\documentclass[a4paper, 10pt, reqno]{amsart}

\usepackage[utf8]{inputenc}
\usepackage[T1]{fontenc}
\usepackage{lmodern}
\usepackage{microtype}
\usepackage{amsmath, amssymb, amsthm, thmtools}
\usepackage{mathrsfs}								
\usepackage{esint}									
\usepackage{enumitem}								
\usepackage{listings}								
\lstset{basicstyle=\ttfamily,breaklines=true}

\usepackage[dvipsnames]{xcolor}
\usepackage[linktocpage]{hyperref}
\definecolor{colorred}{HTML}{B00000}
\definecolor{colorgreen}{HTML}{258300}
\hypersetup{colorlinks=true, linkcolor=colorred, citecolor=colorgreen, urlcolor=black}								
\usepackage{xifthen}								
\usepackage{url}
\urlstyle{rm}
\usepackage{booktabs}								
\usepackage{todonotes}
\usepackage{caption}
\usepackage{bbm}									
\usepackage{stmaryrd}								
\captionsetup[table]{labelformat={default},labelsep=period,name={Table}}		
\captionsetup[figure]{labelformat={default},labelsep=period,name={Figure}}		

\numberwithin{equation}{section}

\setcounter{tocdepth}{2}

\let\oldtocsection=\tocsection

\let\oldtocsubsection=\tocsubsection

\let\oldtocsubsubsection=\tocsubsubsection

\renewcommand{\tocsection}[2]{\hspace{0em}\oldtocsection{#1}{#2}}
\renewcommand{\tocsubsection}[2]{\hspace{1em}\oldtocsubsection{#1}{#2}}
\renewcommand{\tocsubsubsection}[2]{\hspace{2em}\oldtocsubsubsection{#1}{#2}}

\newcommand{\nocontentsline}[3]{}
\newcommand{\tocless}[2]{\bgroup\let\addcontentsline=\nocontentsline#1{#2}\egroup}

%
%
%
%
%
%
%
%
%



\newcommand{\nlb}{{\ensuremath{\textnormal{b}}}}

\newcommand{\nld}{{\ensuremath{\textnormal{d}}}}

\newcommand{\nli}{{\ensuremath{\textnormal{i}}}}


\newcommand{\nlC}{{\ensuremath{\textnormal{C}}}}

\newcommand{\nlE}{{\ensuremath{\textnormal{E}}}}

\newcommand{\nlS}{{\ensuremath{\textnormal{S}}}}


\newcommand{\rmc}{{\ensuremath{\mathrm{c}}}}
\newcommand{\rmd}{{\ensuremath{\mathrm{d}}}}
\newcommand{\rme}{{\ensuremath{\mathrm{e}}}}

\newcommand{\rmi}{{\ensuremath{\mathrm{i}}}}


\newcommand{\rmE}{{\ensuremath{\mathrm{E}}}}





\newcommand{\sfa}{{\ensuremath{\mathsf{a}}}}
\newcommand{\sfb}{{\ensuremath{\mathsf{b}}}}

\newcommand{\sfe}{{\ensuremath{\mathsf{e}}}}

\newcommand{\sfg}{{\ensuremath{\mathsf{g}}}}
\newcommand{\sfh}{{\ensuremath{\mathsf{h}}}}

\newcommand{\sfp}{{\ensuremath{\mathsf{p}}}}

\newcommand{\sfr}{{\ensuremath{\mathsf{r}}}}

\newcommand{\sfA}{{\ensuremath{\mathsf{A}}}}
\newcommand{\sfB}{{\ensuremath{\mathsf{B}}}}

\newcommand{\sfG}{{\ensuremath{\mathsf{G}}}}
\newcommand{\sfH}{{\ensuremath{\mathsf{H}}}}

\newcommand{\sfL}{{\ensuremath{\mathsf{L}}}}

\newcommand{\sfP}{{\ensuremath{\mathsf{P}}}}

\newcommand{\sfS}{{\ensuremath{\mathsf{S}}}}
\newcommand{\sfT}{{\ensuremath{\mathsf{T}}}}


\newcommand{\scrD}{{\ensuremath{\mathscr{D}}}}

\newcommand{\scrL}{{\ensuremath{\mathscr{L}}}}
\newcommand{\scrM}{{\ensuremath{\mathscr{M}}}}
\newcommand{\scrN}{{\ensuremath{\mathscr{N}}}}


\newcommand{\calE}{{\ensuremath{\mathcal{E}}}}



\newcommand{\bdGamma}{{\ensuremath{\boldsymbol{\Gamma}}}}
\newcommand{\bdDelta}{{\ensuremath{\boldsymbol{\Delta}}}}

\newcommand{\N}{\boldsymbol{\mathrm{N}}}						
\newcommand{\R}{\boldsymbol{\mathrm{R}}}						
\newcommand{\C}{\boldsymbol{\mathrm{C}}}						
\renewcommand{\S}{\boldsymbol{\mathrm{S}}}						
\renewcommand{\d}{\,\mathrm{d}}				


\let\liminf\undefined
\let\div\undefined
\DeclareMathOperator*{\liminf}{liminf}		
\DeclareMathOperator*{\esssup}{esssup}		
\DeclareMathOperator{\div}{div}				
\DeclareMathOperator{\Span}{span}			
\DeclareMathOperator{\tr}{tr}				

\let\originalleft\left			
\let\originalright\right
\renewcommand{\left}{\mathopen{}\mathclose\bgroup\originalleft}
\renewcommand{\right}{\aftergroup\egroup\originalright}

\newcommand{\mapdef}[3][]{\ifthenelse{\isempty{#1}}{#2\quad\longmapsto\quad #3}{#1\colon\quad #2\quad\longmapsto\quad #3}}		
\newcommand{\der}[2][]{\ifthenelse{\isempty{#1}}{\frac{\nld}{\nld #2}}{\left.\frac{\nld}{\nld #2}\right\vert_{#1}}}				
\makeatletter
\newcommand{\checknarg}{\@ifnextchar\bgroup{\gobblenarg}{}}
\newcommand{\gobblenarg}[1]{\@ifnextchar\bgroup{,\ \! #1\gobblenarg}{,\ \! #1}}

\theoremstyle{definition}
\newtheorem{bump}{Bump}[section]

\theoremstyle{plain}
\newtheorem{theorem}[bump]{Theorem}
\newtheorem{proposition}[bump]{Proposition}
\newtheorem{definition}[bump]{Definition}
\newtheorem{lemma}[bump]{Lemma}
\newtheorem{corollary}[bump]{Corollary}
\newtheorem{introtheorem}{Theorem}

\theoremstyle{remark}
\newtheorem{remark}[bump]{Remark}
\newtheorem{example}[bump]{Example}

\newtheoremstyle{cited}
{\topsep}		
{\topsep}		
{\itshape}		
{}				
{\bfseries}		
{\textbf{.}}	
{.5em}			
{\thmname{#1} \thmnumber{#2} \thmnote{\normalfont#3}}		

\theoremstyle{cited}			

\makeatletter
\let\@fnsymbol\@arabic	 		
\makeatother

\makeatletter
\def\nonumberfootnote{\xdef\@thefnmark{}\@footnotetext}			
\makeatother

%
%
%
%
%
%
%
%
%

\newcommand{\mms}{M}				
\newcommand{\met}{\mathsf{d}}						
\newcommand{\meas}{\mathfrak{m}}			
\newcommand{\Leb}{\scrL}					
\newcommand{\vol}{\mathrm{vol}}				

\newcommand{\RCD}{\mathrm{RCD}}				
\newcommand{\CD}{\mathrm{CD}}				
\newcommand{\BE}{\mathrm{BE}}				

\newcommand{\bounded}{\nlb}					
\newcommand{\bs}{\mathrm{bs}}				
\newcommand{\loc}{\mathrm{loc}}				
\newcommand{\pr}{\mathrm{pr}}				
\newcommand{\HS}{{\mathrm{HS}}}				
\newcommand{\WHS}{{\mathrm{wHS}}}			
\newcommand{\Ric}{\mathrm{Ric}}				
\newcommand{\Cont}{\nlC}					
\newcommand{\Ell}{\mathit{L}}				
\newcommand{\Lip}{\mathrm{Lip}}				
\newcommand{\Sob}{\mathit{W}}				
\newcommand{\Sobo}{\nlS}					
\newcommand{\Test}{\mathrm{Test}}			

\newcommand{\Dom}{\mathrm{Dom}}				

\DeclareMathOperator{\Hess}{Hess}			
\DeclareMathOperator{\diam}{diam}			
\newcommand{\ChHeat}{\sfP}					
\newcommand{\HHeat}{\sfH}					
\newcommand{\Hheat}{\sfh}					
\newcommand{\Schr}[1]{\sfP^{#1}}			

\newcommand{\One}{\boldsymbol{1}}				
\newcommand{\Harm}{\mathrm{Harm}}			
\newcommand{\RIC}{\boldsymbol{\mathrm{Ric}}}

\newcommand{\Hodge}{\smash{\vec{\Delta}}}		
\newcommand{\LSI}{\mathrm{LSI}}				
\newcommand{\fLSI}{\mathrm{fLSI}}			
\newcommand{\pt}{\mathrm{p}}					
\newcommand{\ess}{\mathrm{e}}				
\newcommand{\DELTA}{\bdDelta}		
\newcommand{\Meas}{\mathfrak{M}}				
\newcommand{\cl}{\mathrm{cl}}				

%
%
%
%
%
%
%
%
%

\usepackage{blindtext}

%
%
%
%
%
%
%
%
%

\clubpenalty10000
\widowpenalty10000
\displaywidowpenalty10000

\providecommand{\bysame}{\leavevmode\hbox to3em{\hrulefill}\thinspace}

\numberwithin{equation}{section}

\begin{document}
	
	\title[Heat flow on 1-forms under lower Ricci bounds]{Heat flow on 1-forms under lower Ricci bounds. Functional inequalities, spectral theory, 
		and heat kernel}
	
	\author{Mathias Braun}
	\address{Department of Mathematics, University of Toronto Bahen Centre, 40 St. George Street Room 6290, Toronto, Ontario M5S 2E4, Canada}
	\email{braun@math.toronto.edu}
	\thanks{The author warmly thanks Karl-Theodor Sturm and Batu Güneysu for many valuable discussions, their constant interest and helpful comments. Funding by the European Research Council through the ERC-AdG ``RicciBounds'', ERC project 10760021, is gratefully acknowledged.}
	
	\subjclass[2010]{Primary: 35K08, 58J35; Secondary: 35P15, 58A14, 58C40.}
	
	\keywords{Heat flow; Heat kernel; Hodge theory; Functional inequalities; Spectral theory; Ricci curvature.}
	
	\date{\today}
	
	\begin{abstract} We study the canonical heat flow $(\mathsf{H}_t)_{t\geq 0}$ on the cotangent module $L^2(T^*M)$ over an $\mathrm{RCD}(K,\infty)$ space $(M,\mathsf{d},\mathfrak{m})$, $K\in\boldsymbol{\mathrm{R}}$. We show Hess--Schrader--Uhlenbrock's inequality and, if $(M,\mathsf{d},\mathfrak{m})$ is also an $\mathrm{RCD}^*(K,N)$ space, $N\in(1,\infty)$, Bakry--Ledoux's inequality for $(\mathsf{H}_t)_{t\geq 0}$ w.r.t.~the heat flow $(\mathsf{P}_t)_{t\geq 0}$ on $L^2(M)$. 
	Variable versions of these estimates are discussed as well. In conjunction with a study of logarithmic Sobolev inequalities for $1$-forms, the previous inequalities yield various $L^p$-properties of $(\mathsf{H}_t)_{t\geq 0}$, $p\in [1,\infty]$.
		
		Then we establish explicit inclusions between the spectrum of its generator, the Hodge Laplacian $\smash{\vec{\Delta}}$, of the negative functional Laplacian $-\Delta$, and of the Schrödinger operator $-\Delta+K$. In the $\mathrm{RCD}^*(K,N)$ case, we prove compactness of $\smash{\vec{\Delta}^{-1}}$ if $M$ is compact, and the independence of the $L^p$-spectrum of $\smash{\vec{\Delta}}$ on $p \in [1,\infty]$ under a volume growth condition. 
		
		We terminate by giving an appropriate interpretation of a heat kernel for $(\mathsf{H}_t)_{t\geq 0}$. We show its existence in full generality without any local compactness or doubling, and derive fundamental estimates and properties of it.
	\end{abstract}
	
	\maketitle
		\thispagestyle{empty}
	
	\tableofcontents
	
	\section{Introduction}\label{Ch:Introduction}
	
	Let $(\mms,\met,\meas)$ be a metric measure space, i.e.~a complete and separable metric space $(\mms,\met)$ endowed with a locally finite, fully supported Borel measure $\meas$. We always assume that $(\mms,\met,\meas)$ is an $\RCD(K,\infty)$ space for some $K\in\R$. See  \autoref{Ch:Preliminaries} for a more detailed account on these.
	
	In the seminal papers \cite{gigli,gigli2018}, based on the notion of \emph{$\Ell^\infty$-modules} over $(\mms,\met,\meas)$, a first and, in particular, a \emph{second} order differential structure on such, possibly quite singular, spaces has been built. In high analogy with the setting of a complete Riemannian manifold with Ricci curvature bounded from below by $K$ endowed with its Riemannian volume measure, \cite{gigli2018} makes sense of differential geometric objects such as gradients, differentials, Hessians, vector fields, $1$-forms, etc. --- in fact, even in smooth situations, the axiomatization of \cite{gigli2018} covers certain Schrödinger-type operators on weighted Riemannian manifolds. In particular, building upon a notion of \emph{Hodge Laplacian} $\Hodge$, a nonsmooth cohomology theory and the \emph{heat flow} $(\HHeat_t)_{t\geq 0}$  with generator $-\Hodge$ acting on differential $1$-forms in the \emph{cotangent module} $\Ell^2(T^*\mms)$ have been introduced in \cite{gigli2018}. 
	
	On Riemannian manifolds --- weighted or not --- $(\HHeat_t)_{t\geq 0}$ has been introduced long before and was studied extensively over the last decades. Let us mention e.g.~\cite{charalambous2007,chavel1984, coulhon2007, devyver2014, lohoue2010, magniez2020} for geometric and analytic, and \cite{bismut1984, elworthy1994} for probabilistic studies on the heat flow on $1$-forms, its integral kernel, etc. See also \cite{elworthy2008} for the development of a Hodge theory on the Wiener space over a Riemannian manifold. On the other hand, apart from \cite{gigli2018}, little is known about $(\HHeat_t)_{t\geq 0}$ in the more general $\RCD(K,\infty)$ framework. This article aims in a thorough study of properties of $(\HHeat_t)_{t\geq 0}$ as well as its generator, which has not been the central objective of \cite{gigli2018}. The final outcome of our discussion is an appropriate definition and the construction of a \emph{heat kernel} for $(\HHeat_t)_{t\geq 0}$ in the nonsmooth setting.

	Our motivation for deeply studying the heat flow on $1$-forms over $\RCD$ spaces comes from different directions. First,  we provide a further contribution to the large diversity of  works generalizing important ``classical smooth''  statements to nonsmooth spaces. Second, we believe that $\RCD$ spaces (or more general spaces with ``lower Ricci bounds'' \cite{braun2019,sturm2019,erbar2020}) with the tensor language of \cite{gigli,gigli2018} are the correct framework to develop nonsmooth notions of stochastic differential geometry, e.g.~(damped) stochastic  parallel transports, a project which we attack in the future. Therein, in establishing Bismut--Elworthy--Li-type derivative formulas for the functional heat flow $(\ChHeat_t)_{t\geq 0}$ \cite{bismut1984, elworthy1994} --- which in turn are expected to provide further regularity information about it --- a good understanding of its $1$-form counterpart $(\HHeat_t)_{t\geq 0}$ is essential \cite{braun2020,driver2001}. Lastly, in smooth contexts, heat kernel methods for $1$-forms are useful in many important applications, all of which could lead to $\RCD$ analogues. Exemplary, let us quote
	\begin{itemize}
	\item a deeper understanding of the Hodge theorem (see  \cite{gigli2018} for the $\RCD$ result) by the study of the heat kernel \cite{milgram1951},
		\item a proof variant of index theorems in Riemannian geometry \cite{bismut1984, chavel1984, hsu2002},
		\item the study of boundedness of the Riesz transform, see e.g.~\cite{coulhon2008, coulhon2013, coulhon2020, devyver2014} and the references therein, and
		\item the study of its short-time asymptotics playing dominant roles in theoretical physics and quantum gravity \cite{avramidi2000, pleijel1949, rosenberg1997}. 
	\end{itemize}

	\subsubsection*{Hess--Schrader--Uhlenbrock's inequality} Let us now start by outlining the content of the paper. If $\mms$ is a Riemannian manifold with volume measure $\vol$, the heat flow $(\HHeat_t)_{t\geq 0}$ is well-known \cite{gallot1975, hess1980, rosenberg1988, strichartz1983} to be tightly linked  to the Ricci curvature $\Ric$ of  $\mms$ via $\Hodge$ through the \emph{vector Bochner formula}
	\begin{align*}
	\Delta \frac{\vert X\vert^2}{2} + \big\langle X, (\Hodge X^\flat)^\sharp\big\rangle = \vert\nabla X \vert^2 + \Ric(X,X),
	\end{align*}
	valid for every sufficiently regular vector field $X$. Since this identity also involves the Laplace--Beltrami operator $\Delta$ --- which is the generator of the heat semigroup $(\ChHeat_t)_{t\geq 0}$ acting on $\Ell^2(\mms)$ --- it implies important comparison estimates between $(\HHeat_t)_{t\geq 0}$ and $(\ChHeat_t)_{t\geq 0}$ as described now.
	
	The main result in \cite{gigli2018} in the $\RCD(K,\infty)$ case tells us that the vector Bochner formula can be made sense of in a weak form involving the measure-valued Laplacian $\DELTA$ and Ricci tensor $\RIC$ --- actually, the latter is \emph{defined} via such an identity. A classical interpolation argument from Bakry--Émery's theory then implies that for every $\omega\in\Ell^2(T^*\mms)$ and every $t\geq 0$, 
	\begin{align}\label{Eq:DIMDE}
	\vert \HHeat_t\omega\vert^2\leq \rme^{-2Kt}\,\ChHeat_t\big(\vert\omega\vert^2\big)\quad\meas\text{-a.e.}
	\end{align}
	
	This estimate, albeit being pointwise, is too weak to derive $\Ell^p$-properties of $(\HHeat_t)_{t\geq 0}$ for the important range $p\in [1,2)$. On Riemannian manifolds with Ricci lower bound $K$ and $\meas := \vol$, however, Bochner's formula is known to entail the stronger \emph{Hess--Schrader--Uhlenbrock inequality}
	\begin{align}\label{Eq:Kato Simon intro}
	\vert\HHeat_t\omega\vert\leq \rme^{-Kt}\,\ChHeat_t\vert\omega\vert\quad\meas\text{-a.e.}
	\end{align}
	for every $\omega\in\Ell^2(T^*\mms)$ and every $t\geq 0$. In this context, \eqref{Eq:Kato Simon intro} is due to \cite{hess1977,hess1980}. 
	
	We prove the bound \eqref{Eq:Kato Simon intro} in full generality on $\RCD(K,\infty)$ spaces $(\mms,\met,\meas)$.
	
	\begin{introtheorem}[see \autoref{Th:Kato-Simon}]\label{Th:Kato Simon intro} For every $\omega\in\Ell^2(T^*\mms)$ and every $t\geq 0$, \eqref{Eq:Kato Simon intro} is satisfied.
	\end{introtheorem}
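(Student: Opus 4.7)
The plan is to deduce \eqref{Eq:Kato Simon intro} by combining the weak vector Bochner inequality (built into Gigli's definition of $\RIC$) with a Kato-type inequality to produce a differential inequality for a regularization of $|\HHeat_s\omega|$, and then closing by duality against the functional heat semigroup $(\ChHeat_t)_{t\geq 0}$.

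Concretely, I would fix $\omega$ in a dense class of test $1$-forms so that $\omega_s:=\HHeat_s\omega$ lies in the domain of $\Hodge$ and $|\omega_s|^2$ admits a measure-valued Laplacian controlled by the weak Bochner inequality
$$\tfrac12\,\DELTA|\omega_s|^2 \;\geq\; \big[|\nabla\omega_s|_{\HS}^2 \,-\, \langle\omega_s,\Hodge\omega_s\rangle \,+\, K|\omega_s|^2\big]\meas.$$
To handle the vanishing set of $\omega_s$, I would regularize via $u_s^\varepsilon:=\sqrt{|\omega_s|^2+\varepsilon^2}$ for $\varepsilon>0$. Chain-rule computations give $\partial_s u_s^\varepsilon = -\langle\omega_s,\Hodge\omega_s\rangle/u_s^\varepsilon$ and the identity $u_s^\varepsilon\,\DELTA u_s^\varepsilon = \tfrac12\DELTA|\omega_s|^2 - |\nabla u_s^\varepsilon|^2\meas$. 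Invoking the Kato inequality $|\nabla|\omega_s|| \leq |\nabla\omega_s|_{\HS}$ (inherited from Gigli's calculus on $\Ell^2(T^*\mms)$) yields $|\nabla u_s^\varepsilon| \leq |\nabla\omega_s|_{\HS}$, so the gradient terms cancel favourably against the Bochner lower bound. Combined with the elementary estimate $|\omega_s|^2/u_s^\varepsilon \geq u_s^\varepsilon-\varepsilon$, this delivers the key differential inequality
$$\partial_s u_s^\varepsilon \;\leq\; \DELTA u_s^\varepsilon \,-\, K u_s^\varepsilon \,+\, |K|\varepsilon$$
in the measure-valued sense.

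The comparison with $(\ChHeat_t)_{t\geq 0}$ then follows by a standard interpolation. Fix $t>0$ and a nonnegative $\phi\in\Lip_\bounded(\mms)$, and consider $J(s):=\int_\mms u_s^\varepsilon\,\ChHeat_{t-s}\phi\,\d\meas$ for $s\in[0,t]$. Differentiating in $s$, using $\partial_s\ChHeat_{t-s}\phi = -\Delta\ChHeat_{t-s}\phi$, and pairing the above inequality against $\ChHeat_{t-s}\phi\geq 0$ via integration by parts yields $J'(s)\leq -KJ(s) + O(\varepsilon)$. Gronwall then gives $J(t)\leq \rme^{-Kt}J(0) + O(\varepsilon)$. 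Sending $\varepsilon\downarrow 0$, exploiting the arbitrariness of $\phi$, and extending by density using the $\Ell^2$-contractivity of both semigroups together with the $\Ell^2$-continuity of $\omega\mapsto|\omega|$ gives \eqref{Eq:Kato Simon intro} for every $\omega\in\Ell^2(T^*\mms)$.

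The main obstacle I anticipate lies in the nonsmooth calculus: establishing Kato's inequality $|\nabla|\omega|| \leq |\nabla\omega|_{\HS}$ and the chain rule for the measure-valued Laplacian of $u_s^\varepsilon$ within Gigli's tensor framework with sufficient regularity, and justifying the integration-by-parts step of the duality argument when $\DELTA u_s^\varepsilon$ is only a signed measure. Once these tools from \cite{gigli2018} are in place, the Gronwall-based closure is routine and the final density extension invokes only the $\Ell^2$ theory of $(\HHeat_t)_{t\geq 0}$ and $(\ChHeat_t)_{t\geq 0}$.
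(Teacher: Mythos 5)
Your proposal is correct and rests on the same two nonsmooth pillars the paper relies on — the weak vector Bochner inequality encoded in $\RIC$ (\autoref{Th:Ricci tensor}) and Kato's inequality (\autoref{Pr:Kato's inequality}) — but the closure step is genuinely different. After establishing (in the paper's Lemmata 3.7--3.10) the vector $1$-Bochner inequality
\begin{align*}
\int_\mms \langle\nabla\phi,\nabla\vert\omega\vert\rangle\,\d\meas + K\int_\mms \phi\,\vert\omega\vert\,\d\meas \leq \int_\mms \phi\,\vert\omega\vert^{-1}\,\langle\omega,\Hodge\omega\rangle\,\d\meas
\end{align*}
for $\omega\in\Dom(\Hodge)$ and nonnegative $\phi$, the paper invokes the abstract form-domination theorem of Hess--Schrader--Uhlenbrock (\cite[Thm.~2.15]{hess1977}, see also \cite{simon1977,ouhabaz1999}) as a black box to pass from this ``integrated $1$-Bochner'' estimate to the semigroup comparison. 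You instead close the argument by hand: you derive a differential inequality $\partial_s u_s^\varepsilon \leq \DELTA u_s^\varepsilon - Ku_s^\varepsilon + \vert K\vert\,\varepsilon$ for the regularization $u_s^\varepsilon = \sqrt{\vert\HHeat_s\omega\vert^2+\varepsilon^2}$, then run a Bakry--\'Emery-style interpolation/Gronwall argument on $J(s) := \int_\mms u_s^\varepsilon\,\ChHeat_{t-s}\phi\,\d\meas$. The computation is internally consistent: the chain rule (\autoref{Le:Laplacian functions chain rule}), the refinement $\vert\nabla u_s^\varepsilon\vert \leq \vert\nabla\vert\omega_s\vert\vert \leq \vert\nabla\omega_s\vert$ of Kato, and the bound $-K\,\vert\omega_s\vert^2/u_s^\varepsilon \leq -K u_s^\varepsilon + \vert K\vert\,\varepsilon$ all hold, and Gronwall plus $\meas$-symmetry of $\ChHeat_t$ gives the estimate for $\omega$ in a dense class; the final density step via $\Ell^2$-contractivity of both semigroups is unproblematic. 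What the paper's route buys is a shorter write-up at the cost of opacity (one must trust the cited form-domination theorem and verify its hypotheses are genuinely met in the $\Ell^\infty$-module framework); what yours buys is a transparent, self-contained argument that exposes the mechanism. The obstacles you flag at the end — in particular, justifying $\vert\omega_s\vert^2\in\Dom(\DELTA)$ so that $\DELTA u_s^\varepsilon$ makes sense, and then doing the integration by parts against $\ChHeat_{t-s}\phi$ when $\DELTA u_s^\varepsilon$ is merely a signed measure — are exactly the technical points the paper also has to address (by first working with $\omega_s^\sharp\in\Test(T\mms)$ in a weak $\psi$-tested formulation, then approximating to general $\omega\in\Dom(\Hodge)$), and you would need to replicate that regularization. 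If you run the interpolation against a nonnegative $\phi\in\Test(\mms)$ and keep everything in the weak form of \autoref{Le:The key} rather than insisting on $\DELTA u_s^\varepsilon$ as an honest measure, the argument goes through cleanly.
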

	
	\autoref{Th:Kato Simon intro} opens the door for plenty of new insights into the behavior of $(\HHeat_t)_{t\geq 0}$ and its generator and is used at many places in this paper. For instance, $(\HHeat_t)_{t\geq 0}$ extends to a semigroup of bounded operators mapping $\Ell^p(T^*\mms)$ into $\Ell^p(T^*\mms)$ for every $p\in [1,\infty]$, strongly continuous if $p<\infty$, with operator norm of $\HHeat_t$ no larger than $\rme^{-Kt}$, see \autoref{Cor:Extension}. 
	

	\subsubsection*{Bakry--Ledoux's inequality} Before further commenting on the proof of \autoref{Th:Kato Simon intro}, we introduce improved versions of \eqref{Eq:DIMDE} we will obtain if $(\mms,\met,\meas)$  obeys the more restrictive $\RCD^*(K,N)$ condition, $N\in(1,\infty)$. To the best of our knowledge and apart from exact $1$-forms \cite{bakry2006, erbar2015, erbar2020}, these seem to be new even in the smooth case. At this point, let us concentrate on the  strongest --- we say that a  $1$-form $\omega$ obeys the \emph{strong Bakry--Ledoux inequality} if for every $t>0$,
		\begin{align*}
		\vert\HHeat_t\omega\vert^2 + \frac{2}{N}\int_0^t\rme^{-2Ks}\,\ChHeat_s\big(\vert\hspace*{-0.05cm}\div \HHeat_{t-s}\omega^\sharp\vert^2\big)\d s \leq \rme^{-2Kt}\,\ChHeat_{t}\big(\vert\omega\vert^2\big)\quad\meas\text{-a.e.}
		\end{align*}
		
	\begin{introtheorem}[see  \autoref{Th:Bakry-Ledoux}]\label{Th:BL intro} If $(\mms,\met,\meas)$ is an $\RCD^*(K,N)$ space, the strong Bakry--Ledoux inequality holds for every $\omega\in\Ell^2(T^*\mms)$.
	\end{introtheorem}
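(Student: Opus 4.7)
The strategy is a Bakry--Émery-type interpolation along the vector heat flow: define
\[
F(s) \;:=\; \rme^{-2Ks}\,\ChHeat_s\bigl(|\HHeat_{t-s}\omega|^2\bigr), \qquad s\in[0,t],
\]
so that $F(0)=|\HHeat_t\omega|^2$, $F(t)=\rme^{-2Kt}\,\ChHeat_t(|\omega|^2)$, and the claim is equivalent to the lower bound $F(t)-F(0)\geq \tfrac{2}{N}\int_0^t \rme^{-2Ks}\,\ChHeat_s(|\div \HHeat_{t-s}\omega^\sharp|^2)\,\rmd s$. Writing $\omega_s := \HHeat_{t-s}\omega$, one has $\partial_s\omega_s = +\Hodge\omega_s$ (since $\HHeat$ is differentiated backwards in $s$) and $\partial_s\ChHeat_s = \ChHeat_s\Delta$, so formally
\[
F'(s) \;=\; 2\,\rme^{-2Ks}\,\ChHeat_s\Bigl[\tfrac12\Delta|\omega_s|^2 \;+\; \langle \omega_s,\Hodge\omega_s\rangle \;-\; K|\omega_s|^2\Bigr],
\]
where the bracket is precisely the defect of the $1$-form Bochner identity.

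Under $\RCD^*(K,N)$, the required analytic input is the dimensional, measure-valued Bochner inequality
\[
\tfrac12\,\DELTA|\omega_s|^2 \;+\; \langle\omega_s,\Hodge\omega_s\rangle\,\meas \;\geq\; K\,|\omega_s|^2\,\meas \;+\; \tfrac{1}{N}\,|\div\omega_s^\sharp|^2\,\meas,
\]
which combines the Gigli-type identity $\tfrac12\DELTA|\omega|^2 + \langle\omega,\Hodge\omega\rangle\meas = |\nabla\omega|^2\meas + \RIC(\omega^\sharp,\omega^\sharp)$, the lower bound $\RIC\geq K$, and a trace--Cauchy--Schwarz estimate $|\nabla\omega|^2 \geq (\div\omega^\sharp)^2/N$ that encodes the role of $N$ as an effective dimension. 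Inserting this bound into the expression for $F'(s)$ produces $F'(s)\geq \tfrac{2}{N}\rme^{-2Ks}\,\ChHeat_s(|\div\omega_s^\sharp|^2)$, and integrating on $[0,t]$ closes the argument.

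The rigorous implementation would proceed by testing the displayed identity against a nonnegative $\varphi$ with $\varphi,\Delta\varphi\in\Ell^\infty(\mms)$, establishing absolute continuity of $s\mapsto\int\varphi F(s)\,\rmd\meas$ on $[0,t]$, and then varying $\varphi$ over a sufficiently rich class to recover the pointwise inequality; a final density argument extends the result from sufficiently regular $\omega$ (e.g.~$\omega\in\Dom(\Hodge^2)$ or forms in the range of $\HHeat_\epsilon$, where the vector heat flow provides the needed instantaneous regularization) to arbitrary $\omega\in\Ell^2(T^*\mms)$, using the $\Ell^2$-continuity of $(\HHeat_t)_{t\geq 0}$ granted by \autoref{Th:Kato Simon intro} together with Fatou's lemma in the integral term. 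The principal obstacle is ensuring that $\div\HHeat_s\omega^\sharp$ is a well-defined $\Ell^2_{\loc}$ object in spacetime --- a parabolic-regularity statement for the vector heat flow. I would derive this from the integrated version of the Bochner identity itself, tested against an exhausting sequence of cutoff functions built from $\One$-type objects, which yields exactly the spacetime $\Ell^2$ estimate on the divergence needed to carry out the approximation and to give meaning to every term in the final inequality.
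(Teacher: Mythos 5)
Your proposal follows essentially the same interpolation strategy as the paper: define $F(s) := \rme^{-2Ks}\,\ChHeat_s(|\HHeat_{t-s}\omega|^2)$ (tested against a nonnegative $\phi\in\Test(\mms)$ to make sense of the derivative), compute $F'(s)$ as twice $\rme^{-2Ks}\ChHeat_s$ applied to the Bochner defect, and bound that defect below by $\frac{1}{N}|\delta\HHeat_{t-s}\omega|^2$ using the dimensional measure-valued Bochner inequality, which the paper takes directly from Han (stated as \autoref{Pr:Han's bound}) rather than deriving it from $\RIC\geq K$ plus a trace--Cauchy--Schwarz estimate as you sketch. The truncation and density argument you flag is handled in the paper by replacing $\omega$ with $\omega_R := \One_{[0,R]}(|\omega|)\,\omega$, which bypasses the parabolic-regularity concern for $\div\HHeat_s\omega^\sharp$ since $\omega_R\in\Ell^2\cap\Ell^\infty$ and the heat flow provides the instantaneous smoothing you anticipate.
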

	
	The link between the $\RCD^*(K,N)$ condition and the strong version of Bakry--Ledoux is created by the same interpolation technique as in the dimension-free case \cite{gigli2018} in combination with the dimensional vector $2$-Bochner-inequality obtained by the study of $\RIC$ on $\RCD^*(K,N)$ spaces from \cite{han2018}. 
	
	The Bakry--Ledoux inequality --- already if valid for sufficiently many \emph{exact} $1$-forms --- characterizes the $\RCD^*(K,N)$ condition for $(\mms,\met,\meas)$ according to \cite{erbar2015, erbar2020} and the commutation relation between $(\HHeat_t)_{t\geq 0}$ and $(\ChHeat_t)_{t\geq 0}$ from \autoref{Le:Ht vs Pt} below.
	
	\subsubsection*{Vector 1-Bochner inequality} A crucial ingredient for \autoref{Th:Kato Simon intro} is that the vector $2$-Bochner inequality --- or, in clearer terms, vector $\Gamma_2$-inequality --- is \emph{self-improving}. As proven in \cite{gigli2018}, this means that from the a priori weaker inequality
	\begin{align}\label{Eq:2-Gamma-2}
	\DELTA\frac{\vert X\vert^2}{2} + \big\langle X, (\Hodge X^\flat)^\sharp\big\rangle\,\meas \geq K\,\vert X\vert^2 \,\meas
	\end{align}
	for a sufficiently large class of vector fields $X$, it already follows that
	\begin{align*}
	\DELTA\frac{\vert X\vert^2}{2} + \big\langle X, (\Hodge X^\flat)^\sharp\big\rangle\,\meas \geq \big[K\,\vert X\vert^2 + \vert\nabla X\vert^2\big]\,\meas,
	\end{align*}
	This property 
	has been crucial in defining $\RIC$ in \cite{gigli2018}. 
	
	Once having proven that $\vert X\vert\in\Dom(\DELTA)$ for sufficiently many $X$, the chain rule 
	\begin{align*}
	\DELTA \vert X\vert^2 = 2\,\vert X\vert\,\DELTA\vert X\vert + 2\,\vert\nabla \vert X\vert\vert^2\,\meas
	\end{align*}
	then implies the \emph{vector $1$-Bochner inequality}
	\begin{align*}
	\DELTA \vert X\vert + \vert X\vert^{-1}\,\big\langle X,(\Hodge X^\flat)^\sharp\big\rangle\,\meas \geq K\,\vert X\vert \,\meas,
	\end{align*}
	as a byproduct, where the key feature canceling out the covariant term is
	\begin{align}\label{Eq:Kato intro}
	\vert\nabla \vert X\vert\vert \leq \vert\nabla X\vert\quad\meas\text{-a.e.}
	\end{align}
	The latter, known as  \emph{Kato's inequality}, is well-known in the smooth framework, while its nonsmooth analogue, stated in \autoref{Pr:Kato's inequality}, has recently appeared in \cite{debin2020}. On the other hand, the vector $1$-Bochner inequality for vector fields not necessarily of gradient-type is new in the dimension-free setting.
	
	In an integrated version, the link between this vector $1$-Bochner inequality and $\Ell^1$-type bounds of the form \eqref{Eq:Kato Simon intro} is known as \emph{form domination}. See the classical references \cite{beurling1958, hess1977,simon1977} for motivations. For \autoref{Th:Kato Simon intro}, we adopt a similar strategy.
	
	It is worth noting --- and shortly addressed in \autoref{Re:Variable}, as the constant case will suffice for our purposes --- that the above-mentioned key estimates also hold true in greater generality if $(\mms,\met,\meas)$ has a stronger variable lower Ricci bound $\smash{k\in\Ell_\loc^1(\mms)}$ with $k\geq K$ on $\mms$ in the sense of \cite{braun2019,sturm2019}. For instance, the estimate \eqref{Eq:Kato Simon intro} appearing in \autoref{Th:Kato Simon intro} can be phrased in terms of Brownian motion $(B_t)_{t\geq 0}$ on $(\mms,\met,\meas)$ through
	\begin{align*}
	\vert \HHeat_t\omega\vert\leq \boldsymbol{\mathrm{E}}^\cdot\big[\rme^{-\int_0^{2t}k(B_r)/2 \d r}\,\vert\omega\vert(B_{2t})\big]\quad\meas\text{-a.e.}
	\end{align*}
	
	More generally, we expect these results to hold in the metric measure space settings of  \cite{braun2020,magniez2020,sturm2019} and even on tamed Dirichlet spaces \cite{erbar2020}  without any uniform lower Ricci bounds.
	
	\subsubsection*{Logarithmic Sobolev inequalities} \emph{Logarithmic Sobolev inequalities} for functions and their connections to the functional heat flow $(\ChHeat_t)_{t\geq 0}$, initiated in \cite{gross1975}, have been an active field of research in past decades. For an overview over the vast literature on this subject, see \cite{bakry2014, davies1989}. In a similar manner, in this article we relate logarithmic Sobolev inequalities for $1$-forms to certain further integral properties of $(\HHeat_t)_{t\geq 0}$ described below. There is some ambiguity in formulating the former depending on whether one regards $1$-forms as vector fields or really as contravariant objects. For brevity, we only outline \autoref{Def:LSI}, where we say that a sufficiently regular vector field $X$ obeys the $2$-logarithmic Sobolev inequality $\LSI_2(\beta,\chi)$ with parameters $\beta>  0$ and $\chi\in \R$ if
	\begin{align*}
	\int_\mms \vert X\vert^2\log\vert X\vert\d\meas \leq \beta\,\Vert\nabla X\Vert_{\Ell^2}^2 + \chi\,\Vert X\Vert_{\Ell^2}^2 + \Vert X\Vert_{\Ell^2}\log\Vert X\Vert_{\Ell^2}.
	\end{align*}
	The advantage of this form is that it follows from logarithmic Sobolev inequalities for functions, known to hold in various cases \cite{cavalletti2017, villani2009}, via \eqref{Eq:Kato intro}, see \autoref{Th:Log Sobolev for RCD}. It also implies its contravariant pendant from \autoref{Def:p log Sob} for arbitrary exponents, see \autoref{Le:2LSI to pLSI}.
	
	The integral properties of $(\HHeat_t)_{t\geq 0}$ to be derived are the following. We call $(\HHeat_t)_{t\geq 0}$ 
	\begin{itemize}
		\item \emph{hypercontractive} if there exist $T\in (0,\infty]$ and a strictly increasing $\Cont^1$-function $p\colon [0,T)\to (1,\infty)$ such that $\HHeat_t$ is bounded from $\Ell^{p(0)}(T^*\mms)$ to $\Ell^{p(t)}(T^*\mms)$ for every $t\in (0,T)$, and
		\item \emph{ultracontractive} if there exist $p_0\in (1,\infty)$ and $T>0$ such that $\HHeat_T$ is bounded from $\Ell^{p_0}(T^*\mms)$ to $\Ell^\infty(T^*\mms)$.
	\end{itemize}
	
	In great generality, in \autoref{Th:pLSI to Ultra} we study when certain logarithmic Sobolev inequalities imply hyper- or ultracontractivity of $(\HHeat_t)_{t\geq 0}$. We also treat a partial converse in \autoref{Th:LSI 2 from hyper}.
	
	Read in concrete applications, according to all these discussions and the known functional examples from \cite{cavalletti2017, villani2009}, we deduce the following hypercontractivity.
	
	\begin{introtheorem}\label{Th:LSI intro} On any compact $\RCD^*(K,N)$ space or, for $K>0$, any $\RCD(K,\infty)$ space, there exists a constant $\beta > 0$ such that for every $p_0 \in (1,\infty)$, $\HHeat_t$ is bounded from $\Ell^{p_0}(T^*\mms)$ to $\Ell^{p(t)}(T^*\mms)$ with operator norm no larger than $\rme^{-Kt}$ for every $t>0$, where the function $p\colon [0,\infty)\to (1,\infty)$ is given by $p(t) := 1+(p_0-1)\,\rme^{2t/\beta}$.
	\end{introtheorem}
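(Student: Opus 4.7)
The plan is to invoke each of the ingredients developed earlier in the paper --- functional logarithmic Sobolev inequalities, Kato's inequality, vector logarithmic Sobolev inequalities in both covariant and contravariant form, and the general implication ``$p$-LSI $\Rightarrow$ hypercontractivity'' --- and chain them in the correct order. Concretely, I would first recall the known functional logarithmic Sobolev inequalities: on a compact $\RCD^*(K,N)$ space this is the inequality of Cavalletti--Mondino, while for an $\RCD(K,\infty)$ space with $K>0$ it is the classical Gross-type inequality coming from the Bakry--Émery criterion and sharp gradient contraction of $(\ChHeat_t)_{t\geq 0}$; both yield an $\LSI_2(\beta,0)$ for functions with some $\beta>0$ depending only on $K,N$ and, in the compact case, on the diameter.

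Next, I would feed this functional inequality into \autoref{Th:Log Sobolev for RCD}: applied to $\vert X\vert$ in place of an ordinary function, Kato's inequality \eqref{Eq:Kato intro} gives
\[
\int_\mms \vert X\vert^2\log\vert X\vert\d\meas \leq \beta\,\Vert\nabla X\Vert_{\Ell^2}^2 + \chi\,\Vert X\Vert_{\Ell^2}^2 + \Vert X\Vert_{\Ell^2}\log\Vert X\Vert_{\Ell^2}
\]
for a suitable $\chi\in\R$, i.e.\ $\LSI_2(\beta,\chi)$ in the sense of \autoref{Def:LSI}. I would then pass to the contravariant $p$-logarithmic Sobolev inequality for every $p\in(1,\infty)$ via \autoref{Le:2LSI to pLSI}, obtaining parameters $(\beta_p,\chi_p)$ for the $1$-form-level $\LSI_p$ that depend in a controlled way on $p$, $\beta$, $\chi$.

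Finally, I would apply \autoref{Th:pLSI to Ultra} with $p_0\in(1,\infty)$ arbitrary and the function $p(t):=1+(p_0-1)\rme^{2t/\beta}$; since this $p(t)$ is exactly the one produced by the Gross differential inequality $p'(t)/(p(t)-1) = 2/\beta$, this yields the hypercontractive bound from $\Ell^{p_0}(T^*\mms)$ to $\Ell^{p(t)}(T^*\mms)$ with some operator norm. The only remaining point is the sharp prefactor $\rme^{-Kt}$: here I would rerun the Gross argument by differentiating $t\mapsto\rme^{Kt}\Vert\HHeat_t\omega\Vert_{p(t)}$ instead of $t\mapsto\Vert\HHeat_t\omega\Vert_{p(t)}$, so that the standard computation
\[
\frac{\rmd}{\rmd t}\Vert\HHeat_t\omega\Vert_{p(t)}^{p(t)} = p'(t)\int_\mms\vert\HHeat_t\omega\vert^{p(t)}\log\vert\HHeat_t\omega\vert\d\meas - p(t)\int_\mms\vert\HHeat_t\omega\vert^{p(t)-2}\langle\HHeat_t\omega,\Hodge\HHeat_t\omega\rangle\d\meas
\]
produces, after an integration by parts and use of the vector $2$-Bochner inequality \eqref{Eq:2-Gamma-2}, an extra term $-K\int_\mms\vert\HHeat_t\omega\vert^{p(t)}\d\meas$ that cancels the derivative of $\rme^{Kt}\Vert\cdot\Vert_{p(t)}$. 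The resulting differential inequality, combined with the $\LSI_{p(t)}(\beta,0)$ absorbed into the log term, gives $\tfrac{\rmd}{\rmd t}\rme^{Kt}\Vert\HHeat_t\omega\Vert_{p(t)}\leq 0$, whence the claim.

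The main technical obstacle I foresee is the bookkeeping needed to extract the exact prefactor $\rme^{-Kt}$ with $\chi$ absorbed into the $K$-contraction in the above differential inequality: one must check that the $\chi$ produced by the chain ``functional LSI $\to$ $\LSI_2$ $\to$ $\LSI_p$'' is compatible with the appearance of $K$ in the vector Bochner inequality, so that the contribution of $\chi$ cancels after gauge-changing $p(t)$ appropriately (equivalently, by shifting the initial time). This is analogous to the classical Gross computation but uses \autoref{Th:Kato Simon intro} to justify regularizations needed to make $\HHeat_t\omega$ smooth enough to differentiate its $\Ell^{p(t)}$-norm, and uses \autoref{Th:BL intro} in the $\RCD^*(K,N)$ case to handle the compact situation where the endpoint $p_0=1$ is not treated here.
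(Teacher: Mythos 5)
Your chain of lemmata is exactly the one the paper follows: functional logarithmic Sobolev inequality $\to$ Kato's inequality $\to$ $\LSI_2$ for vector fields (\autoref{Th:Log Sobolev for RCD}) $\to$ $\fLSI_p$ for all $p$ (\autoref{Le:2LSI to pLSI}) $\to$ hypercontractivity (\autoref{Th:pLSI to Ultra}). However, you misdiagnose where the sharp prefactor $\rme^{-Kt}$ comes from, and this leads you to propose a rerun of the Gross argument with the weight $\rme^{Kt}$ that the paper does not perform and that is in fact unnecessary.

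The key points you are missing are the following. First, \autoref{Th:Log Sobolev for RCD} produces $\LSI_2(\beta,\chi)$ with $\chi = 0$, not merely ``some $\chi\in\R$''; there is no leftover $\chi$ to absorb. Second, the Ricci bound $K$ is already encoded in $\gamma(p)$ in \autoref{Le:2LSI to pLSI}: with $\chi=0$ one gets $\gamma(p)=-K\beta p/(2(p-1))$, the $K$ entering through the integrated Bochner-type bound \eqref{Eq:H embedding} used in that proof, not through the pointwise vector $2$-Bochner inequality \eqref{Eq:2-Gamma-2} as you suggest. Third, \autoref{Th:pLSI to Ultra} already tracks this dependence through the auxiliary function $A(t)$: plugging $\varepsilon(p)=\beta p/(2(p-1))$ and $\gamma(p)=-K\beta p/(2(p-1))$ into the formulas yields $p(t)=1+(p_0-1)\rme^{2t/\beta}$ and $A(t)=-Kt$ (\autoref{Ex:Coefficients}), so the bound $\Vert\HHeat_t\Vert_{\Ell^{p_0},\Ell^{p(t)}}\leq\rme^{A(t)}=\rme^{-Kt}$ from \autoref{Cor: LSI for RCD} comes for free, with no separate differential inequality for $\rme^{Kt}\Vert\HHeat_t\omega\Vert_{p(t)}$ required. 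Finally, \autoref{Th:BL intro} plays no role here; the compact $\RCD^*(K,N)$ case is handled simply by the Cavalletti--Mondino functional inequality providing a valid $\beta>0$, and the endpoint $p_0=1$ is neither claimed nor needed.
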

	
	Many of our arguments and results entailing \autoref{Th:LSI intro} are inspired by the functional treatise \cite{davies1989}. In the case of non-weighted Riemannian manifolds, logarithmic Sobolev inequalities for $1$-forms have been studied with similar results in \cite{charalambous2007}.
	
	\subsubsection*{Spectral behavior of Hodge's Laplacian} Kato's inequality \eqref{Eq:Kato intro} also explicitly connects the spectra of the Hodge and the functional Laplacian. The study of the former is our goal in \autoref{Ch:Spectral props}.
	
	The following is first shown in full generality.
	
	\begin{introtheorem}[see \autoref{Th:Spectrum of fcts and of forms} and  \autoref{Cor:Spectral gap inclusion}]\label{Th:Spectral bounds Intro} If a positive real number belongs to the spectrum of $-\Delta$, then it is also contained in the spectrum of $\Hodge$. Similar inclusions hold between the respective point and essential spectra. In particular, 
		\begin{align*}
		\inf\sigma(-\Delta+K)\leq \inf\sigma(\Hodge)\leq \inf\sigma(\Hodge)\setminus\{0\}\leq \inf\sigma(-\Delta)\setminus \{0\}.
		\end{align*}
	\end{introtheorem}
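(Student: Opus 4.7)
The plan is to drive both halves of Theorem C from two independent ingredients: the intertwining $\Hodge \circ \rmd = \rmd\circ(-\Delta)$ on test functions, and the Bochner-plus-Kato package recorded earlier in the excerpt.

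For the spectral \emph{inclusion}, I would first verify the intertwining relation on a suitably rich class of functions in $\Dom(\Delta)$, exploiting that on such $f$ one has $\rmd f\in\Dom(\Hodge)$ with $\Hodge\,\rmd f = -\rmd\Delta f$ (this is immediate from the definition $\Hodge=\rmd\rmd^*+\rmd^*\rmd$ together with $\rmd^*\rmd = -\Delta$ on functions in the formalism of \cite{gigli2018}). Fix $\lambda>0$ in the spectrum of $-\Delta$. Since $-\Delta$ is self-adjoint and nonnegative, $\lambda$ lies in its approximate point spectrum: there is a normalized sequence $(f_n)\subset\Dom(\Delta)$ with $(-\Delta-\lambda)f_n\to 0$ in $\Ell^2(\mms)$. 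Set $g_n := (-\Delta-\lambda)f_n$ and $\omega_n := \rmd f_n$. A quick calculation gives
\begin{align*}
  \Vert\omega_n\Vert_{\Ell^2}^2 = \langle -\Delta f_n, f_n\rangle = \lambda + \langle g_n, f_n\rangle \to \lambda > 0,
\end{align*}
so $\omega_n/\Vert\omega_n\Vert_{\Ell^2}$ is eventually normalized, and
\begin{align*}
  (\Hodge-\lambda)\omega_n = \rmd(-\Delta f_n) - \lambda\,\rmd f_n = \rmd g_n \to 0 \quad\text{in } \Ell^2(T^*\mms),
\end{align*}
because $\Vert \rmd g_n\Vert_{\Ell^2}^2 = \langle -\Delta g_n, g_n\rangle \le \Vert \Delta g_n\Vert_{\Ell^2}\Vert g_n\Vert_{\Ell^2}$ and $g_n$ lies in $\Dom(\Delta)$ by self-adjoint functional calculus with controlled norms. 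So $\lambda\in\sigma(\Hodge)$. For point spectrum, a genuine eigenfunction with $\lambda>0$ produces a nonzero $\rmd f$ (since $\lambda>0$ forces $f$ nonconstant), which is an honest eigenform of $\Hodge$. For the essential spectrum, start from a Weyl sequence $f_n\rightharpoonup 0$ with $\Vert f_n\Vert=1$ and verify that $\omega_n=\rmd f_n$ still tends weakly to $0$: for any compactly supported test vector field $Y$, $\langle\omega_n, Y^\flat\rangle_{\Ell^2} = -\langle f_n,\div Y\rangle_{\Ell^2}\to 0$.

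The rightmost inequality $\inf\sigma(\Hodge)\setminus\{0\}\le \inf\sigma(-\Delta)\setminus\{0\}$ is then an immediate consequence of this inclusion applied to positive spectral values, and the middle inequality $\inf\sigma(\Hodge)\le \inf\sigma(\Hodge)\setminus\{0\}$ is tautological. The remaining, and most interesting, bound $\inf\sigma(-\Delta+K)\le \inf\sigma(\Hodge)$ I would derive variationally from the vector $2$-Bochner inequality \eqref{Eq:2-Gamma-2} together with Kato's inequality \eqref{Eq:Kato intro}. For a sufficiently regular $1$-form $\omega = X^\flat$, integrating \eqref{Eq:2-Gamma-2} (and noting that the measure-valued Laplacian of $|X|^2/2$ has vanishing total mass in the relevant test class, e.g.\ by cutoff plus the Sobolev-to-Lipschitz property) yields
\begin{align*}
  \langle \Hodge\omega,\omega\rangle_{\Ell^2} \;\ge\; \int_\mms \bigl(|\nabla X|^2 + K\,|X|^2\bigr)\d\meas \;\ge\; \int_\mms \bigl(|\nabla|\omega||^2 + K\,|\omega|^2\bigr)\d\meas,
\end{align*}
where the second step is Kato. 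Since $|\omega|\in\Sob^{1,2}(\mms)$, the variational characterization $\int |\nabla u|^2\d\meas \ge \inf\sigma(-\Delta)\,\Vert u\Vert_{\Ell^2}^2$ applied to $u=|\omega|$ gives $\langle\Hodge\omega,\omega\rangle_{\Ell^2}\ge (\inf\sigma(-\Delta)+K)\,\Vert\omega\Vert_{\Ell^2}^2$; density in $\Dom(\Hodge)$ then yields the desired spectral bound by the min--max principle.

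The main technical obstacle will be making the three approximation steps watertight in the nonsmooth setting: (i) showing the intertwining $\Hodge\,\rmd f = -\rmd\Delta f$ holds on a dense enough subdomain to capture approximate eigenfunctions, including spectral-calculus control of $\Vert\Delta g_n\Vert_{\Ell^2}$; (ii) justifying the integrated Bochner identity (i.e., that the total mass of $\DELTA(|X|^2/2)$ vanishes) for forms in $\Dom(\Hodge)$, which typically requires a test-field density argument plus a careful cutoff in the noncompact case; and (iii) verifying the Weyl-sequence transfer in the essential-spectrum case, where weak convergence of $f_n$ must be upgraded to weak convergence of $\rmd f_n$ in $\Ell^2(T^*\mms)$, using duality with divergences of test vector fields.
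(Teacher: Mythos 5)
Your argument for the spectral gap inequality $\inf\sigma(-\Delta+K)\le\inf\sigma(\Hodge)$ is essentially the paper's: the integrated Bochner inequality \eqref{Eq:H embedding} plus Kato's inequality \autoref{Pr:Kato's inequality} reduce the $\Ell^2$-quadratic form of $\Hodge$ to the quadratic form of $-\Delta+K$ evaluated at $|\omega|$, and the variational principle finishes. The paper's \autoref{Cor:Spectral gap inclusion} is literally this chain of three inequalities, and the concern you raise about the total mass of $\DELTA|X|^2$ is precisely what is encoded in \eqref{Eq:H embedding}, so no issue there.

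For the spectral inclusion, however, there is a genuine gap in the step
\begin{align*}
\Vert\rmd g_n\Vert_{\Ell^2}^2=\langle-\Delta g_n,g_n\rangle\le\Vert\Delta g_n\Vert_{\Ell^2}\Vert g_n\Vert_{\Ell^2}\to 0,
\end{align*}
with $g_n=(-\Delta-\lambda)f_n$. For this to conclude you need $\Vert\Delta g_n\Vert_{\Ell^2}$ bounded, i.e.~$\Vert\Delta^2 f_n\Vert_{\Ell^2}$ bounded, but a generic Weyl sequence $(f_n)\subset\Dom(\Delta)$ is not even in $\Dom(\Delta^2)$, let alone with uniform bound. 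Your appeal to ``self-adjoint functional calculus with controlled norms'' hides exactly this; you flag it yourself as a ``main technical obstacle'' but do not resolve it. It is fixable --- replace $f_n$ by spectral truncations $E([\lambda-1/n,\lambda+1/n])h_n$ or by $\ChHeat_t g_n$ with $t>0$ \emph{fixed} and then track $\Vert\Delta^2\ChHeat_t g_n\Vert\lesssim t^{-2}$ --- but the paper takes a cleaner route that avoids this issue entirely. It invokes a generalized Weyl criterion (\autoref{Le:Generalized Weyl}, from \cite{charalambous2019}) that detects $\lambda\in\sigma(\Hodge)$ from the vanishing of the \emph{resolvent-weighted} pairings $\int\langle(\Hodge-\alpha)^{-j}\omega_n,(\Hodge-\lambda)\omega_n\rangle\d\meas$ for $j\in\{1,2\}$, rather than from $\Vert(\Hodge-\lambda)\omega_n\Vert\to 0$. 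Combined with the commutation relation $(\Hodge-\alpha)^{-1}\rmd=\rmd(-\Delta-\alpha)^{-1}$ (\autoref{Le:Commutation resolvent differential}) and integration by parts, the pairing reduces to $\int[(-\Delta+1)^{-j}\Delta f_n](\Delta f_n+\lambda f_n)\d\meas$, which is controlled by $\Vert\Delta f_n\Vert\cdot\Vert\Delta f_n+\lambda f_n\Vert$ --- only $\Vert\Delta f_n\Vert$ bounded (automatic) and $\Vert\Delta f_n+\lambda f_n\Vert\to 0$ are needed, no second-order control. You should either adopt this criterion or make the spectral-localization construction of $(f_n)$ explicit; the version you wrote does not go through.

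A smaller remark: for the point spectrum inclusion, $\rmd f\in\Dom(\Hodge)$ for an eigenfunction $f$ is not entirely free in the nonsmooth setting; the paper uses $\rmd\ChHeat_1 f$ (which equals $\rme^{-\lambda}\rmd f$, so the conclusion is the same) together with \autoref{Le:Ht vs Pt} to certify membership in $\Dom(\Hodge)$ cleanly. Your essential-spectrum transfer via duality with divergences of test vector fields matches the paper's argument.
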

	
	The stated spectral inclusions are known in the non-weighted Riemannian setting by \cite{charalambous2019}. Our proof of the former adopts a similar strategy, relying on a suitable variant of Weyl's criterion. The first stated spectral gap inequality follows by basic spectral theory and is well-known in the smooth setting. See e.g.~\cite{gueneysu2017} for a more general smooth treatise and further references.
	
	On compact $\RCD^*(K,N)$ spaces, similarly to the case of functions, the spectrum of $\Hodge$ can be characterized much better. A key tool towards an explicit understanding of it in this case is the following Rellich-type compact embedding theorem.
	
	\begin{introtheorem}[see  \autoref{Th:Compact embedding}]\label{Th:Compactness Delta^-1 intro} If $(\mms,\met,\meas)$ is a compact $\RCD^*(K,N)$ space, the formal operator $\smash{\Hodge^{-1}}$ is compact.
	\end{introtheorem}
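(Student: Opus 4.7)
The plan is to show that for every $t > 0$ the vector heat semigroup operator $\HHeat_t$ is compact on $\Ell^2(T^*\mms)$; by standard spectral theory for self-adjoint semigroups this yields that $\Hodge$ has purely discrete spectrum with eigenvalues of finite multiplicity accumulating only at $+\infty$, from which compactness of $\Hodge^{-1}$ (understood on the orthogonal complement of the finite-dimensional kernel $\ker\Hodge$) follows at once.

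To establish the compactness of $\HHeat_t$, I would combine two ingredients. First, on any compact $\RCD^*(K,N)$ space the functional heat semigroup $\ChHeat_t$ is compact on $\Ell^2(\mms)$ for every $t > 0$; this is classical, being a consequence of the Rellich--Kondrachov-type compact embedding $\Sob^{1,2}(\mms) \hookrightarrow \Ell^2(\mms)$ available on compact $\RCD^*(K,N)$ spaces together with the smoothing property $\ChHeat_t \colon \Ell^2(\mms) \to \Sob^{1,2}(\mms)$. Second, the Hess--Schrader--Uhlenbrock inequality from \autoref{Th:Kato Simon intro} provides the pointwise domination
\begin{equation*}
|\HHeat_t\omega| \leq \rme^{-Kt}\,\ChHeat_t|\omega| \qquad \meas\text{-a.e.}
\end{equation*}
for every $\omega \in \Ell^2(T^*\mms)$ and every $t \geq 0$.

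With these in hand, I would invoke a Dodds--Fremlin-type compactness-transfer principle adapted to the $\Ell^\infty$-module setting: a bounded linear operator on a Hilbert module of the form $\Ell^2(T^*\mms)$ that is pointwise dominated by a compact positive operator on $\Ell^2(\mms)$ is itself compact. Concretely, for any bounded sequence $\{\omega_n\} \subset \Ell^2(T^*\mms)$, the scalar sequence $\{|\omega_n|\}$ is bounded in $\Ell^2(\mms)$; after extracting a subsequence, $\ChHeat_t|\omega_n|$ converges strongly in $\Ell^2(\mms)$, and the pointwise domination forces $\{|\HHeat_t\omega_n|^2\}$ to be uniformly integrable. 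Combining this with the semigroup identity $\HHeat_t = \HHeat_{t/2}\HHeat_{t/2}$ and the fact that on the finite-measure space $(\mms,\met,\meas)$ the compact self-adjoint operator $\ChHeat_{t/2}$ is Hilbert--Schmidt, one extracts a further subsequence of $\{\HHeat_t\omega_n\}$ that is strongly Cauchy in $\Ell^2(T^*\mms)$.

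The main obstacle is the rigorous formulation of the compactness-via-domination step in the module context, since the classical Dodds--Fremlin theorem is stated for positive operators on scalar $\Ell^p$-spaces while here $\HHeat_t$ is only linear and the \emph{modulus} $|\cdot|$ is the pointwise module norm rather than a Banach-lattice absolute value. Once this adaptation is secured --- exploiting the Hilbert-module structure of $\Ell^2(T^*\mms)$ and the spectral decomposition of $\ChHeat_t$ --- the passage from compactness of $\HHeat_t$ to compactness of $\Hodge^{-1}$ is routine spectral theory.
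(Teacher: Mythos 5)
Your overall strategy --- combine the Hess--Schrader--Uhlenbrock domination with the smoothing of the functional heat flow --- is in the right direction and close to the paper's own route, which goes via \autoref{Th:Lp Linfty} and then an Arendt-type lemma showing that $\Ell^2$-to-$\Ell^\infty$-bounded operators over finite-measure bases with finite-dimensional fibers are Hilbert--Schmidt, i.e.~\autoref{Cor:Hilbert-Schmidt}. But the step you yourself flag as ``the main obstacle'' is a genuine gap, and the tool you name for it (Dodds--Fremlin) is the wrong one. Dodds--Fremlin compares lattice-ordered operators $0\le R\le S$ between Banach lattices; here the inequality $|\HHeat_t\omega|\le \rme^{-Kt}\,\ChHeat_t|\omega|$ relates the pointwise \emph{module} norm of a non-positive vector-valued operator to a scalar one, and compactness of $\ChHeat_t$ alone does \emph{not} transfer. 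The obstruction is concrete: $\omega_n\rightharpoonup 0$ in $\Ell^2(T^*\mms)$ does not imply $|\omega_n|\rightharpoonup 0$ in $\Ell^2(\mms)$ --- take highly oscillatory covector fields with $|\omega_n|\equiv 1$ --- so the domination yields only $\limsup_n\Vert\HHeat_t\omega_n\Vert_{\Ell^2}\le\rme^{-Kt}\Vert\ChHeat_t g\Vert_{\Ell^2}$ for the weak limit $g$ of a subsequence of $(|\omega_n|)$, which need not vanish.

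What closes the gap --- and what your sketch only gestures at --- is to upgrade from compactness of $\ChHeat_t$ to its Hilbert--Schmidt property, and, crucially, to invoke the \emph{finite fiber dimension} of $\Ell^2(T^*\mms)$, namely $\dim_{\met,\meas}\mms\le N$, which is absent from your proposal. For $\meas$-a.e.~$x$ the fiberwise evaluation $\omega\mapsto(\HHeat_t\omega)(x)$ has rank at most $\dim_{\met,\meas}\mms$, so its Hilbert--Schmidt norm is bounded by a dimensional constant times its operator norm; the HSU domination controls the latter by $\rme^{-Kt}\,\Vert\sfp_t(x,\cdot)\Vert_{\Ell^2}$. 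Integrating in $x$ then gives $\Vert\HHeat_t\Vert_{\HS}\le C_N\,\rme^{-Kt}\,\Vert\ChHeat_t\Vert_{\HS}$, which is the content of \autoref{Cor:Hilbert-Schmidt} reached in a complementary packaging to the paper's. Without a uniform bound on the fiber dimension this inequality between Hilbert--Schmidt and operator norms of the fiberwise maps fails and no transfer is available, which is precisely why the conclusion requires the $\RCD^*(K,N)$ condition rather than $\RCD(K,\infty)$; your proposal should make this ingredient explicit and drop the appeal to Dodds--Fremlin.
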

	
	For Ricci limit spaces, i.e.~noncollapsed mGH-limits of sequences of non-weighted Riemannian manifolds with uniformly lower bounded Ricci curvatures, \autoref{Th:Compactness Delta^-1 intro} is due to \cite{honda2017, honda2018a}. In the very recent work \cite{honda2020}, \autoref{Th:Compactness Delta^-1 intro} has been proven independently in a different way using so-called $\delta$-splitting maps.
	
	The proof of \autoref{Th:Compactness Delta^-1 intro} uses several powerful properties of $(\HHeat_t)_{t\geq 0}$ on compact $\RCD^*(K,N)$ spaces. Using that $(\ChHeat_t)_{t\geq 0}$ admits a heat kernel which obeys Gaussian bounds \cite{jiang2016, tamanini2019}, together with \autoref{Th:Kato Simon intro} and Bishop--Gromov's inequality, we see in \autoref{Th:Lp Linfty} that the heat operator $\HHeat_t$ maps $\Ell^p(T^*\mms)$ boundedly into $\Ell^\infty(T^*\mms)$ for every $t>0$ and every $p\in [1,\infty]$. In particular, $\HHeat_t$ is a Hilbert--Schmidt operator on $\Ell^2(T^*\mms)$, and \autoref{Th:Compactness Delta^-1 intro} as well as expected properties of the spectrum of $\Hodge$ stated in \autoref{Pr:Eigenbasis} are then deduced by abstract functional analysis. We also establish $\Ell^\infty$-estimates on eigenforms of $\Hodge$, with an explicit growth rate for positive eigenvalues. See \autoref{Pr:Harmonic forms bounded} and \autoref{Th:L infty estimates}.
	
	The last part of \autoref{Ch:Spectral props}, especially \autoref{Th:Lp independence theorem}, is devoted to the proof of the independence of the $\Ell^p$-spectrum of $\Hodge$ on $p\in [1,\infty]$, provided $(\mms,\met,\meas)$ is an $\RCD^*(K,N)$ space satisfying, for every $\varepsilon > 0$, the volume growth condition
	\begin{align*}
	\sup_{x\in\mms} \int_\mms \rme^{-\varepsilon\,\met(x,y)}\,\meas[B_1(x)]^{-1/2}\,\meas[B_1(y)]^{-1/2}\d\meas(y) < \infty.
	\end{align*}
	On non-weighted Riemannian manifolds, this is shown in \cite{charalambous2005}. Our proof, based on a perturbation argument, \autoref{Th:Kato Simon intro} and functional heat kernel bounds, is inspired by similar results for the functional Laplacian \cite{hempel1986, hempel1987, saloff-coste1992, sturm1993}. See also \cite{chen2012, deleva2010, kusuoka2014, takeda2007, takeda2009} for further works in this direction for Markov processes and Feynman--Kac semigroups.
	
	\subsubsection*{Heat kernel} Up to now no general result ensuring the \emph{existence} of a heat kernel for $(\HHeat_t)_{t\geq 0}$ was known in the setting of \cite{gigli2018}. Outside the scope of noncompact, even weighted Riemannian manifolds \cite{gueneysu2017, patodi1971, rosenberg1997}, there are only few metric measure constructions under restrictive structural (existence of a \emph{continuous} covector bundle with constant fiber dimensions) and volume doubling assumptions \cite{coulhon2008, sikora2004}. 	Our axiomatization and existence proof of a heat kernel for $(\HHeat_t)_{t\geq 0}$ on $\RCD(K,\infty)$ spaces is hoped to push forward research in the above areas on such spaces. Our general study applies to non-locally compact or non-doubling, possibly infinite-dimensional spaces.
	
	Let us motivate our axiomatization via the heat kernel $\sfp$ of $(\ChHeat_t)_{t\geq 0}$ from \cite{ambrosio2014b}. Slightly abusing notation, it induces a map $\sfp\colon (0,\infty)\times \Ell^0(\mms)^2 \to \Ell^0(\mms^2)$ sending $t>0$ and $(g,f)\in\Ell^0(\mms)^2$ to the $\meas^{\otimes 2}$-measurable function given by $\sfp_t[g,f](x,y) := \sfp_t(x,y)\,g(x)\,f(y)$ in such a way that for  a sufficiently large class of functions $f$ and $g$, we have $\sfp_t[g,f]\in \Ell^1(\mms^2)$ as well as
	\begin{align*}
	g\,\ChHeat_tf = \int_\mms \sfp_t[g,f](\cdot, y)\d\meas(y)\quad\meas\text{-a.e.}
	\end{align*}
	
	Let us turn to $1$-forms. Recall that a heat kernel for $(\HHeat_t)_{t\geq 0}$ in the smooth, possibly weighted setting is a jointly smooth map $\Hheat\colon (0,\infty)\times\mms^2 \to (T^*\mms)^* \boxtimes T^*\mms$ --- i.e.~for every $t>0$ and every $(x,y)\in \mms^2$, $\Hheat_t(x,y)$ is a homomorphism mapping $T_y^*\mms$ to $T_x^*\mms$ --- satisfying
	\begin{align}\label{Eq:Heat kernel identity intro}
	\HHeat_t\omega = \int_\mms \Hheat_t(\cdot,y)\,\omega(y)\d\meas(y)\quad\meas\text{-a.e.}
	\end{align}
	for every $\omega\in\Ell^2(T^*\mms)$. The heat kernel for $1$-forms has first been constructed on compact spaces by \cite{patodi1971} using the so-called \emph{parametrix construction}. See also \cite{gueneysu2017, rosenberg1997}.
	
	Since $\RCD(K,\infty)$ spaces a priori do neither come with any covector bundle nor with a smooth structure, the fiberwise notion \eqref{Eq:Heat kernel identity intro} is replaced by ``testing \eqref{Eq:Heat kernel identity intro} pointwise against sufficiently many $1$-forms''. Motivated by our functional  considerations, we understand a mapping $\Hheat\colon (0,\infty)\times \Ell^0(T^*\mms)^2 \to \Ell^0(\mms^2)$ to be a \emph{heat kernel} for $(\HHeat_t)_{t\geq 0}$ if, for every $t>0$, $\Hheat_t$ is $\Ell^0$-bilinear, and for all sufficiently regular $1$-forms $\omega$ and $\eta$, we have $\Hheat_t[\eta,\omega]\in\Ell^1(\mms^2)$ and
	\begin{align*}
	\langle\eta,\HHeat_t\omega \rangle = \int_\mms \Hheat_t[\eta,\omega](\cdot, y)\d\meas(y)\quad\meas\text{-a.e.}
	\end{align*}
	
	\begin{introtheorem}[see \autoref{Th:Heat kernel existence}]\label{Th:Heat kernel intro} The heat kernel for $(\HHeat_t)_{t\geq 0}$ in the indicated sense exists and is unique.
	\end{introtheorem}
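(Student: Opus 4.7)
The strategy is to extract $\Hheat_t$ via a Radon--Nikodym-type argument in which Theorem~\ref{Th:Kato Simon intro} (the Hess--Schrader--Uhlenbrock inequality) supplies the crucial domination of $(\HHeat_t)_{t\geq 0}$ by $(\ChHeat_t)_{t\geq 0}$, whose scalar heat kernel $\Chheat$ is already known to exist on $\RCD(K,\infty)$ spaces. The fiber-separated structure of the kernel is then encoded by the $\Ell^0(\mms)$-module action, rather than by any fictitious pointwise evaluation of cotangent elements.

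For fixed $t > 0$, first consider $\eta, \omega \in \Ell^2(T^*\mms) \cap \Ell^\infty(T^*\mms)$ with $|\eta|$ and $|\omega|$ of bounded $\meas$-essential support. I define the bilinear form $B_t^{\eta,\omega}(g, f) := \int_\mms g\,\langle \eta, \HHeat_t(f\omega)\rangle\d\meas$ on $\Ell^2(\mms) \times \Ell^2(\mms)$. Pointwise Cauchy--Schwarz, Theorem~\ref{Th:Kato Simon intro}, and the kernel representation of $\ChHeat_t$ yield
\begin{align*}
|B_t^{\eta,\omega}(g,f)| \leq \iint_{\mms^2}|g(x)|\,|f(y)|\,k_{\eta,\omega}(x,y)\d\meas^{\otimes 2}(x,y),
\end{align*}
where $k_{\eta,\omega}(x,y) := \rme^{-Kt}\,|\eta|(x)\,|\omega|(y)\,\Chheat_t(x,y) \in \Ell^1(\mms^2)$. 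Since simple tensors $g\otimes f$ are dense in $\Ell^1(\mms^2, k_{\eta,\omega}\d\meas^{\otimes 2})$ by Fubini, $B_t^{\eta,\omega}$ extends continuously to that space and is represented, via $\Ell^\infty$-duality, by some density $\rho_{\eta,\omega} \in \Ell^\infty(\mms^2, k_{\eta,\omega}\d\meas^{\otimes 2})$. I then set $\Hheat_t[\eta,\omega] := \rho_{\eta,\omega}\,k_{\eta,\omega} \in \Ell^1(\mms^2)$; testing $B_t^{\eta,\omega}$ with $g$ an indicator of an arbitrary measurable set and invoking Fubini delivers the defining integral identity after reabsorbing $f$ into the form argument.

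The uniqueness of $\rho_{\eta,\omega}$, combined with the module relation $\HHeat_t((\psi f)\omega) = \HHeat_t(f(\psi\omega))$ for $\psi \in \Ell^\infty(\mms)$, forces the fiber-separated rule $\Hheat_t[\phi\eta, \psi\omega](x,y) = \phi(x)\,\psi(y)\,\Hheat_t[\eta,\omega](x,y)$ on the class of good forms. Using this, I extend $\Hheat_t$ to arbitrary $\eta, \omega \in \Ell^0(T^*\mms)$ via exhaustions $A_n, B_n \uparrow \mms$ on which $|\eta|$ and $|\omega|$ are bounded, defining $\Hheat_t[\eta,\omega]$ as the $\meas^{\otimes 2}$-a.e.\ pointwise limit of $\Hheat_t[\chi_{A_n}\eta, \chi_{B_n}\omega]$; independence of the exhaustion is ensured by the multiplication rule. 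Uniqueness of the heat kernel is then immediate: two candidates induce the same bilinear form $B_t^{\eta,\omega}$ and, by density of simple tensors in $\Ell^1(\mms^2)$, coincide $\meas^{\otimes 2}$-a.e. The hardest step, I expect, is precisely securing this coherent $\Ell^0$-bilinearity across the construction: scalings in the $\eta$- (resp.\ $\omega$-)argument must appear purely as an $x$- (resp.\ $y$-)dependent factor, despite the absence of a pointwise fiber evaluation on the cotangent module. This has to be engineered entirely through the $\Ell^0(\mms)$-module action on $B_t^{\eta,\omega}$, and one must verify that the final object is genuinely independent of the approximating sets.
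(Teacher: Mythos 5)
Your route is genuinely different from the paper's. The paper conjugates $\HHeat_t$ with the weight $\phi_t(x) = \meas[B_{\sqrt{t}}(x)]^{1/2}$, shows the perturbed operator $\phi_t\,\HHeat_t\,\phi_t$ is bounded from $\Ell^1(T^*\mms)$ to $\Ell^\infty(T^*\mms)$ via the Tamanini heat kernel bound, applies its $\Ell^\infty$-module Dunford--Pettis theorem to that weighted operator, and only then removes the weights by a two-parameter limit. You try to avoid the weights altogether by a direct $\Ell^1$--$\Ell^\infty$ duality on the weighted space $\Ell^1(\mms^2, k_{\eta,\omega}\,\d\meas^{\otimes 2})$.

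There is, however, a genuine gap at the crucial step. The bound you establish,
\[
|B_t^{\eta,\omega}(g,f)| \;\leq\; \big\||g|\otimes|f|\big\|_{\Ell^1(k_{\eta,\omega}\,\d\meas^{\otimes 2})}
\;=\;\big\|g\otimes f\big\|_{\Ell^1(k_{\eta,\omega}\,\d\meas^{\otimes 2})},
\]
holds on individual simple tensors. But to extend $B_t^{\eta,\omega}$ continuously from the span of simple tensors to $\Ell^1(\mms^2, k_{\eta,\omega}\,\d\meas^{\otimes 2})$ and then apply $\Ell^1$--$\Ell^\infty$ duality, you would need the much stronger estimate $|B_t^{\eta,\omega}(\xi)| \leq \|\xi\|_{\Ell^1(k_{\eta,\omega}\,\d\meas^{\otimes 2})}$ for \emph{arbitrary} $\xi = \sum_i c_i\, g_i\otimes f_i$ in the span. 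That does not follow: the $\Ell^1$-norm of a sum of tensors can be far smaller than the sum $\sum_i|c_i|\,\|g_i\otimes f_i\|_{\Ell^1(k_{\eta,\omega}\,\d\meas^{\otimes 2})}$, so the single-tensor estimate only gives continuity with respect to the (larger) projective tensor norm, not the $\Ell^1(\mms^2)$-norm. Concretely, writing $\sfB f := \langle\eta,\HHeat_t(f\,\omega)\rangle$, the pointwise domination $|\sfB f|(x) \leq \int_\mms |f|(y)\,\tilde k(x,y)\,\d\meas(y)$ does \emph{not}, on its own, imply the control
\[
\Big|\sum_i c_i\, g_i(x)\, \sfB f_i(x)\Big| \;\leq\; \int_\mms \Big|\sum_i c_i\, g_i(x)\, f_i(y)\Big|\,\tilde k(x,y)\,\d\meas(y)
\]
for a.e.~$x$, because the signs and cancellations of $\sfB f_i(x)$ have no pointwise relation to those of $f_i(y)$, and the module multipliers $g_i$ cannot be pushed through $\HHeat_t$. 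This bound is precisely what would hold \emph{if} $\sfB$ were already an integral operator with kernel dominated by $\tilde k$, which is what you are trying to prove; the justification as written is therefore circular.

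The gap is fillable, but not with the reasoning you give. What is needed is an absolute kernel theorem of Bukhvalov--Schep type: a linear operator from $\Ell^1(\mms)$ (or $\Ell^\infty_\bs(\mms)$) into $\Ell^0(\mms)$ that is a.e.\ pointwise dominated in modulus by a \emph{positive integral operator} is itself an integral operator, with kernel dominated by that of the majorant. Invoking this for $\sfB$ (dominated by the kernel operator with kernel $\tilde k$) would deliver the scalar kernels $b_{\eta,\omega}$ with $|b_{\eta,\omega}|\leq \tilde k$ a.e., after which your extension step, the $\Ell^0$-bilinearity, the exhaustion, and the uniqueness all go through essentially as you outline. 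The paper instead sidesteps this more delicate order-theoretic ingredient by manufacturing genuine $\Ell^1$-to-$\Ell^\infty$ boundedness through the conjugation by $\phi_t$, so that the elementary scalar Dunford--Pettis theorem suffices. You should either cite and carefully deploy the Bukhvalov--Schep domination theorem in place of the unjustified ``extension by density,'' or switch to the paper's weighting device.
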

	
	The proof strategy is the following. Motivated by similar functional results \cite{sturm1995, saloff-coste2010}, a crucial tool to obtain integral kernels for certain operators is a \emph{Dunford--Pettis}-type theorem \cite{dunford1940, dunford1958}, a very general $\Ell^\infty$-module version of which we prove in \autoref{Th:Dunford Pettis forms}. Boiled down to the $1$-form setting, it states that any linear operator which is bounded from $\Ell^1(T^*\mms)$ to $\Ell^\infty(T^*\mms)$ in the Banach sense admits an integral kernel, the concept of which is similar to the axiomatization of the $1$-form heat kernel. Now for $t>0$, the heat operator $\HHeat_t$ is not bounded from $\Ell^1(T^*\mms)$ to $\Ell^\infty(T^*\mms)$ in this generality. But by \cite{tamanini2019}, given any $\varepsilon > 0$ there exist constants $C_1, C_2 > 0$ with
	\begin{align*}
	\sfp_t(x,y) \leq \meas\big[B_{\sqrt{t}}(x)\big]^{-1/2}\,\meas\big[B_{\sqrt{t}}(x)\big]^{-1/2}\,\exp\!\Big(C_1\big(1+C_2\,t\big) - \frac{\met^2(x,y)}{(4+\varepsilon)t}\Big)
	\end{align*} 
	for every $t>0$ and $\meas^{\otimes}$-a.e.~$(x,y)\in\mms^2$. By \autoref{Th:Kato Simon intro}, the  perturbed operator
	\begin{align*}
	\sfA_t := \phi_t\,\HHeat_t\,\phi_t,
	\end{align*}
	where $\smash{\phi_t(x) := \meas[B_{\sqrt{t}}(x)]^{1/2}}$, 	is thus bounded from $\Ell^1(T^*\mms)$ to $\Ell^\infty(T^*\mms)$ and thus admits an integral kernel --- formally multiplying $\sfA_t$ by $\phi_t^{-1}$ from both sides then yields the desired integral kernel $\Hheat_t$ for $\HHeat_t$. Note that for this argument, it is essential that $(\ChHeat_t)_{t\geq 0}$ has a heat kernel. (This explains best our restriction to uniform lower Ricci bounds, a more general result is not available up to now.)
	
	Having existence of $\Hheat$ at our disposal, further properties of $\Hheat$ such as symmetry, Hess--Schrader--Uhlenbrock's inequality for the ``pointwise operator norm'' $\vert\Hheat_t\vert_\otimes$ of $\Hheat_t$, i.e.~for every $t>0$,
	\begin{align*}
	\vert\Hheat_t\vert_\otimes(x,y)\leq \rme^{-Kt}\,\sfp_t(x,y)
	\end{align*}
	for $\meas^{\otimes 2}$-a.e.~$(x,y)\in\mms^2$, and Chapman--Kolmogorov's formula are shown in \autoref{Th:Properties heat kernel}.
	
	Two further results are then finally given on the class of $\RCD^*(K,N)$ spaces. In \autoref{Pr:Trace}, for every $t>0$ we first prove the trace inequality
	\begin{align*}
	\tr \HHeat_t \leq (\dim_{\met,\meas}\mms)\,\rme^{-Kt}\,\tr\ChHeat_t.
	\end{align*}
	Here, $\dim_{\met,\meas}\mms$, a positive integer not larger than $N$, is the \emph{essential dimension} of $(\mms,\met,\meas)$ in the sense of \cite{brue2020,mondino2019}. This generalizes similar results on possibly weighted Riemannian manifolds \cite{gueneysu2017, hess1980, rosenberg1988}. Furthermore, our spectral analysis for $\Hodge$ from \autoref{Th:Compactness Delta^-1 intro} entails a spectral resolution identity for $\Hheat_t$ in \autoref{Cor:Spectral resol} as soon as $\mms$ is also compact.
	
	\subsubsection*{Organization} In \autoref{Ch:Preliminaries}, we collect basic preliminaries about $\RCD$ spaces and differential geometric notions on these.
	
	In \autoref{Ch:Heat flow pointwise props}, we recall basic properties of the heat equation for $1$-forms and its solution. We establish the vector $1$-Bochner inequality, and prove the important pointwise properties \autoref{Th:Kato Simon intro} and \autoref{Th:BL intro} of $(\HHeat_t)_{t\geq 0}$.
	
	\autoref{Ch:Integral estimates} is devoted to integral consequences of the results from \autoref{Ch:Heat flow pointwise props}. We demonstrate that $(\HHeat_t)_{t\geq 0}$ extends to a semigroup acting on $\Ell^p(T^*\mms)$ for every $p\in [1,\infty]$ and discuss the $\Ell^p$-$\Ell^\infty$-regularization property of $(\HHeat_t)_{t\geq 0}$.  This chapter is then closed by a discussion on hyper- and ultracontractivity properties of $(\HHeat_t)_{t\geq 0}$ and their relation to logarithmic Sobolev inequalities, for instance, the treatise of \autoref{Th:LSI intro}.
	
	\autoref{Ch:Spectral props} contains spectral properties of $\Hodge$, including the proofs of \autoref{Th:Spectral bounds Intro} and \autoref{Th:Compactness Delta^-1 intro}. \autoref{Sec:Independence spectrum} is devoted to the independence of the $\Ell^p$-spectrum of $\Hodge$ on  $p\in[1,\infty]$.
	
	The most important part of this work, \autoref{Ch:Heat kernel}, consists of a careful axiomatization of the notion of integral kernels in the context of $\Ell^\infty$-modules, the proof of \autoref{Th:Heat kernel intro}, and further basic properties of the heat kernel for $(\HHeat_t)_{t\geq 0}$, i.e.~the previously described trace inequality and the spectral resolution identity for $\Hheat_t$ in \autoref{Sub:Fund sol}.
	
	\section{Synthetic Ricci bounds and Ricci curvature}\label{Ch:Preliminaries}
	
	\subsection{Basic preliminaries}
	
	\subsubsection*{Notation} The triple $(\mms,\met,\meas)$ consists of a complete and separable metric space $(\mms,\met)$ and a Borel measure $\meas$ on $(\mms,\met)$ with full support. We always assume that $(\mms,\met,\meas)$ is an $\RCD(K,\infty)$ space, $K\in \R$. In particular, we have the following volume growth condition according to \cite{sturm2006a}. For every $z\in\mms$, there exists $C<\infty$ such that for every $r>0$,
	\begin{align}\label{Eq:Volume growth}
	\meas[B_r(z)] \leq C\,\rme^{Cr^2}.
	\end{align}
	
	By $\Ell^0(\mms)$, we denote the space of ($\meas$-a.e.~equivalence classes of) $\meas$-measurable functions $f\colon\mms\to\R$. Given any partition $(E_j)_{j\in\N}$ of $\mms$ into Borel sets of finite and positive $\meas$-measure, it is a complete and separable metric space w.r.t.~the metric $\met_{\Ell^0}$ defined through
	\begin{align*}
	\met_{\Ell^0}(f,g) := \sum_{j=1}^\infty\frac{2^{-j}}{\meas[E_j]}\int_{E_j}\min\!\big\lbrace \vert f-g\vert,1\big\rbrace\d\meas.
	\end{align*}
	The induced topology on $\Ell^0(\mms)$ does not depend on the choice of $(E_j)_{j\in\N}$ --- indeed, $(f_n)_{n\in\N}$ is a Cauchy sequence w.r.t.~$\met_{\Ell^0}$ if and only if it is a Cauchy sequence w.r.t.~convergence in $\meas$-measure on any Borel set $B\subset\mms$ with $\meas[B] < \infty$. Similar facts hold --- and definitions are used --- for the space $\Ell^0(\mms^2)$ of $\meas^{\otimes 2}$-measurable $f\colon\mms^2\to\R$. We write $\Ell^p(\mms)$ for the $p$-th order Lebesgue space w.r.t.~$\meas$, $p\in [1,\infty]$, endowed with the norm $\Vert \cdot\Vert_{\Ell^p}$. $\Cont(\mms)$ and $\Lip(\mms)$ denote the sets of $\met$-continuous and $\met$-Lipschitz continuous functions $f\colon\mms\to\R$ --- their subspaces of \emph{bounded} or \emph{boundedly supported} functions are marked with the subscript $\bounded$ or $\bs$, respectively. 
	
	For $i\in \{1,2\}$, the projection maps $\pr_i\colon\mms^2\to\mms$ are  $\pr_i(x_1,x_2) := x_i$. Similarly, we define $\pr_i\colon\mms^3\to\mms$ for $i\in\{1,2,3\}$. The distance function from a given point $z\in\mms$ is denoted by $\sfr_z\colon\mms\to\R$ and defined by $\sfr_z(x) := \met(z,x)$.
	
	Let $\Meas(\mms)$ be the space of signed Radon measures on $(\mms,\met)$ with finite total variation.
	
	For two Banach spaces $\scrM$ and $\scrN$, the operator norm of an operator $\sfA\colon \scrM \to \scrN$ is $\Vert \sfA\Vert_{\scrM,\scrN}$.

	\subsubsection*{Notions of $\RCD(K,\infty)$ spaces} We assume the definition of $\RCD(K,\infty)$ spaces, $K\in\R$, to be known to the reader. In this paragraph, we only collect basic properties of them --- details can be found in \cite{ambr2015,ambrosio2014a, ambrosio2014b, gigli2018, lott2009, savare2014, sturm2006a} and the references therein. 
	
	Let $\Sobo^2(\mms)$ be the class of functions $f\in\Ell^0(\mms)$ which have a (squared) \emph{minimal weak upper gradient} $\Gamma(f) \in \Ell^1(\mms)$, whose polarization is also denoted by $\Gamma$. By the $\RCD(K,\infty)$ assumption, $(\mms,\met,\meas)$ is \emph{infinitesimally Hilbertian} --- that is, $(W^{1,2}(\mms),\Vert\cdot\Vert_{W^{1,2}})$ is a Hilbert space, where
	\begin{align*}
	W^{1,2}(\mms) &:= \Sobo^2(\mms)\cap \Ell^2(\mms),\\
	\Vert \cdot \Vert_{W^{1,2}}^2 &:= \Vert \cdot\Vert_{\Ell^2}^2 + \Vert \Gamma(\cdot)\Vert_{\Ell^1}.
	\end{align*}
	
	The infinitesimal generator of the Dirichlet form induced by $\Gamma$ (the so-called \emph{Cheeger energy}) is the \emph{Laplacian} $\Delta$. It is linear, nonpositive, self-adjoint and densely defined on $\Ell^2(\mms)$ with domain $\Dom(\Delta)$. It also gives rise to the $\meas$-symmetric, mass-preserving (\emph{functional}) \emph{heat flow} $(\ChHeat_t)_{t\geq 0}$ on $\Ell^2(\mms)$ via $\ChHeat_t := \rme^{\Delta t}$, which extends to a contractive semigroup of linear operators to $\Ell^p(\mms)$ for every $p\in [1,\infty]$, strongly continuous if $p<\infty$. If $f\in W^{1,2}(\mms)$, the curve $t\mapsto \ChHeat_tf$ is  $W^{1,2}$-continuous on $[0,\infty)$. For every $t>0$, $\ChHeat_t \Delta = \Delta\ChHeat_t$ on $\Dom(\Delta)$, and for every $f\in\Ell^2(\mms)$, 
	\begin{align}\label{Eq:A priori estimate Laplacian}
	\Vert \Delta\ChHeat_tf\Vert_{\Ell^2}\leq \frac{1}{\sqrt{2}t}\,\Vert f\Vert_{\Ell^2}.
	\end{align}
	
	The $\RCD(K,\infty)$ property entails further properties of $(\ChHeat_t)_{t\geq 0}$ crucially exploited in this work. First, as a consequence of heat flow analysis, the \emph{Sobolev-to-Lipschitz property} holds, i.e.~every $f\in W^{1,2}(\mms)$ with $\Gamma(f)\in\Ell^\infty(\mms)$ has an $\meas$-a.e.~representative in $\Lip(\mms)$ whose Lipschitz constant is no larger than $\Vert \Gamma(f)^{1/2}\Vert_{\Ell^\infty}$. Second, $\ChHeat_t$ has a version which maps $\Ell^\infty(\mms)$ to $\Lip(\mms)$ for every $t>0$. Third, the \emph{1-Bakry--Émery inequality} holds, i.e.~for every $f\in W^{1,2}(\mms)$ and every $t\geq 0$,
	\begin{align}\label{Eq:Bakry-Emery L^p}
	\Gamma(\ChHeat_t f)^{1/2} \leq \rme^{-Kt}\,\ChHeat_t\big(\Gamma(f)^{1/2}\big)\quad\meas\text{-a.e.}
	\end{align}
	Fourth, the functional heat flow admits a \emph{heat kernel}, i.e.~for every $t>0$ there exists a symmetric $\meas^{\otimes 2}$-measurable map $\sfp_t\colon \mms^2 \to (0,\infty)$ such that 
	\begin{align*}
		\int_\mms \sfp_t(\cdot,y)\d\meas(y) = 1
	\end{align*}
	and, for every $f\in\Ell^2(\mms)$,
	\begin{align*}
	\ChHeat_t f = \int_\mms \sfp_t(\cdot,y)\,f(y)\d\meas(y)\quad\meas\text{-a.e.}\noindent
	\end{align*}
	
	The following result from \cite{tamanini2019} establishes a Gaussian upper bound for $\sfp_t$, $t>0$. 
	\begin{theorem}\label{Th:Heat kernel bound thm} For every $\varepsilon > 0$, there exist finite constants $C_1>0$, depending only on $\varepsilon$, and $C_2 \geq 0$, depending only on $K$, such that for every $t>0$ and $\meas^{\otimes 2}$-a.e.~$(x,y)\in\mms^2$,
		\begin{align*}
		\sfp_t(x,y) \leq \meas\big[B_{\sqrt{t}}(x)\big]^{-1/2}\,\meas\big[B_{\sqrt{t}}(y)\big]^{-1/2}\,\exp\!\Big(C_1\big(1+C_2t\big) - \frac{\met^2(x,y)}{(4+\varepsilon)t}\Big).
		\end{align*}
		If $K\geq 0$, the constant $C_2$ can be chosen equal to zero.
	\end{theorem}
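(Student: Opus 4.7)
The strategy is Davies' method of exponential perturbation, combined with an on-diagonal upper bound; both steps draw on the Bakry--Émery gradient estimate \eqref{Eq:Bakry-Emery L^p} and its self-improvement. This is essentially the approach carried out in \cite{tamanini2019}.

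First I would establish the on-diagonal estimate
\[\sfp_t(x,y) \leq \meas\big[B_{\sqrt{t}}(x)\big]^{-1/2}\,\meas\big[B_{\sqrt{t}}(y)\big]^{-1/2}\,\rme^{C_1(1+C_2t)}\]
for $\meas^{\otimes 2}$-a.e.~$(x,y)\in\mms^2$. In the dimension-free $\RCD(K,\infty)$ setting, the natural tool is Wang's log-Harnack inequality
\[\ChHeat_t(\log f)(x)\leq \log \ChHeat_t f(y) + \frac{K\,\met^2(x,y)}{2(1-\rme^{-2Kt})},\]
which is a standard consequence of \eqref{Eq:Bakry-Emery L^p}. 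Applied to $f := \sfp_t(\cdot,z)+\delta$ and then averaged in $z$ against suitable cutoffs localized in $B_{\sqrt t}(y)$ (using the mass-preservation $\int_\mms\sfp_t(x,\cdot)\d\meas = 1$ together with the volume growth \eqref{Eq:Volume growth} to control the tails), this yields an upper bound of the form $\sfp_t(x,y)\leq \meas[B_{\sqrt t}(y)]^{-1}\,\rme^{\tilde C(K,t)}$ at Lebesgue points. Taking a geometric mean with the $x\leftrightarrow y$ analogue, available by symmetry of $\sfp_t$, then gives the symmetric volume factor.

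Next, I would implement the Davies perturbation. For $\psi\in\Lip_\bs(\mms)$ with $\Lip(\psi)\leq 1$ and $\alpha\in\R$, introduce the twisted semigroup
\[\ChHeat_t^\alpha f := \rme^{-\alpha\psi}\,\ChHeat_t\big(\rme^{\alpha\psi}\,f\big).\]
Differentiating $t\mapsto\Vert\ChHeat_t^\alpha f\Vert_{\Ell^2}^2$ and using $\Gamma(\psi)\leq 1$ together with the $\Gamma_2$-type estimate coming from the self-improvement of \eqref{Eq:Bakry-Emery L^p} produces
\[\Vert\ChHeat_t^\alpha\Vert_{\Ell^2,\Ell^2}\leq \exp\!\Big(\frac{\alpha^2}{2}\int_0^t\rme^{-2Ks}\d s\Big)\leq \exp\!\Big(\frac{\alpha^2 t(1+C_2t)}{2}\Big),\]
where $C_2$ depends only on $K$ and vanishes when $K\geq 0$. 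Finally, to combine the two, I would choose $\psi := \sfr_y\wedge R$ with $R\gg\met(x,y)$ and rewrite $\sfp_{2t}(x,y)$ as an $\Ell^2$-pairing of $\ChHeat_t^{\alpha}$-type terms via the semigroup identity. The on-diagonal bound of the first step and the $\Ell^2$-bound of the second step furnish, after sending $R\to\infty$,
\[\sfp_{2t}(x,y)\,\rme^{\alpha\met(x,y)}\leq \meas\big[B_{\sqrt t}(x)\big]^{-1/2}\,\meas\big[B_{\sqrt t}(y)\big]^{-1/2}\,\rme^{C_1'(1+C_2t)+\alpha^2 t(1+C_2t)}.\]
Optimizing over $\alpha>0$ (the minimum being attained near $\alpha = \met(x,y)/[2t(1+C_2t)]$), reparametrizing $2t\leadsto t$, and absorbing the surplus factors into the slack produces the claimed Gaussian exponent $-\met^2(x,y)/[(4+\varepsilon)t]$, with $C_2=0$ when $K\geq 0$ descending directly from the second step.

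The principal obstacle is the on-diagonal bound: in the purely infinite-dimensional framework there is no Nash or Sobolev inequality that would convert hypercontractivity into an $\Ell^1\to\Ell^\infty$-regularization with an explicit volume factor, so one must work exclusively with the log-Harnack inequality. Its $K<0$ prefactor $K\met^2/[2(1-\rme^{-2Kt})]$ deteriorates as $t\to\infty$, and controlling this uniformly so as to obtain the clean $\rme^{C_2t}$ factor requires careful bookkeeping. A secondary technical point is that Davies' method in its raw form yields the Gaussian exponent $-\met^2/(4t)$ only in the limit of an idealized distance cutoff, so one has to manage the cutoff $R$ and the optimization in $\alpha$ carefully to keep the slack inside the factor $4+\varepsilon$.
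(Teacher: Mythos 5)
This result is cited from \cite{tamanini2019}; the paper does not supply a proof of its own, so I am evaluating your sketch against what such a proof would actually require.

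The two-step architecture (on-diagonal bound, then Gaussian off-diagonal decay) is the right skeleton, and attributing the input to the $1$-Bakry--\'Emery estimate \eqref{Eq:Bakry-Emery L^p} is correct. But two of your key steps do not hold as stated. First, the log-Harnack inequality is not the right tool for an \emph{upper} on-diagonal bound: applying $\ChHeat_t(\log f)(x)\leq \log\ChHeat_t f(y)+\Phi(t,\met(x,y))$ to $f=\sfp_t(\cdot,z)$ and combining with Jensen ($\ChHeat_t(\log f)\leq\log\ChHeat_t f$) produces a chain of inequalities that bounds the same quantity from both sides, not an upper bound on $\sfp_{2t}$. What works is the \emph{power} (dimension-free) Wang Harnack inequality $(\ChHeat_t f(x))^\alpha\leq\ChHeat_t(f^\alpha)(y)\exp(\tfrac{\alpha}{\alpha-1}\cdot\tfrac{K\met^2(x,y)}{2(\rme^{2Kt}-1)})$ with $\alpha=2$: applying it to $f=\sfp_t(\cdot,z)$, integrating $y$ over $B_{\sqrt t}(x)$ and using stochastic completeness gives $\sfp_{2t}(x,z)^2\,\meas[B_{\sqrt t}(x)]\leq \rme^{\Theta(Kt)}\,\sfp_{2t}(z,z)$, and setting $x=z$ closes the bound; this is where the prefactor $\rme^{C_1(1+C_2t)}$ with $C_2=0$ for $K\geq 0$ is born.

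Second, and more seriously, your Davies estimate is wrong. The $\Ell^2\to\Ell^2$ bound for $\ChHeat_t^\alpha=\rme^{-\alpha\psi}\ChHeat_t\,\rme^{\alpha\psi}$ follows from $\Gamma(\psi)\leq 1$ and integration by parts alone, with no curvature input, and the result is the $K$-independent estimate $\Vert\ChHeat_t^\alpha\Vert_{\Ell^2,\Ell^2}\leq\rme^{\alpha^2 t/2}$: differentiating $\Vert\ChHeat_t^\alpha f\Vert_{\Ell^2}^2$ and using the Leibniz rule for $\div(\psi\,\cdot)$ cancels the cross term, leaving only $\alpha^2\int\Gamma(\psi)(\ChHeat_t^\alpha f)^2\d\meas\leq\alpha^2\Vert\ChHeat_t^\alpha f\Vert_{\Ell^2}^2$. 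Your formula $\exp(\tfrac{\alpha^2}{2}\int_0^t\rme^{-2Ks}\d s)$ is not what Davies' computation gives, the subsequent inequality $\int_0^t\rme^{-2Ks}\d s\leq t(1+C_2t)$ is false for $K<0$ and large $t$, and even if one granted the formula it would degrade the Gaussian exponent exponentially in $t$ after optimizing in $\alpha$ --- which would not reproduce the clean $-\met^2(x,y)/[(4+\varepsilon)t]$ in the statement, whose only $K$-dependence lives in the prefactor. Your diagnosis that the infinite-dimensional on-diagonal bound is the delicate part is right, but the remedy is the power Harnack inequality (or an exponential integrability/$\mathrm{ED}_\alpha$ argument à la Grigor'yan built on it), not the log-Harnack.
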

	
	Closely related to $(\ChHeat_t)_{t\geq 0}$ and its regularizing properties is the set of \emph{test functions}
	\begin{align*}
	\Test(\mms) := \big\lbrace f \in \Dom(\Delta)\cap\Ell^\infty(\mms) : \Gamma(f)\in\Ell^\infty(\mms),\ \Delta f\in W^{1,2}(\mms) \big\rbrace,
	\end{align*}
	which is an algebra w.r.t.~pointwise multiplication. For instance, the heat operator $\ChHeat_t$ maps $\Ell^2(\mms)\cap\Ell^\infty(\mms)$ to $\Test(\mms)$ for every $t>0$, whence $\Test(\mms)$ is dense in $W^{1,2}(\mms)$. Variants of this have widely been used in the literature, e.g.~yielding the subsequent useful results \cite{gigli2018,savare2014}.
	
	\begin{lemma}\label{Le:Mollified heat flow} For every $f\in W^{1,2}(\mms)$ with $a \leq f\leq b$ $\meas$-a.e., $a,b \in [-\infty,\infty]$, there exists a sequence $(f_n)_{n\in\N}$ in $\Test(\mms)$ converging to $f$ in $W^{1,2}(\mms)$ with $a\leq f_n\leq b$ $\meas$-a.e.~and $\Delta f_n\in \Ell^\infty(\mms)$ for all $n\in\N$.		If $\Gamma(f)\in \Ell^\infty(\mms)$, this sequence can be chosen such that $(\Gamma(f_n))_{n\in\N}$ is bounded in $\Ell^\infty(\mms)$.
	\end{lemma}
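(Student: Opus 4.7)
The strategy is to regularize $f$ in time against the heat semigroup $(\ChHeat_t)_{t\geq 0}$ with a smooth mollifier supported away from zero, after reducing to the bounded case by truncation.

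If $f\notin \Ell^\infty(\mms)$, which forces $a=-\infty$ or $b=+\infty$, I first set $f^M := \min\{M,\max\{-M,f\}\}$ for $M\in\N$. Standard chain rule and locality properties of minimal weak upper gradients give $f^M\in W^{1,2}(\mms)\cap \Ell^\infty(\mms)$ with $f^M\to f$ in $W^{1,2}(\mms)$ as $M\to\infty$, and for $M$ large enough $a\leq f^M\leq b$ $\meas$-a.e. A diagonal argument at the end of the construction thus reduces the problem to $f\in W^{1,2}(\mms)\cap \Ell^\infty(\mms)$. Fix now a non-negative $\kappa\in \Cont_\comp^\infty((0,\infty))$ with $\int_0^\infty \kappa(t)\d t = 1$, set $\kappa_n(t) := n\,\kappa(nt)$, and define
\[
f_n := \int_0^\infty \kappa_n(t)\,\ChHeat_tf\d t
\]
as a Bochner integral in $\Ell^2(\mms)$.

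The verification that $f_n\in \Test(\mms)$ proceeds property by property. The $\Ell^\infty$-contractivity of $\ChHeat_t$ gives $\Vert f_n\Vert_{\Ell^\infty}\leq \Vert f\Vert_{\Ell^\infty}$. The bound $a\leq f_n\leq b$ $\meas$-a.e.~follows by integration from $a\leq \ChHeat_tf\leq b$ $\meas$-a.e., the latter being an immediate consequence of positivity preservation combined with mass-preservation of $(\ChHeat_t)_{t\geq 0}$ on $\RCD(K,\infty)$ spaces (equivalently, $\ChHeat_t\One=\One$). Integration by parts, the boundary contributions vanishing because $\supp \kappa_n$ is compact in $(0,\infty)$, yields
\[
\Delta f_n = \int_0^\infty \kappa_n(t)\,\frac{\nld}{\nld t}\ChHeat_tf\d t = -\int_0^\infty \kappa_n'(t)\,\ChHeat_tf\d t,
\]
so $\Vert \Delta f_n\Vert_{\Ell^\infty}\leq \Vert \kappa_n'\Vert_{\Ell^1}\,\Vert f\Vert_{\Ell^\infty}<\infty$; the same expression, read as a Bochner integral in $W^{1,2}(\mms)$ (which is legitimate since $t\mapsto \ChHeat_tf$ is $W^{1,2}$-continuous on $[0,\infty)$ and $\kappa_n'$ is compactly supported in $(0,\infty)$), shows $\Delta f_n \in W^{1,2}(\mms)$. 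Finally, the $\Ell^\infty$-to-$\Lip$ regularization of $\ChHeat_t$ provides a uniform Lipschitz constant for $\ChHeat_tf$ as $t$ ranges over $\supp\kappa_n$, whence the Bochner integral $f_n$ has a Lipschitz representative, and therefore $\Gamma(f_n)\in \Ell^\infty(\mms)$.

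Convergence $f_n\to f$ in $W^{1,2}(\mms)$ as $n\to\infty$ is then immediate from the $W^{1,2}$-continuity of $t\mapsto \ChHeat_tf$ on $[0,\infty)$ and the concentration of $(\kappa_n)_{n\in\N}$ at $0$. When $\Gamma(f)\in \Ell^\infty(\mms)$, the $1$-Bakry--Émery inequality \eqref{Eq:Bakry-Emery L^p} gives the refined pointwise estimate
\[
\Gamma(f_n)^{1/2} \leq \int_0^\infty \kappa_n(t)\,\rme^{-Kt}\,\ChHeat_t\big(\Gamma(f)^{1/2}\big)\d t,
\]
which together with the $\Ell^\infty$-contractivity of $\ChHeat_t$ and the boundedness of $\supp\kappa_n$ yields $\sup_{n\in\N}\Vert \Gamma(f_n)^{1/2}\Vert_{\Ell^\infty}<\infty$. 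The main delicate point will be setting up the diagonal procedure interleaving the truncation $M\to\infty$ with the mollification $n\to\infty$ so that the two-sided bound, the $\Ell^\infty$-boundedness of $\Delta f_n$, the $W^{1,2}$-convergence, and---when applicable---the uniform $\Ell^\infty$-bound on $\Gamma(f_n)^{1/2}$ are simultaneously preserved.
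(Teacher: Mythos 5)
Your proof is correct, and the time-mollification of the heat semigroup is precisely the device implicit in the references the paper cites for this lemma (the paper itself offers no proof). The key point you correctly implement is that mollifying with $\kappa_n$ --- rather than simply taking $\ChHeat_{1/n}f^M$, which lands in $\Test(\mms)$ but gives no $\Ell^\infty$-control on its Laplacian in general --- delivers $\Delta f_n = -\int_0^\infty\kappa_n'(t)\,\ChHeat_tf^M\d t$, bounded in $\Ell^\infty$ by $\Vert\kappa_n'\Vert_{\Ell^1}\,\Vert f^M\Vert_{\Ell^\infty}$. Two small details to spell out in a final write-up: the estimate $\Gamma(f_n)^{1/2}\leq\int_0^\infty\kappa_n(t)\,\Gamma(\ChHeat_tf^M)^{1/2}\d t$, implicit before your application of \eqref{Eq:Bakry-Emery L^p}, uses the pointwise $\meas$-a.e.~sublinearity of $\Gamma^{1/2}$ under Bochner integration (approximate by Riemann sums and apply the pointwise triangle inequality for $\vert\rmd\cdot\vert$), and the interchange of $\Delta$ with the Bochner integral rests on the closedness of $\Delta$ (Hille's theorem). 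The diagonal argument itself is routine once one observes $\Gamma(f^M)\leq\Gamma(f)$ $\meas$-a.e.~by locality of minimal weak upper gradients, so the resulting bound on $\Gamma(f_n)^{1/2}$ is $M$-independent.
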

	
	\begin{lemma}\label{Le:Cutoff function} Assume that $\mms$ is locally compact, and let $U, V\subset\mms$ be two open subsets with $U\subset V$. Suppose that $U$ and $V^\rmc$ have positive $\met$-distance to each other. Then there exists a continuous function $\psi_{U,V} \in \Test(\mms)$ satisfying $\psi_{U,V}(\mms) \subset [0,1]$, $\psi_{U,V} = 1$ ond $U$ and $\psi_{U,V} = 0$ on $V^\rmc$.
	\end{lemma}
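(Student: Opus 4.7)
The plan is to produce $\psi_{U,V}$ as a smooth composition of a heat-mollified Lipschitz cutoff. First I would build a Lipschitz function $\varphi\colon \mms \to [0,1]$ whose upper level set $\{\varphi = 1\}$ is a thickening of $U$ and whose support lies inside $V$. Then I would mollify $\varphi$ via $\ChHeat_s$ for a small $s > 0$ to obtain an element of $\Test(\mms)$. Finally, I would compose with a smooth real-valued cutoff $\chi$ to ``reset'' the boundary values, which are slightly distorted by the mollification, to exactly $1$ on $U$ and $0$ on $V^\rmc$.

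Setting $\delta := \met(U, V^\rmc) > 0$, the function
\begin{align*}
\varphi(x) := \max\!\big\lbrace 0,\, \min\{1,\, 2 - 3\met(x, U)/\delta\}\big\rbrace
\end{align*}
is $(3/\delta)$-Lipschitz, valued in $[0,1]$, equal to $1$ on $\{\met(\cdot, U) < \delta/3\}$, and vanishing outside $\{\met(\cdot, U) < 2\delta/3\} \subset V$. Local compactness together with the volume growth \eqref{Eq:Volume growth} gives $\varphi \in \Ell^2(\mms) \cap \Ell^\infty(\mms)$ as soon as $U$ is bounded; the general case reduces to this by additionally multiplying by a distance-based exhausting cutoff produced by the same construction.

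Setting $h_s := \ChHeat_s \varphi$, the membership $h_s \in \Test(\mms)$ is checked directly: $h_s \in \Ell^\infty(\mms)$ by $\Ell^\infty$-contractivity of $\ChHeat_s$, $\Gamma(h_s) \in \Ell^\infty(\mms)$ by \eqref{Eq:Bakry-Emery L^p}, and $\Delta h_s = \ChHeat_{s/2}\big(\Delta \ChHeat_{s/2}\varphi\big) \in W^{1,2}(\mms)$ thanks to \eqref{Eq:A priori estimate Laplacian} together with the $\Ell^2(\mms) \to W^{1,2}(\mms)$ regularization of $\ChHeat_{s/2}$. Continuity of $h_s$ is given by the Sobolev-to-Lipschitz property. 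Next, I fix a smooth $\chi\colon \R \to [0,1]$ with $\chi \equiv 0$ on $(-\infty, 1/4]$ and $\chi \equiv 1$ on $[3/4, \infty)$, and set $\psi_{U,V} := \chi \circ h_s$. The chain rule for smooth composition in $\Dom(\Delta)$ yields
\begin{align*}
\Delta \psi_{U,V} = \chi'(h_s)\,\Delta h_s + \chi''(h_s)\,\Gamma(h_s), \qquad \Gamma(\psi_{U,V}) = \big(\chi'(h_s)\big)^{2} \Gamma(h_s),
\end{align*}
whose factors lie in the correct spaces to ensure $\psi_{U,V} \in \Test(\mms)$.

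The main obstacle will be choosing $s$ small enough so that $\chi \circ h_s$ genuinely equals $1$ on $U$ and $0$ on $V^\rmc$: this reduces to the quantitative $\Ell^\infty$-closeness $\Vert h_s - \varphi\Vert_{\Ell^\infty} < 1/4$, which I plan to deduce from the $\Ell^1$-Wasserstein contraction of the heat flow (itself a consequence of \eqref{Eq:Bakry-Emery L^p} via Kuwada's duality). Applied to the Lipschitz $\varphi$, this yields the pointwise bound
\begin{align*}
\big\vert h_s(x) - \varphi(x) \big\vert \leq \Lip(\varphi)\, W_1(\ChHeat_s^{*}\delta_x, \delta_x) \leq C(K)\,\Lip(\varphi)\,\sqrt{s},
\end{align*}
and $C(K)$ is under control by standard short-time Wasserstein estimates on $\RCD(K,\infty)$ spaces. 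A subsidiary technicality is verifying rigorously that $\Delta \psi_{U,V}$ belongs to $W^{1,2}(\mms)$ and not merely to $\Ell^2(\mms)$; this can be handled by approximating $h_s$ via \autoref{Le:Mollified heat flow} and exploiting the stability of $\Test(\mms)$ under multiplication and smooth composition applied to each of the (bounded) factors $\chi'(h_s)$, $\chi''(h_s)$, $\Delta h_s$, and $\Gamma(h_s)$.
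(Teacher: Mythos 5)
The overall architecture — Lipschitz shell $\varphi$, heat mollification $h_s = \ChHeat_s\varphi$, composition with a real $\Cont^\infty$ cutoff $\chi$ to restore the exact level sets — is the natural strategy, and most of your bookkeeping is correct. The genuine gap lies in the claimed bound $W_1(\ChHeat_s^*\delta_x,\delta_x)\leq C(K)\sqrt{s}$ uniformly in $x$. First, it does not follow from Wasserstein contraction: Kuwada's duality applied to \eqref{Eq:Bakry-Emery L^p} gives $W_1(\ChHeat_s^*\mu,\ChHeat_s^*\nu)\leq \rme^{-Ks}\,W_1(\mu,\nu)$, a relation between two \emph{evolved} measures, not between an evolved measure and its initial datum. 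Second, as stated — with $C$ depending only on $K$ — the inequality is false: in $(\R^n,\vert\cdot\vert,\Leb^n)$ one has $K=0$ and $W_1(\ChHeat_s^*\delta_x,\delta_x)=\sqrt{2s}\,\bbE\vert W\vert\asymp\sqrt{2sn}$ with $W$ a standard $n$-dimensional Gaussian, so the constant necessarily scales like $\sqrt{n}$. Under $\RCD^*(K,N)$ a dimension-dependent version does follow from two-sided Jiang–Li–Zhang Gaussian bounds, but under the lemma's actual hypotheses — bare $\RCD(K,\infty)$ plus local compactness — there is no lower Gaussian or lower volume bound; the upper heat-kernel estimate of \autoref{Th:Heat kernel bound thm} carries the uncontrolled prefactor $\meas[B_{\sqrt{s}}(\cdot)]^{-1/2}$ and does not by itself give concentration. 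Until you supply a uniform small-time escape estimate $\sup_x\int_{B_\rho(x)^\rmc}\sfp_s(x,y)\d\meas(y)\to 0$ (or any other route to $\Vert h_s-\varphi\Vert_{\Ell^\infty}\to 0$) valid at this level of generality, the argument does not close.

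Two smaller points. If $U$ is unbounded with $\meas[U]=\infty$ the conclusion is impossible, since $\psi_{U,V}\geq\One_U$ would force $\psi_{U,V}\notin\Ell^2(\mms)$, hence $\psi_{U,V}\notin\Dom(\Delta)$; so the lemma tacitly assumes $U$ bounded, and your proposed reduction via an ``exhausting cutoff'' cannot work and should simply be dropped. Also, the claim $\Delta(\chi\circ h_s)\in W^{1,2}(\mms)$ is not directly covered by \autoref{Le:Calculus rules d}: $\Delta h_s\in W^{1,2}(\mms)$ but need not be bounded, so the Leibniz rule for the product $\chi'(h_s)\,\Delta h_s$ requires a separate (easy, but unwritten) step, e.g.~exploiting that $\chi'(h_s)$ is bounded with bounded gradient, or the approximation through \autoref{Le:Mollified heat flow} you briefly sketch.
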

	
	\subsubsection*{Notions of $\RCD^*(K,N)$ spaces} We recall useful properties of $\RCD(K,\infty)$ spaces admitting a synthetic notion of ``upper dimension bound'' $N\in (1,\infty)$ in addition, the so-called $\RCD^*(K,N)$ spaces. See \cite{erbar2015, lott2009, sturm2006b} and the references therein for details.
	
	The first is the following corollary of \emph{Bishop--Gromov's inequality}. For every $D>0$, whenever $0< r < R < D$, there exists a constant $C<\infty$ depending only on $K$, $N$ and $D$ such that
	\begin{align}\label{Eq:Bishop-Gromov}
	\frac{\meas[B_R(x)]}{\meas[B_r(x)]}  \leq C\,\Big(\frac{R}{r}\Big)^N
	\end{align}
	for every $x\in\mms$ --- in particular, since $B_1(y) \subset B_{1+\met(x,y)}(x)$, for every $y\in\mms$,
	\begin{align}\label{Eq:AHPT}
	\meas[B_1(y)]\leq C\,\rme^{N\met(x,y)}\,\meas[B_1(x)].
	\end{align}
	A further consequence of \eqref{Eq:Bishop-Gromov} is that $\meas$ is \emph{locally doubling}, that is, for every $x\in\mms$ and $r\in (0,D)$ we have
	\begin{align}\label{Eq:Doubling condition}
	\meas[B_{2r}(x)] \leq 2^N\,C\,\meas[B_r(x)].
	\end{align}
	In turn, this condition implies local compactness of $\mms$. In particular, every finite diameter $\RCD^*(K,N)$ space is necessarily compact --- this is in particular the case when $K>0$.
	
	Since $\RCD^*(K,N)$ spaces also satisfy local $(1,1)$- and $(2,2)$-Poincaré inequalities \cite{erbar2015, rajala2012}, the general study from \cite{sturm1995, sturm1996} yields the existence of a locally Hölder continuous representative of the heat kernel $\sfp$ on $(0,\infty)\times\mms^2$. By \cite{jiang2016}, for every $\varepsilon > 0$, there exist constants $C_3,C_4> 1$ depending only on $K$, $N$ and $\varepsilon$, such that for every $x,y\in\mms$ and every $t>0$,
	\begin{align}\label{Eq:Heat kernel bound}
	\sfp_t(x,y) \leq C_3\,\meas\big[B_{\sqrt{t}}(x)\big]^{-1}\,\exp\!\Big(C_4t -\frac{\met^2(x,y)}{(4+\varepsilon)t}\Big).
	\end{align}
	
	\subsection{A glimpse on nonsmooth differential geometry}\label{Sec:A glimpse}
	
	\subsubsection*{$\Ell^\infty$-modules} The next two paragraphs summarize \cite[Ch.~1]{gigli2018}, creating the framework of spaces of higher order differential objects. This toolbox is particularly needed in \autoref{Sec:Dunford-Pettis}.
	
	\begin{definition} Given any $p\in [1,\infty]$, we call a real Banach space $(\scrM,\Vert\cdot\Vert_{\scrM})$ (or simply $\scrM$ if its norm is understood) an \emph{$\Ell^p$-normed $\Ell^\infty$-module} over $(\mms,\met,\meas)$ if is endowed with
		\begin{enumerate}[label=\textnormal{\alph*.}]
			\item a bilinear \emph{multiplication mapping} $\cdot\colon \Ell^\infty(\mms)\times \scrM\to\scrM$ satisfying
			\begin{align*}
			(f\, g)\cdot v&= f\cdot(g\cdot v),\\
			\One_\mms\cdot v &= v,
			\end{align*}
			\item a nonnegatively valued mapping $\vert \cdot\vert_\scrM\colon\scrM\to \Ell^p(\mms)$, the \emph{pointwise norm}, obeying
			\begin{align*}
			\Vert v\Vert_{\scrM} &= \Vert\vert v\vert_\scrM\Vert_{\Ell^p},\\
			\vert f\cdot v\vert_\scrM &= \vert f\vert\,\vert v\vert_\scrM\quad\meas\text{-a.e.}
			\end{align*}
		\end{enumerate}
		for every $v\in\scrM$ and every $f,g\in\Ell^\infty(\mms)$.
	\end{definition}
	
	We shall drop the $\cdot$ sign and leave out the subscript $\scrM$ from all pointwise norms --- it is always clear from the context which one is considered. A simple example of an $\Ell^p$-normed $\Ell^\infty$-module is $\Ell^p(\mms)$ itself. The pointwise norm $\vert\cdot\vert$ is local, i.e.~for every $v\in\scrM$, $\One_B\,v = 0$ if and only if $\vert v\vert = 0$ $\meas$-a.e.~on $B$ for every Borel set $B\subset\mms$, and it satisfies the pointwise $\meas$-a.e.~triangle inequality. The set of all $v\in\scrM$ such that $\One_{B^\rmc}\,v = 0$ for some bounded Borel set $B\subset\mms$ will be termed $\scrM_\bs$. 
	
	A \emph{Hilbert module} is an $\Ell^2$-normed $\Ell^\infty$-module $\scrM$ which is a Hilbert space. In this case, $\vert\cdot\vert$ satisfies the pointwise $\meas$-a.e.~parallelogram identity, hence induces a \emph{pointwise scalar product} $\langle\cdot,\cdot\rangle \colon \scrM^2 \to \Ell^1(\mms)$. The latter is $\Ell^\infty$-bilinear, local in both components and obeys the pointwise $\meas$-a.e.~Cauchy--Schwarz inequality.

	The \emph{dual module} $\scrM^*$ of $\scrM$ is the set of all linear maps $\sfL\colon \scrM\to\Ell^1(\mms)$ for which $\Vert\sfL\Vert_{\scrM,\Ell^1} <\infty$ and $\sfL(f\,v) = f\,\sfL v$ for every $v\in\scrM$ and every $f\in\Ell^\infty(\mms)$. We endow $\scrM^*$ with the usual operator norm. Whenever convenient, we denote the pairing $\sfL v$ of $\sfL\in\scrM^*$ and $v\in\scrM$ by $\langle v\mid \sfL\rangle$, $\sfL(v)$ or $v(\sfL)$. Then $\scrM^*$ is an $\Ell^q$-normed $\Ell^\infty$-module, where  $q\in [1,\infty]$ satisfies $1/p+1/q=1$, its pointwise norm $\vert\cdot\vert\colon\scrM^*\to\Ell^q(\mms)$ being given by
	\begin{align*}
	\vert\sfL\vert := \esssup\!\big\lbrace \vert \langle v \mid \sfL\rangle\vert: v\in \scrM,\ \vert v\vert\leq 1\ \meas\text{-a.e.}\big\rbrace.
	\end{align*}
	
	\subsubsection*{$\Ell^0$-modules} Let $\scrM$ be an $\Ell^p$-normed $\Ell^\infty$-module, $p\in [1,\infty]$. By $\scrM^0$ we intend the \emph{$\Ell^0$-module} corresponding to $\scrM$ from \cite[Sec.~1.3]{gigli2018} --- the completion of $\scrM$ w.r.t.~the metric $\met_{\scrM^0}$ given by
	\begin{align*}
	\met_{\scrM^0}(v,w) := \sum_{j=1}^\infty \frac{2^{-j}}{\meas[E_j]}\int_{E_j}\min\!\big\lbrace \vert v-w\vert,1\big\rbrace\d\meas.
	\end{align*}
	Here, $(E_j)_{j\in\N}$ is a partition of $\mms$ into Borel sets of finite and positive $\meas$-measure. 
	
	Roughly speaking, $\scrM^0$ is some larger space of ``$\meas$-measurable elements $v$'' for which $v\in\scrM$ if and only if $\vert v\vert\in\Ell^p(\mms)$. Along with the construction of $\scrM^0$, the multiplication, the pointwise norm and the pointwise pairing operations on $\scrM$ uniquely extend to maps $\cdot\colon\Ell^0(\mms)\times \scrM^0\to\scrM^0$, $\vert\cdot\vert\colon\scrM^0\to\Ell^0(\mms)$ and $\langle\cdot\mid\cdot\rangle\colon\scrM^0\times (\scrM^0)^*\to \Ell^0(\mms)$  which satisfy similar properties as their former counterparts w.r.t.~elements in $\scrM^0$ and $\Ell^0(\mms)$ instead of $\scrM$ and $\Ell^\infty(\mms)$, respectively.
	
	\subsubsection*{Nonsmooth first order calculus} This paragraph surveys \cite[Ch.~2]{gigli2018}, i.e.~the introduction of the relevant spaces of $\Ell^2$-$1$-forms and $\Ell^2$-vector fields over $(\mms,\met,\meas)$. 
	
	We denote the \emph{cotangent module} constructed in \cite[Sec.~2.2]{gigli2018} by $\Ell^2(T^*\mms)$. Elements of it are called \emph{\textnormal{(}differential\textnormal{)} $1$-forms}. The \emph{tangent module}, whose elements are called \emph{vector fields}, is defined by $\Ell^2(T\mms) := \Ell^2(T^*\mms)^*$. The norms on both spaces are termed $\Vert\cdot\Vert_{\Ell^2}$. $\Ell^2(T^*\mms)$ and $\Ell^2(T\mms)$ are separable Hilbert modules and are isometrically isomorphic. The isometry is even true pointwise $\meas$-a.e., via the \emph{musical isomorphisms} $\flat\colon \Ell^2(T\mms)\to\Ell^2(T^*\mms)$ and $\sharp\colon \Ell^2(T^*\mms)\to\Ell^2(T\mms)$ with
	\begin{align*}
	X^\flat(Y) := \langle X,Y\rangle\quad\text{and}\quad\langle\omega^\sharp,Y\rangle := \omega(Y). 
	\end{align*}
	
	The construction of $\Ell^2(T^*\mms)$ comes with a linear and continuous \emph{differential} map $\rmd\colon \Sobo^2(\mms)\to\Ell^2(T^*\mms)$ for which $\vert \rmd f\vert = \Gamma(f)^{1/2}$ $\meas$-a.e.~for every $f\in\Sobo^2(\mms)$. It is closed in the sense that if $(f_n)_{n\in\N}$ is a sequence in $\Sobo^2(\mms)$ converging to $f\in\Ell^0(\mms)$ pointwise $\meas$-a.e., and if $(\rmd f_n)_{n\in\N}$ converges to some $\omega\in\Ell^2(T^*\mms)$ w.r.t.~$\Vert\cdot\Vert_{\Ell^2}$, then $f\in\Sobo^2(\mms)$ and $\omega = \rmd f$.  The \emph{gradient} $\nabla$ of $f\in\Sobo^2(\mms)$ is then defined by $\nabla f := (\rmd f)^\sharp\in\Ell^2(T\mms)$ --- equivalently, $\nabla f$ is the unique element $Y\in \Ell^2(T\mms)$ such that $\rmd f(Y) =  \vert\rmd f\vert^2= \vert Y\vert^2$ $\meas$-a.e. It obeys similar calculus rules as the following ones for $\rmd$.
	\begin{lemma}\label{Le:Calculus rules d} \begin{enumerate}[label=\textnormal{(\roman*)}]
			\item \textnormal{\textsc{Locality.}} For every $\Leb^1$-negligible Borel set $A\subset\R$ and every $f\in\Sobo^2(\mms)$, we have $\One_{f^{-1}(A)}\,\rmd f = 0$. In particular, $\One_{\{c\}}(f)\,\rmd f = 0$ for every $c\in\R$.
			\item \textnormal{\textsc{Chain rule.}} For every $f\in\Sobo^2(\mms)$ and every $\Phi\in\Lip(\R)$, define $\Phi'(f)$ arbitrarily on the preimage of all non-differentiability points of $\Phi$ under $f$. Then $\Phi(f)\in\Sobo^2(\mms)$ and
			\begin{align*}
			\rmd\Phi(f) = \Phi'(f)\,\rmd f.
			\end{align*}
			\item \textnormal{\textsc{Leibniz rule.}} For every $f,g\in\Sobo^2(\mms)\cap\Ell^\infty(\mms)$, $f\,g\in\Sobo^2(\mms)$ with
			\begin{align*}
			\rmd(f\,g) = f\,\rmd g + g\,\rmd f.
			\end{align*}
		\end{enumerate} 
	\end{lemma}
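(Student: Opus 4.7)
All three rules transfer from the corresponding well-known calculus rules for the Cheeger energy $\Gamma$ (equivalently, the minimal weak upper gradient) to the differential via the defining pointwise identity $\vert\rmd f\vert^2 = \Gamma(f)$ $\meas$-a.e.~on $\Sobo^2(\mms)$, which upon polarization yields $\langle \rmd f, \rmd g\rangle = \Gamma(f,g)$ $\meas$-a.e. I would establish the three items in the order (iii), (i), (ii), since (ii) for non-$\Cont^1$ Lipschitz functions relies crucially on (i).

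For (iii), the Leibniz rule for $\Gamma$ yields $f\,g\in\Sobo^2(\mms)$ together with $\Gamma(f\,g,h) = f\,\Gamma(g,h) + g\,\Gamma(f,h)$ for every $h\in\Sobo^2(\mms)$. Setting $\omega := \rmd(f\,g) - f\,\rmd g - g\,\rmd f \in \Ell^2(T^*\mms)$ and expanding $\vert\omega\vert^2$ by pointwise bilinearity of $\langle\cdot,\cdot\rangle$, polarization replaces each scalar-product term by the corresponding $\Gamma$-expression and the Leibniz identity for $\Gamma$ makes all terms cancel, giving $\vert\omega\vert = 0$ $\meas$-a.e. For (i), locality of the minimal weak upper gradient --- intrinsic to its test-plan definition --- states $\One_{f^{-1}(A)}\,\Gamma(f) = 0$ $\meas$-a.e.~for every $\Leb^1$-negligible Borel $A\subset\R$, so $\vert\One_{f^{-1}(A)}\,\rmd f\vert^2 = \One_{f^{-1}(A)}\,\vert\rmd f\vert^2 = \One_{f^{-1}(A)}\,\Gamma(f) = 0$ $\meas$-a.e., using the pointwise-norm axiom $\vert\varphi\cdot v\vert = \vert\varphi\vert\,\vert v\vert$.

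For (ii), first consider $\Phi\in\Cont^1(\R)\cap\Lip(\R)$: the $\Cont^1$ chain rule for $\Gamma$ gives $\Phi(f)\in\Sobo^2(\mms)$ with $\Gamma(\Phi(f),g) = \Phi'(f)\,\Gamma(f,g)$ and $\Gamma(\Phi(f)) = \vert\Phi'(f)\vert^2\,\Gamma(f)$, so expanding $\vert\rmd\Phi(f) - \Phi'(f)\,\rmd f\vert^2$ via polarization yields zero $\meas$-a.e. For general $\Phi\in\Lip(\R)$, take a standard mollifier $(\rho_n)_{n\in\N}$ on $\R$ and set $\Phi_n := \Phi * \rho_n\in \Cont^1(\R)\cap\Lip(\R)$, so that $\Lip(\Phi_n)\leq \Lip(\Phi)$, $\Phi_n\to\Phi$ uniformly on $\R$, and $\Phi_n'\to\Phi'$ pointwise on $\R\setminus\scrN$ for some $\Leb^1$-negligible set $\scrN$ containing the non-differentiability points of $\Phi$. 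The $\Cont^1$ case gives $\rmd\Phi_n(f) = \Phi_n'(f)\,\rmd f$; applying (i) to $A = \scrN$ forces $\rmd f = 0$ $\meas$-a.e.~on $f^{-1}(\scrN)$, so the arbitrary value of $\Phi'(f)$ there is immaterial, and dominated convergence with the bound $\vert\Phi_n'(f)\,\rmd f\vert^2\leq \Lip(\Phi)^2\,\Gamma(f)\in\Ell^1(\mms)$ yields $\Phi_n'(f)\,\rmd f\to\Phi'(f)\,\rmd f$ in $\Ell^2(T^*\mms)$. Since $\Phi_n(f)\to\Phi(f)$ pointwise $\meas$-a.e., closure of $\rmd$ concludes $\Phi(f)\in\Sobo^2(\mms)$ with $\rmd\Phi(f) = \Phi'(f)\,\rmd f$.

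The main obstacle is the handling of the conventional values of $\Phi'(f)$ on $f^{-1}(\scrN)$ in (ii), which (i) renders harmless by forcing $\rmd f$ to vanish there. Once this is dispatched, the other rules reduce to routine pointwise Hilbert-module computations once the parallel properties of $\Gamma$ are invoked.
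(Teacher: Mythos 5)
The paper states \autoref{Le:Calculus rules d} without proof: it appears in a preliminaries paragraph that explicitly surveys \cite[Ch.~2]{gigli2018}, so there is no proof in the paper with which to compare. Your blind proposal is correct, and it is in essence the argument from the cited reference. The key observation underpinning all three items --- namely that the defining identity $\vert\rmd f\vert = \Gamma(f)^{1/2}$, the linearity of $\rmd$, and the pointwise parallelogram law in the Hilbert module $\Ell^2(T^*\mms)$ together yield $\langle\rmd u,\rmd v\rangle = \Gamma(u,v)$ $\meas$-a.e.~for all $u,v\in\Sobo^2(\mms)$ --- cleanly reduces (i) and (iii), as well as the $\Cont^1\cap\Lip$ case of (ii), to the corresponding well-known properties of the minimal weak upper gradient, by computing $\vert\rmd(fg) - f\,\rmd g - g\,\rmd f\vert^2$ and $\vert\rmd\Phi(f) - \Phi'(f)\,\rmd f\vert^2$ and showing both vanish $\meas$-a.e.

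Your reduction of the general Lipschitz case of (ii) is also sound: mollifying gives $\Phi_n\in\Cont^1\cap\Lip$ with $\Lip(\Phi_n)\leq\Lip(\Phi)$, $\Phi_n\to\Phi$ uniformly, and $\Phi_n'\to\Phi'$ $\Leb^1$-a.e.~(at every Lebesgue point of $\Phi'$); enlarging $\scrN$ to a $\Leb^1$-null Borel set containing both the non-differentiability points and the non-Lebesgue points, item (i) forces $\One_{f^{-1}(\scrN)}\,\rmd f = 0$, so $\vert\Phi_n'(f) - \Phi'(f)\vert\,\vert\rmd f\vert\to 0$ $\meas$-a.e.; dominated convergence (with envelope $2\,\Lip(\Phi)\,\vert\rmd f\vert\in\Ell^2(\mms)$) gives $\Phi_n'(f)\,\rmd f\to\Phi'(f)\,\rmd f$ in $\Ell^2(T^*\mms)$, and the closedness of $\rmd$ stated in the paper completes the argument. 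Your ordering (iii), (i), (ii), with (i) feeding into (ii), is exactly what is needed. No gaps.
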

	\begin{definition} Let $\Dom(\div)$ be the space of all $X\in\Ell^2(T\mms)$ for which there exists a function $f\in\Ell^2(\mms)$ such that for every $g\in W^{1,2}(\mms)$,
		\begin{align*}
		\int_\mms g\,f\d\meas = -\int_\mms \rmd g(X)\d\meas.
		\end{align*}
		In case of existence, $f$ is unique, denoted by $\div X$ and called the \emph{divergence} of $X$.
	\end{definition}
	
	By the integration by parts formula for $\Delta$, for every $f\in\Dom(\Delta)$, we have $\nabla f\in\Dom(\div)$ with the usual identity $\div \nabla f = \Delta f$. Moreover, given any $X\in\Dom(\div)$ and any $f\in\Sobo^2(\mms)\cap\Ell^\infty(\mms)$ with $\vert \rmd f\vert \in\Ell^\infty(\mms)$, we have $f\,X\in\Dom(\div)$ with
	\begin{align}\label{Eq:div product rule}
	\div(f\,X) = \rmd f(X) + f\div X\quad\meas\text{-a.e.}
	\end{align}
	
	\subsubsection*{Further Lebesgue spaces and $\Ell^\infty$-module tensor products} We set
	\begin{align*}
	\Ell^0(T^*\mms) &:= \Ell^2(T^*\mms)^0,\\
	\Ell^0(T\mms) &:= \Ell^2(T\mms)^0.
	\end{align*}
	
	$\Ell^p(T^*\mms)$ is the class of all $v \in \Ell^0(T^*\mms)$ with $\vert v\vert\in\Ell^p(T^*\mms)$, where $p\in [1,\infty]$. It is an $\Ell^p$-normed $\Ell^\infty$-module w.r.t.~the canonical norm $\Vert\cdot\Vert_{\Ell^p}$, separable if $p<\infty$. For every $p,q\in [1,\infty]$ with $1/p+1/q=1$, $\Ell^p(T^*\mms)^*$ and $\Ell^q(T^*\mms)$ are isometrically isomorphic to each other as modules. Define $\Ell^p(T\mms)$ analogously for $p\in [1,\infty]$.
	
	Denote by $\Ell^2((T^*)^{\otimes 2}\mms)$ and $\Ell^2(T^{\otimes 2}\mms)$ the two-fold \emph{tensor products} of $\Ell^2(T^*\mms)$ and $\Ell^2(T\mms)$ in the \emph{$\Ell^\infty$-module sense} of \cite[Sec.~1.5]{gigli2018}. These spaces are separable Hilbert modules in which the linear spans of elements of the kind $\omega_1\otimes\omega_2$ and $X_1\otimes X_2$ with $\omega_1,\omega_2\in\Ell^2(T^*\mms)\cap\Ell^\infty(T^*\mms)$ and $X_1,X_2\in\Ell^2(T\mms)\cap\Ell^\infty(T\mms)$, are respectively dense. The corresponding pointwise norms $:$ on $\Ell^2((T^*)^{\otimes 2}\mms)$ and $\Ell^2(T^{\otimes 2}\mms)$ --- which turn both spaces isometrically isomorphic --- as well as $\langle \cdot,\cdot\rangle$ on $\Ell^2(\Lambda^2T^*\mms)$ are initially defined --- and then extended by approximation --- by
	\begin{align*}
	(\omega_1\otimes\omega_2) : (X_1^\flat\otimes X_2^\flat) &:= \omega_1(X_1)\,\omega_2(X_2),\\
	\langle \omega_1\wedge\omega_2, X_1^\flat\wedge X_2^\flat\rangle &:= \det \omega_i(X_j).
	\end{align*}
	
	\subsubsection*{Hilbert--Schmidt operators and Hilbert space tensor products} We call a linear operator $\sfS\colon \Ell^2(T^*\mms)\to\Ell^2(T^*\mms)$ a \emph{Hilbert--Schmidt operator} if for some --- or equivalently \emph{any} --- countable orthonormal bases $(\omega_i)_{i\in\N}$ and $(\eta_{i'})_{i'\in\N}$ of $\Ell^2(T^*\mms)$,
	\begin{align*}
	\Vert \sfS\Vert_{\HS}^2 := \sum_{i,i'=1}^\infty \Big[\! \int_\mms \langle \sfS\omega_i, \eta_{i'}\rangle\d\meas\Big]^2 < \infty.
	\end{align*}
	
	The two-fold \emph{Hilbert space tensor product} $\Ell^2(T^*\mms)^{\otimes 2}$ of $\Ell^2(T^*\mms)$ --- see e.g.~\cite{gigli2018,kadison1983} for the precise definition --- which will be needed in \autoref{Sub:Fund sol}, is isometrically isomorphic to the space of all Hilbert--Schmidt operators from $\Ell^2(T^*\mms)'$ to $\Ell^2(T^*\mms)$, endowed with the norm $\Vert\cdot\Vert_\HS$. Moreover, up to isomorphism, it is characterized by the following universal property \cite[Thm.~2.6.4]{kadison1983}. Given a real Hilbert space $H$, a bilinear mapping $\sfG\colon \Ell^2(T^*\mms)^2\to H$ is termed \emph{weakly Hilbert--Schmidt} if for some --- or equivalently \emph{any} --- countable orthonormal bases $(\omega_i)_{i\in\N}$ and $(\eta_{i'})_{i'\in\N}$  as above,
	\begin{align*}
	\Vert \sfG\Vert_{\WHS}^2 := \sup\!\Big\lbrace \Vert h\Vert_H^{-2}\sum_{i,i'=1}^\infty \big(\sfG(\omega_i,\eta_{i'}), h\big)_{H}^2 : h\in H\setminus\{0\}\Big\rbrace < \infty.
	\end{align*}
	
	\begin{theorem}\label{Th:Univ property} The mapping $\sfe\colon\Ell^2(T^*\mms)^2\to\Ell^2(T^*\mms)^{\otimes 2}$ which is defined by $\sfe(\eta,\omega) :=  \eta\otimes\omega$ is weakly Hilbert--Schmidt. Moreover, given any real Hilbert space $H$, for every weakly Hilbert--Schmidt mapping $\sfG\colon \Ell^2(T^*\mms)^2\to H$, there exists a unique bounded operator $\sfT\colon \Ell^2(T^*\mms)^{\otimes 2}\to H$ such that
		\begin{align*}
		\sfG = \sfT(\sfe)\quad\text{and}\quad \Vert\sfT\Vert_{\Ell^2,H} = \Vert \sfG\Vert_\WHS.
		\end{align*}
	\end{theorem}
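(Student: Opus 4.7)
The plan is to adapt the standard argument for the universal property of the Hilbert space tensor product to the weakly Hilbert--Schmidt category. The central input is the fact that, by the very construction of $\Ell^2(T^*\mms)^{\otimes 2}$, whenever $(\omega_i)_{i\in\N}$ and $(\eta_{i'})_{i'\in\N}$ are orthonormal bases of $\Ell^2(T^*\mms)$, the family $(\omega_i\otimes\eta_{i'})_{i,i'\in\N}$ is an orthonormal basis of $\Ell^2(T^*\mms)^{\otimes 2}$. With this in hand, the first assertion reduces to Parseval's identity: for every nonzero $h \in \Ell^2(T^*\mms)^{\otimes 2}$,
\begin{align*}
\sum_{i,i'=1}^\infty \big(\sfe(\omega_i,\eta_{i'}),h\big)^2 = \sum_{i,i'=1}^\infty \big(\omega_i\otimes\eta_{i'},h\big)^2 = \Vert h\Vert^2,
\end{align*}
so $\Vert\sfe\Vert_\WHS = 1$.

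For the universal factorization, I would first define $\sfT$ on the linear span of $\{\omega_i\otimes\eta_{i'}\}_{i,i'\in\N}$ by $\sfT(\omega_i\otimes\eta_{i'}) := \sfG(\omega_i,\eta_{i'})$. Writing a generic $\xi = \sum c_{ii'}\,\omega_i\otimes\eta_{i'}$ with $\Vert\xi\Vert^2 = \sum \vert c_{ii'}\vert^2$, the Cauchy--Schwarz inequality together with the weak Hilbert--Schmidt bound gives, for every $h\in H$,
\begin{align*}
\big(\sfT\xi,h\big)_H^2 = \Big[\sum c_{ii'}\,\big(\sfG(\omega_i,\eta_{i'}),h\big)_H\Big]^2 \leq \Vert\xi\Vert^2\,\Vert\sfG\Vert_\WHS^2\,\Vert h\Vert_H^2,
\end{align*}
so that $\sfT$ extends by continuity to $\Ell^2(T^*\mms)^{\otimes 2}$ with $\Vert\sfT\Vert_{\Ell^2,H} \leq \Vert\sfG\Vert_\WHS$. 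The reverse inequality follows by applying Parseval to $\sfT^*h$, exploiting that $(\omega_i\otimes\eta_{i'})$ is an orthonormal basis:
\begin{align*}
\sum_{i,i'=1}^\infty \big(\sfG(\omega_i,\eta_{i'}),h\big)_H^2 = \sum_{i,i'=1}^\infty \big(\omega_i\otimes\eta_{i'},\sfT^*h\big)^2 = \Vert\sfT^*h\Vert^2 \leq \Vert\sfT\Vert^2\,\Vert h\Vert_H^2.
\end{align*}

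It then remains to verify $\sfG = \sfT \circ \sfe$ as maps on $\Ell^2(T^*\mms)^2$, not only on pairs of basis elements. I would expand $\eta = \sum a_i\,\omega_i$ and $\omega = \sum b_{i'}\,\eta_{i'}$; joint continuity of the tensor product yields $\eta\otimes\omega = \sum a_ib_{i'}\,\omega_i\otimes\eta_{i'}$ in $\Ell^2(T^*\mms)^{\otimes 2}$, and hence $\sfT(\eta\otimes\omega) = \sum a_ib_{i'}\,\sfG(\omega_i,\eta_{i'})$ in $H$ by continuity of $\sfT$. Matching this with $\sfG(\eta,\omega)$ is the main technical obstacle, as the bilinearity of $\sfG$ only covers \emph{finite} expansions and one needs a separate-continuity-type property of $\sfG$ to pass to the limit. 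The crucial observation is that the weakly Hilbert--Schmidt estimate already furnishes this: for every $h\in H$,
\begin{align*}
\sum_{i,i'=1}^\infty \big\vert a_ib_{i'}\,\big(\sfG(\omega_i,\eta_{i'}),h\big)_H\big\vert \leq \Vert\eta\Vert\,\Vert\omega\Vert\,\Vert\sfG\Vert_\WHS\,\Vert h\Vert_H
\end{align*}
converges absolutely, so the Riesz representation theorem identifies $\sfG(\eta,\omega)$ with the weak, and then strong, limit of the partial sums in $H$, which coincides with $\sfT(\eta\otimes\omega)$. Uniqueness of $\sfT$ is then immediate, since $\sfT$ is uniquely prescribed on the total set $\sfe(\Ell^2(T^*\mms)^2)$.
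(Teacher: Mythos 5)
The paper does not actually prove this theorem: it is quoted as a classical fact about Hilbert space tensor products with the attribution to Kadison--Ringrose, Theorem~2.6.4, so there is no in-text argument to compare against. Still, your proposal follows what is indeed the standard route, and most of it is sound: the Parseval computation giving $\Vert\sfe\Vert_\WHS=1$, the definition of $\sfT$ on the span of $\{\omega_i\otimes\eta_{i'}\}$ together with the Cauchy--Schwarz bound $\Vert\sfT\Vert_{\Ell^2,H}\leq\Vert\sfG\Vert_\WHS$, and the reverse inequality via $\sfT^*$ are all correct.

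The gap is in the final step, establishing $\sfG(\eta,\omega)=\sfT(\eta\otimes\omega)$ for arbitrary $\eta,\omega$, which is exactly where you flag ``the main technical obstacle'' --- but the absolute-convergence estimate you offer does not close it. That estimate shows the series $\sum a_ib_{i'}\,\sfG(\omega_i,\eta_{i'})$ converges in $H$ (indeed to $\sfT(\eta\otimes\omega)$, by continuity of $\sfT$), but nothing in it identifies this limit with $\sfG(\eta,\omega)$: the Riesz theorem tells you only that the bounded functional $h\mapsto\sum a_ib_{i'}(\sfG(\omega_i,\eta_{i'}),h)_H$ is of the form $(g,\cdot)_H$ for a unique $g\in H$, not that $g=\sfG(\eta,\omega)$. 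Equating them amounts to $\sfG$ commuting with the infinite orthonormal expansions of $\eta$ and $\omega$, which is precisely (separate, in fact joint) continuity of $\sfG$ --- and this is not supplied by bilinearity plus summability of the coefficients.

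The missing ingredient is the observation that the weakly Hilbert--Schmidt hypothesis forces $\sfG$ to be norm-bounded. Given unit vectors $x,y$, complete them to orthonormal bases with $\omega_1=x$ and $\eta_1=y$ (invoking the basis-independence built into the definition of $\Vert\cdot\Vert_\WHS$); taking $h:=\sfG(x,y)$ in the defining sum and retaining only the $(1,1)$-term yields $\Vert\sfG(x,y)\Vert_H^4\leq\Vert\sfG\Vert_\WHS^2\,\Vert\sfG(x,y)\Vert_H^2$, hence $\Vert\sfG(x,y)\Vert_H\leq\Vert\sfG\Vert_\WHS\,\Vert x\Vert_{\Ell^2}\,\Vert y\Vert_{\Ell^2}$ after rescaling. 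A norm-bounded bilinear map between Hilbert spaces is jointly continuous, and with that in hand $\sfG(\eta,\omega)=\lim_n\sfG\big(\sum_{i\leq n}a_i\omega_i,\sum_{i'\leq n}b_{i'}\eta_{i'}\big)=\lim_n\sum_{i,i'\leq n}a_ib_{i'}\,\sfG(\omega_i,\eta_{i'})=\sfT(\eta\otimes\omega)$, which closes the gap. With this addition your argument is complete.
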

	
	From the $\Ell^\infty$-module perspective, it is not difficult to prove the subsequent result which actually holds true for the Hilbert space tensor product of \emph{any} two Hilbert modules over $(\mms,\met,\meas)$. Note that we use the $\otimes$ sign in both cases, although $\Ell^2(T^*\mms)^{\otimes 2}$ differs from $\Ell^2((T^*)^{\otimes 2}\mms)$. For instance, we always have $\omega_1\otimes \omega_2\in \Ell^2(T^*\mms)^{\otimes 2}$ for every $\omega_1,\omega_2\in\Ell^2(T^*\mms)$, but in Gigli's sense, $\omega_1\otimes\omega_2$  does not necessarily belong to $\Ell^2((T^*)^{\otimes 2}\mms)$ unless $\omega_i\in\Ell^\infty(T^*\mms)$ for at least one $i\in\{1,2\}$. (Indeed, one should rather formally think of $\omega_1\otimes\omega_2\in\Ell^2(T^*\mms)^{\otimes 2}$ as section $(x,y) \mapsto \omega_1(x)\otimes\omega_2(y)$ and of $\omega_1\otimes\omega_2\in\Ell^2((T^*)^{\otimes 2}\mms)$ as section $x\mapsto \omega_1(x)\otimes\omega_2(x)$.)

	\begin{proposition}\label{Pr:HS tensor product} The Hilbert space tensor product $\Ell^2(T^*\mms)^{\otimes 2}$ has a natural structure of a Hilbert module over the product space $(\mms^2,\met^2,\meas^{\otimes 2})$ such that the multiplication $\cdot\colon\Ell^\infty(\mms^2)\times\Ell^2(T^*\mms)^{\otimes 2}\to\Ell^2(T^*\mms)^{\otimes 2}$ and the pointwise norm $\vert\cdot\vert\colon \Ell^2(T^*\mms)^{\otimes 2}\to\Ell^2(\mms^2)$ satisfy
		\begin{align*}
		\big(f(\pr_1)\,g(\pr_2)\big)\,(\omega_1\otimes \omega_2) &= (f\,\omega_1)\otimes (g\,\omega_2),\\
		\vert\omega_1\otimes\omega_2\vert &= \vert\omega_1\vert(\pr_1)\,\vert\omega_2\vert(\pr_2)
		\end{align*}
		for every $\omega_1,\omega_2\in\Ell^2(T^*\mms)$ and every $f,g\in\Ell^\infty(\mms)$.
	\end{proposition}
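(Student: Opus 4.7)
The plan is to define the $\Ell^\infty(\mms^2)$-action and the pointwise norm on elementary tensors first, then extend to the whole Hilbert space tensor product $\Ell^2(T^*\mms)^{\otimes 2}$ using the universal property (\autoref{Th:Univ property}). The main technical obstruction will be that $\Ell^\infty(\mms) \otimes \Ell^\infty(\mms)$ is not $\Ell^\infty$-norm dense in $\Ell^\infty(\mms^2)$, which forces a density argument via rectangle-simple functions.

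For each product function $h = f(\pr_1)\,g(\pr_2)$ with $f, g \in \Ell^\infty(\mms)$, the bilinear map $(\omega_1, \omega_2) \mapsto (f\,\omega_1) \otimes (g\,\omega_2)$ factors as $\sfe \circ (\sfM_f \times \sfM_g)$, where $\sfM_f, \sfM_g$ denote pointwise multiplication on $\Ell^2(T^*\mms)$, bounded of norm at most $\Vert f\Vert_{\Ell^\infty}$ and $\Vert g\Vert_{\Ell^\infty}$ respectively. Since $\sfe$ is weakly Hilbert--Schmidt by \autoref{Th:Univ property} and this property is preserved by precomposition with bounded operators, \autoref{Th:Univ property} yields a unique bounded operator $\sfT_h$ on $\Ell^2(T^*\mms)^{\otimes 2}$ realizing the prescribed action on elementary tensors, with $\Vert \sfT_h\Vert_\op \leq \Vert f\Vert_{\Ell^\infty}\,\Vert g\Vert_{\Ell^\infty}$. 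I would then extend $h \mapsto \sfT_h$ by linearity to rectangle-simple functions $h = \sum_i c_i\,\One_{A_i}(\pr_1)\,\One_{B_i}(\pr_2)$. In parallel, on elementary tensors I define the pointwise scalar product
\begin{equation*}
\langle \omega_1 \otimes \omega_2, \eta_1 \otimes \eta_2\rangle_{\mms^2} := \langle \omega_1, \eta_1\rangle(\pr_1)\,\langle \omega_2, \eta_2\rangle(\pr_2) \in \Ell^1(\mms^2),
\end{equation*}
extended bilinearly to finite sums; since $\int_{\mms^2} \langle \xi, \zeta\rangle_{\mms^2}\d\meas^{\otimes 2}$ recovers the Hilbert inner product of $\xi$ and $\zeta$ on such sums, a pointwise Cauchy--Schwarz estimate yields $\Vert\langle \xi, \zeta\rangle_{\mms^2}\Vert_{\Ell^1(\mms^2)} \leq \Vert\xi\Vert_{\Ell^2}\,\Vert\zeta\Vert_{\Ell^2}$, so the form extends continuously to all of $\Ell^2(T^*\mms)^{\otimes 2}$. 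Setting $\vert\xi\vert_{\mms^2} := \sqrt{\langle \xi, \xi\rangle_{\mms^2}}$ produces the pointwise norm, with $\Vert \vert\xi\vert_{\mms^2}\Vert_{\Ell^2(\mms^2)} = \Vert\xi\Vert_{\Ell^2(T^*\mms)^{\otimes 2}}$ and the claimed identity on elementary tensors.

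The hard part is extending $h \mapsto \sfT_h$ from rectangle-simple functions to all of $\Ell^\infty(\mms^2)$. I would approximate a general $h \in \Ell^\infty(\mms^2)$ pointwise $\meas^{\otimes 2}$-a.e.~by rectangle-simple $(h_n)_{n\in\N}$ with $\Vert h_n\Vert_{\Ell^\infty} \leq \Vert h\Vert_{\Ell^\infty}$; such an approximation exists because the Borel $\sigma$-algebra of the separable space $\mms^2$ coincides with the product Borel $\sigma$-algebra. For any $\xi \in \Ell^2(T^*\mms)^{\otimes 2}$, the identity $\vert \sfT_{h_n}\xi - \sfT_{h_m}\xi\vert_{\mms^2} = \vert h_n - h_m\vert\,\vert\xi\vert_{\mms^2}$, which I would verify on rectangle-simple functions using the locality of the pointwise norm on $\Ell^2(T^*\mms)$ after refining to a disjoint-rectangle representation, combined with dominated convergence in $\Ell^2(\mms^2)$, shows that $(\sfT_{h_n}\xi)_{n\in\N}$ is Cauchy in $\Ell^2(T^*\mms)^{\otimes 2}$ and that its limit $\sfT_h\xi$ is independent of the approximating sequence. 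All remaining module axioms (associativity, unit law, compatibility of the two norms, and the two displayed formulas of the statement) are first checked on elementary tensors and product functions and then propagated by continuity and by the uniqueness built into the universal property.
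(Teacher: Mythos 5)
The paper does not prove \autoref{Pr:HS tensor product}; it merely remarks that the result ``is not difficult to prove'' from the $\Ell^\infty$-module perspective and that it holds for the Hilbert space tensor product of any two Hilbert modules over $(\mms,\met,\meas)$. Your construction therefore supplies what the paper omits rather than varying an existing argument, and it is essentially the canonical one: define the action and pointwise norm on elementary tensors, extend by density, and handle the genuine obstruction --- that rectangle-simple functions are dense in $\Ell^\infty(\mms^2)$ only in the sense of bounded pointwise a.e.~convergence, not in $\Ell^\infty$-norm --- via the isometry $\vert\sfT_k\xi\vert = \vert k\vert\,\vert\xi\vert$ for disjoint-rectangle-simple $k$ together with dominated convergence in $\Ell^2(\mms^2)$. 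This is correct.

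Two places could be tightened. First, the pointwise positive semi-definiteness of $\langle\cdot,\cdot\rangle_{\mms^2}$ on the algebraic tensor product --- and hence the pointwise Cauchy--Schwarz inequality you invoke to get the $\Ell^1$-bound --- is not entirely automatic: at $\meas^{\otimes 2}$-a.e.~$(x,y)$ the Gram matrix of a finite sum of elementary tensors is the Hadamard (entrywise) product of two PSD Gram matrices, so one should invoke the Schur product theorem. Second, rather than arguing that weak Hilbert--Schmidtness is preserved under precomposition with bounded operators --- which is true, but whose verification in terms of \autoref{Th:Univ property} reduces precisely to the boundedness of $\sfM_f^*\otimes\sfM_g^*$ --- it is cleaner to invoke the standard fact that the Hilbert space tensor product $\sfM_f\otimes\sfM_g$ of two bounded operators is bounded with $\Vert\sfM_f\otimes\sfM_g\Vert_\op\leq\Vert f\Vert_{\Ell^\infty}\Vert g\Vert_{\Ell^\infty}$, and set $\sfT_{f(\pr_1)g(\pr_2)} := \sfM_f\otimes\sfM_g$ directly. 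Neither of these affects the soundness of the plan.
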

	
	\subsubsection*{Nonsmooth second order calculus} The next two  paragraphs summarize \cite[Ch.~3]{gigli2018}. In Gigli's treatise, Hessian, covariant derivative and exterior differential are defined by integration by parts procedures. However, we do not need their precise defining formulas in these cases, which are thus omitted --- we focus on their calculus rules.
	
	We denote the space of \emph{test $1$-forms} and \emph{test vector fields}, respectively, by
	\begin{align*}
	\Test(T^*\mms) &:= \Span\!\big\lbrace g\d f : f,g\in\Test(\mms)\big\rbrace,\\
	\Test(T\mms) &:= \Test(T^*\mms)^\sharp.
	\end{align*}
	Then $\Test(T^*\mms)$ and $\Test(T\mms)$ are dense in $\Ell^2(T^*\mms)$ and $\Ell^2(T\mms)$, respectively.
	
	Define the linear space $W^{2,2}(\mms)$ as the space of all $f\in W^{1,2}(\mms)$ which admit a \emph{Hessian} $\Hess f\in\Ell^2((T^*)^{\otimes 2}\mms)$. The Hessian of a fixed function is symmetric and $\Ell^\infty$-bilinear. Its graph is a closed subset of $W^{1,2}(\mms)\times \Ell^2((T^*)^{\otimes 2}\mms)$. As an important consequence of the $\RCD(K,\infty)$ assumption, a class which is dense in $\Ell^2(\mms)$ is contained in $W^{2,2}(\mms)$, and the \emph{integrated $2$-Bochner inequality} holds --- more precisely, for every $f\in\Dom(\Delta)$, we have $f\in W^{2,2}(\mms)$ with
	\begin{align}\label{Eq:Integrated Bochner}
	\int_\mms \vert\hspace*{-0.05cm}\Hess f\vert^2\d\meas \leq \int_\mms \big[\vert \Delta f\vert^2 - K\,\vert\rmd f\vert^2\big]\d\meas.
	\end{align}
	
	Next, let $W^{1,2}(T\mms)$ be the linear space of all $X\in \Ell^2(T\mms)$ admitting a \emph{covariant derivative} $\nabla X\in\Ell^2(T^{\otimes 2}\mms)$. The graph of the  operator $\nabla$ is a closed subset of $\Ell^2(T\mms)\times \Ell^2(T^{\otimes 2}\mms)$. We have $\Test(T\mms)\subset W^{1,2}(T\mms)$ --- more precisely, given any $f,g\in \Test(\mms)$, we have $g\,\nabla f\in W^{1,2}(T\mms)$ with
	\begin{align*}
	\nabla(g\,\nabla f) = \nabla g\otimes \nabla f + g\,(\Hess f)^\sharp.
	\end{align*}
	Moreover, $\nabla f\in W^{1,2}(T\mms)$ for every $f\in W^{2,2}(\mms)$ with $\nabla^2 f := \nabla\nabla f = (\Hess f)^\sharp$. Lastly, $\nabla$ is compatible with $\langle\cdot,\cdot\rangle$ on $\Ell^2(T\mms)^2$ in the following way. Let
	\begin{align*}
	H^{1,2}(T\mms) &= \cl_{\Vert\cdot\Vert_{H^{1,2}}} \Test(T\mms),\\
	\Vert\cdot\Vert_{H^{1,2}}^2 &:= \Vert\cdot\Vert_{\Ell^2}^2 + \Vert \nabla\cdot\Vert_{\Ell^2}^2,
	\end{align*}
	a separable Hilbert space,  dense in $\Ell^2(T\mms)$.  Given any $X\in H^{1,2}(T\mms)\cap\Ell^\infty(T\mms)$ and $Z\in \Ell^2(T\mms)$, let $\nabla_Z X\in\Ell^0(T\mms)$ be the vector field uniquely defined by $\langle\nabla_Z X, V\rangle := \nabla X : (Z\otimes V)$ for every $V\in\Ell^0(T\mms)$. Then for every $Y\in H^{1,2}(T\mms)\cap\Ell^\infty(T\mms)$, we have $\langle X,Y\rangle\in W^{1,2}(\mms)$ with
	\begin{align}\label{Eq:Compatibility with the metric}
	\rmd\langle X,Y\rangle(Z) = \langle\nabla_Z X,Y\rangle + \langle\nabla_ZY,X\rangle\quad\meas\text{-a.e.}
	\end{align}
	
	We next turn to the contravariant picture. Let $\Dom(\rmd)$ be the linear space of all $\omega\in\Ell^2(T^*\mms)$ which have an \emph{exterior differential} $\rmd \omega\in\Ell^2(\Lambda^2T^*\mms)$. This gives rise to a closed operator $\rmd$ on $\Ell^2(T^*\mms)$. For every $f,g\in\Test(\mms)$, we have $g\d f\in \Dom(\rmd)$ with
	\begin{align*}
	\rmd(g\d f) := \rmd g\wedge\rmd f.
	\end{align*}
	Lastly, we have $\rmd f\in \Dom(\rmd)$ for every $f\in W^{1,2}(\mms)$ with $\rmd^2 f := \rmd\rmd f = 0$. The formal adjoint of the exterior differential is the \emph{codifferential} $\delta$. Its domain $\Dom(\delta)$ is the space of all differential $1$-forms $\omega\in\Ell^2(T^*\mms)$ for which $\omega^\sharp\in\Dom(\div)$, in which case we define $\delta\omega := -\div \omega^\sharp$. In particular, if $f\in\Dom(\Delta)$, then $\rmd f\in \Dom(\delta)$ with $\delta\rmd f = -\Delta f$. The graph of $\delta$ is a closed subset of $\Ell^2(T^*\mms)\times \Ell^2(\mms)$. Having now these two notions at our disposal,  define the \emph{Hodge space} 
	\begin{align*}
	H^{1,2}(T^*\mms) &:= \cl_{\Vert\cdot\Vert_{H^{1,2}}}\Test(T^*\mms),\\
	\Vert\cdot\Vert_{H^{1,2}}^2 &:= \Vert\cdot\Vert_{\Ell^2}^2 + \Vert \rmd\cdot\Vert_{\Ell^2}^2 + \Vert \delta\cdot\Vert_{\Ell^2}^2,
	\end{align*}
	a separable Hilbert space which is dense in $\Ell^2(T^*\mms)$. The \emph{Hodge energy functional} $\calE\colon\Ell^2(T^*\mms)\to [0,\infty]$ is 
	\begin{align}\label{Eq:Hodge energy functional}
	\calE(\omega) := \begin{cases}\displaystyle\frac{1}{2}\int_\mms\big[\vert\rmd \omega\vert^2 + \vert\delta\omega\vert^2\big]\d\meas & \text{if }\omega\in H^{1,2}(T^*\mms),\\
	\infty & \text{otherwise}.
	\end{cases}
	\end{align}
	
	We provide the subsequent two lemmata concerning the space $H^{1,2}(T^*\mms)$. We give a proof for \autoref{Le:Differential regularity} for convenience.  \autoref{Le:Multiplication with Sobolev functions} follows by similar results for $\Test(T^*\mms)$ from \cite[Thm.~3.5.2, Prop.~3.5.12]{gigli2018} by approximation as in  \autoref{Le:Mollified heat flow}.
	
	\begin{lemma}\label{Le:Differential regularity} For every $f\in \Ell^2(\mms)$ and every $t>0$, we have $\rmd\ChHeat_tf\in H^{1,2}(T^*\mms)$.
	\end{lemma}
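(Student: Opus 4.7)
The plan is to reduce to bounded $f$ and then approximate $\rmd\ChHeat_tf$ in the $H^{1,2}$-graph norm by test $1$-forms obtained by multiplying $\rmd\ChHeat_tf$ with cutoffs produced via a short-time heat-flow smoothing.

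First I would reduce to the case $f\in\Ell^2(\mms)\cap\Ell^\infty(\mms)$. Setting $f_n := \One_{\{\vert f\vert\leq n\}}\,f$, the $\Ell^2$-continuity of $\rmd\ChHeat_t$ together with the a priori estimate \eqref{Eq:A priori estimate Laplacian} applied to $f_n-f$ yields $\rmd\ChHeat_tf_n\to\rmd\ChHeat_tf$ in $\Ell^2(T^*\mms)$ and $\delta\rmd\ChHeat_tf_n = -\Delta\ChHeat_tf_n\to -\Delta\ChHeat_tf = \delta\rmd\ChHeat_tf$ in $\Ell^2(\mms)$, while $\rmd\rmd\ChHeat_tg=0$ always. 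Closedness of $H^{1,2}(T^*\mms)$ in its graph norm thus propagates the conclusion from the bounded case to general $f$, so I may assume $h := \ChHeat_tf\in\Test(\mms)$.

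Next I would construct cutoffs $\psi_R\in\Test(\mms)$ with $0\leq\psi_R\leq 1$, $\psi_R\to 1$ $\meas$-a.e.~as $R\to\infty$, and $\vert\rmd\psi_R\vert\leq c/R$ $\meas$-a.e.~for a constant $c$ independent of $R$. Fixing a basepoint $z\in\mms$ and defining the $1/R$-Lipschitz truncation $\eta_R := \max\{0,1-\met(\cdot,\overline{B_R(z)})/R\}$, the volume growth \eqref{Eq:Volume growth} ensures $\eta_R\in W^{1,2}(\mms)\cap\Ell^\infty(\mms)$ with $\vert\rmd\eta_R\vert\leq 1/R$ $\meas$-a.e. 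Setting $\psi_R := \ChHeat_\tau\eta_R\in\Test(\mms)$ for some fixed $\tau>0$, mass preservation of the heat kernel combined with dominated convergence gives $\psi_R\to 1$ pointwise $\meas$-a.e., while the $1$-Bakry--Émery inequality \eqref{Eq:Bakry-Emery L^p} yields $\vert\rmd\psi_R\vert\leq\rme^{-K\tau}\,\ChHeat_\tau(\vert\rmd\eta_R\vert)\leq\rme^{-K\tau}/R$ $\meas$-a.e.

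Finally, $\omega_R := \psi_R\,\rmd h\in\Test(T^*\mms)$ since $\psi_R,h\in\Test(\mms)$. Dominated convergence with dominant $\vert\rmd h\vert^2\in\Ell^1(\mms)$ gives $\omega_R\to\rmd h$ in $\Ell^2(T^*\mms)$; the pointwise bound $\vert\rmd\omega_R\vert=\vert\rmd\psi_R\wedge\rmd h\vert\leq\vert\rmd\psi_R\vert\,\vert\rmd h\vert\leq c\,\vert\rmd h\vert/R$ forces $\rmd\omega_R\to 0$ in $\Ell^2(\Lambda^2 T^*\mms)$; and the product rule \eqref{Eq:div product rule} applied to $\psi_R\,\nabla h$ yields
\begin{align*}
\delta\omega_R = -\langle\rmd\psi_R,\rmd h\rangle-\psi_R\,\Delta h \longrightarrow -\Delta h = \delta\rmd h\quad\text{in }\Ell^2(\mms),
\end{align*}
the first term vanishing by Cauchy--Schwarz and the gradient bound, the second converging by dominated convergence. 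Hence $\omega_R\to\rmd h$ in $\Vert\cdot\Vert_{H^{1,2}}$, proving $\rmd h\in H^{1,2}(T^*\mms)$. The hard part is producing the cutoffs $\psi_R\in\Test(\mms)$ without assuming local compactness of $\mms$ (so that \autoref{Le:Cutoff function} does not apply); this is overcome by combining a distance-based Lipschitz cutoff with a single short-time application of $\ChHeat_\tau$, which simultaneously supplies the test function regularity and, via the $1$-Bakry--Émery inequality, the quantitative gradient decay.
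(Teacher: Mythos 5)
Your proof is correct and in fact more careful than the paper's own argument. The paper uses the same truncation and closedness argument as your first paragraph (with $f_n := \min\{n,\max\{f,-n\}\}$, convergence of $\rmd\ChHeat_tf_n$ in $\Ell^2(T^*\mms)$, $\rmd^2\ChHeat_tf_n=0$, and $\delta\rmd\ChHeat_tf_n=-\Delta\ChHeat_tf_n\to-\Delta\ChHeat_tf$ via \eqref{Eq:A priori estimate Laplacian}), but asserts without justification that $\rmd\ChHeat_tf_n\in\Test(T^*\mms)$. When $\meas[\mms]=\infty$ this cannot hold as written, since $\rmd h=\One_\mms\,\rmd h$ for $h\in\Test(\mms)$ would require $\One_\mms\in\Test(\mms)$. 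What actually suffices for the closedness argument — and what your cutoff argument supplies from scratch — is $\rmd h\in H^{1,2}(T^*\mms)$ for every $h\in\Test(\mms)$. Your construction $\psi_R:=\ChHeat_\tau\eta_R$ handles this cleanly without local compactness: the volume growth \eqref{Eq:Volume growth} puts the distance-based truncations $\eta_R$ in $W^{1,2}(\mms)\cap\Ell^\infty(\mms)$, the heat semigroup upgrades them to test functions, and \eqref{Eq:Bakry-Emery L^p} together with conservativeness of $(\ChHeat_t)_{t\geq 0}$ gives the uniform decay $\vert\rmd\psi_R\vert\leq\rme^{-K\tau}/R$. The subsequent convergences $\psi_R\,\rmd h\to\rmd h$, $\rmd(\psi_R\,\rmd h)\to 0$ and $\delta(\psi_R\,\rmd h)\to-\Delta h$ in the respective $\Ell^2$-norms are all correctly justified. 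In short, you have reproduced the paper's outline and explicitly filled the one step it leaves implicit; the fact that $\rmd h\in H^{1,2}(T^*\mms)$ for test functions $h$ is also available in Gigli's monograph, but your direct cutoff proof is a complete and self-contained substitute.
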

	
	\begin{proof} Given any $f\in \Ell^2(\mms)$ and $n\in\N$, set $f_n := \min\{n,\max\{f,-n\}\} \in\Ell^2(\mms)\cap\Ell^\infty(\mms)$. Then $(\rmd\ChHeat_tf_n)_{n\in\N}$ is a sequence in $\Test(T^*\mms)$ which converges to $\rmd\ChHeat_t f$ in $\Ell^2(T^*\mms)$ for every $t>0$. 
		
		By the closedness of the exterior differential, it follows that $\rmd\ChHeat_t f\in\Dom(\rmd)$ and $\rmd^2\ChHeat_tf_n = \rmd^2\ChHeat_t f = 0$ for every $n\in\N$. Since $\delta\rmd\ChHeat_t f_n = - \Delta\ChHeat_tf_n \to -\Delta\ChHeat_t f = \delta\rmd\ChHeat_tf$ in $\Ell^2(\mms)$ as $n\to\infty$ 
		as a consequence of \eqref{Eq:A priori estimate Laplacian}, we obtain the claim. 
	\end{proof}
	
	\begin{lemma}\label{Le:Multiplication with Sobolev functions} 
		Let $f\in \Sobo^2(\mms)\cap\Ell^\infty(\mms)$ and $\omega\in H^{1,2}(T^*\mms)$. Assume moreover that $\rmd f\in\Ell^\infty(T^*\mms)$ or that $\omega\in\Ell^\infty(T^*\mms)$. Then $f\,\omega \in H^{1,2}(T^*\mms)$ with
		\begin{align*}
		\rmd(f\,\omega) &= f\,\rmd\omega + \rmd f\wedge\omega,\\
		\delta(f\,\omega) &= f\,\delta\omega - \langle\rmd f,\omega\rangle.
		\end{align*}
	\end{lemma}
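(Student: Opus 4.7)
My plan is to establish the identities first when $f\in\Test(\mms)$ and $\omega\in H^{1,2}(T^*\mms)$ is arbitrary, and then to reduce the general case to this one by approximating $f$ via \autoref{Le:Mollified heat flow}. The two running assumptions ($\rmd f\in\Ell^\infty(T^*\mms)$ or $\omega\in\Ell^\infty(T^*\mms)$) appear only when passing the two terms involving $\rmd f$ to the limit.

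\emph{Step 1 (test $f$, general $\omega$).} Fix $f\in\Test(\mms)$. By definition of $H^{1,2}(T^*\mms)$, pick $(\omega_n)_{n\in\N}\subset\Test(T^*\mms)$ with $\omega_n\to\omega$ in the $H^{1,2}$-norm. Then $f\,\omega_n\in\Test(T^*\mms)$, and the Leibniz rules for test $1$-forms cited from \cite[Thm.~3.5.2, Prop.~3.5.12]{gigli2018} yield
\begin{align*}
\rmd(f\,\omega_n) &= f\,\rmd\omega_n + \rmd f\wedge\omega_n, \\
\delta(f\,\omega_n) &= f\,\delta\omega_n - \langle \rmd f,\omega_n\rangle.
\end{align*}
Since $f\in\Ell^\infty(\mms)$ and $\rmd f\in\Ell^\infty(T^*\mms)$, the left- and right-hand sides converge in $\Ell^2(T^*\mms)$, $\Ell^2(\Lambda^2 T^*\mms)$, and $\Ell^2(\mms)$ respectively, to $f\,\omega$, $f\,\rmd\omega+\rmd f\wedge\omega$ and $f\,\delta\omega-\langle\rmd f,\omega\rangle$. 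Closedness of $\rmd$ and $\delta$ on $\Ell^2(T^*\mms)$ then yields $f\,\omega\in H^{1,2}(T^*\mms)$ with the claimed identities.

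\emph{Step 2 (general $f$, approximation).} Apply \autoref{Le:Mollified heat flow} to pick $(f_n)_{n\in\N}\subset\Test(\mms)$ with $f_n\to f$ in $W^{1,2}(\mms)$ and $\vert f_n\vert\leq \Vert f\Vert_{\Ell^\infty}$ $\meas$-a.e. If moreover $\rmd f\in\Ell^\infty(T^*\mms)$, the same lemma arranges in addition that $(\rmd f_n)_{n\in\N}$ is bounded in $\Ell^\infty(T^*\mms)$. Step 1 applied to each $f_n$ gives $f_n\,\omega\in H^{1,2}(T^*\mms)$ together with the corresponding two identities.

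\emph{Step 3 (passage to the limit).} Passing to a subsequence so that $f_n\to f$ and $\rmd f_n\to\rmd f$ hold pointwise $\meas$-a.e., the uniform $\Ell^\infty$-bound on $(f_n)$ and dominated convergence force $f_n\,\omega\to f\,\omega$, $f_n\,\rmd\omega\to f\,\rmd\omega$ and $f_n\,\delta\omega\to f\,\delta\omega$ in the appropriate $\Ell^2$-spaces. The two remaining terms are handled by case: if $\omega\in\Ell^\infty(T^*\mms)$, then $\rmd f_n\wedge\omega\to\rmd f\wedge\omega$ in $\Ell^2(\Lambda^2 T^*\mms)$ and $\langle\rmd f_n,\omega\rangle\to\langle\rmd f,\omega\rangle$ in $\Ell^2(\mms)$ simply from $\rmd f_n\to\rmd f$ in $\Ell^2(T^*\mms)$; if instead $\rmd f\in\Ell^\infty(T^*\mms)$, the uniform $\Ell^\infty$-bound on $(\rmd f_n)$ combined with a.e.\ convergence and the pointwise Cauchy--Schwarz inequality gives a domination of each integrand by a multiple of $\vert\omega\vert^2\in\Ell^1(\mms)$, so dominated convergence delivers the same $\Ell^2$-limits. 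Applying closedness of $\rmd$ and $\delta$ one last time completes the argument. The only mildly delicate point is keeping track, in the latter case, of the $\meas$-a.e.\ subsequential extraction needed to dominate the wedge and scalar-product terms; everything else is routine once Step 1 is in place.
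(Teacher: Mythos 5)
Your argument is correct and is exactly the route the paper intends: start from the Leibniz rules on $\Test(T^*\mms)$ with a test function $f$ (the cited results from Gigli's memoir), and upgrade by mollifying $f$ as in \autoref{Le:Mollified heat flow} and invoking the closedness of $\rmd$ and $\delta$, with the two alternative boundedness hypotheses deployed precisely where you say, to control the cross terms $\rmd f_n\wedge\omega$ and $\langle\rmd f_n,\omega\rangle$ in the limit. The only thing to keep in mind, shared by the paper's own one-line sketch, is that $\Sobo^2(\mms)\cap\Ell^\infty(\mms)$ is not contained in $W^{1,2}(\mms)$ when $\meas[\mms]=\infty$, so \autoref{Le:Mollified heat flow} does not literally apply to $f$; this is easily repaired either by using locality of $\rmd$ and $\delta$ to reduce to $\omega$ of bounded support (then truncating $f$ near $\spt\omega$), or by taking $f_n:=\ChHeat_{1/n}f$ directly, which is Lipschitz and bounded so Gigli's Leibniz rules still apply to it, and then arguing the $\Ell^2$-convergence of $\rmd f_n$ by hand.
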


	\begin{definition} A differential $1$-form $\omega\in H^{1,2}(T^*\mms)$ belongs to $\Dom(\Hodge)$  if there exists $\alpha\in \Ell^2(T^*\mms)$ such that for every $\eta\in H^{1,2}(T^*\mms)$,
		\begin{align*}
		\int_\mms \langle\eta,\alpha\rangle\d\meas =  \int_\mms \big[\langle \rmd\eta,\rmd\omega\rangle + \delta\eta\,\delta\omega\big]\d\meas.
		\end{align*}
		In case of existence, $\alpha$ is unique, denoted by $\Hodge \omega$ and termed  \emph{Hodge Laplacian} of $\omega$.
	\end{definition}
	
	The induced operator $\Hodge$ on $\Ell^2(T^*\mms)$ is nonnegative, self-adjoint and closed. The space of \emph{harmonic $1$-forms}, i.e.~those $\omega\in\Dom(\Hodge)$ with $\Hodge\omega = 0$ --- or equivalently, $\omega\in H^{1,2}(T^*\mms)$ with $\rmd\omega = 0$ and $\delta \omega = 0$ --- is termed $\Harm(T^*\mms)$. Furthermore, for every $f,g\in\Test(\mms)$, we have $g\d f \in\Dom(\Hodge)$ --- actually also with $\smash{\Hodge(g \d f)\in\Ell^1(T^*\mms)}$ --- and
	\begin{align}\label{Le:Hodge Laplacian on test forms}
	\Hodge(g\d f) = -g\d\Delta f -\Delta g\d f - 2\Hess f(\nabla g,\cdot).
	\end{align}

	\subsubsection*{Measure-valued Ricci curvature} The main result of \cite{gigli2018} is the appropriate definition of a measure-valued \emph{Ricci tensor} $\RIC$ on $\RCD(K,\infty)$ spaces. Its final outcome is stated in  \autoref{Th:Ricci tensor}. In the $\RCD^*(K,N)$ framework, $N\in(1,\infty)$, a dimensional $N$-Ricci tensor has been introduced and studied in \cite{han2018}. The main result therein is formulated in \autoref{Pr:Han's bound}.
	
	\begin{definition} Let $\Dom(\DELTA)$ consist of all functions $f\in W^{1,2}(\mms)$ for which there exists a signed measure $\mu\in \Meas(\mms)$ such that for every $g\in \Lip_\bs(\mms)$, 
		\begin{align*}
		\int_\mms g\d\mu = -\int_\mms \langle\nabla g,\nabla f\rangle\d\meas.
		\end{align*}
		In case of existence, $\mu$ is unique, denoted by $\DELTA f$ and called the \emph{measure-valued Laplacian} of $f$.
	\end{definition}
	
	This definition is compatible with the functional Laplacian in the sense that $f\in \Dom(\Delta)$ if and only if $f\in \Dom(\DELTA)$ and $\DELTA f$ has a density $h\in\Ell^2(\mms)$ w.r.t.~$\meas$, in which case $\Delta f = h$ $\meas$-a.e. The following is proven in \cite[Lem.~3.1]{braun2019}.
	
	\begin{lemma}\label{Le:Laplacian functions chain rule} For every $f\in\Dom(\DELTA)\cap\Ell^\infty(\mms)$, every interval $I\subset \R$ with $f(\mms)\cup\{0\}\subset I$ and every $\Phi\in\Cont^2(I)$ with $\Phi(0) = 0$, we have $\Phi(f) \in\Dom(\DELTA)$ with
		\begin{align*}
		\DELTA\Phi(f) = \Phi'(f)\,\DELTA f + \Phi''(f)\,\Gamma(f)\,\meas.
		\end{align*}
	\end{lemma}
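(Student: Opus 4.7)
The plan is to verify that the candidate measure $\mu := \Phi'(f)\,\DELTA f + \Phi''(f)\,\Gamma(f)\,\meas$ lies in $\Meas(\mms)$ and satisfies the defining relation of the measure-valued Laplacian for $\Phi(f)$. First I would establish the preliminary regularity. Since $f\in\Ell^\infty(\mms)$ and $\Phi\in\Cont^2(I)$ on an interval $I$ containing the range of $f$, the functions $\Phi'(f)$ and $\Phi''(f)$ are bounded. The hypothesis $\Phi(0)=0$ combined with boundedness of $f$ gives the bound $|\Phi(f)|\leq \Vert \Phi'\Vert_{\infty,I_0}\,|f|$ on $\mms$ for a suitable bounded subinterval $I_0$, so $\Phi(f)\in \Ell^2(\mms)\cap\Ell^\infty(\mms)$. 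The chain rule in \autoref{Le:Calculus rules d} yields $\Phi(f)\in W^{1,2}(\mms)$ with $\nabla\Phi(f)=\Phi'(f)\nabla f$, and moreover $\Phi'(f)\in\Ell^\infty(\mms)$ with $|\nabla\Phi'(f)|=|\Phi''(f)|\,|\nabla f|\in\Ell^2(\mms)$. Boundedness of $\Phi'(f)$ and $\Phi''(f)$ shows $|\mu|\leq \Vert\Phi'\Vert_\infty\,|\DELTA f|+\Vert\Phi''\Vert_\infty\,\Gamma(f)\,\meas\in\Meas(\mms)$.

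Next, for $g\in\Lip_\bs(\mms)$, the defining identity for $\DELTA\Phi(f)=\mu$ reads
\begin{align*}
\int_\mms g\,\Phi'(f)\d\DELTA f+\int_\mms g\,\Phi''(f)\,\Gamma(f)\d\meas
=-\int_\mms \Phi'(f)\,\langle\nabla g,\nabla f\rangle\d\meas.
\end{align*}
Using the Leibniz rule, $\nabla(g\,\Phi'(f)) = \Phi'(f)\,\nabla g + g\,\Phi''(f)\,\nabla f$, so this identity is exactly the defining property of $\DELTA f$ tested against the function $g\,\Phi'(f)$. By Leibniz (\autoref{Le:Calculus rules d}(iii)), $g\,\Phi'(f)\in\Sobo^2(\mms)\cap\Ell^\infty(\mms)$ and has bounded support, hence belongs to $W^{1,2}(\mms)$.

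The only obstacle is that $g\,\Phi'(f)$ is not a priori Lipschitz, so it does not lie in $\Lip_\bs(\mms)$. This will be the main technical step: extending the defining relation of $\DELTA f$ from $\Lip_\bs(\mms)$ to bounded-support elements of $W^{1,2}(\mms)\cap\Ell^\infty(\mms)$. I would handle this by regularizing the outer factor: mollify $\Phi'$ on $\R$ by convolution with a standard mollifier to obtain $\Psi_n\in\Cont^1(\R)$ with $\Psi_n\to \Phi'$ and $\Psi_n'\to\Phi''$ uniformly on (a neighborhood of) the range of $f$, and such that each $\Psi_n'$ is bounded. Then $\Psi_n(f)\in W^{1,2}(\mms)\cap\Ell^\infty(\mms)$ with $|\nabla\Psi_n(f)|\leq \Vert\Psi_n'\Vert_\infty\,|\nabla f|$, and for a sequence $(f_k)_k$ provided by \autoref{Le:Mollified heat flow} approximating $f$ in $W^{1,2}(\mms)$ (with uniformly bounded sup-norms), the product $g\,\Psi_n(f_k)$ lies in $\Lip_\bs(\mms)$ because test functions have Lipschitz representatives and $\Psi_n\in\Cont^1$. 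Plugging these into the defining relation for $\DELTA f$ and then passing $k\to\infty$ and $n\to\infty$ using dominated convergence (the sup-norm bounds plus $|\DELTA f|(\mms)<\infty$ and $\Gamma(f)\in\Ell^1(\mms)$) yields the claim. In particular, each limit step works on the $\Lip_\bs$-supported ball containing the support of $g$, so no local compactness or cutoff outside $\supp g$ is needed.
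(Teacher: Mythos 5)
The reduction you set up is the right one: verify the defining relation of $\DELTA\Phi(f)=\mu$ against $g\in\Lip_\bs(\mms)$ by recognizing the target identity as the defining relation of $\DELTA f$ tested against $h:=g\,\Phi'(f)$, after applying the Leibniz and chain rules. You also correctly isolate the only real technical obstacle: $h$ need not belong to $\Lip_\bs(\mms)$, so the defining relation of $\DELTA f$ does not apply directly. (The paper itself gives no proof here and simply cites \cite[Lem.~3.1]{braun2019}, so there is nothing in the source to compare against step by step.) Two small remarks first: the mollification of $\Phi'$ is superfluous, since $\Phi\in\Cont^2(I)$ already makes $\Phi'$ a $\Cont^1$-function, and \autoref{Le:Mollified heat flow} keeps the $f_k$ in the same bounded range, so $\Phi'(f_k)$ is Lipschitz as it stands; and once one has the convergence it is enough to take $\Psi_n\equiv\Phi'$ throughout.

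The genuine gap is in the last limit passage. After substituting $g\,\Phi'(f_k)\in\Lip_\bs(\mms)$ into the defining relation of $\DELTA f$, you need
\begin{align*}
\int_\mms g\,\Phi'(f_k)\,\rmd\DELTA f \longrightarrow \int_\mms g\,\Phi'(f)\,\rmd\DELTA f
\end{align*}
as $k\to\infty$. You invoke dominated convergence on the basis of the uniform sup-bounds and $\meas$-a.e.\ convergence $f_k\to f$. But $\DELTA f\in\Meas(\mms)$ is only a finite signed Radon measure and may well have a part singular with respect to $\meas$; $\meas$-a.e.\ convergence of the integrands gives no control whatsoever against that singular part, so dominated convergence does not apply as stated. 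This is exactly the delicate point, and it is not a formality. The standard repair is potential-theoretic: the defining identity shows that $\DELTA f$ is a measure of finite energy integral, $|\int g\,\rmd\DELTA f|\leq\Vert g\Vert_{W^{1,2}}\Vert f\Vert_{W^{1,2}}$, hence it charges no set of zero $2$-capacity. Since strong $W^{1,2}$-convergence of $(f_k)_{k}$ implies, along a subsequence, quasi-everywhere convergence of quasi-continuous representatives, and the Lipschitz representatives of $f_k\in\Test(\mms)$ are precisely those, one obtains $\Phi'(f_k)\to\Phi'(f)$ $\vert\DELTA f\vert$-a.e.\ and dominated convergence then closes the argument. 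Without this input the proof as written is incomplete; everything else (the $W^{1,2}$-regularity of $\Phi(f)$, finiteness of $|\mu|$, convergence of the right-hand side integrals in $\Ell^1(\meas)$) is fine.
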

	
	A crucial outcome of the $\RCD(K,\infty)$ condition is that $\vert X\vert^2\in\Dom(\DELTA)$ for every $X\in\Test(T\mms)$.
	
	Consider now the image $H^{1,2}(T^*\mms)^\sharp$ of $H^{1,2}(T^*\mms)$ under $\sharp$ and, abusing notation, equip it with the norm $\smash{\Vert \cdot\Vert_{H^{1,2}} := \Vert \cdot^\flat\Vert_{H^{1,2}}}$. A key feature yielding \autoref{Th:Ricci tensor} below is that the inclusion $H^{1,2}(T^*\mms)^\sharp\subset H^{1,2}(T\mms)$ is continuous, i.e.~$X\in H^{1,2}(T\mms)$ for every $X\in H^{1,2}(T^*\mms)^\sharp$, and
	\begin{align}\label{Eq:H embedding}
	\int_\mms \vert\nabla X\vert^2\d\meas \leq \int_\mms \big[\vert\rmd X^\flat\vert^2 + \vert\delta X^\flat\vert^2 - K\,\vert X\vert^2\big]\d\meas.
	\end{align}
	
	\begin{theorem}\label{Th:Ricci tensor} There exists a unique continuous map $\RIC\colon H^{1,2}(T^*\mms)^\sharp\to\Meas(\mms)$ which satisfies
		\begin{align*}
		\RIC(X,Y) = \frac{1}{2}\bdDelta \langle X,Y\rangle + \Big[\frac{1}{2}\big\langle X, (\Hodge Y^\flat)^\sharp\big\rangle + \frac{1}{2}\big\langle Y, (\Hodge X^\flat)^\sharp\big\rangle - \nabla X : \nabla Y\Big]\,\meas
		\end{align*}
		for every $X,Y\in\Test(T\mms)$. It is symmetric and $\R$-bilinear. Furthermore,  for every $X,Y\in H^{1,2}(T^*\mms)^\sharp$ it satisfies the following relations.
		\begin{enumerate}[label=\textnormal{(\roman*)}]
			\item\label{La:RIC lower bound} \textnormal{\textsc{Ricci bound.}} We have
			\begin{align*}
			\RIC(X,X) \geq K\,\vert X\vert^2\,\meas.
			\end{align*}
			\item \textnormal{\textsc{Integrated Bochner formula.}} We have
			\begin{align*}
			\RIC(X,X)[\mms] = \int_\mms \big[\langle \rmd X^\flat,\rmd Y^\flat \rangle + \delta X^\flat\,\delta Y^\flat - \nabla X:\nabla Y\big]\d\meas
			\end{align*}
		\end{enumerate}
	\end{theorem}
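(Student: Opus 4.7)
The plan is to construct $\RIC$ on the dense subspace $\Test(T\mms)\subset H^{1,2}(T^*\mms)^\sharp$ by the explicit formula of the statement, establish the Ricci lower bound \textit{(i)} and the integrated identity \textit{(ii)} on test fields, and then use these to produce a continuity estimate that yields a unique continuous extension to the whole space.

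\textbf{Well-definedness and algebraic properties.} For $X,Y\in\Test(T\mms)$, the crucial structural input of the $\RCD(K,\infty)$ condition is that $|X|^2\in\Dom(\DELTA)$, so by polarization $\langle X,Y\rangle\in\Dom(\DELTA)$, taking care of the measure-valued first summand. For the density part, \eqref{Le:Hodge Laplacian on test forms} gives $\Hodge Y^\flat\in\Ell^2(T^*\mms)\cap\Ell^1(T^*\mms)$, so $\langle X,(\Hodge Y^\flat)^\sharp\rangle\in\Ell^1(\mms)$, while $\nabla X:\nabla Y\in\Ell^1(\mms)$ because $X,Y\in H^{1,2}(T\mms)$. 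The resulting assignment $(X,Y)\mapsto\RIC(X,Y)$ is manifestly symmetric and $\R$-bilinear, using \autoref{Le:Laplacian functions chain rule} to manipulate $\bdDelta$ of quadratic expressions.

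\textbf{Ricci bound.} The heart of the argument is the self-improved vector $\Gamma_2$-inequality
\[
\tfrac{1}{2}\bdDelta|X|^2 + \langle X,(\Hodge X^\flat)^\sharp\rangle\,\meas \geq \big[K\,|X|^2 + |\nabla X|^2\big]\,\meas,
\]
which when inserted into the defining formula becomes precisely \textit{(i)}. The a priori weaker inequality \eqref{Eq:2-Gamma-2} (without the covariant term) follows by differentiating the semigroup interpolation $s\mapsto\ChHeat_s\langle\HHeat_{t-s}X^\flat,\HHeat_{t-s}X^\flat\rangle$ at $s=t$, using \eqref{Eq:Bakry-Emery L^p} and integration by parts. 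A Gigli-type self-improvement, testing \eqref{Eq:2-Gamma-2} against perturbations of the form $\varphi(g)\,X$ for suitable $g\in\Test(\mms)$ and $\varphi\in\Cont^2(\R)$, expanding and optimizing in the perturbation parameter, then squeezes out the missing $|\nabla X|^2$ via Cauchy--Schwarz together with the metric compatibility \eqref{Eq:Compatibility with the metric}. This self-improvement is the principal obstacle: it requires showing that all perturbed fields stay in the relevant domain, handling the measure-valued Leibniz rule of \autoref{Le:Laplacian functions chain rule}, and extracting the correct limiting pointwise inequality rather than merely an integrated one.

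\textbf{Integrated Bochner formula and continuous extension.} Pairing the defining identity with a cutoff $\chi\in\Test(\mms)$ (available by \autoref{Le:Cutoff function} in locally compact situations and by a standard mollification in the general $\RCD(K,\infty)$ setting) and letting $\chi\nearrow\One_\mms$ delivers \textit{(ii)}, because $\int_\mms\chi\d\bdDelta|X|^2\to 0$ in the limit while the defining property of $\Hodge$ converts $\int\chi\,\langle X,(\Hodge X^\flat)^\sharp\rangle\d\meas$ into $\int\chi\,[|\rmd X^\flat|^2+(\delta X^\flat)^2]\d\meas$. Combining the bound $\RIC(X,X)\geq K\,|X|^2\meas$ (which controls the negative part $\RIC(X,X)_-$ by $|K|\,\||X|^2\|_{\Ell^1}$) with \textit{(ii)} gives
\[
\big\|\RIC(X,X)\big\|_{\TV} \leq \int_\mms\big[|\rmd X^\flat|^2+(\delta X^\flat)^2\big]\d\meas + 2|K|\int_\mms|X|^2\d\meas \leq C(K)\,\Vert X^\flat\Vert_{H^{1,2}}^2.
\]
Polarizing turns $(X,Y)\mapsto\RIC(X,Y)$ into a continuous bilinear map $\Test(T\mms)^2\to\Meas(\mms)$ in the $H^{1,2}$-topology. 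Density of $\Test(T^*\mms)$ in $H^{1,2}(T^*\mms)$ combined with the continuous inclusion \eqref{Eq:H embedding} of $H^{1,2}(T^*\mms)^\sharp$ into $H^{1,2}(T\mms)$ then yields the unique continuous extension $\RIC\colon H^{1,2}(T^*\mms)^\sharp\to\Meas(\mms)$, and both \textit{(i)} and \textit{(ii)} persist by passage to the limit along any approximating sequence.
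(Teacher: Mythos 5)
This theorem is stated in the paper as a citation of the main result of Gigli's memoir \cite{gigli2018}; the paper does not prove it. Your overall scaffolding (define $\RIC$ on $\Test(T\mms)$ via the explicit formula, establish the Ricci bound and the integrated identity there, read off a total-variation bound, and extend by density using the continuous embedding \eqref{Eq:H embedding}) does mirror the structure of Gigli's construction. However, there is a genuine gap in the step that carries the whole argument.

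Your route to the weak vector $\Gamma_2$-inequality \eqref{Eq:2-Gamma-2} is circular. Differentiating $s\mapsto\ChHeat_s\langle\HHeat_{t-s}X^\flat,\HHeat_{t-s}X^\flat\rangle$ and exploiting monotonicity is the argument that turns a Bochner inequality for $1$-forms \emph{into} a Bakry--Émery estimate; running it backwards needs the contravariant Bakry--Émery bound \eqref{Eq:DIMDE} (item \ref{La:Bakry-Emery L^2} of \autoref{Th:L^2-contractivity}) to already be in hand. But that bound is recorded in the paper as a consequence of \autoref{Th:Ricci tensor}, so you cannot invoke it here. The $1$-Bakry--Émery inequality \eqref{Eq:Bakry-Emery L^p} that you cite is a scalar statement about $\Gamma(\ChHeat_t f)$ and does not control $|\HHeat_{t-s}X^\flat|^2$ for a general test vector field; nothing in the pre-\autoref{Th:Ricci tensor} toolbox bounds the $\meas$-a.e.\ behaviour of $|\HHeat_t\omega|$ against $\ChHeat_t|\omega|$ or $\ChHeat_t(|\omega|^2)$. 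There is also a regularity obstruction: the computation presupposes $|\HHeat_{t-s}X^\flat|^2\in\Dom(\DELTA)$ (or at least a distributional version), and establishing this is itself part of what the theorem provides. The correct derivation of \eqref{Eq:2-Gamma-2}, and hence of the self-improved pointwise inequality, is purely algebraic: write $X=\sum_i g_i\nabla f_i\in\Test(T\mms)$, expand $|X|^2=\sum_{i,j} g_ig_j\,\Gamma(f_i,f_j)$, and feed the polarized functional Bochner inequality (available directly from the $\RCD(K,\infty)$ condition, compare \eqref{Eq:Integrated Bochner}) through the chain and Leibniz rules for $\DELTA$. The self-improvement you sketch (perturbing by $\varphi(g)\,X$ and optimizing) is in spirit the right second step, but it rests on this algebraic first step, not on a semigroup interpolation. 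As written, your proof of item \ref{La:RIC lower bound} would not close.
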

	
	\begin{proposition}\label{Pr:Han's bound} If $(\mms,\met,\meas)$ satisfies the $\RCD^*(K,N)$ condition, $N\in (1,\infty)$, then within the notation of \autoref{Th:Ricci tensor}, for every $X\in H^{1,2}(T^*\mms)^\sharp$ we have
		\begin{align*}
		\bdGamma_2(X) &\geq \Big[K\,\vert X\vert^2 + \frac{1}{N}\,\vert\!\div X\vert^2\Big]\,\meas,
\end{align*}
where $\smash{\bdGamma_2(X) := \RIC(X,X) + \vert\nabla X\vert^2\,\meas}$.
	\end{proposition}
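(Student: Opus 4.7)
My plan is to reduce the assertion to the well-known dimensional Bochner--Bakry--Émery inequality for functions and then promote it to vector fields by polarization and a density argument, mirroring the proof of the dimension-free bound $\RIC(X,X) \geq K|X|^2\,\meas$ from Theorem \ref{Th:Ricci tensor}\ref{La:RIC lower bound} but carrying the $(\Delta f)^2/N$ term through. The starting point is that on any $\RCD^*(K,N)$ space, every $f\in\Test(\mms)$ satisfies the pointwise dimensional $2$-Bochner inequality
\begin{align*}
\tfrac{1}{2}\DELTA\vert\nabla f\vert^2 - \langle\nabla f,\nabla\Delta f\rangle\,\meas \geq \Big[K\,\vert\nabla f\vert^2 + \tfrac{1}{N}(\Delta f)^2\Big]\meas,
\end{align*}
which is exactly the $\Gamma_2$-statement for gradient vector fields: taking $X=\nabla f$ gives $\div X=\Delta f$ and $\bdGamma_2(X)$ reduces to the left-hand side, so the claim holds on exact test forms.

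Next I would work on the dense class of test vector fields $X = \sum_{i=1}^n g_i\,\nabla f_i$ with $f_i,g_i\in\Test(\mms)$. Using Lemma \ref{Le:Laplacian functions chain rule} to expand $\DELTA\vert X\vert^2$ (where $\vert X\vert^2 = \sum_{i,j} g_i\,g_j\,\langle\nabla f_i,\nabla f_j\rangle$), the compatibility identity \eqref{Eq:Compatibility with the metric} for $\nabla$, and the product rule \eqref{Eq:div product rule} for the divergence $\div X = \sum_i[\langle\nabla g_i,\nabla f_i\rangle + g_i\,\Delta f_i]$, one expresses $\bdGamma_2(X)$ as a quadratic form in the quantities $\nabla g_i$, $\nabla f_i$, $\Hess f_i$, $\Delta f_i$. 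Polarizing the scalar dimensional Bochner inequality via $f \rightsquigarrow f_i \pm f_j$ and $f\rightsquigarrow \lambda f + \mu h$ yields the matrix-valued extension
\begin{align*}
\tfrac{1}{2}\DELTA\langle\nabla f,\nabla h\rangle - \tfrac{1}{2}\langle\nabla f,\nabla\Delta h\rangle\,\meas - \tfrac{1}{2}\langle\nabla h,\nabla\Delta f\rangle\,\meas \geq K\,\langle\nabla f,\nabla h\rangle\,\meas + \tfrac{1}{N}\,\Delta f\,\Delta h\,\meas,
\end{align*}
and, after summing over $i,j$ with weights $g_i g_j$ and collecting the remainder terms coming from $\nabla g_i$, the dimensional correction reassembles into precisely $\tfrac{1}{N}(\div X)^2\,\meas$.

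The last step is extension by continuity. Since $\Test(T\mms)$ is dense in $H^{1,2}(T^*\mms)^\sharp$ in the $H^{1,2}$-norm, $\RIC:H^{1,2}(T^*\mms)^\sharp\to\Meas(\mms)$ is continuous by Theorem \ref{Th:Ricci tensor}, the map $X\mapsto \vert\nabla X\vert^2\meas$ is continuous in total variation thanks to \eqref{Eq:H embedding}, and the divergence is closable on $\Ell^2(T\mms)$. A sequence $(X_n)_{n\in\N}\subset\Test(T\mms)$ converging to $X$ in $H^{1,2}$ therefore yields the claimed pointwise measure inequality after passing to the limit.

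The main obstacle I anticipate lies in the algebraic step bridging the polarized Bochner inequalities to the identification of $\tfrac{1}{N}(\div X)^2$ as the dimensional correction: naive polarization produces a symmetric bilinear form in $(\nabla f_i,\Delta f_j)$ that is not obviously a perfect square in the combined expression $\sum_i[\langle\nabla g_i,\nabla f_i\rangle + g_i\Delta f_i]$. Making this matching rigorous --- essentially a finite-dimensional Cauchy--Schwarz-type argument on the coefficient vector, in the spirit of Han's self-improvement --- is the genuinely delicate calculation, whereas the density/continuity closure at the end is routine given the machinery already set up in \autoref{Sec:A glimpse}.
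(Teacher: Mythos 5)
For context: the paper does not prove this proposition --- it is quoted from \cite{han2018}, as the prose immediately preceding the statement indicates, so there is no in-paper proof to compare against. Turning to your sketch: the middle step contains a genuine flaw that your ``anticipated obstacle'' correctly locates but understates. The displayed ``matrix-valued extension'' is not a valid signed-measure inequality. Polarization of the nonnegative quadratic form $Q(f):=\tfrac12\DELTA|\nabla f|^2-\langle\nabla f,\nabla\Delta f\rangle\,\meas-K|\nabla f|^2\,\meas-\tfrac1N(\Delta f)^2\,\meas$ only yields positive semi-definiteness over \emph{constant} scalar combinations, i.e.\ $Q(\sum_i\lambda_i f_i)\geq 0$ for $\lambda_i\in\R$; it does not give a pointwise bilinear inequality $B(f,h)\geq 0$. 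Indeed, applying your displayed bilinear inequality to $h=-f$ would force $\tfrac12\DELTA|\nabla f|^2-\langle\nabla f,\nabla\Delta f\rangle\,\meas\leq K|\nabla f|^2\,\meas+\tfrac1N(\Delta f)^2\,\meas$, which together with the Bochner inequality would force equality --- absurd except in rigid situations. Summing against the nonconstant, sign-changing weights $g_ig_j$ is therefore not licensed by polarization, and the obvious remedy of freezing the $g_i$ at a point (the strategy behind the dimension-free bound of \autoref{Th:Ricci tensor}\ref{La:RIC lower bound}) produces $\tfrac1N\big(\sum_ig_i\Delta f_i\big)^2$ rather than $\tfrac1N(\div X)^2$, since the frozen vector field $\sum_ig_i(x_0)\nabla f_i$ has divergence $\sum_ig_i(x_0)\Delta f_i$ and does not see the contributions $\sum_i\langle\nabla g_i,\nabla f_i\rangle$.

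More structurally, those $\langle\nabla g_i,\nabla f_i\rangle$ contributions to $(\div X)^2$ --- the cross term $2\big(\sum_jg_j\Delta f_j\big)\big(\sum_i\langle\nabla g_i,\nabla f_i\rangle\big)$ and the square $\big(\sum_i\langle\nabla g_i,\nabla f_i\rangle\big)^2$ --- arise in the expansion of $\DELTA|X|^2$ and $|\nabla X|^2$ only through exact Leibniz- and chain-rule identities, never through the scalar Bochner inequality, so your proposal has no mechanism that can attach the dimensional coefficient $1/N$ to them. The missing input is a genuinely \emph{new} pointwise estimate controlling $|\div X|^2$ by $N\,|\nabla X|^2$ (for instance via the $\meas$-a.e.\ identification $\div X=\tr\nabla X$ and a Cauchy--Schwarz argument on the trace, using that the pointwise dimension does not exceed $N$), from which the claim follows at once together with the dimension-free Ricci bound $\RIC(X,X)\geq K|X|^2\,\meas$. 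That is precisely the content of Han's argument, and it is exactly the step you flag as ``the genuinely delicate calculation'' --- but without supplying it, or an equivalent, the route you describe does not close.
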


	\section{Pointwise estimates for the heat flow}\label{Ch:Heat flow pointwise props}
	
	\subsection{Definition and basic properties}\label{Sub:Def bas props}
	
	The negative Hodge Laplacian $-\Hodge$ is nonpositive, self-adjoint and densely defined on $\Ell^2(T^*\mms)$. By spectral calculus, it generates a family of bounded linear operators $(\HHeat_t)_{t\geq 0}$ on $\Ell^2(T^*\mms)$ via $\smash{\HHeat_t := \rme^{-t\Hodge}}$ for which, given any $\omega\in \Ell^2(T^*\mms)$, the map $t\mapsto \HHeat_t\omega$ defined on $[0,\infty)$ is uniquely characterized by the subsequent three properties. 
	\begin{enumerate}[label=\textnormal{\alph*.}]
		\item\label{La:A} \textsc{Initial value.} $\HHeat_0\omega = \omega$.
		\item\label{La:C} \textsc{Strong continuity.} The map $t\mapsto\HHeat_t\omega$ is strongly $\Ell^2$-continuous on $[0,\infty)$.
		\item\label{La:D} \textsc{Kolmogorov forward equation.} For every $t>0$, it holds that $\HHeat_t\omega\in\Dom(\Hodge)$. Moreover, the limit of $\smash{h^{-1}\,\big(\HHeat_{t+h}\omega-\HHeat_t\omega\big)}$ as $h\to 0$ exists strongly in $\Ell^2(T^*\mms)$ and satisfies
		\begin{align*} 
		\frac{\rmd}{\rmd t} \HHeat_t\omega =-\Hodge\HHeat_t\omega.
		\end{align*}
	\end{enumerate}
	
	By uniqueness, it admits the \emph{semigroup property} $\HHeat_{t+s}\omega = \HHeat_t\HHeat_s\omega$, $s,t\geq 0$.

	\begin{definition}\label{Def:Heat flow} We refer to $(\HHeat_t)_{t\geq 0}$ as the \emph{heat semigroup} or \emph{heat flow} on $1$-forms.
	\end{definition}
	
	\begin{remark}\label{Re:GF definition} One can equivalently define $(\HHeat_t\omega)_{t\geq 0}$ as the gradient flow of the functional $\calE$ from \eqref{Eq:Hodge energy functional} starting in $\omega\in\Ell^2(T^*\mms)$ via Brézis--Komura's theory \cite{brezis1973}, as done in \cite[Sec.~3.6]{gigli2018}. 
		
		Such flow satisfies the conditions \ref{La:A} and \ref{La:C} by construction. The link between \ref{La:D} and the usual differential inclusion appearing in the previous gradient flow approach is due to the fact that the subdifferential of $\calE$ at a given $\eta\in H^{1,2}(T^*\mms)$ is nonempty if and only if $\eta\in \Dom(\Hodge)$, in which case it only contains $\Hodge\eta$. See \cite[Subsec.~3.4.4]{gigli2018} for a similar discussion.
	\end{remark}
	
	Let us list some properties of $(\HHeat_t)_{t\geq 0}$. \autoref{Le:Ht vs Pt} treats the link between $(\HHeat_t)_{t\geq 0}$ and $(\ChHeat_t)_{t\geq 0}$ on exact forms, while  \autoref{Th:L^2-contractivity} is due to standard semigroup theory \cite{brezis1973, reed1975, yosida1980} --- item \ref{La:Bakry-Emery L^2} therein has been proven in \cite[Prop.~3.6.10]{gigli2018} using \autoref{Th:Ricci tensor}.
	
	\begin{lemma}\label{Le:Ht vs Pt} For every $f\in\Sob^{1,2}(\mms)$ and every $t> 0$,  $\rmd \ChHeat_t f\in\Dom(\Hodge)$ and
		\begin{align*}
		\HHeat_t\rmd f = \rmd\ChHeat_t f.
		\end{align*}
	\end{lemma}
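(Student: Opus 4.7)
\emph{Proof plan.} The plan is to verify that the curve $t\mapsto\rmd\ChHeat_t f$ satisfies the three characterizing properties \ref{La:A}, \ref{La:C}, and \ref{La:D} of the heat flow on $1$-forms with initial datum $\rmd f$; uniqueness of the semigroup $(\HHeat_t)_{t\geq 0}$ then forces $\HHeat_t\rmd f=\rmd\ChHeat_t f$ for every $t\geq 0$.

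Property \ref{La:A} is immediate. For \ref{La:C}, since $f\in W^{1,2}(\mms)$, the curve $t\mapsto\ChHeat_t f$ is $W^{1,2}$-continuous on $[0,\infty)$, so $t\mapsto\rmd\ChHeat_t f$ is strongly $\Ell^2$-continuous. To establish the $\Dom(\Hodge)$ part of \ref{La:D}, I would invoke \autoref{Le:Differential regularity} to obtain $\rmd\ChHeat_t f\in H^{1,2}(T^*\mms)$ for every $t>0$. Semigroup iteration gives $\ChHeat_t f\in\Dom(\Delta^2)$, hence $\Delta\ChHeat_t f\in W^{1,2}(\mms)$ and in particular $\rmd\Delta\ChHeat_t f\in\Ell^2(T^*\mms)$. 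Writing $\omega:=\rmd\ChHeat_t f$, one has $\rmd\omega=\rmd^2\ChHeat_t f=0$ and $\delta\omega=-\Delta\ChHeat_t f$. Hence for every $\eta\in H^{1,2}(T^*\mms)$, using the integration-by-parts identity $\int_\mms g\,\delta\eta\d\meas=\int_\mms\langle\rmd g,\eta\rangle\d\meas$ (a direct consequence of $\delta=-\div\circ\sharp$ together with the defining identity for $\div$, valid for any $g\in W^{1,2}(\mms)$) applied to $g:=\Delta\ChHeat_t f$, one computes
\begin{align*}
\int_\mms\big[\langle\rmd\eta,\rmd\omega\rangle+\delta\eta\,\delta\omega\big]\d\meas &= -\int_\mms\delta\eta\,\Delta\ChHeat_t f\d\meas\\
&= -\int_\mms\langle\eta,\rmd\Delta\ChHeat_t f\rangle\d\meas.
\end{align*}
This identifies $\omega\in\Dom(\Hodge)$ with $\Hodge\omega=-\rmd\Delta\ChHeat_t f$.

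It remains to derive the Kolmogorov equation in \ref{La:D}. Since $\ChHeat_t f\in\Dom(\Delta^2)$ for $t>0$, the Bochner representation $\ChHeat_h(\ChHeat_t f)-\ChHeat_t f=\int_0^h\ChHeat_r\Delta\ChHeat_t f\d r$ takes values in $W^{1,2}(\mms)$, the integrand $r\mapsto\ChHeat_r\Delta\ChHeat_t f$ being $W^{1,2}$-continuous because $\Delta\ChHeat_t f\in W^{1,2}(\mms)$. Dividing by $h$ and sending $h\to 0$ gives $\frac{\rmd}{\rmd t}\ChHeat_t f=\Delta\ChHeat_t f$ as a strong limit in $W^{1,2}(\mms)$; applying the $\Ell^2(T^*\mms)$-continuous operator $\rmd$ then yields $\frac{\rmd}{\rmd t}\rmd\ChHeat_t f=\rmd\Delta\ChHeat_t f=-\Hodge\rmd\ChHeat_t f$, which is \ref{La:D}. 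The main obstacle is precisely this last step --- promoting the $\Ell^2$-level time differentiability of $\ChHeat_t f$ to $W^{1,2}$-level in order to commute $\rmd$ with the time derivative --- which relies crucially on the smoothing action of $(\ChHeat_t)_{t\geq 0}$.
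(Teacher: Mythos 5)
Your proof is correct, and it takes a different path than the paper's. The paper first truncates $f$ to $f_n := \min\{n,\max\{f,-n\}\}$ so that $\ChHeat_t f_n\in\Test(\mms)$ and hence $\rmd\ChHeat_t f_n\in\Test(T^*\mms)$; membership in $\Dom(\Hodge)$ and the equation $\Hodge\rmd\ChHeat_t f_n = -\rmd\Delta\ChHeat_t f_n$ are then read off from the explicit Hodge--Laplacian formula \eqref{Le:Hodge Laplacian on test forms}, the Kolmogorov equation is checked exactly as you do it (commuting $\rmd$ with the $W^{1,2}$-level time derivative, using $\Delta\ChHeat_t f_n\in W^{1,2}(\mms)$), and the lemma is concluded by letting $n\to\infty$ via boundedness of $\HHeat_t$ and closedness of $\rmd$. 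You avoid the truncation at the level of this lemma by invoking \autoref{Le:Differential regularity} to place $\rmd\ChHeat_t f$ in $H^{1,2}(T^*\mms)$ directly, and then you identify $\Hodge\rmd\ChHeat_t f=-\rmd\Delta\ChHeat_t f$ from the weak definition of $\Hodge$, exploiting $\rmd^2\ChHeat_t f=0$, $\delta\rmd\ChHeat_t f=-\Delta\ChHeat_t f$, and the integration-by-parts identity $\int_\mms g\,\delta\eta\d\meas = \int_\mms\langle\rmd g,\eta\rangle\d\meas$ for $g=\Delta\ChHeat_t f\in W^{1,2}(\mms)$. This is cleaner insofar as it needs neither the test-form formula for $\Hodge$ nor a final limiting argument, but note that \autoref{Le:Differential regularity} (as proved in the paper) is itself established by the same truncation and closedness argument, so the approximation is merely relocated. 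One cosmetic point: your Bochner-integral argument as written gives the one-sided derivative $h\to 0^+$; the two-sided derivative at each $t>0$ follows by applying the same argument to $u:=\ChHeat_\epsilon f$ for small $\epsilon\in(0,t)$, since $\Delta u\in W^{1,2}(\mms)$, and this should be made explicit. Otherwise the proposal is complete and correct.
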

	
	\begin{proof} Set $f_n := \min\{n,\max\{f,-n\}\}\in W^{1,2}(\mms)\cap\Ell^\infty(\mms)$ for every $n\in\N$. Then the collection $(\rmd\ChHeat_tf_n)_{t\geq 0}$ in $\Ell^2(T^*\mms)$ clearly satisfies \ref{La:A} above, and \ref{La:C} follows by continuity of $t\mapsto \ChHeat_tf_n$ on $[0,\infty)$ w.r.t.~strong $W^{1,2}$-convergence. To check \ref{La:D}, given any $t>0$, recall that $\rmd\ChHeat_t f_n\in \Test(T^*\mms)$. The identity \eqref{Le:Hodge Laplacian on test forms} and the closedness of $\rmd$ \cite[Thm.~2.2.9]{gigli2018} together with $\Delta\ChHeat_t f_n = \ChHeat_{t/2}\Delta\ChHeat_{t/2}f_n\in W^{1,2}(\mms)$ then imply $\rmd\ChHeat_tf_n\in\Dom(\Hodge)$ and
		\begin{align*}
		\frac{\rmd}{\rmd t}\rmd\ChHeat_tf_n = \rmd\frac{\rmd}{\rmd t}\ChHeat_t f_n = \rmd\Delta\ChHeat_tf_n = -\Hodge\rmd \ChHeat_t f_n.
		\end{align*}
		Therefore, $\HHeat_t\rmd f_n = \rmd \ChHeat_t f_n$ for every $n\in\N$ by uniqueness.
		
		The claim follows by letting $n\to\infty$, using the boundedness of $\HHeat_t$ as well as the fact that $(\ChHeat_tf_n)_{n\in\N}$ converges to $\ChHeat_tf$ in $W^{1,2}(\mms)$. In particular, $\rmd \ChHeat_tf\in\Dom(\Hodge)$.
	\end{proof}
	
	\begin{remark}\label{Re:Commutation higher degree} Given any $k\in\N$ with $k\geq 2$, one can define a ``canonical'' heat flow $\smash{(\HHeat_t^k)_{t\geq 0}}$ acting on $\smash{\Ell^2(\Lambda^kT^*\mms)}$ as the semigroup corresponding to the negative Hodge Laplacian on  $k$-forms. However, we do not know if for every $\omega\in H^{1,2}(\Lambda^{k-1}T^*\mms)$ and every $t\geq 0$, we have
		\begin{align*}
		\HHeat_t^{k}\rmd\omega = \rmd\HHeat_t^{k-1}\omega.
		\end{align*}
		
		The subtle problem arising when mimicking the proof of \autoref{Le:Ht vs Pt} is that, to verify that $\smash{\rmd\HHeat_t^{k-1}\omega\in\Dom(\Hodge^k)}$, by definition of $\smash{\Hodge^k}$ one \emph{a priori} has to know that $\smash{\rmd\HHeat_t^{k-1}\omega \in H^{1,2}(\Lambda^kT^*\mms)}$, i.e.~to have an analogue of \autoref{Le:Differential regularity} for forms of higher degree, which is unclear to us.
	\end{remark}
	
	\begin{corollary}\label{Cor:delta H_t P_t delta} If $\omega\in\Dom(\delta)$ and $t\geq 0$, then $\HHeat_t\omega\in \Dom(\delta)$ with
		\begin{align*}
		\delta\HHeat_t\omega = \ChHeat_t\delta\omega.
		\end{align*}
	\end{corollary}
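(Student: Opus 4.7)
The plan is to prove this by a straightforward duality argument, exploiting the self-adjointness of both semigroups together with the commutation relation $\HHeat_t \rmd g = \rmd \ChHeat_t g$ from \autoref{Le:Ht vs Pt}.

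First, I would fix $\omega \in \Dom(\delta)$ and $t \geq 0$ (the case $t=0$ is trivial), and test the candidate identity against an arbitrary $g \in W^{1,2}(\mms)$. Using that $-\Hodge$ is self-adjoint, hence so is $\HHeat_t$ on $\Ell^2(T^*\mms)$, and that $\rmd g \in \Ell^2(T^*\mms)$, we have
\begin{align*}
\int_\mms \langle \rmd g, \HHeat_t \omega \rangle \d\meas = \int_\mms \langle \HHeat_t \rmd g, \omega \rangle \d\meas = \int_\mms \langle \rmd \ChHeat_t g, \omega \rangle \d\meas,
\end{align*}
where the second equality uses \autoref{Le:Ht vs Pt}. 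Since $\ChHeat_t g \in W^{1,2}(\mms)$, the definition of the codifferential applied to $\omega \in \Dom(\delta)$ yields
\begin{align*}
\int_\mms \langle \rmd \ChHeat_t g, \omega \rangle \d\meas = \int_\mms \ChHeat_t g \cdot \delta \omega \d\meas = \int_\mms g \cdot \ChHeat_t \delta \omega \d\meas,
\end{align*}
where in the last step I use the $\meas$-symmetry of $\ChHeat_t$ on $\Ell^2(\mms)$, valid because $\delta \omega \in \Ell^2(\mms)$ and $g \in \Ell^2(\mms)$.

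Finally, since $\ChHeat_t \delta \omega \in \Ell^2(\mms)$ and $g \in W^{1,2}(\mms)$ was arbitrary, unrolling the definition of $\delta$ via $\div$ shows that $(\HHeat_t \omega)^\sharp \in \Dom(\div)$ with $-\div (\HHeat_t \omega)^\sharp = \ChHeat_t \delta \omega$, i.e.~$\HHeat_t \omega \in \Dom(\delta)$ and $\delta \HHeat_t \omega = \ChHeat_t \delta \omega$. There is no real obstacle here; the argument is just three lines of duality. The only thing to double-check is that all integration by parts steps are justified by the correct domain of test functions, which they are since $W^{1,2}(\mms)$ is preserved by $\ChHeat_t$.
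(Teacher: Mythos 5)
Your proof is correct and is the natural duality argument the paper implicitly intends; the paper states this as a corollary of \autoref{Le:Ht vs Pt} without written proof, and you supply exactly the three-line chain — self-adjointness of $\HHeat_t$, the commutation $\HHeat_t\rmd g=\rmd\ChHeat_t g$, the defining property of $\delta$ on $\omega$ tested against $\ChHeat_t g\in W^{1,2}(\mms)$, and $\meas$-symmetry of $\ChHeat_t$ — that one would expect. All side conditions (membership of $\rmd g$ and $\ChHeat_t g$ in the correct spaces, trivial $t=0$ case) are handled.
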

	
	\begin{theorem}\label{Th:L^2-contractivity} The following properties of $(\HHeat_t)_{t\geq 0}$ hold for every $\omega\in\Ell^2(T^*\mms)$ and every $t>0$.
		\begin{enumerate}[label=\textnormal{(\roman*)}]
			\item \textnormal{\textsc{Self-adjointness.}} The operator $\HHeat_t$ is self-adjoint in $\Ell^2(T^*\mms)$.
			\item \textnormal{\textsc{Kolmogorov backward equation.}} If $\omega\in\Dom(\Hodge)$, we have
			\begin{align*}
			\frac{\rmd}{\rmd t}\HHeat_t\omega = -\HHeat_t\Hodge\omega.
			\end{align*}
			In particular, the identity $\Hodge\HHeat_t = \HHeat_t\Hodge$ holds on $\Dom(\Hodge)$.
			\item \textnormal{$\Ell^2$\textsc{-contractivity.}}\label{La:Contractivity} $
			\Vert \HHeat_t\omega\Vert_{\Ell^2} \leq \Vert \HHeat_s\omega\Vert_{\Ell^2}$ for every $s\in [0,t]$.
			\item \textnormal{\textsc{Energy dissipation.}} The function $t\mapsto\calE(\HHeat_t\omega)$ belongs to the class $\Cont^1((0,\infty))$. Its derivative satisfies
			\begin{align*}
			\frac{\rmd}{\rmd t}\calE(\HHeat_t\omega) = -\int_\mms \vert\Hodge\HHeat_t\omega\vert^2\d\meas.
			\end{align*}
			In particular, $\calE(\HHeat_t\omega)\leq\calE(\HHeat_s\omega)$ for every $s\in [0,t]$.
			\item \textnormal{$H^{1,2}$\textsc{-continuity.}} If $\omega\in H^{1,2}(T^*\mms)$, the map $t\mapsto \HHeat_t\omega$ is continuous on $[0,\infty)$ w.r.t.~strong convergence in $H^{1,2}(T^*\mms)$.
			\item\label{La:A priori} \textnormal{\textsc{A priori estimates.}} We have
			\begin{align*}
			\calE(\HHeat_t\omega) &\leq \frac{1}{4t}\,\Vert \omega\Vert_{\Ell^2}^2,\\
			\Vert \Hodge\HHeat_t\omega\Vert_{\Ell^2}^2 &\leq \frac{1}{2t^2}\,\Vert \omega\Vert_{\Ell^2}^2.
			\end{align*}
			\item\label{La:Bakry-Emery L^2} \textnormal{\textsc{Contravariant Bakry--Émery inequality.}} We have
			\begin{align*}
			\vert\HHeat_t\omega\vert^2\leq\rme^{-2Kt}\,\ChHeat_t\big(\vert\omega\vert^2\big)\quad\meas\text{-a.e.}
			\end{align*}
		\end{enumerate}
	\end{theorem}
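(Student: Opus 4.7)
My approach is to derive items (i)--(vi) from spectral calculus applied to the nonpositive self-adjoint operator $-\Hodge$, and to invoke the existing result from \cite{gigli2018} for the contravariant Bakry--Émery inequality in (vii). The spectral theorem represents $\HHeat_t$ as a bounded Borel function of $-\Hodge$ of operator norm at most one, which immediately yields self-adjointness of $\HHeat_t$, the contractivity assertion in \ref{La:Contractivity}, and the commutation $\HHeat_t\Hodge = \Hodge\HHeat_t$ on $\Dom(\Hodge)$; combining this commutation with the forward equation in \ref{La:D} gives the backward equation.

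To handle the energy estimates I first record the identity $\calE(\eta) = \frac{1}{2}\,\langle \Hodge\eta,\eta\rangle_{\Ell^2}$ for $\eta\in\Dom(\Hodge)$, which follows directly from the definition of $\Hodge$ by choosing $\eta$ itself as test form. Using this, the a priori estimates in \ref{La:A priori} reduce to the scalar bounds $\sup_{\lambda\geq 0}\lambda\,\rme^{-2\lambda t} = (2\rme t)^{-1}$ and $\sup_{\lambda\geq 0}\lambda^2\,\rme^{-2\lambda t} = (\rme t)^{-2}$ applied to the spectral measure of $\omega$, both of which are sharper than the stated bounds since $\rme > \sqrt{2}$. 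Energy dissipation is obtained by differentiating $t \mapsto \frac{1}{2}\,\langle \Hodge\HHeat_t\omega,\HHeat_t\omega\rangle_{\Ell^2}$ and invoking the forward and backward equations; the resulting derivative $-\Vert\Hodge\HHeat_t\omega\Vert_{\Ell^2}^2$ is continuous on $(0,\infty)$ by the spectral representation, so $t\mapsto\calE(\HHeat_t\omega)$ is of class $\Cont^1$ there. Monotonicity along $[0,t]$ is then clear.

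For $H^{1,2}$-continuity of $t\mapsto\HHeat_t\omega$ with $\omega\in H^{1,2}(T^*\mms)$, I combine strong $\Ell^2$-continuity from \ref{La:C} with energy monotonicity. At each $t_0 > 0$ continuity is automatic from the $\Cont^1$-regularity just obtained, since $\Vert\HHeat_t\omega\Vert_{H^{1,2}}^2 = \Vert\HHeat_t\omega\Vert_{\Ell^2}^2 + 2\calE(\HHeat_t\omega)$. At $t_0 = 0$ the $\Ell^2$-lower semicontinuity of $\calE$ (immediate from its definition via $\rmd$ and $\delta$, both of which are closed) together with the bound $\calE(\HHeat_t\omega)\leq \calE(\omega)$ pins down $\lim_{t\downarrow 0}\calE(\HHeat_t\omega) = \calE(\omega)$, which upgrades weak to strong convergence in $H^{1,2}(T^*\mms)$ through convergence of the Hilbertian norm.

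Finally, \ref{La:Bakry-Emery L^2} is exactly \cite[Prop.~3.6.10]{gigli2018}, established there via the standard Bakry--Émery interpolation applied to the pointwise bound $\RIC(X,X)\geq K\,\vert X\vert^2\,\meas$ from \autoref{Th:Ricci tensor}\,\ref{La:RIC lower bound}, and can be quoted verbatim. The only step that is not fully mechanical is the upgrading from weak to strong $H^{1,2}$-continuity at $t = 0$, but even there the combination of energy monotonicity and lower semicontinuity of $\calE$ is the standard argument for gradient flows of convex, lower semicontinuous functionals in Hilbert spaces à la Brézis--Komura, as already alluded to in \autoref{Re:GF definition}.
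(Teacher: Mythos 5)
Your proposal is correct. The paper itself does not spell out the proof of this theorem but simply cites ``standard semigroup theory'' (Br\'ezis, Reed--Simon, Yosida) for items (i)--(vi) and Gigli's Prop.~3.6.10 for (vii), so there is no single ``paper proof'' against which to match step by step; what you have done is supply the details of the linear self-adjoint route via the spectral theorem, which is precisely one of the routes those references cover.

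Two small points worth recording. First, the constants you obtain, $\calE(\HHeat_t\omega)\leq(4\rme t)^{-1}\,\Vert\omega\Vert_{\Ell^2}^2$ and $\Vert\Hodge\HHeat_t\omega\Vert_{\Ell^2}^2\leq(\rme t)^{-2}\,\Vert\omega\Vert_{\Ell^2}^2$, are indeed sharper than the stated $(4t)^{-1}$ and $(2t^2)^{-1}$; the latter are the Br\'ezis--Komura constants, which hold for the gradient flow of any proper, lower semicontinuous convex functional on a Hilbert space and make no use of linearity. Your spectral bounds buy a better constant at the cost of working only in the linear, self-adjoint setting; since that is exactly the setting here, the improvement is genuine and harmless. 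Second, in the $H^{1,2}$-continuity argument the Radon--Riesz step (norm convergence together with weak convergence in the Hilbert space $H^{1,2}(T^*\mms)$ implies strong convergence) is needed at interior times $t_0>0$ as well as at $t_0=0$; continuity of $t\mapsto\Vert\HHeat_t\omega\Vert_{H^{1,2}}$ does not by itself give strong $H^{1,2}$-continuity of the trajectory. You invoke this upgrade only at $t_0=0$; the fix is either to apply the same Radon--Riesz argument at $t_0>0$, or, more directly, to observe that $\HHeat_t\omega=\HHeat_{t_0/2}(\HHeat_{t-t_0/2}\omega)$ and that $\HHeat_{t_0/2}$ maps $\Ell^2(T^*\mms)$ boundedly into $H^{1,2}(T^*\mms)$ by the a priori estimates, so $H^{1,2}$-continuity at positive times follows immediately from strong $\Ell^2$-continuity. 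Neither point affects the correctness of your argument; both are matters of polish.
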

	
	\subsection{Kato's inequality}
	
	A further key to obtain vector $1$-Bochner inequalities and to derive various functional inequalities for vector fields and $1$-forms is \autoref{Pr:Kato's inequality} below. In the smooth framework, this estimate is known as \emph{Kato's inequality}. The elementary proof of its nonsmooth counterpart can be found in \cite[Lem.~2.5]{debin2020}. It also easily provides us with the chain rule from \autoref{Re:Chain rule}.
	
	Besides the rest of the current \autoref{Ch:Heat flow pointwise props}, further applications of \autoref{Pr:Kato's inequality} are treated in \autoref{Th:Log Sobolev for RCD} and \autoref{Cor:Spectral gap inclusion}. 
	
	\begin{proposition}[Kato inequality]\label{Pr:Kato's inequality} For every $X\in H^{1,2}(T\mms)$, we have $\vert X\vert\in \Sob^{1,2}(\mms)$ and
		\begin{align*}
		\vert\nabla\vert X\vert\vert\leq\vert\nabla X\vert\quad\meas\text{-a.e.}
		\end{align*}
	\end{proposition}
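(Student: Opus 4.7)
The plan is to regularize $|X|$ by $f_\varepsilon := \sqrt{|X|^2 + \varepsilon}$, prove the desired bound for $f_\varepsilon$ using the available second-order calculus, and then pass to the limit $\varepsilon \to 0$. The whole argument is first carried out for test vector fields and then extended to $H^{1,2}(T\mms)$ by approximation.

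First I would reduce to $X \in \Test(T\mms)$. By construction, such an $X$ is a finite sum $\sum_i g_i\,\nabla f_i$ with $f_i,g_i\in\Test(\mms)$, so $X\in H^{1,2}(T\mms)\cap\Ell^\infty(T\mms)$ and the pointwise inner product $|X|^2=\langle X,X\rangle$ belongs to $W^{1,2}(\mms)$ via the compatibility identity \eqref{Eq:Compatibility with the metric} applied with $Y = X$. The chain rule (\autoref{Le:Calculus rules d}) applied to the smooth Lipschitz function $s\mapsto \sqrt{s+\varepsilon}$ then gives $f_\varepsilon\in\Sobo^2(\mms)\cap\Ell^\infty(\mms)$ and
\begin{align*}
\rmd f_\varepsilon = \frac{1}{2\,f_\varepsilon}\,\rmd|X|^2.
\end{align*}

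The key pointwise estimate comes from \eqref{Eq:Compatibility with the metric}: for every $Z\in \Ell^0(T\mms)$,
\begin{align*}
\rmd|X|^2(Z) = 2\,\langle \nabla_Z X,X\rangle,
\end{align*}
and the pointwise Cauchy--Schwarz inequality, together with $|\nabla_Z X|\leq |\nabla X|\,|Z|$ (which follows from $\langle\nabla_Z X,V\rangle = \nabla X:(Z\otimes V)$), yields $|\rmd|X|^2|\leq 2\,|X|\,|\nabla X|$ $\meas$-a.e. Consequently,
\begin{align*}
|\rmd f_\varepsilon| \leq \frac{|X|\,|\nabla X|}{\sqrt{|X|^2+\varepsilon}} \leq |\nabla X| \quad\meas\text{-a.e.}
\end{align*}

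Second, I pass to the limit $\varepsilon\to 0$. Monotone convergence yields $f_\varepsilon\to |X|$ in $\Ell^2(\mms)$, while $(\rmd f_\varepsilon)_\varepsilon$ is bounded in $\Ell^2(T^*\mms)$ by $\Vert \nabla X\Vert_{\Ell^2}$. By weak sequential compactness extract $\rmd f_{\varepsilon_n}\rightharpoonup \omega$; Mazur's lemma produces convex combinations $\tilde f_n$ with $\rmd \tilde f_n\to\omega$ strongly in $\Ell^2(T^*\mms)$ and still $\tilde f_n\to |X|$ pointwise $\meas$-a.e. Closedness of $\rmd$ then gives $|X|\in\Sobo^2(\mms)$ with $\rmd |X|=\omega$. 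Moreover, the pointwise bound survives under convex combination, hence, after extracting a further subsequence that converges pointwise $\meas$-a.e., we obtain $|\rmd|X||\leq |\nabla X|$ $\meas$-a.e.

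Finally, I extend the result to arbitrary $X\in H^{1,2}(T\mms)$ by density of $\Test(T\mms)$. Choose $X_n\in\Test(T\mms)$ with $X_n\to X$ in $H^{1,2}(T\mms)$. The pointwise triangle inequality $\bigl||X_n|-|X|\bigr|\leq |X_n-X|$ gives $|X_n|\to |X|$ in $\Ell^2(\mms)$, while $|\rmd|X_n||\leq |\nabla X_n|$ keeps $(\rmd|X_n|)_n$ bounded in $\Ell^2(T^*\mms)$ and, by the analogous inequality $\bigl||\nabla X_n|-|\nabla X|\bigr|\leq |\nabla X_n-\nabla X|$ (valid for the pointwise Hilbert--Schmidt norm on $T^{\otimes 2}\mms$), $|\nabla X_n|\to |\nabla X|$ in $\Ell^2(\mms)$. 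Repeating the Mazur/closedness argument then shows $|X|\in\Sobo^2(\mms)$ and yields the claimed pointwise inequality $\meas$-a.e. The main technical subtlety is the transfer of a pointwise bound through a weak limit, which Mazur's lemma handles cleanly.
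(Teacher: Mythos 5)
The paper does not prove this proposition itself; it refers to \cite[Lem.~2.5]{debin2020}. Your argument is an independent reconstruction and it is correct: the $\sqrt{|X|^2+\varepsilon}$-regularization combined with the compatibility identity \eqref{Eq:Compatibility with the metric} (giving $\rmd|X|^2(Z)=2\langle\nabla_Z X,X\rangle$ and hence $|\rmd|X|^2|\leq 2|X||\nabla X|$), followed by a weak-compactness/Mazur/closedness passage $\varepsilon\to 0$ and a second density passage from $\Test(T\mms)$ to $H^{1,2}(T\mms)$, is exactly the expected route.

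One small slip worth flagging: the claim ``monotone convergence yields $f_\varepsilon\to|X|$ in $\Ell^2(\mms)$'' is false whenever $\meas[\mms]=\infty$, since $f_\varepsilon\geq\sqrt{\varepsilon}>0$ everywhere and hence $f_\varepsilon\notin\Ell^2(\mms)$. This does not affect the argument: from $0\leq f_\varepsilon-|X|\leq\sqrt{\varepsilon}$ one gets uniform (hence pointwise $\meas$-a.e.) convergence $f_\varepsilon\to|X|$, which is all that the closedness of $\rmd$ as stated in \autoref{Sec:A glimpse} requires, and it also makes the Mazur convex combinations (which range over tail segments) converge pointwise $\meas$-a.e.~to $|X|$. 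With this correction the proof stands.
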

	
	Recalling that $H^{1,2}(T^*\mms)^\sharp \subset H^{1,2}(T\mms)$ by virtue of \eqref{Eq:H embedding}, \autoref{Pr:Kato's inequality} yields in particular that for every $\omega\in H^{1,2}(T^*\mms)$, we have $\vert\omega\vert\in \Sob^{1,2}(\mms)$ and
	\begin{align*}
	\vert\rmd \vert\omega\vert\vert \leq \vert \nabla\omega^\sharp\vert\quad\meas\text{-a.e.}
	\end{align*}
	
	\begin{corollary}\label{Re:Chain rule} For every $X\in H^{1,2}(T\mms)\cap\Ell^\infty(T\mms)$, we have $\vert X\vert^2\in W^{1,2}(\mms)$ with
		\begin{align*}
		\nabla\vert X\vert^2 = 2\,\vert X\vert\,\nabla\vert X\vert.
		\end{align*}
	\end{corollary}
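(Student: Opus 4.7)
The proof is essentially an application of the product/Leibniz rule for the differential, using Kato's inequality to secure enough Sobolev regularity of $\vert X\vert$.

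First I would apply Kato's inequality (the preceding \autoref{Pr:Kato's inequality}) to conclude $\vert X\vert\in\Sob^{1,2}(\mms)$ with $\vert\nabla\vert X\vert\vert\le \vert\nabla X\vert$ $\meas$-a.e. Since $X\in\Ell^\infty(T\mms)$, we also have $\vert X\vert\in\Ell^\infty(\mms)$, so in particular $\vert X\vert\in\Sob^{1,2}(\mms)\cap\Ell^\infty(\mms)$. Moreover, from $\vert X\vert\in\Ell^2(\mms)\cap\Ell^\infty(\mms)$ we get $\vert X\vert^2\in\Ell^2(\mms)$, since the essential supremum of $\vert X\vert$ controls the second factor.

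Next I would invoke the Leibniz rule from \autoref{Le:Calculus rules d}\,(iii) applied to $f=g:=\vert X\vert$. This yields $\vert X\vert^2\in\Sob^{1,2}(\mms)$, and the differential identity
\begin{align*}
\rmd\vert X\vert^2 = \vert X\vert\,\rmd\vert X\vert + \vert X\vert\,\rmd\vert X\vert = 2\,\vert X\vert\,\rmd\vert X\vert.
\end{align*}
Combined with the previous step, $\vert X\vert^2\in\Ell^2(\mms)$ together with $\rmd\vert X\vert^2\in\Ell^2(T^*\mms)$ gives $\vert X\vert^2\in W^{1,2}(\mms)$.

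Finally, applying the musical isomorphism $\sharp$ to both sides (which is $\Ell^\infty$-linear and pointwise) converts the differential identity into the desired gradient identity:
\begin{align*}
\nabla\vert X\vert^2 = (\rmd\vert X\vert^2)^\sharp = 2\,\vert X\vert\,(\rmd\vert X\vert)^\sharp = 2\,\vert X\vert\,\nabla\vert X\vert.
\end{align*}
There is no real obstacle here — the content of the corollary is just the combination of Kato's inequality (to get $\vert X\vert$ into $\Sob^{1,2}$) with the standard Leibniz rule. The boundedness assumption on $X$ is needed only to ensure the hypotheses of the Leibniz rule are met, which is why the $\Ell^\infty$ hypothesis appears.
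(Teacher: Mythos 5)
Your proof is correct and follows the route the paper itself indicates (the paper gives no explicit argument, stating only that Kato's inequality ``easily provides'' the chain rule). You correctly verify that Kato's inequality places $\vert X\vert$ in $\Sob^{1,2}(\mms)\subset\Sobo^2(\mms)$, that the $\Ell^\infty$-bound on $X$ gives $\vert X\vert\in\Ell^\infty(\mms)$ so that the hypotheses of the Leibniz rule from \autoref{Le:Calculus rules d} are met with $f=g=\vert X\vert$, and that the interpolation $\vert X\vert\in\Ell^2(\mms)\cap\Ell^\infty(\mms)$ upgrades the conclusion $\vert X\vert^2\in\Sobo^2(\mms)$ to $\vert X\vert^2\in W^{1,2}(\mms)$; passing to the gradient via $\sharp$ is then immediate by $\Ell^0$-linearity of the musical isomorphism.
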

	
	\begin{remark} One can drop the assumption that $X\in\Ell^\infty(T^*\mms)$ in \autoref{Re:Chain rule}, still retaining the stated identity for $\nabla \vert X\vert^2$. In this case, one has to understand $\nabla\vert X\vert^2$ as $\Ell^1$-covariant derivative of the $\Ell^1$-function $\vert X\vert^2$ belonging to the Sobolev space $H^{1,1}(T\mms)$ \cite[Subsec.~3.3.3, Prop.~3.4.6]{gigli2018}.
	\end{remark}
	
	\subsection{Vector 1-Bochner inequality}\label{Sec:Prop}
	
	A first important consequence of \autoref{Pr:Kato's inequality} is a $1$-Bochner inequality for vector fields, see \autoref{Th:1-Bochner vector}, in the dimension-free case. The results proven on the go will also yield the very important $\Ell^1$-type estimate between $(\HHeat_t)_{t\geq 0}$ and $(\ChHeat_t)_{t\geq 0}$ in \autoref{Th:Kato-Simon}. Similarly to the level of functions \cite[Lem.~2.6, Cor.~4.3]{savare2014}, the key point is to verify that $\vert X\vert\in \Dom(\DELTA)$ for a sufficiently large class of vector fields $X$. \autoref{Th:1-Bochner vector} is then essentially a consequence of the chain rule for the measure-valued Laplacian $\DELTA$ from \autoref{Le:Laplacian functions chain rule}.
	
	In this section, we state our results commonly for the case when $(\mms,\met,\meas)$ is an $\RCD(K,\infty)$ space. 
	
	A crucial estimate is established in the subsequent lemma.
	
	\begin{lemma}\label{Le:The key} Let $\smash{X\in H^{1,2}(T^*\mms)^\sharp\cap\Ell^\infty(T\mms)}$ satisfy $X^\flat\in\Dom(\Hodge)$, and let $\psi\in W^{1,2}(\mms)\cap\Ell^\infty(\mms)$ be nonnegative. Then
		\begin{align*}
		K\int_\mms \psi\,\vert X\vert^2\d\meas 
		&\leq -\frac{1}{2}\int_\mms \langle\nabla \psi, \nabla \vert X\vert^2 \rangle\d\meas +\int_\mms \psi\,\big\langle X, (\Hodge X^\flat)^\sharp\big\rangle\d\meas\\
		&\qquad\qquad\qquad\qquad - \int_\mms \psi\,\vert\nabla X\vert^2\d\meas.
		\end{align*}
	\end{lemma}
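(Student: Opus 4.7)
The strategy is to read the inequality as a localized, integrated form of the vector $\Gamma_2$-inequality from \autoref{Th:Ricci tensor}, and to extend by approximation. In the core case $X\in\Test(TM)$ and $\psi\in\Lip_\bs(\mms)$ with $\psi\geq 0$, \autoref{Th:Ricci tensor} gives $|X|^2\in\Dom(\DELTA)$ and, after rearranging the Ricci identity and invoking the Ricci lower bound $\RIC(X,X)\geq K|X|^2\,\meas$, the measure inequality
\begin{align*}
\tfrac{1}{2}\DELTA|X|^2\geq\bigl[K|X|^2 - \langle X,(\Hodge X^\flat)^\sharp\rangle + |\nabla X|^2\bigr]\,\meas.
\end{align*}
Testing against $\psi\geq 0$ and integrating by parts via the definition of $\DELTA$ --- rewriting $\int\psi\,\rmd\DELTA|X|^2 = -\int\langle\nabla\psi,\nabla|X|^2\rangle\,\rmd\meas$ --- yields the target inequality in this restricted setting.

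For general $\psi\in W^{1,2}(\mms)\cap\Ell^\infty(\mms)$ with $\psi\geq 0$, approximate in $W^{1,2}(\mms)$ by nonnegative $\Lip_\bs$ functions (with uniform $\Ell^\infty$-bound); all four terms are continuous in $\psi$ for the fixed test $X$. For general $X$ satisfying the lemma's hypotheses, first smooth via the heat flow on $1$-forms, setting $X_n^\flat:=\HHeat_{1/n}X^\flat$. By \autoref{Th:L^2-contractivity}, one has $X_n^\flat\to X^\flat$ in $H^{1,2}(T^*\mms)$ and $\Hodge X_n^\flat=\HHeat_{1/n}\Hodge X^\flat\to\Hodge X^\flat$ in $\Ell^2(T^*\mms)$, while the contravariant Bakry--Émery inequality of \autoref{Th:L^2-contractivity} combined with the $\Ell^\infty$-contractivity of $(\ChHeat_t)_{t\geq 0}$ yields the uniform bound $\|X_n\|_{\Ell^\infty}\leq\rme^{|K|/n}\|X\|_{\Ell^\infty}$. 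Further approximating each $X_n^\flat$ by test $1$-forms in the graph norm of $\Hodge$ (truncating if needed to preserve the $\Ell^\infty$-bound) produces a sequence $(\tilde X_n)_{n\in\N}\subset\Test(TM)$ along which the inequality from the core case can be passed to the limit.

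The delicate step is the limit passage in $\int\langle\nabla\psi,\nabla|\tilde X_n|^2\rangle\,\rmd\meas$. Using \autoref{Re:Chain rule} to expand $\nabla|\tilde X_n|^2 = 2|\tilde X_n|\nabla|\tilde X_n|$, one has $|\tilde X_n|\nabla\psi\to|X|\nabla\psi$ strongly in $\Ell^2(T\mms)$ by dominated convergence (thanks to the uniform $\Ell^\infty$-bound), while Kato's inequality \autoref{Pr:Kato's inequality} applied to the $H^{1,2}$-bounded sequence $(\tilde X_n)$ furnishes weak $\Ell^2$-convergence $\nabla|\tilde X_n|\rightharpoonup\nabla|X|$; a strong-weak inner-product argument then concludes this term, and the remaining terms pass to the limit by standard Cauchy--Schwarz estimates using the three convergences listed above. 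The main technical obstacle is precisely the construction of the test-field approximation that preserves all three convergence properties simultaneously --- in particular the uniform $\Ell^\infty$-bound --- which is what forces the two-stage mollification rather than a direct $H^{1,2}$-density argument.
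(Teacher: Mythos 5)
Your core case and the approximation in $\psi$ follow the paper's outline, but the vector-field approximation step contains a genuine gap that you flag yourself and do not close. You propose to first mollify by the $1$-form heat flow to get $X_n^\flat=\HHeat_{1/n}X^\flat$ (achieving graph-norm convergence and a uniform $\Ell^\infty$-bound), and then to further approximate each $X_n^\flat$ by \emph{test} $1$-forms in the graph norm of $\Hodge$, ``truncating if needed to preserve the $\Ell^\infty$-bound.'' This last step is asserted but not constructed: truncation of the form $\One_{[0,R]}(|X_n|)X_n$ leaves $\Test(T\mms)$, and it is not a priori clear that $\Test(T^*\mms)$ is an \emph{operator} core for $\Hodge$ (it is only a form core by construction of $\Hodge$ via the Hodge energy), let alone that one can combine approximation in graph norm, $\Ell^\infty$-bound preservation, and test regularity simultaneously. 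Since the whole argument hinges on passing $\Hodge\tilde X_n\to\Hodge X^\flat$ through a sequence of test fields, the proof as written does not go through.

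The paper avoids this obstacle entirely, and you can too: there is no need to make the approximating vector fields converge in the graph norm of $\Hodge$, nor to control their $\Ell^\infty$-norms. The paper first fixes $\psi_n\in\Test(\mms)$ nonnegative with $\nabla\psi_n\in\Ell^\infty(T\mms)$ (via \autoref{Le:Mollified heat flow}), then approximates $X$ by $X_i\in\Test(T\mms)$ in $H^{1,2}(T^*\mms)^\sharp$ only. The problematic term $\int_\mms\psi_n\langle X_i,(\Hodge X_i^\flat)^\sharp\rangle\,\rmd\meas$ is handled by integration by parts and \autoref{Le:Multiplication with Sobolev functions}: it equals $\int_\mms\bigl[\langle\rmd(\psi_n X_i^\flat),\rmd X_i^\flat\rangle+\delta(\psi_n X_i^\flat)\,\delta X_i^\flat\bigr]\rmd\meas$, which depends on $X_i$ only through $(X_i^\flat,\rmd X_i^\flat,\delta X_i^\flat)$ and hence passes to the limit under $H^{1,2}$-convergence alone, with the bounded $\psi_n$, $\rmd\psi_n$ controlling the products; one then integrates by parts back using $X^\flat\in\Dom(\Hodge)$. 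The $\Ell^\infty$-bound on $X$ enters only in the subsequent $n\to\infty$ limit for $\psi$, where it ensures that $\nabla|X|^2=2|X|\nabla|X|$ and $\langle X,(\Hodge X^\flat)^\sharp\rangle$ lie in $\Ell^2$ so that weak $W^{1,2}$-convergence of $\psi_n$ suffices. The moral is that the Hodge term should be treated weakly; once this is done, your strong-weak argument for $\int\langle\nabla\psi,\nabla|X_i|^2\rangle$ is fine and the ``three simultaneous convergences'' you worry about reduce to ordinary $H^{1,2}$-density of $\Test(T\mms)$.
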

	
	\begin{proof} Let $(\psi_n)_{n\in\N}$ be a sequence of nonnegative functions in $\Test(\mms)$ converging to $\psi$ in $W^{1,2}(\mms)$ according to \autoref{Le:Mollified heat flow}. Moreover, let $(X_i)_{i\in\N}$ be a sequence in $ \Test(T\mms)$ converging to $X$ in $\smash{H^{1,2}(T^*\mms)^\sharp}$. For every $n,i\in\N$, by the definition of $\DELTA$ as well as \autoref{Th:Ricci tensor},
		\begin{align*}
		K\int_\mms \psi_n\,\vert X_i\vert^2\d\meas
		&\leq -\frac{1}{2}\int_\mms \langle\nabla \psi_n, \nabla \vert X_i\vert^2\rangle\d\meas +\int_\mms \psi_n\,\big\langle X_i, (\Hodge X_i^\flat)^\sharp\big\rangle\d\meas\\
		&\qquad\qquad - \int_\mms \psi_n\,\vert\nabla X_i\vert^2\d\meas.
		\end{align*}
		
		For every $n\in\N$, the convergence of the first, second,  and fourth term towards the desired quantities as $i\to \infty$ is clear by boundedness of $\psi_n$ and $\nabla\psi_n$, \autoref{Re:Chain rule} and \eqref{Eq:H embedding}. Moreover, integration by parts, \autoref{Le:Multiplication with Sobolev functions} and the facts that $\psi_n\in\Ell^\infty(\mms)$ and $\rmd \psi_n\in\Ell^\infty(T^*\mms)$ also entail the appropriate convergence of the third term as $i\to\infty$.
		
		The claim follows by letting $n\to\infty$ in the resulting inequality.
	\end{proof}
	
	\begin{proposition}\label{Le:Key lemma} Let $\smash{X\in H^{1,2}(T^*\mms)^\sharp\cap\Ell^\infty(T\mms)}$ satisfy $X^\flat\in\Dom(\Hodge)$, and let $\phi\in \Dom(\Delta)\cap\Ell^\infty(\mms)$ be nonnegative with $\Delta\phi\in\Ell^\infty(\mms)$. Then for every $\varepsilon > 0$, we have
		\begin{align*}
		&\int_\mms \Delta\phi\, \big[(\vert X\vert^2 + \varepsilon)^{1/2} - \varepsilon^{1/2}\big] \d\meas + \int_\mms \phi\,(\vert X\vert^2+\varepsilon)^{-1/2}\,\big\langle X,(\Hodge X^\flat)^\sharp\big\rangle\d\meas\\
		&\qquad\qquad\qquad\qquad - \int_\mms \phi\,(\vert X\vert^2+\varepsilon)^{-1/2} \,\vert\nabla X\vert^2\d\meas\\
		&\qquad\qquad\qquad\qquad\qquad\qquad  + \int_\mms \phi\,(\vert X\vert^2+\varepsilon)^{-1/2}\,\vert \nabla \vert X\vert\vert^2\d\meas\\
		&\qquad\qquad\geq K\int_\mms \phi\,(\vert X\vert^2+\varepsilon)^{-1/2}\,\vert X\vert^2\d\meas.
		\end{align*}
	\end{proposition}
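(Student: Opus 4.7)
The plan is to apply Lemma~\ref{Le:The key} with the specific test function $\psi := \phi\,(\vert X\vert^2+\varepsilon)^{-1/2}$ and then rewrite the resulting inequality via a chain-rule computation and one integration by parts. The role of $\varepsilon > 0$ is to keep $(\vert X\vert^2+\varepsilon)^{-1/2}$ bounded, so that all calculus rules apply cleanly; the eventual $1$-Bochner inequality will be obtained from this intermediate estimate by sending $\varepsilon\downarrow 0$.

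The first step is to verify that $\psi$ is an admissible test function in Lemma~\ref{Le:The key}, i.e.\ $\psi \in W^{1,2}(\mms)\cap\Ell^\infty(\mms)$ and nonnegative. Nonnegativity and $\Ell^\infty$-boundedness are immediate because $\phi\geq 0$ and $(\vert X\vert^2+\varepsilon)^{-1/2}\leq\varepsilon^{-1/2}$. For Sobolev regularity, Kato's inequality (\autoref{Pr:Kato's inequality}) applied to $X\in H^{1,2}(T^*\mms)^\sharp\subset H^{1,2}(T\mms)$ gives $\vert X\vert\in\Sobo^2(\mms)\cap\Ell^\infty(\mms)$; applying the chain rule of \autoref{Le:Calculus rules d} to the Lipschitz map $t\mapsto (t^2+\varepsilon)^{-1/2}$ then yields $(\vert X\vert^2+\varepsilon)^{-1/2}\in\Sobo^2(\mms)\cap\Ell^\infty(\mms)$, and the Leibniz rule combined with $\phi\in W^{1,2}(\mms)\cap\Ell^\infty(\mms)$ produces $\psi\in W^{1,2}(\mms)$.

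Next, I would expand $\nabla\psi$ via the Leibniz and chain rules and use \autoref{Re:Chain rule} ($\nabla\vert X\vert^2=2\vert X\vert\,\nabla\vert X\vert$) to compute
\begin{align*}
-\tfrac{1}{2}\langle\nabla\psi,\nabla\vert X\vert^2\rangle &= -\vert X\vert\,(\vert X\vert^2+\varepsilon)^{-1/2}\,\langle\nabla\phi,\nabla\vert X\vert\rangle \\
&\quad + \phi\,\vert X\vert^2\,(\vert X\vert^2+\varepsilon)^{-3/2}\,\vert\nabla\vert X\vert\vert^2
\end{align*}
$\meas$-a.e. Since $\vert X\vert^2/(\vert X\vert^2+\varepsilon)\leq 1$, the second summand is pointwise bounded above by $\phi\,(\vert X\vert^2+\varepsilon)^{-1/2}\,\vert\nabla\vert X\vert\vert^2$, which recovers the ``good'' gradient term on the right-hand side of the claim. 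Plugging the resulting pointwise bound into the conclusion of \autoref{Le:The key} (applied to our $\psi$) produces every summand of the target inequality except the one containing $\Delta\phi$.

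The final step is to realize the remaining term $-\int_\mms\vert X\vert\,(\vert X\vert^2+\varepsilon)^{-1/2}\,\langle\nabla\phi,\nabla\vert X\vert\rangle\d\meas$ as an integral against $\Delta\phi$. Setting $h := (\vert X\vert^2+\varepsilon)^{1/2}-\sqrt{\varepsilon}$, the chain rule gives $\nabla h = \vert X\vert\,(\vert X\vert^2+\varepsilon)^{-1/2}\,\nabla\vert X\vert$, so one would like to apply $\int_\mms h\,\Delta\phi\d\meas = -\int_\mms\langle\nabla h,\nabla\phi\rangle\d\meas$, which is just the defining identity of $\Delta\phi$. The delicate point, which I expect to be the main technical obstacle, is checking $h\in W^{1,2}(\mms)$ on a possibly infinite-measure space: $h\in\Ell^\infty(\mms)$ and $\vert\nabla h\vert\leq\vert\nabla\vert X\vert\vert\in\Ell^2(\mms)$ are immediate, but membership in $\Ell^2(\mms)$ is not automatic. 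This is resolved by the algebraic telescoping
\begin{align*}
h = \frac{\vert X\vert^2}{(\vert X\vert^2+\varepsilon)^{1/2}+\sqrt{\varepsilon}}\leq \frac{\vert X\vert^2}{\sqrt{\varepsilon}},
\end{align*}
which, together with $\vert X\vert\in\Ell^2(\mms)\cap\Ell^\infty(\mms)$ (hence $\vert X\vert^2\in\Ell^2(\mms)$), yields $h\in\Ell^2(\mms)$. Combining the three steps produces the claimed inequality.
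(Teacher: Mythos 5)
Your proof is correct and follows essentially the same route as the paper: both apply \autoref{Le:The key} with the test function $\psi = \phi\,(\vert X\vert^2+\varepsilon)^{-1/2}$, bound the gradient cross term via $\vert X\vert^2/(\vert X\vert^2+\varepsilon)\leq 1$, and integrate by parts against $(\vert X\vert^2+\varepsilon)^{1/2}-\sqrt{\varepsilon}$ to produce the $\Delta\phi$ term. The paper packages the pointwise inequality as $-2\Phi_\varepsilon''(r)\,r\leq\Phi_\varepsilon'(r)$ for $\Phi_\varepsilon(r)=2(r+\varepsilon)^{1/2}-2\varepsilon^{1/2}$, which is algebraically identical to your bound; the only difference is that you carry out the chain-rule manipulations explicitly rather than through the abstract $\Phi_\varepsilon$.
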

	
	\begin{proof} Given any $\varepsilon > 0$, note that the function $\Phi_\varepsilon\in \Cont^\infty([0,\infty))$ defined by
		\begin{align*}
		\Phi_\varepsilon(r) := 2(r+\varepsilon)^{1/2} - 2\varepsilon^{1/2}
		\end{align*}
		obeys the inequality
		\begin{align}\label{Eq:Phi eps bound}
		-2\,\Phi_\varepsilon''(r)\,r \leq \Phi_\varepsilon'(r).
		\end{align}
		
		The stated inequality then follows by applying \autoref{Le:The key} to $\psi := \phi\,\Phi_\varepsilon'(\vert X\vert^2)$, which belongs to $W^{1,2}(\mms)\cap\Ell^\infty(\mms)$ by \autoref{Re:Chain rule} and the Leibniz rule, noting that by \eqref{Eq:Phi eps bound},
		\begin{align*}
		&-\frac{1}{2}\int_\mms \langle\nabla \psi,\nabla\vert X\vert^2\rangle\\
		&\qquad\qquad = - \frac{1}{2}\int_\mms \Phi_\varepsilon'(\vert X\vert^2)\,\langle\nabla\phi,\nabla\vert X\vert^2\rangle\d\meas\\
		&\qquad\qquad\qquad\qquad  -\frac{1}{2}\int_\mms\phi\,\langle\nabla \Phi_\varepsilon'(\vert X\vert^2),\nabla\vert X\vert^2\rangle\d\meas\\
		&\qquad\qquad = -\frac{1}{2}\int_\mms \langle\nabla\phi,\nabla\Phi_\varepsilon(\vert X\vert^2)\rangle\d\meas - 2\int_\mms\phi\,\Phi_\varepsilon''(\vert X\vert^2)\,\vert X\vert^2\,\vert\nabla \vert X\vert\vert^2\d\meas\\
		&\qquad\qquad \leq \frac{1}{2}\int_\mms \Phi_\varepsilon(\vert X\vert^2)\,\Delta\phi\d\meas + \int_\mms \phi\,\Phi_\varepsilon'(\vert X\vert^2)\,\vert \nabla\vert X\vert\vert^2\d\meas.\qedhere
		\end{align*}
	\end{proof}

	\begin{theorem}[Vector 1-Bochner inequality]\label{Th:1-Bochner vector} Let $(\mms,\met,\meas)$ be an $\RCD(K,\infty)$ space for $K\in\R$. Let $X\in\Test(T\mms)$. Then $\vert X\vert\in\Dom(\DELTA)$ and, after redefining $\smash{\vert X\vert^{-1}\,\big\langle X,(\Hodge X^\flat)^\sharp\big\rangle := 0}$ on $\vert X\vert^{-1}(\{0\})$,
		\begin{align*}
		\DELTA\vert X\vert +\vert X\vert^{-1}\,\big\langle X,(\Hodge X^\flat)^\sharp\big\rangle\,\meas \geq K\,\vert X\vert\,\meas.
		\end{align*}
	\end{theorem}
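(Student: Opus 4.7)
Plan. The argument rests on three ingredients: the self-improved integrated vector $2$-Bochner inequality
\begin{align*}
\DELTA\frac{\vert X\vert^2}{2} + \big\langle X,(\Hodge X^\flat)^\sharp\big\rangle\,\meas \geq \big[K\,\vert X\vert^2 + \vert\nabla X\vert^2\big]\,\meas,
\end{align*}
which is nothing but a rewriting of the Ricci lower bound $\RIC(X,X) \geq K\,\vert X\vert^2\,\meas$ from \autoref{Th:Ricci tensor} via the defining formula for $\RIC$; the chain rule for the measure-valued Laplacian \autoref{Le:Laplacian functions chain rule}; and Kato's inequality \autoref{Pr:Kato's inequality}. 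Formally, inserting $\DELTA\vert X\vert^2 = 2\vert X\vert\,\DELTA\vert X\vert + 2\vert\nabla\vert X\vert\vert^2\,\meas$ into the $2$-Bochner inequality, dividing by $\vert X\vert$, and discarding the Kato-nonnegative residual $\vert X\vert^{-1}\big[\vert\nabla X\vert^2 - \vert\nabla\vert X\vert\vert^2\big]\,\meas$ yields the claim.

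I rigorize this by regularizing the square root. Let $\Phi_\varepsilon(r) := (r+\varepsilon)^{1/2} - \varepsilon^{1/2}$, a $\Cont^2$-function on $[0,\infty)$ vanishing at $0$. Since $\vert X\vert^2 \in \Dom(\DELTA)$ for $X \in \Test(T\mms)$, \autoref{Le:Laplacian functions chain rule} produces $u_\varepsilon := \Phi_\varepsilon(\vert X\vert^2) \in \Dom(\DELTA)$ with
\begin{align*}
\DELTA u_\varepsilon = \frac{\DELTA\vert X\vert^2}{2(\vert X\vert^2+\varepsilon)^{1/2}} - \frac{\vert X\vert^2\,\vert\nabla\vert X\vert\vert^2}{(\vert X\vert^2+\varepsilon)^{3/2}}\,\meas,
\end{align*}
using $\Gamma(\vert X\vert^2) = 4\vert X\vert^2\,\vert\nabla\vert X\vert\vert^2$ from \autoref{Re:Chain rule}. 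Combining this with \autoref{Le:Key lemma} and invoking Kato's inequality to drop the nonnegative term $(\vert X\vert^2+\varepsilon)^{-1/2}\,\big[\vert\nabla X\vert^2 - \vert\nabla\vert X\vert\vert^2\big]$ yields, for every nonnegative $\phi \in \Dom(\Delta)\cap\Ell^\infty(\mms)$ with $\Delta\phi \in \Ell^\infty(\mms)$,
\begin{align*}
\int_\mms \phi\,\d\DELTA u_\varepsilon + \int_\mms \phi\,\frac{\langle X,(\Hodge X^\flat)^\sharp\rangle}{(\vert X\vert^2+\varepsilon)^{1/2}}\d\meas \geq K\int_\mms \phi\,\frac{\vert X\vert^2}{(\vert X\vert^2+\varepsilon)^{1/2}}\d\meas.
\end{align*}

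I will then pass to the limit $\varepsilon \to 0^+$. Dominated convergence handles the potential and right-hand integrals with uniform dominants $\vert(\Hodge X^\flat)^\sharp\vert \in \Ell^2(\mms)$ (note $X^\flat \in \Dom(\Hodge)$ via \eqref{Le:Hodge Laplacian on test forms}) and $\vert X\vert \in \Ell^\infty(\mms)$. Simultaneously $u_\varepsilon \to \vert X\vert$ in $W^{1,2}(\mms)$, via Kato's inequality (which also gives $\vert X\vert \in \Sob^{1,2}(\mms)\cap\Ell^\infty(\mms)$); extracting a weak${}^*$-limit of $\DELTA u_\varepsilon$ in $\Meas(\mms)$ then identifies this limit with $\DELTA\vert X\vert$, establishing $\vert X\vert \in \Dom(\DELTA)$. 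The limiting tested inequality, together with the stated convention on $\vert X\vert^{-1}(\{0\})$, is the assertion.

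The main obstacle is the uniform total variation control of $\DELTA u_\varepsilon$: the prefactor $(\vert X\vert^2+\varepsilon)^{-1/2}$ in the chain-rule formula may become singular on $\vert X\vert^{-1}(\{0\})$. I expect to handle this by combining the one-sided bound established above (which controls the negative part of $\DELTA u_\varepsilon$) with the fact that $\vert X\vert^2$ attains its minimum $0$ on this set, so $\nabla\vert X\vert^2 = 0$ there by \autoref{Le:Calculus rules d}, which constrains $\DELTA\vert X\vert^2$ on $\vert X\vert^{-1}(\{0\})$ via the locality-type arguments of the self-improvement template from \cite{gigli2018, savare2014}.
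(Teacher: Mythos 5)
Your strategy through the intermediate tested inequality coincides with the paper's: both combine the integrated vector $2$-Bochner bound, the regularization $\Phi_\varepsilon(r) := (r+\varepsilon)^{1/2}-\varepsilon^{1/2}$, Kato's inequality to discard the nonnegative covariant residual, and dominated convergence. Your derivation of the resulting tested inequality and the convergence $u_\varepsilon \to \vert X\vert$ in $W^{1,2}(\mms)$ are correct. (One small redundancy: you both compute $\DELTA u_\varepsilon$ via \autoref{Le:Laplacian functions chain rule} and cite \autoref{Le:Key lemma}; the latter already delivers the needed tested inequality, and the explicit chain-rule formula is only relevant for the weak-$*$ route you propose.)

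The divergence from the paper's proof is in the final step, and this is where your argument has a genuine gap. From the tested inequality
\begin{align*}
\int_\mms \Delta\phi\,\vert X\vert\d\meas \;\geq\; \int_\mms\phi\,\big[K\,\vert X\vert - \vert X\vert^{-1}\,\big\langle X,(\Hodge X^\flat)^\sharp\big\rangle\big]\d\meas
\end{align*}
for every nonnegative $\phi\in\Dom(\Delta)\cap\Ell^\infty(\mms)$ with $\Delta\phi\in\Ell^\infty(\mms)$, the paper concludes $\vert X\vert\in\Dom(\DELTA)$ by invoking \cite[Lem.~2.6]{savare2014} — an abstract criterion saying that a one-sided tested bound by an $\Ell^1$-function already implies membership in $\Dom(\DELTA)$ together with the corresponding measure inequality, entirely bypassing any a priori control on $\DELTA u_\varepsilon$. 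You instead attempt a weak-$*$ compactness argument for $\DELTA u_\varepsilon$ in $\Meas(\mms)$, and you rightly flag the obstruction yourself: the one-sided bound controls only the negative part $(\DELTA u_\varepsilon)^-$, while the positive part is not obviously bounded. The crude estimate $\Vert\Phi_\varepsilon'(\vert X\vert^2)\,\DELTA\vert X\vert^2\Vert_\TV \leq \tfrac12\varepsilon^{-1/2}\,\Vert\DELTA\vert X\vert^2\Vert_\TV$ blows up as $\varepsilon\downarrow 0$, and your sketched workaround does not close this. Knowing that $\nabla\vert X\vert^2 = 0$ $\meas$-a.e.\ on $\vert X\vert^{-1}(\{0\})$ by \autoref{Le:Calculus rules d} constrains the absolutely continuous correction term $\Phi_\varepsilon''(\vert X\vert^2)\,\Gamma(\vert X\vert^2)\,\meas$, but it says nothing about a possible singular part of $\DELTA\vert X\vert^2$ concentrated on $\vert X\vert^{-1}(\{0\})$, which is precisely where the prefactor $\Phi_\varepsilon'(\vert X\vert^2) = \tfrac12\varepsilon^{-1/2}$ is largest. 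To make the compactness route work one would need $\vert\DELTA\vert X\vert^2\vert\big[\vert X\vert^{-1}(\{0\})\big] = 0$; this is true a posteriori (from $\DELTA\vert X\vert^2 = 2\vert X\vert\,\DELTA\vert X\vert + 2\vert\nabla\vert X\vert\vert^2\,\meas$ once the theorem is known), but cannot be assumed in advance and is not supplied by the locality argument you gesture at. So the final step should be replaced by the citation to \cite[Lem.~2.6]{savare2014}, or by a self-contained proof of that criterion.
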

	
	\begin{proof} 
	We already know that $\vert X\vert\in W^{1,2}(\mms)$ under the given assumptions. 
	Furthermore, given any nonnegative $\phi\in \Dom(\Delta)\cap\Ell^\infty(\mms)$ with $\Delta\phi\in\Ell^\infty(\mms)$, via \autoref{Le:Key lemma}, 
		letting $\varepsilon \to 0$ with Lebesgue's theorem,  we arrive at
		\begin{align*}
		\int_\mms\Delta\phi\, \vert X\vert\d\meas &\geq \int_{\mms}\phi\, \big[\!-\!\vert X\vert^{-1}\,\big\langle X,(\Hodge X^\flat)^\sharp\big\rangle  + K\,\vert X\vert\big]\d\meas.
		\end{align*}
		Hence, it follows from \cite[Lem.~2.6]{savare2014} that $\vert X\vert\in\Dom(\DELTA)$ with
		\begin{align*}
		\DELTA\vert X\vert + \vert X\vert^{-1}\,\big\langle X,(\Hodge X^\flat)^\sharp\big\rangle\,\meas \geq K\,\vert X\vert \,\meas.\tag*{\qedhere}
		\end{align*}
	\end{proof}
	
	\begin{remark}\label{Re:X vec fields} 
	\autoref{Th:1-Bochner vector} is \emph{a priori} true for $\smash{X\in H^{1,2}(T^*\mms)^\sharp} \cap \Ell^1(T\mms) \cap\Ell^\infty(T\mms)$ with $\smash{X^\flat\in\Dom(\Hodge)}$ and $\Hodge X^\flat\in\Ell^1(T^*\mms)$. We restricted ourselves to the assumption that $X\in\Test(T\mms)$ to simplify the presentation. 
		
		It is worth to note that \emph{a posteriori}, by \autoref{Th:Kato-Simon} and \autoref{Cor:Extension} below, the assumption of \autoref{Th:1-Bochner vector} is satisfied for every vector field $\smash{X := (\HHeat_t\omega)^\sharp}$ with $t>0$ and $\omega\in \Ell^1(T^*\mms)\cap\Ell^\infty(T^*\mms)$. See also \autoref{Sub:Log Sob}.
	\end{remark}

	\subsection{Hess--Schrader--Uhlenbrock's inequality}\label{Sec:Kato-Simon}
	
	The ``almost'' vector $1$-Bochner inequality from \autoref{Le:Key lemma} is also an indispensable tool for \autoref{Th:Kato-Simon}. It is an instance of \emph{form domination} in smooth situations  \cite{hess1977, simon1977}, which also implicitly uses some sort of ``integrated $1$-Bochner inequality''.
	
	A slight restatement of \autoref{Le:Key lemma} and \autoref{Th:1-Bochner vector} is \autoref{Le:Form dom} below. \autoref{Th:Kato-Simon} then directly follows from the very general form domination result \cite[Thm.~2.15]{hess1977}. See  also \cite{simon1977, ouhabaz1999} for comprehensive proofs of the previous implication. 
	
	\begin{lemma}\label{Le:Form dom} For every $\omega\in \Dom(\Hodge)$ and every nonnegative $\phi\in W^{1,2}(\mms)$, 
		\begin{align}\label{Eq:Inequ form}
		\int_\mms \langle\nabla\phi,\nabla\vert \omega\vert\rangle \d\meas + K\int_\mms\phi\,\vert \omega\vert\d\meas \leq \int_\mms \phi\,\vert \omega\vert^{-1}\,\langle \omega,\Hodge \omega\rangle\d\meas.
		\end{align}
	\end{lemma}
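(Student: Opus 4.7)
\textbf{Proof plan for \autoref{Le:Form dom}.} The strategy is to start from the $\varepsilon$-regularised inequality in \autoref{Le:Key lemma}, invoke Kato's inequality from \autoref{Pr:Kato's inequality} to cancel out the two covariant gradient terms, integrate by parts so that $\Delta\phi$ becomes a gradient term, and finally send $\varepsilon\downarrow 0$ and extend by density.

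First I fix $X\in\Test(TM)$ (so that $X^\flat\in\Dom(\Hodge)$ and all hypotheses of \autoref{Le:Key lemma} are met) and a nonnegative $\phi\in\Test(\mms)$. Applying \autoref{Le:Key lemma} and dropping the two gradient contributions --- which is legitimate since by \autoref{Pr:Kato's inequality} one has $\vert\nabla\vert X\vert\vert^2\le\vert\nabla X\vert^2$ $\meas$-a.e., making their joint contribution nonpositive --- yields, for every $\varepsilon>0$,
\begin{align*}
\int_\mms\Delta\phi\,g_\varepsilon\d\meas+\int_\mms\phi\,(\vert X\vert^2+\varepsilon)^{-1/2}\,\big\langle X,(\Hodge X^\flat)^\sharp\big\rangle\d\meas\ge K\int_\mms\phi\,(\vert X\vert^2+\varepsilon)^{-1/2}\,\vert X\vert^2\d\meas,
\end{align*}
where $g_\varepsilon:=(\vert X\vert^2+\varepsilon)^{1/2}-\varepsilon^{1/2}\in W^{1,2}(\mms)\cap\Ell^\infty(\mms)$ by Kato plus the chain rule. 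Now I integrate the first term by parts using $\phi\in\Dom(\Delta)$ together with the chain rule $\nabla g_\varepsilon=\vert X\vert\,(\vert X\vert^2+\varepsilon)^{-1/2}\,\nabla\vert X\vert$, which turns it into $-\int_\mms \vert X\vert\,(\vert X\vert^2+\varepsilon)^{-1/2}\,\langle\nabla\phi,\nabla\vert X\vert\rangle\d\meas$.

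Next I let $\varepsilon\downarrow 0$ using Lebesgue's dominated convergence theorem: on $\{\vert X\vert>0\}$ each integrand converges pointwise to its natural limit, and on $\{\vert X\vert=0\}$ the locality of $\rmd$ from \autoref{Le:Calculus rules d} gives $\nabla\vert X\vert=0$ $\meas$-a.e., so the gradient term poses no issue; the middle term is controlled by the pointwise bound $\vert X\vert^{-1}\vert\langle X,(\Hodge X^\flat)^\sharp\rangle\vert\le\vert\Hodge X^\flat\vert$, which makes $\phi\,\vert\Hodge X^\flat\vert\in\Ell^1(\mms)$ a dominant (since $\phi\in\Ell^\infty$ and $\Hodge X^\flat\in\Ell^2$), with the convention $\vert X\vert^{-1}\langle X,(\Hodge X^\flat)^\sharp\rangle:=0$ on $\vert X\vert^{-1}(\{0\})$. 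Rearranging terms yields the desired inequality \eqref{Eq:Inequ form} for $X\in\Test(TM)$ and $\phi\in\Test(\mms)$.

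Finally, the extension to general $\omega\in\Dom(\Hodge)$ and nonnegative $\phi\in W^{1,2}(\mms)$ is by approximation: the left-hand side is jointly continuous w.r.t.~strong $W^{1,2}$-convergence of $\phi$ and strong $\Ell^2$-convergence of $\vert\omega\vert$ (hence of $\omega^\sharp$ by Kato), and the right-hand side is continuous under graph-norm convergence $X_n^\flat\to\omega$ in $\Dom(\Hodge)$ once one passes to a subsequence that converges $\meas$-a.e., using the uniform dominant $\phi\,\vert\Hodge\omega\vert\in\Ell^1(\mms)$ guaranteed by $\omega\in\Dom(\Hodge)$ and $\phi\in W^{1,2}(\mms)\subset\Ell^2(\mms)$. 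The most delicate point in this last step is the joint handling of the almost-everywhere singular factor $\vert\omega\vert^{-1}$ together with the test approximation, but this is exactly the content encoded in the general form-domination framework of Hess--Schrader--Uhlenbrock, so one may either carry out the a.e.~subsequence argument directly or invoke \cite[Thm.~2.15]{hess1977} to close the loop.
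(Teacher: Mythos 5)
Your core strategy is the same as the paper's: invoke \autoref{Le:Key lemma}, drop the two covariant gradient contributions via Kato's inequality (\autoref{Pr:Kato's inequality}), integrate by parts, and let $\varepsilon\downarrow 0$ with dominated convergence. That part is sound, including the chain-rule computation for $\nabla g_\varepsilon$ and the pointwise dominants. However, there is a genuine gap in the final extension step. You start from $X\in\Test(T\mms)$ and then want to reach general $\omega\in\Dom(\Hodge)$ ``by approximation,'' which would require that $\Test(T^*\mms)$ be dense in $\Dom(\Hodge)$ with respect to the graph norm of $\Hodge$. This is not available in the paper (density of $\Test(T^*\mms)$ is only stated in the $H^{1,2}$-norm), and the restriction to test fields was unnecessary: \autoref{Le:Key lemma} already applies to the larger class $\omega\in\Dom(\Hodge)\cap\Ell^\infty(T^*\mms)$ (equivalently $X\in H^{1,2}(T^*\mms)^\sharp\cap\Ell^\infty(T\mms)$ with $X^\flat\in\Dom(\Hodge)$). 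The paper starts there, and then produces an explicit graph-norm approximation of general $\omega\in\Dom(\Hodge)$ by elements of $\Dom(\Hodge)\cap\Ell^\infty(T^*\mms)$: set $\omega_R := \One_{B_R(z)}\,\One_{[0,R]}(\vert\omega\vert)\,\omega$, mollify by $\HHeat_t$ (which gives $\Ell^\infty$-boundedness via the $L^2$-Bakry--Émery estimate of \autoref{Th:L^2-contractivity}\ref{La:Bakry-Emery L^2}, and membership in $\Dom(\Hodge)$ for $t>0$), and run a diagonal argument in $(R,t)$. Without such a construction your density claim is unsupported.

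Separately, the suggestion to ``invoke \cite[Thm.~2.15]{hess1977} to close the loop'' is backwards in the paper's logic: that form-domination theorem is the \emph{consumer} of \autoref{Le:Form dom} --- it is what turns the integrated inequality \eqref{Eq:Inequ form} into the pointwise semigroup bound of \autoref{Th:Kato-Simon} --- not a tool for establishing \eqref{Eq:Inequ form} in the first place. Invoking it here would be circular.
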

	
	\begin{proof} If $\omega$ also belongs to $\Ell^\infty(T^*\mms)$, the claimed inequality originates from \autoref{Le:Key lemma} by letting $\varepsilon \to 0$, dropping the contributions by $\smash{\vert\nabla \omega^\sharp\vert^2}$ and $\smash{\vert\nabla\vert \omega\vert\vert^2}$ via \autoref{Pr:Kato's inequality}, and integrating by parts --- the resulting inequality easily extends to the asserted class of $\phi$ by \autoref{Le:Mollified heat flow}.
		
		Observe that given any $\omega\in\Dom(\Hodge)$, there exists a sequence $(\omega_n)_{n\in\N}$ in $\Dom(\Hodge)\cap\Ell^\infty(T^*\mms)$ converging to $\omega$ in $H^{1,2}(T^*\mms)$ such that $\Hodge \omega_n \to\Hodge \omega$ in $\Ell^2(T^*\mms)$ as $n\to\infty$ --- this then already provides \eqref{Eq:Inequ form} by approximation, possibly after taking pointwise $\meas$-a.e.~converging subsequences. Indeed, given any $z\in\mms$ and $R>0$, set $\omega_R := \One_{B_R(z)}\,\One_{[0,R]}(\vert\omega\vert)\,\omega$. By \autoref{Th:L^2-contractivity}, for every $t>0$ we have $\HHeat_t\omega_R \to \HHeat_t\omega$ in $H^{1,2}(T^*\mms)$ and $\Hodge \HHeat_t\omega_R\to\Hodge\HHeat_t\omega$ in $\Ell^2(T^*\mms)$ as $R\to\infty$. Since also $\HHeat_t\omega \to\omega$ in $H^{1,2}(T^*\mms)$ as well as $\Hodge\HHeat_t\omega = \HHeat_t\Hodge \omega \to \Hodge\omega$ as $t\to 0$ thanks to \autoref{Th:L^2-contractivity} as well, the claimed existence of an approximation sequence follows by a diagonal argument.
	\end{proof}
	
	\begin{theorem}[Hess--Schrader--Uhlenbrock inequality]\label{Th:Kato-Simon} Suppose that $(\mms,\met,\meas)$ is an $\RCD(K,\infty)$ space for some $K\in\R$. Then for every $\omega\in\Ell^2(T^*\mms)$ and every $t\geq 0$, we have
		\begin{align*}
		\vert\HHeat_t\omega\vert \leq \rme^{-Kt}\,\ChHeat_t\vert\omega\vert\quad\meas\text{-a.e.}
		\end{align*}
	\end{theorem}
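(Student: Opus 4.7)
My plan is to deduce the pointwise semigroup inequality from the form-domination inequality already secured in Lemma~\ref{Le:Form dom}, by appealing to the abstract form domination principle of Hess--Schrader--Uhlenbrock~\cite{hess1977} (see also \cite{simon1977, ouhabaz1999}). In the classical setting, this principle says that two $\Ell^2$-semigroups $\rme^{-tA}$ on $\Ell^2(\mms)$ and $\rme^{-tB}$ on an auxiliary ``section'' Hilbert space $\scrM$ satisfy $\vert\rme^{-tB}\omega\vert\le\rme^{-tA}\vert\omega\vert$ for every $\omega\in\scrM$ and every $t\ge 0$, as soon as $(\rme^{-tA})_{t\ge 0}$ is positivity preserving and the quadratic forms satisfy a pointwise domination of Beurling--Deny type.

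Here I would take $A := -\Delta + K$ and $B := \Hodge$, so that $\rme^{-tA} = \rme^{-Kt}\,\ChHeat_t$ is positivity preserving on $\Ell^2(\mms)$, while $\rme^{-tB} = \HHeat_t$ on $\Ell^2(T^*\mms)$. The bilinear form associated with $A$, evaluated at $(\phi, \vert\omega\vert)$ for $\phi\ge 0$ and $\omega\in\Dom(\Hodge)$, is precisely
\begin{align*}
\int_\mms \langle\nabla\phi, \nabla\vert\omega\vert\rangle\d\meas + K\int_\mms \phi\,\vert\omega\vert\d\meas,
\end{align*}
while the ``sign-weighted'' pairing of $\phi$ with $B\omega$ formally reads $\int_\mms \phi\,\vert\omega\vert^{-1}\,\langle\omega,\Hodge\omega\rangle\d\meas$ upon interpreting $\omega/\vert\omega\vert$ in the $\Ell^\infty$-module sense. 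The inequality between these two quantities is exactly the content of Lemma~\ref{Le:Form dom}. The abstract principle therefore yields $\vert\HHeat_t\omega\vert \le \rme^{-Kt}\,\ChHeat_t\vert\omega\vert$ $\meas$-a.e.\ first for $\omega\in\Dom(\Hodge)$, and then for all $\omega \in\Ell^2(T^*\mms)$ by strong $\Ell^2$-continuity of $\HHeat_t$ and $\ChHeat_t$ together with density of $\Dom(\Hodge)$, passing if necessary to an $\meas$-a.e.\ convergent subsequence.

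The main obstacle, in my view, is not the abstract implication itself but rather the translation of the Hilbert-lattice form-domination framework of \cite{hess1977, simon1977, ouhabaz1999}, which is typically phrased either for sections of a smooth vector bundle or for Banach lattice-valued semigroups, into the axiomatic $\Ell^\infty$-module $\Ell^2(T^*\mms)$, where the pointwise modulus $\vert\cdot\vert$ and the ``sign'' $\omega\mapsto\omega/\vert\omega\vert$ arise from the module structure rather than fiberwise from a classical bundle. The crucial analytic ingredient that makes this translation go through is Kato's inequality (Proposition~\ref{Pr:Kato's inequality}), guaranteeing that $\vert\omega\vert\in\Sobo^2(\mms)$ with $\vert\nabla\vert\omega\vert\vert\le\vert\nabla\omega^\sharp\vert$; once this is in place, the Beurling--Deny-style closure arguments underpinning \cite{hess1977} carry over verbatim, and no further genuinely new ingredient is needed.
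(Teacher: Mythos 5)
Your proposal is correct and takes essentially the same route as the paper: the paper also reduces the theorem to Lemma~\ref{Le:Form dom} (whose proof relies on Kato's inequality, Proposition~\ref{Pr:Kato's inequality}) and then invokes the abstract form-domination criterion of Hess--Schrader--Uhlenbrock \cite[Thm.~2.15]{hess1977}, with the same supplementary citations to \cite{simon1977,ouhabaz1999}. The only difference is that you spell out the choices $A=-\Delta+K$, $B=\Hodge$ and the final density argument explicitly, which the paper leaves implicit.
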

	
	\begin{remark}\label{Re:Related Kato Simon} For Riemannian manifolds, \autoref{Th:Kato-Simon} is due to \cite[Ch.~3]{hess1980}. 
		
		A similar approach to \autoref{Th:Kato-Simon}, by encoding uniform lower Ricci bounds via Bakry--Émery calculus, comes from the study of form domination for Hilbert space valued functions \cite{shigekawa1997}. The analogy is created by the structural characterization of $\Ell^2(T^*\mms)$ from \cite[Thm.~1.4.11]{gigli2018}.
		
		The setting of \cite{shigekawa1997} is, however, more restrictive than ours. For instance, the assumptions ($\Gamma$) and ($\bdGamma_\lambda$) made in \cite[Ch.~3]{shigekawa1997}, see (3.1) and (3.7) therein, transferred to our notation require that $\langle \omega,\eta\rangle\in\Dom(\Delta)$ for a sufficiently large class of $\omega,\eta\in\Dom(\Hodge)$, an assumption we can make sense of only in a weak form using the measure-valued Laplacian $\DELTA$. (In \cite{shigekawa1997}, the previous regularity assumption has been used to define a contravariant $\Gamma$-operator.)
	\end{remark}
	
	\subsection{Bakry--Ledoux's inequality}
	
	We turn to a version of \ref{La:Bakry-Emery L^2} in \autoref{Th:L^2-contractivity} under synthetic upper dimension bounds, namely the  \emph{Bakry--Ledoux inequality}. 
	Our strategy for \autoref{Th:Bakry-Ledoux} closely follows its functional counterparts for \cite[Thm.~3.6, Prop.~3.7]{erbar2020} and in fact both uses and entails their equivalence with the $\RCD^*(K,N)$ condition on $\RCD(K,\infty)$ spaces (or, with the same arguments, on $\RCD(K',\infty)$ spaces with $K' < K$).

	\begin{theorem}[Bakry--Ledoux inequality]\label{Th:Bakry-Ledoux} Let $(\mms,\met,\meas)$ be an $\RCD(K,\infty)$ space for some $K\in\R$, and let $N\in (1,\infty)$. Then $(\mms,\met,\meas)$ is an $\RCD^*(K,N)$ space if and only if any of the subsequent inequalities holds for every $\omega\in\Ell^2(T^*\mms)$.
		\begin{enumerate}[label=\textnormal{(\roman*)}]
			\item\label{La:Strong BL} \textnormal{\textsc{Strong Bakry--Ledoux inequality.}}  For every $t>0$,
			\begin{align*}
			\vert\HHeat_t\omega\vert^2 + \frac{2}{N}\int_0^t\rme^{-2Ks}\,\ChHeat_s\big(\vert\delta\HHeat_{t-s}\omega\vert^2\big)\d s \leq \rme^{-2Kt}\,\ChHeat_t\big(\vert\omega\vert^2\big)\quad\meas\text{-a.e.}
			\end{align*}
			\item\label{La:Weak BL} \textnormal{\textsc{Integral weak Bakry--Ledoux inequality.}}  For every $t>0$,
			\begin{align*}
			\vert\HHeat_t\omega\vert^2 + \frac{2}{N}\int_0^t\rme^{-2Ks}\,\vert\ChHeat_s \delta\HHeat_{t-s}\omega\vert^2\d s \leq \rme^{-2Kt}\,\ChHeat_t\big(\vert\omega\vert^2\big)\quad\meas\text{-a.e.}
			\end{align*}
			\item\label{La:RCD EKS} \textnormal{\textsc{Non-integral weak Bakry--Ledoux inequality.}} For every $t>0$, with the interpretation of the prefactor in the second term as $2t/N$ in the case $K=0$,
			\begin{align*}
			\vert\HHeat_t\omega\vert^2 + \frac{4Kt^2}{N(\rme^{2Kr}-1)}\,\vert\delta\HHeat_t\omega\vert^2\leq \rme^{-2Kt}\,\ChHeat_t\big(\vert\omega\vert^2\big)\quad\meas\text{-a.e.}
			\end{align*}
		\end{enumerate}
	\end{theorem}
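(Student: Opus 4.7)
The plan is to prove the cyclic chain $(\mathrm{i})\Rightarrow(\mathrm{ii})\Rightarrow(\mathrm{iii})\Rightarrow\RCD^*(K,N)\Rightarrow(\mathrm{i})$, which under the standing $\RCD(K,\infty)$ assumption yields equivalence of all four statements. The two easy implications and the third one reduce to classical tools (Jensen's inequality, \autoref{Cor:delta H_t P_t delta}, and the characterisation of $\RCD^*(K,N)$ by the functional non-integral weak Bakry--Ledoux inequality from \cite{erbar2015} respectively); the main step $\RCD^*(K,N)\Rightarrow(\mathrm{i})$ mimics the interpolation argument underlying \cite[Prop.~3.6.10]{gigli2018} but uses the dimensional refinement \autoref{Pr:Han's bound} in place of the mere Ricci bound from \autoref{Th:Ricci tensor}\ref{La:RIC lower bound}.

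For $(\mathrm{i})\Rightarrow(\mathrm{ii})$, inserting the pointwise $\meas$-a.e.\ Markovian Jensen inequality $\ChHeat_s(\vert\delta\HHeat_{t-s}\omega\vert^2)\geq\vert\ChHeat_s\delta\HHeat_{t-s}\omega\vert^2$ into the integral of $(\mathrm{i})$ produces $(\mathrm{ii})$ directly. For $(\mathrm{ii})\Rightarrow(\mathrm{iii})$, approximating $\omega$ by $\HHeat_\varepsilon\omega\in\Dom(\delta)$ and applying \autoref{Cor:delta H_t P_t delta} yields $\ChHeat_s\delta\HHeat_{t-s}\omega=\ChHeat_t\delta(\HHeat_\varepsilon\omega)=\delta\HHeat_t\omega$ independently of $s$, so the time integral in $(\mathrm{ii})$ evaluates to $(1-\rme^{-2Kt})/(NK)\cdot\vert\delta\HHeat_t\omega\vert^2$; the elementary inequality $\vert Kt\vert\leq\vert\sinh Kt\vert$, which is equivalent to $4Kt^2/(\rme^{2Kt}-1)\leq(1-\rme^{-2Kt})/K$, then concludes after letting $\varepsilon\to 0$. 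For $(\mathrm{iii})\Rightarrow\RCD^*(K,N)$, specialising to exact forms $\omega=\rmd f$ with $f\in W^{1,2}(\mms)$ and invoking \autoref{Le:Ht vs Pt} transforms $(\mathrm{iii})$ into the inequality $\Gamma(\ChHeat_tf)+4Kt^2/(N(\rme^{2Kt}-1))\,\vert\Delta\ChHeat_tf\vert^2\leq\rme^{-2Kt}\,\ChHeat_t\Gamma(f)$, and \cite{erbar2015} provides the desired characterisation.

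For the main step $\RCD^*(K,N)\Rightarrow(\mathrm{i})$, fix a sufficiently regular $\omega$ (density and \autoref{Th:L^2-contractivity} will handle the general case afterwards), set $\phi_s:=\HHeat_{t-s}\omega$ for $s\in[0,t]$, and consider
\begin{align*}
G_\psi(s) := \rme^{-2Ks}\int_\mms\psi\,\ChHeat_s\big(\vert\phi_s\vert^2\big)\d\meas
\end{align*}
for nonnegative $\psi\in\Test(\mms)$. Combining the temporal derivative $(\rmd/\rmd s)\vert\phi_s\vert^2=2\langle\phi_s,\Hodge\phi_s\rangle$ with the identity $\bdGamma_2(\phi_s^\sharp)=\tfrac{1}{2}\bdDelta\vert\phi_s\vert^2+\langle\phi_s^\sharp,(\Hodge\phi_s)^\sharp\rangle\,\meas$ implicit in \autoref{Th:Ricci tensor} and the bound $\bdGamma_2(\phi_s^\sharp)\geq[K\vert\phi_s\vert^2+N^{-1}\vert\delta\phi_s\vert^2]\,\meas$ from \autoref{Pr:Han's bound}, the weak derivative of $G_\psi$ should satisfy
\begin{align*}
\frac{\rmd}{\rmd s}G_\psi(s)\geq\frac{2}{N}\,\rme^{-2Ks}\int_\mms\psi\,\ChHeat_s\big(\vert\delta\phi_s\vert^2\big)\d\meas;
\end{align*}
the $-2K\,G_\psi(s)$ contribution coming from differentiating the exponential is exactly cancelled by the $2K\vert\phi_s\vert^2\,\meas$ part of the $\bdGamma_2$-bound, thanks to the self-adjointness of $\ChHeat_s$. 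Integrating on $[0,t]$, using $G_\psi(t)=\rme^{-2Kt}\int\psi\,\ChHeat_t(\vert\omega\vert^2)\d\meas$ and $G_\psi(0)=\int\psi\,\vert\HHeat_t\omega\vert^2\d\meas$, and varying $\psi$ within a class rich enough to test $\meas$-a.e.\ inequalities then yields $(\mathrm{i})$.

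The main obstacle is the rigorous justification of the differential inequality for $G_\psi$: since $\vert\phi_s\vert^2$ belongs in general only to $\Dom(\bdDelta)$ rather than to $\Dom(\Delta)$, the commutation of $\ChHeat_s$ with the Laplacian acting on $\vert\phi_s\vert^2$ must be performed through the pairing of the measure $\bdDelta\vert\phi_s\vert^2$ with the Lipschitz function $\ChHeat_s\psi$, in the spirit of \autoref{Le:The key} and \autoref{Le:Key lemma}, and this forces a mollification of $\omega$ reducing one to test vector fields $\phi_s^\sharp\in\Test(T\mms)$ for which $\vert\phi_s\vert^2\in\Dom(\bdDelta)$ is guaranteed. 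A limiting argument analogous to \autoref{Le:Form dom}, followed by the routine density extension from the test class to $\Ell^2(T^*\mms)$ via \autoref{Th:L^2-contractivity}, should then close the proof.
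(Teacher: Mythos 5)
Your proposal follows the paper's exact route: the cyclic chain $\RCD^*(K,N)\Rightarrow(\mathrm{i})\Rightarrow(\mathrm{ii})\Rightarrow(\mathrm{iii})\Rightarrow\RCD^*(K,N)$ with the same easy steps (Jensen for $(\mathrm{i})\Rightarrow(\mathrm{ii})$, \autoref{Cor:delta H_t P_t delta} plus a scalar Jensen/convexity estimate for $(\mathrm{ii})\Rightarrow(\mathrm{iii})$, and \cite{erbar2015} on exact forms for the last), and the same interpolation functional for the main step — your $G_\psi(s)$ agrees, by self-adjointness of $\ChHeat_s$, with the paper's $F(s)=\rme^{-2Ks}\int_\mms\ChHeat_s\phi\,\vert\HHeat_{t-s}\omega_R\vert^2\d\meas$, which is differentiated and bounded below via \autoref{Pr:Han's bound}. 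The only small difference lies in the regularity bookkeeping: the paper truncates $\omega$ to $\omega_R\in\Ell^2(T^*\mms)\cap\Ell^\infty(T^*\mms)$ and invokes the weak (integrated, tested against $\ChHeat_s\phi$) dimensional Bochner inequality from \cite{han2018} directly for vector fields in $H^{1,2}(T^*\mms)^\sharp$, whereas you propose approximating by test vector fields — a viable alternative, so long as the approximation of $\phi_s^\sharp$ is done at each fixed $s$ (mollifying the initial datum $\omega$ alone does not put $\HHeat_{t-s}\omega$ into $\Test(T^*\mms)$) and the convergence of the $\Hodge$-term is secured by the integration-by-parts device already used in the proof of \autoref{Le:The key}.
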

	
	\begin{proof} It suffices to prove that the $\RCD^*(K,N)$ condition implies \ref{La:Strong BL}. Indeed, \ref{La:Strong BL} easily implies \ref{La:Weak BL} by Jensen's inequality. If \ref{La:Weak BL} holds, then also \ref{La:RCD EKS} is satisfied, since then, by Jensen's inequality, \autoref{Cor:delta H_t P_t delta} and with the indicated interpretation in the case $K=0$, 
		\begin{align*}
		&\frac{4Kt^2}{N(\rme^{2Kr}-1)}\,\vert\delta\HHeat_t\omega\vert^2 = \frac{2t}{N}\,\vert\delta\HHeat_t\omega\vert^2\,\Big[\frac{1}{t}\int_0^t \rme^{2Ks}\d s\Big]^{-1}\\
		&\qquad\qquad\leq \frac{2}{N}\int_0^t \rme^{-2Ks}\,\vert\delta\HHeat_{t}\omega\vert^2\d s = \frac{2}{N}\int_0^t \rme^{-2Ks}\,\vert\ChHeat_s\delta\HHeat_{t-s}\omega\vert^2\d s
		\end{align*}
		Moreover, under the given topological assumptions, it is known by \cite[Thm.~4.19]{erbar2015} that \ref{La:RCD EKS}, restricted to exact differential $1$-forms, implies the $\RCD^*(K,N)$ condition on the metric measure space $(\mms,\met,\meas)$.
		
		For $R>0$, let $\omega_R\in \Ell^2(T^*\mms)\cap\Ell^\infty(T^*\mms)$ and  $F\colon [0,t]\to\R$ be defined by
		\begin{align*}
		\omega_R &:= \One_{[0,R]}(\vert\omega\vert)\,\omega,\\
		F(s) &:= \rme^{-2Ks}\int_\mms\ChHeat_s\phi\,\vert\HHeat_{t-s}\omega_R\vert^2\d \meas,
		\end{align*}
		where $\phi\in\Test(\mms)$ is nonnegative. 		By the respective continuity of $(\ChHeat_t)_{t\geq 0}$ and $(\HHeat_t)_{t\geq 0}$ in $\Ell^2(\mms)$ and $\Ell^2(T^*\mms)$ and the boundedness of $\phi$ and $\omega_R$,  $F$ is continuous. Its restriction to $(0,t]$ has $\Cont^1$-regularity, which is also inherited from the corresponding properties of $(\ChHeat_t)_{t\geq 0}$ and $(\HHeat_t)_{t\geq 0}$ as well as item \ref{La:Bakry-Emery L^2} in \autoref{Th:L^2-contractivity}.
		
		Therefore, employing \cite[Thm.~4.3]{han2018}, the claim follows by integration and the arbitrariness of $\phi$  after observing that
		\begin{align*}
		F'(s) &= -2K\rme^{-2Ks}\int_\mms\ChHeat_s\phi\,\vert\HHeat_{t-s}\omega_R\vert^2\d\meas +\rme^{-2Ks}\int_\mms \Delta\ChHeat_s\phi\,\vert\HHeat_{t-s}\omega_R\vert^2\d\meas\\
		&\qquad\qquad + 2\rme^{-2Ks}\int_\mms \ChHeat_s\phi\,\langle\HHeat_{t-s}\omega_R,\Hodge\HHeat_{t-s}\omega_R\rangle\d\meas\\
		&\geq \frac{2}{N}\,\rme^{-2Ks}\int_\mms \ChHeat_s\phi\,\vert\delta\HHeat_{t-s}\omega_R\vert^2\d\meas.\qedhere
		\end{align*}
	\end{proof}

	\begin{remark}[Generalization to variable Ricci bounds]\label{Re:Variable} To simplify the presentation and since most later results do not require refinements of the previous facts, we reduced ourselves to the case of constant lower bounds $K$ for the Ricci curvature in \autoref{Ch:Heat flow pointwise props}.
		
		However, the arguments in \autoref{Ch:Heat flow pointwise props} perfectly work in a similar manner if the $\RCD(K,\infty)$ space $(\mms,\met,\meas)$ obeys the stronger $2$-Bakry--Émery inequalities $\BE_2(k,\infty)$ or $\BE_2(k,N)$ with variable curvature bound $k$ in the sense of \cite[Def.~1.4]{braun2019} or \cite[Def.~3.3]{sturm2019}, respectively. Here, $k\colon\mms\to\R$ is a locally $\meas$-integrable function with $k\geq K$ on $\mms$. (The assumption on lower semicontinuity on $k$ in \cite{braun2019,sturm2019} is not needed in the purely Eulerian perspective of \cite{gigli2018}.) In particular, \autoref{Th:1-Bochner vector} remains true by just replacing $K$ by $k$. In addition, \autoref{Th:Kato-Simon} holds when replacing $\smash{\rme^{-qKt}\,\ChHeat_t}$, $q\in \{1,2\}$, by the operator $\smash{\Schr{qk}_t}$, where $\smash{(\Schr{qk}_t)_{t\geq 0}}$ is the \emph{Schrödinger semigroup} on $\Ell^2(\mms)$ with generator $\Delta - qk$ \cite{stollmann1996}. One way to read \autoref{Th:Kato-Simon} in terms of \emph{Brownian motion} $\smash{\big((\boldsymbol{\mathrm{P}}^x)_{x\in\mms}, (B_t)_{t\geq 0}\big)}$ on $(\mms,\met,\meas)$ --- defined w.r.t.~the generator $\Delta/2$ \cite{ambrosio2014b} --- is thus
		\begin{align*}
		\vert \HHeat_t\omega\vert \leq \boldsymbol{\mathrm{E}}^\cdot\big[\rme^{-\int_0^{2t} k(B_r)/2\d r}\,\vert\omega\vert(B_{2t})\big]\quad\meas\text{-a.e.}\tag*{\qedhere}
		\end{align*}
	\end{remark}
	
	\section{Integral estimates for the heat flow}\label{Ch:Integral estimates}
	
	\subsection{Basic $\Ell^p$-properties and $\Ell^p$-$\Ell^\infty$-regularization}\label{Sec:Basic props Lp}
	
	From \autoref{Th:Kato-Simon} and a standard procedure, the following is immediate by approximation. It is worth to  emphasize that the restriction of $\HHeat_t$ to $\Ell^\infty(T^*\mms)$ is defined as the Banach space adjoint of the restriction of $\HHeat_t$ to $\Ell^1(T^*\mms)$ for every $t\geq 0$. See also \autoref{Sec:Independence spectrum}.

	\begin{theorem}\label{Cor:Extension} Let $(\mms,\met,\meas)$ be an $\RCD(K,\infty)$ space for some $K\in\R$. For every $p\in [1,\infty]$, $(\HHeat_t)_{t\geq 0}$ then extends to a semigroup of bounded linear operators from $\Ell^p(T^*\mms)$ into $\Ell^p(T^*\mms)$, strongly continuous if $p<\infty$, which satisfies
		\begin{align*}
		\Vert \HHeat_t\omega \Vert_{\Ell^p} \leq \rme^{-Kt}\,\Vert \omega\Vert_{\Ell^p}
		\end{align*}
		for every $\omega\in\Ell^p(T^*\mms)$ and every $t\geq 0$.
	\end{theorem}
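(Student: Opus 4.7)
The plan is to derive the operator norm bound directly from the pointwise Hess--Schrader--Uhlenbrock estimate in \autoref{Th:Kato-Simon}, combined with the known $\Ell^p$-contractivity of the functional heat flow $(\ChHeat_t)_{t\geq 0}$, and then to extend $\HHeat_t$ from a dense subclass to all of $\Ell^p(T^*\mms)$ by continuity.

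First I would fix $\omega\in\Ell^2(T^*\mms)\cap\Ell^p(T^*\mms)$ and $t\geq 0$. By \autoref{Th:Kato-Simon}, $\vert\HHeat_t\omega\vert\leq\rme^{-Kt}\,\ChHeat_t\vert\omega\vert$ $\meas$-a.e.; taking $\Ell^p$-norms on both sides and invoking the $\Ell^p$-contractivity of $\ChHeat_t$ yields $\Vert\HHeat_t\omega\Vert_{\Ell^p}\leq\rme^{-Kt}\,\Vert\omega\Vert_{\Ell^p}$. For $p<\infty$ the subspace $\Ell^2(T^*\mms)\cap\Ell^p(T^*\mms)$ is dense in $\Ell^p(T^*\mms)$: for any $\omega\in\Ell^p(T^*\mms)$ and any fixed $x_0\in\mms$, the truncations $\omega_n := \One_{B_n(x_0)}\,\One_{\{\vert\omega\vert\leq n\}}\,\omega$ lie in $\Ell^2\cap\Ell^\infty\cap\Ell^p$ (using $\meas[B_n(x_0)]<\infty$) and converge to $\omega$ in $\Ell^p$ by dominated convergence. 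Therefore $\HHeat_t$ extends uniquely to a bounded linear operator on $\Ell^p(T^*\mms)$ with operator norm at most $\rme^{-Kt}$, and the semigroup property $\HHeat_{t+s}=\HHeat_t\HHeat_s$ passes to the extension by continuity.

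For $p=\infty$ I would define $\HHeat_t$ as the Banach-space adjoint of its $\Ell^1$-extension, using the module isomorphism $\Ell^1(T^*\mms)^*\cong\Ell^\infty(T^*\mms)$. Duality then automatically transfers the $\rme^{-Kt}$ operator bound and the semigroup property; self-adjointness of $\HHeat_t$ on $\Ell^2(T^*\mms)$ ensures that this extension agrees with the original $\HHeat_t$ on $\Ell^2\cap\Ell^\infty$.

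The only non-routine step is strong continuity for $p\in[1,\infty)$. By the semigroup property and the uniform bound $\Vert\HHeat_t\Vert_{\Ell^p,\Ell^p}\leq\rme^{\vert K\vert t}$, it suffices to prove continuity at $t=0$ on the dense class of $\omega\in\Ell^2\cap\Ell^\infty\cap\Ell^p$ of bounded support. Strong $\Ell^2$-continuity from \autoref{Th:L^2-contractivity} yields $\meas$-a.e.\ convergence $\HHeat_{t_n}\omega\to\omega$ along some $t_n\downarrow 0$; the pointwise domination $\vert\HHeat_t\omega\vert\leq\rme^{\vert K\vert t}\,\ChHeat_t\vert\omega\vert$, together with the strong $\Ell^1$- and $\Ell^\infty$-bounds on the right-hand side, supplies both an $\Ell^\infty$-bound and equi-integrability, so that H\"older interpolation (for $p\in(1,\infty)$) or Scheff\'e's lemma combined with Fatou (for $p=1$) forces $\HHeat_{t_n}\omega\to\omega$ in $\Ell^p$; a standard subsequence argument upgrades this to the full $t\downarrow 0$ convergence. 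The main obstacle I anticipate is the endpoint $p=1$, which rules out the interpolation shortcut and must be handled via equi-integrability of $\{\ChHeat_t\vert\omega\vert\}_{t\in[0,1]}$, itself a consequence of the strong $\Ell^1$-continuity of $(\ChHeat_t)_{t\geq 0}$.
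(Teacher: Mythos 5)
Your proof follows exactly the paper's (unspelled-out) route: the $\Ell^p$-bound comes from combining \autoref{Th:Kato-Simon} with the $\Ell^p$-contractivity of the functional heat semigroup, the $p<\infty$ extension is by density, and the $p=\infty$ case is defined as the Banach adjoint of the $\Ell^1$-operator — which is precisely what the paper records in the sentence preceding \autoref{Cor:Extension} ("From \autoref{Th:Kato-Simon} and a standard procedure, the following is immediate by approximation. \dots the restriction of $\HHeat_t$ to $\Ell^\infty(T^*\mms)$ is defined as the Banach space adjoint of the restriction of $\HHeat_t$ to $\Ell^1(T^*\mms)$"). Your treatment of strong continuity (reduce to $t=0$ on the dense class, get a.e.\ convergence along a subsequence from the $\Ell^2$-theory, dominate by $\rme^{|K|t}\,\ChHeat_t\vert\omega\vert$ and conclude via interpolation or Scheffé together with the strong $\Ell^1$-continuity of $(\ChHeat_t)_{t\geq 0}$, then upgrade by a subsequence argument) correctly supplies the standard details the paper omits.
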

	
	\begin{remark} 	This result implies the $\Ell^p$-contractivity of $(\HHeat_t)_{t\geq 0}$ for every $p\in[1,\infty]$ under nonnegative lower Ricci bounds. However, we should not expect contractivity in larger generality, not even on Riemannian manifolds \cite{strichartz1983, strichartz1986}.
	\end{remark}
	
	On compact $\RCD^*(K,N)$ spaces, $N\in (1,\infty)$, the heat operator $\HHeat_t$ is not only bounded from $\Ell^p(T^*\mms)$ to $\Ell^p(T^*\mms)$, but also from $\Ell^p(T^*\mms)$ to $\Ell^\infty(T^*\mms)$ for every $t>0$ and every $p\in [1,\infty]$. This is the content of the following result which will be crucial in \autoref{Sec:Spectral properties compact} and \autoref{Sub:Fund sol}.
	
	\begin{theorem}[$\Ell^p$-$\Ell^\infty$-regularization]\label{Th:Lp Linfty} Let $(\mms,\met,\meas)$ be a compact $\RCD^*(K,N)$ space, $K\in\R$ and $N\in (1,\infty)$. Furthermore, let $t>0$ and $p\in [1,\infty]$. Then $\HHeat_t$ is bounded from $\Ell^p(T^*\mms)$ to $\Ell^\infty(T^*\mms)$.
	\end{theorem}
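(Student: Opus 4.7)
My plan is to deduce the result from the scalar heat kernel bound on compact $\RCD^*(K,N)$ spaces via the Hess--Schrader--Uhlenbrock inequality (\autoref{Th:Kato-Simon}). The point is that on the level of pointwise norms, $\HHeat_t$ is dominated by $\ChHeat_t$, for which the desired $\Ell^p$-$\Ell^\infty$-regularization is a direct consequence of a uniform upper bound on the scalar heat kernel.

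\textbf{Step 1: Uniform bound on $\sfp_t$.} I would first establish that, for every fixed $t>0$, one has
\begin{align*}
M_t := \esssup_{(x,y)\in\mms^2}\sfp_t(x,y) < \infty.
\end{align*}
Since $(\mms,\met,\meas)$ is a compact $\RCD^*(K,N)$ space, $\meas[\mms]<\infty$ and the diameter $D := \diam\mms$ is finite. By Bishop--Gromov's inequality \eqref{Eq:Bishop-Gromov}, for $r := \min\{\sqrt{t},D\}$ there is a constant $c>0$, depending only on $K$, $N$, $D$ and $t$, such that $\meas[B_{\sqrt{t}}(x)] \geq \meas[B_r(x)] \geq c\,\meas[\mms]$ for every $x\in\mms$. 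Plugging this into the Gaussian bound \eqref{Eq:Heat kernel bound} (or \autoref{Th:Heat kernel bound thm}) and discarding the negative exponent yields $M_t \leq C_3\,c^{-1}\,\meas[\mms]^{-1}\,\rme^{C_4 t}$.

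\textbf{Step 2: Scalar $\Ell^p$-$\Ell^\infty$-bound.} For any $p\in [1,\infty]$ with conjugate exponent $q$, the uniform bound from Step~1 together with the normalization $\int_\mms \sfp_t(x,\cdot)\d\meas = 1$ gives
\begin{align*}
\Vert \sfp_t(x,\cdot)\Vert_{\Ell^q} \leq M_t^{1/p}
\end{align*}
for $\meas$-a.e.~$x\in\mms$ (with the natural convention when $p=\infty$). By Hölder's inequality applied to the kernel representation of $\ChHeat_t$, for every $f\in \Ell^p(\mms)$,
\begin{align*}
\Vert \ChHeat_t f\Vert_{\Ell^\infty} \leq M_t^{1/p}\,\Vert f\Vert_{\Ell^p}.
\end{align*}

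\textbf{Step 3: Transfer to $1$-forms.} For $\omega\in \Ell^p(T^*\mms)$, \autoref{Th:Kato-Simon} gives $\vert\HHeat_t\omega\vert\leq \rme^{-Kt}\,\ChHeat_t\vert\omega\vert$ $\meas$-a.e., so combining with Step~2 yields
\begin{align*}
\Vert \HHeat_t\omega\Vert_{\Ell^\infty} \leq \rme^{-Kt}\,\Vert \ChHeat_t\vert\omega\vert\Vert_{\Ell^\infty} \leq \rme^{-Kt}\,M_t^{1/p}\,\Vert \omega\Vert_{\Ell^p},
\end{align*}
which is the claimed $\Ell^p$-$\Ell^\infty$-boundedness. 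I do not anticipate a serious obstacle: once \autoref{Th:Kato-Simon} is available, everything reduces to the scalar situation, and the only slightly delicate point is the uniform lower bound on $\meas[B_{\sqrt{t}}(\cdot)]$, which is immediate from Bishop--Gromov and compactness.
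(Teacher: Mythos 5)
Your proposal is correct and follows essentially the same route as the paper: both rest on the Hess--Schrader--Uhlenbrock inequality (\autoref{Th:Kato-Simon}) to dominate $\vert\HHeat_t\omega\vert$ by $\rme^{-Kt}\,\ChHeat_t\vert\omega\vert$, then invoke the scalar Gaussian heat kernel upper bound \eqref{Eq:Heat kernel bound} together with a uniform lower bound on $\meas[B_{\sqrt{t}}(\cdot)]$ — obtained from Bishop--Gromov and compactness — to conclude. The only difference is organizational: the paper reduces to $p=1$ at the outset (using finiteness of $\meas$ and Hölder), whereas you treat general $p$ directly by estimating $\Vert\sfp_t(x,\cdot)\Vert_{\Ell^q}$ via interpolation between $M_t$ and the stochastic completeness $\int_\mms\sfp_t(x,\cdot)\,\mathrm{d}\meas=1$; this is a cosmetic variant with the benefit of giving the explicit operator norm bound $\rme^{-Kt}M_t^{1/p}$, which the paper's proof does not record.
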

	
	\begin{proof} By Hölder's inequality, it suffices to prove boundedness of $\HHeat_t$ from $\Ell^1(T^*\mms)$ to $\Ell^\infty(T^*\mms)$.
		
		Let $\omega\in\Ell^1(T^*\mms)\cap\Ell^2(T^*\mms)$ with $\Vert\omega\Vert_{\Ell^1}\leq 1$ be arbitrary --- the consideration of such $1$-forms is enough by the density of $\Ell^1(T^*\mms)\cap\Ell^2(T^*\mms)$ in $\Ell^1(T^*\mms)$. By \eqref{Eq:AHPT}, there exists a constant $C>0$ such that $\smash{\meas\big[B_{\sqrt{t}}(\cdot)\big]^{-1}} \leq \smash{C\,\meas\big[B_{\sqrt{t}}(z)\big]^{-1}}$ on $\mms$. The conclusion follows by observing that by \autoref{Th:Kato-Simon} and \eqref{Eq:Heat kernel bound}, there exist constants $C_3,C_4 > 1$ depending only on $K$ and $N$ such that
		\begin{align*}
		\vert \HHeat_t\omega\vert &\leq \rme^{-Kt}\,\ChHeat_t\vert\omega\vert = \rme^{-Kt}\int_\mms \sfp_t(\cdot,y)\,\vert \omega\vert(y)\d\meas(y)\\
		&\leq C_3\,\rme^{-(K-C_4)t}\,\meas\big[B_{\sqrt{t}}(\cdot)\big]^{-1}\textcolor{white}{\int_M}\\
		&\leq C\,C_3\,\rme^{-(K-C_4)t}\,\meas\big[B_{\sqrt{t}}(z)\big]^{-1}\quad\meas\text{-a.e.}\textcolor{white}{\int_M}\qedhere
		\end{align*}
	\end{proof}

	\subsection{Logarithmic Sobolev inequalities}\label{Sec:Hypercontractivity}
	
	We come to an important class of functional inequalities, namely \emph{logarithmic Sobolev inequalities} for $1$-forms and their relation to integral-type inequalities for $(\HHeat_t)_{t\geq 0}$. More precisely, following \cite{charalambous2007,davies1989} we show that the former imply, for certain $t>0$ and every $p_0 \in (1,\infty)$, the boundedness of $\HHeat_t$ from $\Ell^{p_0}(T^*\mms)$ into $\Ell^{p(t)}(T^*\mms)$, where $p$ is a real-valued function with $p(0) = p_0$. This property is called \emph{hypercontractivity}. Under more restrictive assumptions, for some finite $T>0$ it is even possible to prove the boundedness of $\HHeat_T$ from $\Ell^{p_0}(T^*\mms)$ to $\Ell^\infty(T^*\mms)$, a property termed \emph{ultracontractivity}. See \autoref{Th:pLSI to Ultra}.
	
	A certain reverse implication also holds, see \autoref{Th:LSI 2 from hyper}.
	
	\begin{definition}\label{Def:LSI} Let $\beta > 0$ and $\chi\in\R$. We say that $X\in H^{1,2}(T\mms)\cap\Ell^1(T\mms)\cap\Ell^\infty(T\mms)$ satisfies the \emph{$2$-lo\-ga\-rithmic Sobolev inequality} with constants $\beta$ and $\chi$, briefly $\LSI_2(\beta,\chi)$, if
		\begin{align*}
		\int_\mms \vert X\vert^2\log\vert X\vert\d\meas \leq \beta\,\Vert \nabla X\Vert_{\Ell^2}^2 + \chi\,\Vert X\Vert_{\Ell^2}^2 + \Vert X\Vert_{\Ell^2}^2\log\Vert X\Vert_{\Ell^2}.
		\end{align*}
	\end{definition}
	
	\begin{definition}\label{Def:p log Sob} Let $\varepsilon > 0$, $\gamma\in\R$ and $p\in (1,\infty)$. We say that $\omega\in \Dom(\Hodge)\cap\Ell^1(T^*\mms)\cap\Ell^\infty(T^*\mms)$ obeys the \emph{form $p$-logarithmic Sobolev inequality} with constants $\varepsilon$ and $\gamma$, briefly $\fLSI_p(\varepsilon,\gamma)$, if $\Hodge\omega\in\Ell^p(T^*\mms)$ as well as, with the convention $0^0 := 0$,
		\begin{align*}
		\int_\mms \vert\omega\vert^p\log\vert\omega\vert\d\meas \leq \varepsilon\int_\mms\vert\omega\vert^{p-2}\,\langle\omega,\Hodge\omega\rangle\d\meas + \gamma\,\Vert\omega\Vert_{\Ell^p}^p + \Vert\omega\Vert_{\Ell^p}^p\log\Vert\omega\Vert_{\Ell^p}.
		\end{align*}
	\end{definition}
	
	\begin{remark}\label{Re:Contravariant} On Riemannian manifolds, a similar definition as \autoref{Def:p log Sob} has been given and considered in \cite[Def.~2.1]{charalambous2007} for the more restrictive case $p\in (2,\infty)$. The definition of a $2$-logarithmic Sobolev inequality therein, on the other hand, is similar to \autoref{Def:LSI}. 
	\end{remark}
	
	\begin{remark}\label{Re:p in (1,2)} We do not discuss the case of $1$-logarithmic Sobolev inequalities since it is not clear, even having an appropriate version of such inequality at our disposal, that for $\omega\in\Ell^1(T^*\mms)\cap\Ell^\infty(T^*\mms)$, the function $\vert\omega\vert\log\vert\omega\vert$ is integrable. 
		
		On the other hand, the integrability of $\vert\omega\vert^p\log\vert\omega\vert$ for $p\in (1,\infty)$ is clear by local boundedness of the function $r\mapsto r^\delta\log r$ on $[0,\infty)$ for every $\delta > 0$.
	\end{remark}
	
	Later, special interest will be devoted to the class
	\begin{align*}
	V_{1,\infty} := \bigcup_{t>0} \HHeat_t\big(\Ell^1(T^*\mms)\cap\Ell^\infty(T^*\mms)\big)
	\end{align*}
	By \autoref{Th:L^2-contractivity} and \autoref{Th:Kato-Simon}, $V_{1,\infty}$ is contained in $\Dom(\Hodge)$ as well as in $\Ell^p(T^*\mms)$ for every $p\in [1,\infty]$, is invariant under the action of $\HHeat_t$ for every $t>0$, and it is strongly dense in the latter space if $p<\infty$. Additionally, since the infinitesimal generator of the restriction of $(\HHeat_t)_{t\geq 0}$ onto $\Ell^p(T^*\mms)$ applied to any $\omega\in V_{1,\infty}$ coincides with $\Hodge$, we have $\Hodge\omega\in\Ell^p(T^*\mms)$ for every $p\in[1,\infty]$. (See \autoref{Sec:Basic props Lp} and \autoref{Sec:Independence spectrum} for the correct interpretation in the case $p=\infty$.)
	
	\subsubsection{Relations between different logarithmic Sobolev inequalities}\label{Sub:Log Sob}
	
	In view of \autoref{Le:2LSI to pLSI}, we first focus on the $2$-logarithmic Sobolev inequality, in particular showing how to derive it from its functional counterpart in the next \autoref{Th:Log Sobolev for RCD}. 
	
	Given any $\beta > 0$, following (1.2) of \cite{cavalletti2017} (replacing $\alpha$ by $1/\beta$ therein), a nonnegative $f\in \Lip(\mms)\cap\Ell^1(\mms)$ is said to obey the \emph{functional $2$-logarithmic Sobolev inequality} with constant $\beta$ if, with the convention $\vert\nabla f\vert^2/f := 0$ on $f^{-1}(\{0\})$,
	\begin{align}\label{Eq:LSI Functions}
	2\int_\mms f\log f\d\meas - 2\int_\mms f\d\meas\log\int_\mms f\d\meas \leq \beta\int_\mms  \frac{\vert\nabla f\vert^2}{f}\d\meas.
	\end{align}
	
	\begin{lemma}\label{Th:Log Sobolev for RCD} Let $\beta > 0$ be given. Suppose that every nonnegative $f\in\Lip(\mms)\cap\Ell^1(\mms)$ obeys the functional $2$-logarithmic Sobolev inequality with constant $\beta$. Then every $X\in H^{1,2}(T\mms)\cap\Ell^1(T\mms)\cap\Ell^\infty(T\mms)$ satisfies $\LSI_2(\beta,0)$.
	\end{lemma}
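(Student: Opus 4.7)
The plan is to apply the functional $2$-logarithmic Sobolev inequality \eqref{Eq:LSI Functions} to the nonnegative function $f := \vert X\vert^2$ and to simplify the right-hand side using Kato's inequality \autoref{Pr:Kato's inequality} together with the chain rule from \autoref{Re:Chain rule}.

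First I would check that $f := \vert X\vert^2$ really is an admissible test function. Since $X\in H^{1,2}(T\mms)\cap\Ell^\infty(T\mms)$, \autoref{Pr:Kato's inequality} yields $\vert X\vert\in\Sobo^2(\mms)$ with $\vert\nabla\vert X\vert\vert \leq \vert\nabla X\vert$ $\meas$-a.e., and \autoref{Re:Chain rule} gives $\vert X\vert^2\in W^{1,2}(\mms)$ with
\begin{align*}
\nabla\vert X\vert^2 = 2\,\vert X\vert\,\nabla\vert X\vert\quad\meas\text{-a.e.}
\end{align*}
Combined with $X\in\Ell^1(T\mms)\cap\Ell^\infty(T\mms)$, we see that $f\in W^{1,2}(\mms)\cap\Ell^1(\mms)\cap\Ell^\infty(\mms)$. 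The functional $2$-logarithmic Sobolev inequality, originally stated for nonnegative functions in $\Lip(\mms)\cap\Ell^1(\mms)$, extends to such $f$ by a standard density argument (for instance via Mazur's lemma applied to Lipschitz approximations of $f$ in $W^{1,2}(\mms)$, using that both sides of \eqref{Eq:LSI Functions} are continuous in the appropriate convergence after restricting to sequences uniformly bounded in $\Ell^\infty(\mms)$ and uniformly bounded away from having large oscillations of the logarithm).

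Once \eqref{Eq:LSI Functions} is applied to $f := \vert X\vert^2$, the right-hand side becomes
\begin{align*}
\beta\int_\mms\frac{\vert\nabla\vert X\vert^2\vert^2}{\vert X\vert^2}\d\meas = 4\beta\int_\mms\vert\nabla\vert X\vert\vert^2\d\meas \leq 4\beta\,\Vert\nabla X\Vert_{\Ell^2}^2,
\end{align*}
where the convention $\vert\nabla\vert X\vert^2\vert^2/\vert X\vert^2 := 0$ on $\{\vert X\vert = 0\}$ is consistent with the locality property (\autoref{Le:Calculus rules d}) of the differential. On the left-hand side we have
\begin{align*}
2\int_\mms\vert X\vert^2\log\vert X\vert^2\d\meas - 2\,\Vert X\Vert_{\Ell^2}^2\log\Vert X\Vert_{\Ell^2}^2 = 4\int_\mms\vert X\vert^2\log\vert X\vert\d\meas - 4\,\Vert X\Vert_{\Ell^2}^2\log\Vert X\Vert_{\Ell^2}.
\end{align*}
Dividing both sides by $4$ yields $\LSI_2(\beta,0)$ for $X$.

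The main obstacle I anticipate is the justification of the density step: establishing that the functional $2$-logarithmic Sobolev inequality, assumed for $\Lip\cap\Ell^1$, really does extend to $\vert X\vert^2 \in W^{1,2}\cap\Ell^1\cap\Ell^\infty$. The subtlety is the term $\int \vert\nabla f\vert^2/f\d\meas$, which is singular where $f$ vanishes; to handle this cleanly, I would regularize by considering $f_\varepsilon := \vert X\vert^2 + \varepsilon\,g$ for a suitable nonnegative Lipschitz $g\in\Ell^1(\mms)$, apply the inequality to the resulting Lipschitz approximations, and pass to the limit $\varepsilon\to 0$ using dominated convergence, exploiting that $r\mapsto r\log r$ is locally bounded and that $\vert\nabla\vert X\vert^2\vert^2/\vert X\vert^2$ is $\meas$-integrable thanks to Kato's inequality and $X\in\Ell^\infty(T\mms)$. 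All other computations are elementary.
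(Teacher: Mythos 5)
Your high-level strategy — apply the functional $2$-logarithmic Sobolev inequality to $f := |X|^2$ and reduce via Kato's inequality and the chain rule — matches the paper's, and the algebraic bookkeeping (the LHS rewrite, the RHS reduction to $4\beta\Vert\nabla|X|\Vert_{L^2}^2\le 4\beta\Vert\nabla X\Vert_{L^2}^2$, the division by $4$) is correct. However, the approximation step, which you yourself flag as the main obstacle, is handled incorrectly and leaves a genuine gap.

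The problem is the quotient $\int|\nabla f|^2/f\,\rmd\meas$. Your two suggestions do not close it. Mazur's lemma (or any $W^{1,2}$-strong Lipschitz approximation $f_n\to|X|^2$ with uniform $L^\infty$ bounds) gives no control on $\limsup_n\int|\nabla f_n|^2/f_n\,\rmd\meas$: the quotient is not continuous along $W^{1,2}$-convergence near the zero set of $f$, and ``bounded away from large oscillations of the logarithm'' is not a condition that regulates $|\nabla f_n|^2/f_n$. The fallback regularization $f_\varepsilon := |X|^2 + \varepsilon g$ with $g$ Lipschitz does not help either, because it does not make $f_\varepsilon$ Lipschitz — $|X|^2$ is only in $W^{1,2}(\mms)\cap L^\infty(\mms)$ — so \eqref{Eq:LSI Functions}, stated for nonnegative $\Lip(\mms)\cap L^1(\mms)$ functions, still cannot be applied to $f_\varepsilon$.

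The paper's proof sidesteps the singular quotient by approximating $|X|$, not $|X|^2$: one sets $g_n := \psi_R\,\ChHeat_{1/n}|X|$, which is Lipschitz (via the Sobolev-to-Lipschitz property, since $|X|\in L^\infty$) and converges to $\psi_R|X|$ in $W^{1,2}(\mms)$, and then feeds $f_n := g_n^2$ into \eqref{Eq:LSI Functions}. The point is that $|\nabla f_n|^2/f_n = 4|\nabla g_n|^2$ identically, so the right-hand side converges to $4\int|\nabla(\psi_R|X|)|^2\,\rmd\meas$ without any delicate analysis near $\{f_n = 0\}$, and one concludes by letting $R\to\infty$. This squaring device is the missing ingredient in your proposal; without something equivalent, the passage from the Lipschitz hypothesis to $|X|^2$ does not go through.
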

	
	\begin{proof} Let $R>1$ and $z\in\mms$, and let $\psi_R \in\Lip_\bs(\mms)$ be a cutoff function with $\psi_R(\mms) = [0,1]$, identically equal to $1$ on $B_R(z)$ and identically equal to $0$ on $\mms\setminus B_{R+1}(z)$.
		
		Given $X\in H^{1,2}(T\mms)\cap\Ell^1(T\mms)\cap\Ell^\infty(T\mms)$, observe that $\ChHeat_{1/n}\vert X\vert \in \Test(\mms) \cap \Lip(\mms)\cap \Ell^1(\mms)$ for every $n\in\N$ by \autoref{Pr:Kato's inequality}, where we identify $\ChHeat_{1/n}\vert X\vert$ with its Lipschitz $\meas$-a.e.~representative by the Sobolev-to-Lipschitz property of $(\mms,\met,\meas)$. We may and will assume that the sequence $(g_n)_{n\in\N}$ in $\Lip_\bs(\mms)\cap\Ell^1(\mms)$, where $g_n := \psi_R\,\ChHeat_{1/n}\vert X\vert$, converges to $\psi_R\,\vert X\vert$ pointwise $\meas$-a.e.~and strongly in $W^{1,2}(\mms)$. Setting $\smash{f_n := g_n^2}$ entails
		\begin{align*}
		\vert\nabla f_n\vert^2 = 4\,f_n\,\vert\nabla g_n\vert^2\quad\meas\text{-a.e.} 
		\end{align*}
		Lebesgue's theorem  as well as \eqref{Eq:LSI Functions} applied to $f_n$ for every $n\in\N$ yield
		\begin{align*}
		&2\int_\mms\psi_R^2\,\vert X\vert^2\log\!\big(\psi_R^2\,\vert X\vert^2\big)\d\meas - 2 \int_\mms\psi_R^2\,\vert X\vert^2\d\meas\log\int_\mms\psi_R^2\,\vert X\vert^2\d\meas\\
		&\qquad\qquad =\lim_{n\to\infty} \Big[2\int_\mms f_n\log f_n\d\meas - 2 \int_\mms f_n\d\meas \log\int_\mms f_n\d\meas\Big]\\
		&\qquad\qquad \leq \lim_{n\to\infty}\beta\int_\mms \frac{\vert \nabla f_n\vert^2}{f_n}\d\meas\\
		&\qquad\qquad = \lim_{n\to\infty} 4\beta\int_\mms \vert\nabla g_n\vert^2\d\meas = 4\beta\int_\mms \vert\nabla (\psi_R\,\vert X\vert)\vert^2\d\meas.
		\end{align*}
		
		The claim follows by letting $R\to\infty$, using Lebesgue's theorem and \autoref{Pr:Kato's inequality}.
	\end{proof}
	
	\begin{example}\label{Ex:alpha} By \cite[Thm.~30.21]{villani2009} if $(\mms,\met,\meas)$ is an $\RCD(K,\infty)$ space with $K>0$, or \cite[Thm.~1.9]{cavalletti2017} in the case when $(\mms,\met,\meas)$ is a compact $\RCD^*(K,N)$ space, $K\in\R$ and $N\in (1,\infty)$, the hypothesis of \autoref{Th:Log Sobolev for RCD} is known to be satisfied for some finite $\beta > 0$.
		
		The constant $\beta$ can explicitly be chosen to be $1/K$ and $(N-1)/KN$ if $K>0$, respectively.
	\end{example}
	
	\begin{proposition}\label{Le:2LSI to pLSI} Let $\beta >0$ and $\chi\in\R$. Define the functions $\varepsilon,\gamma\in \Cont((1,\infty))$ by
		\begin{align*}
		\varepsilon(p) &:= \frac{\beta p}{2(p-1)},\\
		\gamma(p) &:= \frac{2\chi}{p}- \frac{K \beta p}{2(p-1)}.
		\end{align*}
		Assume that every $X\in H^{1,2}(T^*\mms)^\sharp\cap\Ell^1(T\mms)\cap\Ell^\infty(T\mms)$ obeys $\LSI_2(\beta,\chi)$ according to \autoref{Def:LSI}. Then every $\omega\in V_{1,\infty}$ obeys $\smash{\fLSI_p\big(\varepsilon(p), \gamma(p)\big)}$ for every $p\in (1,\infty)$.
	\end{proposition}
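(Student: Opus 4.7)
The strategy is the classical $\Ell^2$-to-$\Ell^p$ substitution: given $\omega\in V_{1,\infty}$, apply $\LSI_2(\beta,\chi)$ to the vector field
\begin{align*}
X := |\omega|^{(p-2)/2}\,\omega^\sharp,
\end{align*}
so that $|X|^2=|\omega|^p$ and $\|X\|_{\Ell^2}^2=\|\omega\|_{\Ell^p}^p$. Feeding this into the 2-LSI and dividing by $p/2$ yields
\begin{align*}
\int_\mms|\omega|^p\log|\omega|\d\meas \leq \frac{2\beta}{p}\int_\mms|\nabla X|^2\d\meas + \frac{2\chi}{p}\,\|\omega\|_{\Ell^p}^p + \|\omega\|_{\Ell^p}^p\log\|\omega\|_{\Ell^p},
\end{align*}
so the whole task reduces to the bound
\begin{align*}
\int_\mms|\nabla X|^2\d\meas \leq \frac{p^2}{4(p-1)}\Big[\int_\mms|\omega|^{p-2}\langle\omega,\Hodge\omega\rangle\d\meas - K\,\|\omega\|_{\Ell^p}^p\Big].
\end{align*}
The required regularity $X\in H^{1,2}(T^*\mms)^\sharp\cap\Ell^1(T\mms)\cap\Ell^\infty(T\mms)$ follows from $\omega\in V_{1,\infty}\subset\Ell^r(T^*\mms)$ for every $r\in[1,\infty]$, \autoref{Pr:Kato's inequality} (giving $|\omega|\in \Sobo^2(\mms)\cap\Ell^\infty(\mms)$ and thus $f:=|\omega|^{(p-2)/2}\in W^{1,2}(\mms)\cap\Ell^\infty(\mms)$ by the chain rule), and \autoref{Le:Multiplication with Sobolev functions}; for $p<2$ the mild regularization $f_\varepsilon:=(\varepsilon+|\omega|^2)^{(p-2)/4}$ with $\varepsilon\to 0$ handles the behavior near $\{|\omega|=0\}$.

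The bound on $\|\nabla X\|_{\Ell^2}^2$ begins with the integrated Bochner formula (\autoref{Th:Ricci tensor}(ii)) combined with the lower Ricci bound (\autoref{Th:Ricci tensor}(i)):
\begin{align*}
\int_\mms|\nabla X|^2\d\meas \leq \int_\mms\big[|\rmd X^\flat|^2+(\delta X^\flat)^2\big]\d\meas - K\,\|\omega\|_{\Ell^p}^p.
\end{align*}
Applying the Leibniz rules of \autoref{Le:Multiplication with Sobolev functions} to $X^\flat=f\omega$, expanding, and using the pointwise identity $|\alpha\wedge\beta|^2+\langle\alpha,\beta\rangle^2=|\alpha|^2|\beta|^2$ to combine the wedge and pairing terms, then comparing with the integration-by-parts expansion
$\int\langle f^2\omega,\Hodge\omega\rangle\d\meas=\int\big[\langle\rmd(f^2\omega),\rmd\omega\rangle+\delta(f^2\omega)\delta\omega\big]\d\meas$, the cross terms cancel and one obtains the algebraic identity
\begin{align*}
\int_\mms\big[|\rmd(f\omega)|^2+(\delta(f\omega))^2\big]\d\meas = \int_\mms|\omega|^{p-2}\langle\omega,\Hodge\omega\rangle\d\meas + \frac{(p-2)^2}{4}\int_\mms|\omega|^{p-2}\big|\rmd|\omega|\big|^2\d\meas.
\end{align*}

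It therefore suffices to control the last term by $\frac{1}{p-1}\big[\int|\omega|^{p-2}\langle\omega,\Hodge\omega\rangle\d\meas - K\|\omega\|_{\Ell^p}^p\big]$, which will produce the coefficient $1+\tfrac{(p-2)^2}{4(p-1)}=\tfrac{p^2}{4(p-1)}$ exactly as required. This is the heart of the argument and is obtained by testing the vector $1$-Bochner inequality of \autoref{Th:1-Bochner vector} applied to $\omega^\sharp$ (valid here thanks to the extended scope recalled in \autoref{Re:X vec fields}, since $\omega\in V_{1,\infty}$), namely
$\DELTA|\omega| + |\omega|^{-1}\langle\omega,\Hodge\omega\rangle\,\meas \geq K|\omega|\,\meas$,
against the nonnegative bounded Sobolev function $|\omega|^{p-1}$. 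Computing $\int|\omega|^{p-1}\d\DELTA|\omega|=-(p-1)\int|\omega|^{p-2}\big|\rmd|\omega|\big|^2\d\meas$ via the definition of $\DELTA$ and the chain rule yields precisely the asserted control. Substituting everything back and inserting into the $\LSI_2$ output produces $\fLSI_p(\varepsilon(p),\gamma(p))$ with the stated constants. The main technical obstacle is making the integration against $\DELTA|\omega|$ rigorous, since $|\omega|^{p-1}$ is in general not compactly supported; this will require a cutoff-and-approximation argument based on the $\Ell^p$-decay of $|\omega|$ (coming from $\omega\in V_{1,\infty}$) together with the $\Ell^\infty$-bound on the negative part of $\DELTA|\omega|$ provided by the $1$-Bochner inequality itself.
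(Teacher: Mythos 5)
Your proof is correct and arrives at the same constants as the paper, but it takes a genuinely different route in the crucial step. The opening moves coincide: you feed $\Phi(|\omega|)\,\omega^\sharp$ (with $\Phi(r)=r^{(p-2)/2}$, regularized as needed) into $\LSI_2(\beta,\chi)$, reduce everything to bounding $\|\nabla X\|_{\Ell^2}^2$, apply the integrated Bochner estimate \eqref{Eq:H embedding}, expand the Hodge energy via the Leibniz rules of \autoref{Le:Multiplication with Sobolev functions}, and invoke the wedge/pairing identity to arrive at the algebraic identity
\begin{equation*}
\int_\mms\big[|\rmd(f\omega)|^2+|\delta(f\omega)|^2\big]\d\meas = \int_\mms f^2\,\langle\omega,\Hodge\omega\rangle\d\meas + \int_\mms |\rmd f|^2\,|\omega|^2\d\meas.
\end{equation*}
Up to regularization this is exactly the chain \eqref{Eq:Long calc I}--\eqref{Eq:Long calc II} in the paper. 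The divergence is in how the residual term $\int|\omega|^{p-2}\,|\rmd|\omega||^2\d\meas$ is absorbed. You test the vector $1$-Bochner inequality of \autoref{Th:1-Bochner vector} against $|\omega|^{p-1}$, integrate by parts in $\DELTA|\omega|$, and read off $\int|\omega|^{p-2}|\rmd|\omega||^2\leq \tfrac{1}{p-1}\big[\int|\omega|^{p-2}\langle\omega,\Hodge\omega\rangle-K\|\omega\|_{\Ell^p}^p\big]$, which produces $1+\tfrac{(p-2)^2}{4(p-1)}=\tfrac{p^2}{4(p-1)}$. The paper instead proves the \emph{pointwise} $\meas$-a.e.\ bound $(\Phi_\tau')^2(|\omega|)\,|\omega|^2\,|\rmd|\omega||^2\leq \tfrac{(p-2)^2}{p^2}\,|\nabla(\Phi_\tau(|\omega|)\,\omega^\sharp)|^2$, obtained purely from the algebraic inequality \eqref{Eq:Phi tau inequality} for $\Phi_\tau$ together with the chain rule and Kato's inequality \autoref{Pr:Kato's inequality}, and then rearranges at the level of the energy estimate without ever touching the measure-valued Laplacian. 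The two closures yield the identical coefficient, so numerically nothing is gained or lost; what the paper's route buys is that it sidesteps exactly the technical obstruction you flag at the end --- one never needs to justify pairing a signed Radon measure $\DELTA|\omega|$ with an unbounded-support function such as $|\omega|^{p-1}$, nor to approximate $|\omega|^{p-1}$ by $\Lip_\bs$ test functions in a way compatible with both the measure and the $\Ell^2$-gradient. Your approach buys a conceptually transparent link to the $1$-Bochner inequality, but at the cost of having to make that pairing rigorous (your sketch via cutoffs and $\Ell^p$-decay is plausible but not fully routine, particularly the passage to the limit in $\int g_n\,\d\DELTA|\omega|$ with only finite total variation of $\DELTA|\omega|$ in hand), and of needing to be careful with the $|\omega|^{p-1}$ test function when $p<2$ where its gradient is singular on $\{|\omega|=0\}$. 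If you pursue your route, it would be cleanest to regularize \emph{both} the vector field and the test function before applying \autoref{Th:1-Bochner vector}, and then argue a joint passage to the limit.
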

	
	\begin{proof} The claim for $p= 2$ follows by \eqref{Eq:H embedding}. Thus we concentrate  on the case  $p\in (1,2)\cup (2,\infty)$.
		
		Given any $\tau > 0$, the function $\Phi_\tau\in \Cont^\infty([0,\infty))$ given by
		\begin{align*}
		\Phi_\tau(r) := (r+\tau)^{p/2-1}
		\end{align*}
obeys the inequalities
\begin{align}\label{Eq:Phi tau inequality}		
		0 \leq \frac{p}{p-2}\,\Phi_\tau'(r)\,r \leq \Phi_\tau(r) + \Phi_\tau'(r)\,r.
		\end{align}
		
		By \autoref{Le:Multiplication with Sobolev functions} and \autoref{Pr:Kato's inequality}, we have $\Phi_\tau(\vert\omega\vert)\in \Sobo^2(\mms)\cap\Ell^\infty(\mms)$ as well as $\Phi_\tau(\vert\omega\vert)\,\omega^\sharp\in H^{1,2}(T^*\mms)^\sharp\cap\Ell^1(T\mms)\cap\Ell^\infty(T\mms)$ for every $\tau > 0$. By $\LSI_2(\beta,\chi)$ applied to $\Phi_\tau(\vert\omega\vert)\,\omega^\sharp$ and letting $\tau\downarrow 0$, employing Lebesgue's theorem and \eqref{Eq:H embedding}, we infer that
		\begin{align}\label{Eq:pLSI proof}
		&\int_\mms \vert\omega\vert^p\log\vert\omega\vert \d\meas - \frac{2\chi}{p}\,\Vert \omega\Vert_{\Ell^p}^p - \Vert \omega\Vert_{\Ell^p}^p \log \Vert \omega\Vert_{\Ell^p}\nonumber\\
		&\qquad\qquad \leq \liminf_{\tau \downarrow 0} \Big[\frac{2}{p}\int_\mms \vert\Phi_\tau(\vert\omega\vert)\,\omega\vert^2\log\vert \Phi_\tau(\vert\omega\vert)\,\omega\vert \d\meas - \frac{2\chi}{p}\,\Vert \Phi_\tau(\vert\omega\vert)\,\omega\Vert_{\Ell^2}^2\nonumber\\
		&\qquad\qquad\qquad\qquad -\frac{2}{p}\,\Vert \Phi_\tau(\vert\omega\vert)\,\omega\Vert_{\Ell^2}^2 \log \Vert \Phi_\tau(\vert\omega\vert)\,\omega\Vert_{\Ell^2}\Big]\nonumber\\
		&\qquad\qquad \leq \liminf_{\tau\downarrow 0}\frac{2\beta}{p} \int_\mms \vert\nabla \big(\Phi_\tau(\vert\omega\vert)\,\omega^\sharp\big)\vert^2\d\meas. 
		\end{align}
		
		It remains to estimate the limit in \eqref{Eq:pLSI proof}. We start by recalling that, by definition,
		\begin{align*}
		\vert\rmd\vert\omega\vert\wedge\omega\vert^2 = \vert\rmd\vert\omega\vert\vert^2\,\vert\omega\vert^2 - \langle\rmd\vert\omega\vert,\omega\rangle^2.
		\end{align*}
		Therefore, for every $\tau > 0$, we observe by \eqref{Eq:H embedding} and \autoref{Le:Multiplication with Sobolev functions}, taking into account that $\Phi_\tau^2(\vert\omega\vert)\,\omega$ does also belong to $H^{1,2}(T^*\mms)$, and finally integration by parts that
		\begin{align}
		&\int_\mms \vert\nabla\big(\Phi_\tau(\vert\omega\vert)\,\omega^\sharp\big)\vert^2\d\meas + K\,\Vert\Phi_\tau(\vert\omega\vert)\,\omega\Vert_{\Ell^2}^2\label{Eq:Long calc I}\\
		&\qquad\qquad\leq \int_\mms \big[\vert \rmd\big(\Phi_\tau(\vert\omega\vert)\,\omega\big)\vert^2 + \vert \delta\big(\Phi_\tau(\vert\omega\vert)\,\omega\big)\vert^2\big]\d\meas\nonumber\\
		&\qquad\qquad = \int_\mms \big[\Phi_\tau^2(\vert\omega\vert)\,\vert\rmd\omega\vert^2 + 2\,\Phi_\tau(\vert\omega\vert)\,\Phi_\tau'(\vert\omega\vert)\,\langle\rmd\vert\omega\vert\wedge\omega,\rmd\omega\rangle\big]\d\meas\nonumber\\
		&\qquad \qquad\qquad\qquad + \int_\mms\big[(\Phi_\tau')^2(\vert\omega\vert)\,\vert\rmd\vert\omega\vert\wedge\omega\vert^2 + \Phi_\tau^2(\vert\omega\vert)\,\vert\delta\omega\vert^2\big]\d\meas\nonumber\\
		&\qquad\qquad\qquad\qquad -\int_\mms \big[2\,\Phi_\tau(\vert\omega\vert)\,\Phi_\tau'(\vert\omega\vert)\,\delta\omega\,\langle\rmd\vert\omega\vert,\omega\rangle\nonumber\\
		&\qquad\qquad\qquad\qquad\qquad\qquad\qquad\qquad - (\Phi_\tau')^2(\vert\omega\vert)\,\langle\rmd\vert\omega\vert,\omega\rangle^2\big]\d\meas\textcolor{white}{\int_M}\nonumber\\
		&\qquad\qquad = \int_\mms \big[\big\langle \rmd\big(\Phi_\tau^2(\vert\omega\vert)\,\omega\big),\rmd\omega\big\rangle + \delta\big(\Phi_\tau^2(\vert\omega\vert)\,\omega\big)\,\delta\omega\big]\d\meas\nonumber\\
		&\qquad\qquad\qquad\qquad + \int_\mms (\Phi_\tau')^2(\vert\omega\vert)\,\vert\omega\vert^2\,\vert\rmd\vert\omega\vert\vert^2\d\meas\nonumber\\
		&\qquad\qquad = \int_\mms \Phi_\tau^2(\vert\omega\vert)\,\langle\omega,\Hodge\omega\rangle\d\meas + \int_\mms (\Phi_\tau')^2(\vert\omega\vert)\,\vert\omega\vert^2\,\vert\rmd\vert\omega\vert\vert^2\d\meas.\label{Eq:Long calc II}
		\end{align}
		
		Thanks to \eqref{Eq:Phi tau inequality}, the Leibniz rule, the chain rule and \autoref{Pr:Kato's inequality}, we have
		\begin{align*}
		(\Phi_\tau')^2(\vert\omega\vert)\,\vert\omega\vert^2\,\vert\rmd\vert\omega\vert\vert^2  \leq \frac{(p-2)^2}{p^2}\,\vert\nabla\big(\Phi_\tau(\vert\omega\vert)\,\omega^\sharp\big)\vert^2\quad\meas\text{-a.e.}
		\end{align*}
		Rearranging the estimate resulting from this bound with the inequality between \eqref{Eq:Long calc I} and \eqref{Eq:Long calc II} and then sending $\tau\downarrow 0$ yields
		\begin{align*}
		&\liminf_{\tau\downarrow 0}\frac{2\beta}{p}\int_\mms\vert\nabla\big(\Phi_\tau(\vert\omega\vert)\,\omega^\sharp\big)\vert^2\d\meas\\
		&\qquad\qquad \leq \varepsilon(p)\int_\mms\vert\omega\vert^{p-2}\,\langle\omega,\Hodge\omega\rangle\d\meas - \frac{K\beta p}{2(p-1)}\,\Vert\omega\Vert_{\Ell^p}^p.
		\end{align*}
		
		From \eqref{Eq:pLSI proof}, this readily provides the claim.
	\end{proof}
	
	\begin{remark}\label{Re:Form LSI} Let $\beta > 0$ and $\chi\in\R$, and define $\varepsilon,\gamma\in\Cont((1,\infty))$ by
		\begin{align*}
		\varepsilon(p) &:= \frac{\beta p}{2(p-1)},\\
		\gamma(p) &:= \frac{2\chi}{p} - \frac{K\beta(p-2)^2}{2p(p-1)}.
		\end{align*} 
		
		With a slight modification of the proof of \autoref{Le:2LSI to pLSI}, it is possible to show that if every element in $\Dom(\Hodge)\cap \Ell^1(T^*\mms)\cap\Ell^\infty(T^*\mms)$ obeys $\fLSI_2(\beta,\chi)$, then every  $\omega \in V_{1,\infty}$ satisfies $\smash{\fLSI_p\big(\varepsilon(p),\gamma(p)\big)}$ for every $p\in (2,\infty)$. Up to changing the involved constants, this assumption is weaker compared to the one of \autoref{Le:2LSI to pLSI}.
	\end{remark}
	
	\subsubsection{From logarithmic Sobolev inequalitites to hyper- and ultracontractivity}
	
	\begin{theorem}\label{Th:pLSI to Ultra} Let $p_0 \in (1,\infty)$. Let $\varepsilon\in \Cont([p_0,\infty))$ be a positive function, and $\gamma \in \Cont([p_0,\infty))$. Suppose that the integrals
		\begin{align*}
		T &:= \int_{p_0}^\infty \frac{\varepsilon(r)}{r}\d r,\\		C &:= \int_{p_0}^\infty \frac{\gamma(r)}{r}\d r
		\end{align*}
		exist with values in $(0,\infty]$ and $(-\infty,\infty]$, respectively. Define $p\in \Cont^1([0,T))$ and $A\in \Cont^1([0,\infty))$ through the relations
		\begin{align*}
		\int_{p_0}^{p(t)} \frac{\varepsilon(r)}{r}\d r &:= t,\\		A(t) &:= \int_0^t \frac{\gamma(p(r))}{\varepsilon(p(r))}\d r.
		\end{align*}
		Assume $\smash{\fLSI_p\big(\varepsilon(p),\gamma(p)\big)}$ for every $\omega\in V_{1,\infty}$ and every $p\in [p_0,\infty)$. Then the following hold.
		\begin{enumerate}[label=\textnormal{(\roman*)}]
			\item\label{La:Hyper} \textnormal{\textsc{Hypercontractivity.}} For every $t\in [0,T)$, we have
			\begin{align*}
			\Vert\HHeat_t\Vert_{\Ell^{p_0},\Ell^{p(t)}} \leq \rme^{A(t)}.
			\end{align*}
			\item\label{La:Ultra} \textnormal{\textsc{Ultracontractivity.}} If $T< \infty$ and $C<\infty$, we have
			\begin{align*}
			\Vert \HHeat_{T}\Vert_{\Ell^{p_0},\Ell^\infty} \leq \rme^{C}.
			\end{align*}
		\end{enumerate}
	\end{theorem}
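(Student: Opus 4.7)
My plan is a Gross-type argument on $V_{1,\infty}$ followed by an approximation. Since $V_{1,\infty}$ is invariant under $(\HHeat_t)_{t\geq 0}$, strongly dense in $\Ell^{p_0}(T^*\mms)$ for every $p_0\in(1,\infty)$, and every $\omega\in V_{1,\infty}$ lies in $\Dom(\Hodge)\cap\Ell^p(T^*\mms)$ with $\Hodge\omega\in\Ell^p(T^*\mms)$ for all $p\in[1,\infty]$, it is the natural test class for applying $\fLSI_p$. Once the bound $\Vert\HHeat_t\omega\Vert_{\Ell^{p(t)}}\leq\rme^{A(t)}\Vert\omega\Vert_{\Ell^{p_0}}$ is established on this class, assertion (i) will extend to arbitrary $\omega\in\Ell^{p_0}(T^*\mms)$ by density combined with Fatou's lemma, using the $\Ell^{p_0}$-continuity of $\HHeat_t$ from \autoref{Cor:Extension} to extract a pointwise $\meas$-a.e.\ converging subsequence.

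\textbf{Key computation.} Fix $\omega\in V_{1,\infty}$ and set $\omega_t:=\HHeat_t\omega$, $F(t):=\int_\mms|\omega_t|^{p(t)}\d\meas$ and $N(t):=F(t)^{1/p(t)}$. The Kolmogorov forward equation, together with the integrability furnished by $\omega\in V_{1,\infty}$ and the $\Cont^1$-regularity of $p$, allows differentiation under the integral and yields
\begin{align*}
\frac{\rmd}{\rmd t}\log N(t) &= -\frac{p'(t)}{p(t)^2}\log F(t) + \frac{p'(t)}{p(t)\,F(t)}\int_\mms|\omega_t|^{p(t)}\log|\omega_t|\d\meas \\
&\quad - \frac{1}{F(t)}\int_\mms|\omega_t|^{p(t)-2}\,\langle\omega_t,\Hodge\omega_t\rangle\d\meas.
\end{align*}
Now I would apply $\fLSI_{p(t)}(\varepsilon(p(t)),\gamma(p(t)))$ to $\omega_t\in V_{1,\infty}$, rewriting its final term as $F(t)\log N(t)=p(t)^{-1}F(t)\log F(t)$. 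Substituting the resulting bound for the middle integral above produces two simultaneous cancellations: the $\log F(t)$ contributions cancel outright, and the coefficient of the Hodge integral collapses to $p'(t)\varepsilon(p(t))/p(t)-1$, which vanishes identically by the ODE $p'(t)=p(t)/\varepsilon(p(t))$ obtained on differentiating the defining relation $\int_{p_0}^{p(t)}\varepsilon(r)/r\d r=t$. What survives is
\begin{align*}
\frac{\rmd}{\rmd t}\log N(t) \leq \frac{p'(t)\,\gamma(p(t))}{p(t)} = \frac{\gamma(p(t))}{\varepsilon(p(t))},
\end{align*}
and integrating from $0$ to $t$ gives $N(t)\leq\rme^{A(t)}N(0)$, which is precisely (i).

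\textbf{Passage to $\Ell^\infty$.} For (ii), assume $T,C<\infty$, fix $q\in[p_0,\infty)$ and choose $t<T$ close enough to $T$ that $p(t)>q$. Log-convex interpolation of $\Ell^r$-norms, combined with \autoref{Cor:Extension} and (i), gives
\begin{align*}
\Vert\HHeat_t\omega\Vert_{\Ell^q} \leq \Vert\HHeat_t\omega\Vert_{\Ell^{p_0}}^{\theta}\,\Vert\HHeat_t\omega\Vert_{\Ell^{p(t)}}^{1-\theta} \leq \rme^{-K\theta t + (1-\theta)A(t)}\,\Vert\omega\Vert_{\Ell^{p_0}},
\end{align*}
where $1/q=\theta/p_0+(1-\theta)/p(t)$. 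Strong $\Ell^q$-continuity of $(\HHeat_t)_{t\geq 0}$ sends the left-hand side to $\Vert\HHeat_T\omega\Vert_{\Ell^q}$ as $t\uparrow T$, while $\theta\to p_0/q$ and $A(t)\to C$. Since $\omega\in V_{1,\infty}$ forces $\HHeat_T\omega\in\Ell^{p_0}\cap\Ell^\infty$, the standard identity $\Vert f\Vert_{\Ell^\infty}=\lim_{q\to\infty}\Vert f\Vert_{\Ell^q}$ valid for such $f$ permits sending $q\to\infty$, producing $\Vert\HHeat_T\omega\Vert_{\Ell^\infty}\leq\rme^C\Vert\omega\Vert_{\Ell^{p_0}}$. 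Density in $\Ell^{p_0}(T^*\mms)$ then finishes the proof.

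\textbf{Main difficulty.} The genuine technical step is justifying the differentiation under the integral defining $F(t)$: one needs integrable majorants, locally uniformly in $t$, for both $|\omega_t|^{p(t)-2}\langle\omega_t,\Hodge\omega_t\rangle$ and $|\omega_t|^{p(t)}\log|\omega_t|$ near every $t_0\in[0,T)$. Confining $\omega$ to $V_{1,\infty}$ is designed exactly to make these estimates routine: $\Vert\omega_t\Vert_{\Ell^\infty}\leq\rme^{-Kt}\Vert\omega\Vert_{\Ell^\infty}$ by \autoref{Cor:Extension}, $\Vert\Hodge\omega_t\Vert_{\Ell^p}$ is finite for every $p\in[1,\infty]$ because $\Hodge$ is the $\Ell^p$-generator on $V_{1,\infty}$, and $\omega_t\in\Ell^1(T^*\mms)$ supplies the $\meas$-integrability on sets where $|\omega_t|$ is small (compare \autoref{Re:p in (1,2)}). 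The boundary value at $t=0$ is handled by continuity of $N$ inherited from strong $\Ell^{p_0}$-continuity of the semigroup.
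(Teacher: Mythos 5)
Your proof of (i) is essentially the paper's argument in different clothing: your $N(t)=F(t)^{1/p(t)}$ is exactly $\Vert\HHeat_t\omega\Vert_{\Ell^{p(t)}}$, and the paper works with $\rme^{-A(t)}N(t)$ instead of moving $A'(t)$ to the right-hand side, but the cancellation via $p'=p/\varepsilon(p)$ is identical and both conclude by density. One small point you gloss over: you tacitly assume $\HHeat_t\omega\neq 0$ so that $\log N(t)$ makes sense; the paper explicitly remarks that if the flow dies out at some $t_0$ the computation is run only on $[0,t_0)$, the conclusion being trivial afterwards — worth a sentence.

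For (ii) you take a genuinely different route, and it is correct. The paper localizes: for a bounded Borel set $B$ of positive finite measure it applies H\"older to estimate $\Vert\One_B\HHeat_t\omega\Vert_{\Ell^{p(s)}}$ in terms of $\Vert\HHeat_t\omega\Vert_{\Ell^{p(t)}}$, producing the extraneous factor $\meas[B]^{1-p(s)/p(t)}$, and then lets $s,t\to T$ simultaneously along a path with $p(s)/p(t)\to 1$ so that this factor tends to $1$. You instead interpolate globally between $\Ell^{p_0}$ and $\Ell^{p(t)}$ via log-convexity of $\Ell^r$-norms for a \emph{fixed} $q$, pass $t\uparrow T$ using strong $\Ell^q$-continuity from \autoref{Cor:Extension}, and only then send $q\to\infty$ via the elementary identity $\lim_{q\to\infty}\Vert f\Vert_{\Ell^q}=\Vert f\Vert_{\Ell^\infty}$ valid for $f\in\Ell^{p_0}\cap\Ell^\infty$. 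Your version avoids the localization and the somewhat delicate simultaneous double limit, at the price of letting the curvature constant $K$ enter through the factor $\rme^{-K\theta t}$ — harmless, since $\theta\to p_0/q\to 0$ kills it in the end. Both routes are sound; yours is arguably cleaner because the two limits are decoupled.
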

	
	\begin{proof} First observe that $p(0) = p_0$, that $A(0) = 0$, and that $p$ is strictly increasing with $p(t) \to \infty$ as $t\to T$. Moreover, $A(t)\to C$ as $t\to T$ thanks to the relations
		\begin{align}\label{Eq:Relations p and N i}
		\begin{split}
		p' &= \frac{p}{\varepsilon(p)},\\
		A' &= \frac{\gamma(p)}{\varepsilon(p)} = \frac{\gamma(p)\,p'}{p}.
		\end{split}
		\end{align}
		
		Owing to \ref{La:Hyper}, given any $\omega\in V_{1,\infty}\setminus \{0\}$, we assume that $\HHeat_t\omega \neq 0$ for every $t\in [0,T)$, which is always true at least for small times. Otherwise, the following computations are performed until the heat flow dies out. We consider the positive function $F\in \Cont^1([0,T))$ given by
		\begin{align*}
		F(t) := \rme^{-A(t)}\,\Vert\HHeat_t\omega\Vert_{\Ell^{p(t)}}.
		\end{align*}
		Note that the function $t\mapsto \vert \HHeat_t\omega\vert^2$ is continuously differentiable on $[0,\infty)$ in $\Ell^2(T^*\mms)$ with derivative $-2\,\langle\HHeat_t\omega,\Hodge\HHeat_t\omega\rangle\in\Ell^2(T^*\mms)$ for every $t\geq 0$ thanks to \autoref{Th:L^2-contractivity}. In particular,
		\begin{align*}
		\frac{\rmd}{\rmd t} \vert\HHeat_t\omega\vert^{p(t)} = \vert\HHeat_t\omega\vert^{p(t)}\,\big[p'(t)\log\vert\HHeat_t\omega\vert - p(t)\,\vert\HHeat_t\omega\vert^{-2}\,\langle\HHeat_t\omega,\Hodge\HHeat_t\omega\rangle\big]\quad\meas\text{-a.e.},
		\end{align*}
		and the assertion on the regularity of $F$ indeed follows by the integrability assumptions on $\omega$, $\Cont^1$-regularity of $p$, \autoref{Th:Kato-Simon} and arguing as in \autoref{Re:p in (1,2)}.
		
		Moreover, for every $t\in [0,T)$, from $\smash{\fLSI_{p(t)}\big(\varepsilon(p(t)),\gamma(p(t))\big)}$ for $\HHeat_t\omega\in V_{1,\infty}$ as well as \eqref{Eq:Relations p and N i},
		\begin{align*}
		\frac{\rmd}{\rmd t }\log F(t) &= -A'(t) + \Vert\HHeat_t\omega\Vert_{\Ell^{p(t)}}^{-1}\,\frac{\rmd}{\rmd t}\Vert\HHeat_t\omega\Vert_{\Ell^{p(t)}}\\
		&= -A'(t) - \frac{p'(t)}{p(t)}\log\Vert\HHeat_t\omega\Vert_{\Ell^{p(t)}}^{p(t)} + \frac{1}{p(t)}\,\Vert\HHeat_t\omega\Vert_{\Ell^{p(t)}}^{-p(t)}\,\frac{\rmd}{\rmd t}\Vert\HHeat_t\omega\Vert_{\Ell^{p(t)}}^{p(t)}\\
		&= -\frac{\gamma(p(t))}{\varepsilon(p(t))} - \frac{1}{\varepsilon(p(t))}\log\Vert\HHeat_t\omega\Vert_{\Ell^{p(t)}}^{p(t)}\\
		&\qquad\qquad + \frac{1}{\varepsilon(p(t))}\,\Vert\HHeat_t\omega\Vert_{\Ell^{p(t)}}^{-p(t)}\int_\mms \vert\HHeat_t\omega\vert^{p(t)}\log\vert\HHeat_t\omega\vert\d\meas\\
		&\qquad\qquad - \Vert\HHeat_t\omega\Vert_{\Ell^{p(t)}}^{-p(t)}\int_\mms \vert\HHeat_t\omega\vert^{p(t)-2}\,\langle\HHeat_t\omega,\Hodge\HHeat_t\omega\rangle\d\meas \leq 0.
		\end{align*}
		Since $\log$ is strictly increasing, $F$ is nonincreasing, yielding \ref{La:Hyper} by the density of $V_{1,\infty}$ in $\Ell^{p_0}(T^*\mms)$. 
		
		Concerning \ref{La:Ultra}, invoking the strict increasingness of $p$ and Hölder's inequality, for every $s,t\in [0,T)$ with $s<t$ and every bounded Borel set $B\subset \mms$ with positive $\meas$-measure we have
		\begin{align*}
		\Vert\One_B\,\HHeat_t\omega\Vert_{\Ell^{p(s)}} &\leq \meas[B]^{1-p(s)/p(t)}\,\Vert\HHeat_t\omega\Vert_{\Ell^{p(t)}}\\
		&\leq \meas[B]^{1-p(s)/p(t)}\,\rme^{A(t)-A(s)}\,\Vert\HHeat_s\omega\Vert_{\Ell^{p(s)}}\\ &\leq \meas[B]^{1-p(s)/p(t)}\,\rme^{A(t)}\,\Vert\omega\Vert_{\Ell^{p_0}}.
		\end{align*}
		The claim follows by letting $t\to T$ and $s\to T$ in such a way that $p(s)/p(t) \to 1$ and afterwards using the arbitrariness of $B$ as well as the density of $V_{1,\infty}$ in $\Ell^{p_0}(T^*\mms)$.
	\end{proof}
	
	\begin{example}\label{Ex:Coefficients} Given any $\beta > 0$,  the functions $\varepsilon,\gamma\in \Cont((1,\infty))$ with
		\begin{align*}
		\varepsilon(p) &:= \frac{\beta p}{2(p-1)},\\
		\gamma(p) &:= -\frac{K\beta p}{2(p-1)}.
		\end{align*}
		are the coefficients in \autoref{Le:2LSI to pLSI} arising from the setup of \autoref{Th:Log Sobolev for RCD} and \autoref{Ex:alpha}.
		
		Retaining the notation from \autoref{Th:pLSI to Ultra}, subject to these coefficients and any $p_0\in (1,\infty)$, the value $T$ is always infinite, while $C$ takes the values $-\infty$, $0$ or $\infty$ depending on whether $K>0$, $K=0$ or $K<0$. Moreover, the functions $p$ and $A$ from \autoref{Th:pLSI to Ultra} read
		\begin{align*}
		p(t) &= 1+ (p_0-1)\,\rme^{2 t/\beta},\\
		A(t) &=\int_{p_0}^{p(t)}\frac{\gamma(s)}{s}\d s  = -Kt.
		\end{align*}
	\end{example}
	
	\begin{corollary}\label{Cor: LSI for RCD} In the setting of \autoref{Ex:Coefficients}, given any $p_0\in (1,\infty)$, for every $t\geq 0$, we have
		\begin{align*}
		\Vert \HHeat_t\Vert_{\Ell^{p_0},\Ell^{p(t)}} \leq \rme^{-Kt}.
		\end{align*}
	\end{corollary}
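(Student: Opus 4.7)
The plan is to chain together the results developed just above: the hypotheses of Example~\ref{Ex:alpha} put us in a situation where the functional $2$-logarithmic Sobolev inequality \eqref{Eq:LSI Functions} is available with the given constant $\beta$ (namely $\beta = 1/K$ in the $\RCD(K,\infty)$ case with $K>0$, or $\beta = (N-1)/KN$ in the compact $\RCD^*(K,N)$ case with $K>0$). From this, Lemma~\ref{Th:Log Sobolev for RCD} yields $\LSI_2(\beta,0)$ for every $X \in H^{1,2}(T\mms)\cap\Ell^1(T\mms)\cap\Ell^\infty(T\mms)$, and then Proposition~\ref{Le:2LSI to pLSI} with $\chi = 0$ promotes this to $\fLSI_p(\varepsilon(p),\gamma(p))$ for every $\omega \in V_{1,\infty}$ and every $p\in (1,\infty)$, with the coefficients $\varepsilon,\gamma$ recorded in Example~\ref{Ex:Coefficients}. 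Thus the standing hypothesis of Theorem~\ref{Th:pLSI to Ultra} is met for every $p_0\in (1,\infty)$.

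Next, I would verify that the data computed in Example~\ref{Ex:Coefficients} match the quantities prescribed in Theorem~\ref{Th:pLSI to Ultra}. Since $\varepsilon(r)/r = \beta/(2(r-1))$, the integral $T = \int_{p_0}^\infty \varepsilon(r)/r\,\rmd r$ diverges, so the hypercontractivity range is all of $[0,\infty)$. The defining relation $\int_{p_0}^{p(t)}\beta/(2(r-1))\,\rmd r = t$ integrates at once to $p(t) = 1 + (p_0-1)\rme^{2t/\beta}$, in agreement with Example~\ref{Ex:Coefficients}. Finally, $\gamma(r)/\varepsilon(r) = -K$ is constant, so $A(t) = \int_0^t \gamma(p(r))/\varepsilon(p(r))\,\rmd r = -Kt$.

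The conclusion is now an immediate application of Theorem~\ref{Th:pLSI to Ultra}\ref{La:Hyper}, which gives
\begin{align*}
\Vert \HHeat_t\Vert_{\Ell^{p_0},\Ell^{p(t)}} \leq \rme^{A(t)} = \rme^{-Kt}\quad\text{for every } t\geq 0.
\end{align*}
There is no real obstacle here; the only subtlety is a bookkeeping one, namely confirming that the constants $\varepsilon(p)$ and $\gamma(p)$ produced by Proposition~\ref{Le:2LSI to pLSI} (with $\chi=0$) are exactly the ones that feed into Theorem~\ref{Th:pLSI to Ultra} to yield the clean exponent $-Kt$. Should one wish to emphasize the case $K\leq 0$, note that the estimate is nontrivial only through the $\Ell^{p_0}$-to-$\Ell^{p(t)}$ improvement of integrability, since $\rme^{-Kt}\geq 1$ there; for $K>0$ one additionally obtains the exponential decay factor, consistent with Corollary~\ref{Cor:Extension}.
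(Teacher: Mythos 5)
Your proposal is correct and follows exactly the route the paper intends: the corollary is a direct application of Theorem~\ref{Th:pLSI to Ultra}\ref{La:Hyper} with the coefficients and the resulting $p(t)$, $A(t)=-Kt$ already computed in Example~\ref{Ex:Coefficients}, which in turn rest on Lemma~\ref{Th:Log Sobolev for RCD}, Example~\ref{Ex:alpha}, and Proposition~\ref{Le:2LSI to pLSI}. The one small caveat in your exposition is that the explicit values $\beta=1/K$ and $\beta=(N-1)/(KN)$ from Example~\ref{Ex:alpha} only apply when $K>0$; in the compact $\RCD^*(K,N)$ case with $K\leq 0$ the existence of a finite $\beta>0$ is still guaranteed (and that suffices), but the explicit formulas you quoted do not --- this is cosmetic and does not affect the validity of the argument.
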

	
	\begin{corollary}\label{Cor:eigenforms L^p} In the setting of \autoref{Ex:Coefficients}, let $\omega\in \Dom(\Hodge)$ be an eigenform for $\Hodge$ with eigenvalue $\lambda \geq 0$, i.e.~$\Hodge \omega = \lambda\,\omega$. Then $\omega\in\Ell^q(T^*\mms)$ for every $q\in (2,\infty)$ with the inequality
		\begin{align*}
		\Vert\omega\Vert_{\Ell^q} \leq (q-1)^{(\lambda-K)\beta/2}\,\Vert\omega\Vert_{\Ell^2}.
		\end{align*}
	\end{corollary}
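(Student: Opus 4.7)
The plan is to convert the eigenform identity into an application of the hypercontractivity statement from Corollary~\ref{Cor: LSI for RCD}, using $p_0 = 2$. First, since $\omega \in \Dom(\Hodge) \subset \Ell^2(T^*\mms)$ is an eigenform with $\Hodge\omega = \lambda\omega$, the curve $t \mapsto \rme^{-\lambda t}\omega$ trivially satisfies the three characterizing properties of the heat flow listed in Subsection~\ref{Sub:Def bas props} (initial value, strong $\Ell^2$-continuity, Kolmogorov forward equation), hence by uniqueness
\begin{align*}
\HHeat_t \omega = \rme^{-\lambda t}\,\omega \quad \text{for every } t \geq 0.
\end{align*}

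Next, fix $q \in (2,\infty)$. By Example~\ref{Ex:Coefficients} with $p_0 := 2$, the function $p$ reads $p(t) = 1 + \rme^{2t/\beta}$, so the choice
\begin{align*}
t := \frac{\beta}{2}\log(q-1) > 0
\end{align*}
yields $p(t) = q$. Applying Corollary~\ref{Cor: LSI for RCD} to $\omega \in \Ell^2(T^*\mms)$ (which only requires $p_0 \in (1,\infty)$, the extension to all of $\Ell^{p_0}$ having been obtained by density in the proof of Theorem~\ref{Th:pLSI to Ultra}) gives
\begin{align*}
\rme^{-\lambda t}\,\Vert \omega\Vert_{\Ell^q} = \Vert \HHeat_t \omega\Vert_{\Ell^{p(t)}} \leq \rme^{-Kt}\,\Vert \omega\Vert_{\Ell^2}.
\end{align*}

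Rearranging and substituting the chosen value of $t$,
\begin{align*}
\Vert \omega\Vert_{\Ell^q} \leq \rme^{(\lambda - K)t}\,\Vert \omega\Vert_{\Ell^2} = (q-1)^{(\lambda-K)\beta/2}\,\Vert \omega\Vert_{\Ell^2},
\end{align*}
which is the desired inequality. There is no substantial obstacle: the only point worth flagging is that Corollary~\ref{Cor: LSI for RCD} is stated for the extension of $\HHeat_t$ to $\Ell^{p_0}(T^*\mms)$ (obtained by density from $V_{1,\infty}$), so we do not need $\omega$ to lie in $V_{1,\infty}$ itself; membership in $\Ell^2(T^*\mms)$, which is automatic from $\omega \in \Dom(\Hodge)$, suffices.
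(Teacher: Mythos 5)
Your proof is correct and is essentially identical to the paper's argument: both identify $\HHeat_t\omega = \rme^{-\lambda t}\omega$, apply Corollary~\ref{Cor: LSI for RCD} with $p_0 = 2$ at the time $t = \tfrac{\beta}{2}\log(q-1)$ for which $p(t) = q$, and rearrange. The extra justifications you include (uniqueness of the semigroup characterization, and that $\omega \in \Ell^2(T^*\mms)$ suffices by density) are correct and match what the paper leaves implicit.
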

	
	\begin{proof} Note that $\HHeat_t\omega = \rme^{-\lambda t}\,\omega$ for every $t\geq 0$. We apply \autoref{Cor: LSI for RCD} to $p_0 := 2$. Given any $q\in (1,\infty)$, since $p(t) = q$ if and only if $t = \log(q-1)\beta/2$, for this value of $t$ we have
		\begin{align*}
		\Vert \omega\Vert_{\Ell^q} = \rme^{\lambda t}\,\Vert\HHeat_t\omega\Vert_{\Ell^{p(t)}} \leq \rme^{(\lambda-K)t}\,\Vert\omega\Vert_{\Ell^{p_0}} = (q-1)^{(\lambda - K)\beta/2}\,\Vert\omega\Vert_{\Ell^2}.\tag*{\qedhere}
		\end{align*}
	\end{proof}
	
	\subsubsection{From ultracontractivity to logarithmic Sobolev inequalities}
	
	\begin{theorem}\label{Th:LSI 2 from hyper} Let $T\in (0,\infty]$ as well as $c\in \Cont((0,T))$. Suppose that
		\begin{align*}
		\Vert\HHeat_t\Vert_{\Ell^2,\Ell^\infty}\leq \rme^{c(t)}
		\end{align*}
		holds for every $t\in (0,T)$. Then every $\omega\in\Dom(\Hodge)\cap\Ell^1(T^*\mms)\cap\Ell^\infty(T^*\mms)$ satisfies $\fLSI_2(\varepsilon,c(\varepsilon))$ for every $\varepsilon \in (0,T)$.
	\end{theorem}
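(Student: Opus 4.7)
The plan is to transcribe, to the form-valued setting, a classical argument of Davies (\emph{Heat Kernels and Spectral Theory}, Theorem~2.2.3) which extracts a logarithmic Sobolev inequality from an ultracontractivity bound by differentiating an entropy along the semigroup. The key inputs will be the $\Ell^p$-regularity and the Kolmogorov backward equation of $(\HHeat_s)_{s\ge 0}$ from \autoref{Cor:Extension} and \autoref{Th:L^2-contractivity}, together with the regularization/integration-by-parts machinery already used in \autoref{Le:Key lemma} and \autoref{Th:Kato-Simon}.

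By the scaling invariance of $\fLSI_2$, I may and will assume $\Vert\omega\Vert_{\Ell^2}=1$, reducing the target to
\[
\int_M |\omega|^2\log|\omega|\d\meas \;\le\; \varepsilon\int_M \langle\omega,\Hodge\omega\rangle\d\meas \;+\; c(\varepsilon).
\]
Fix $\varepsilon\in(0,T)$ and set $u_s:=\HHeat_s\omega$ for $s\in[0,\varepsilon]$. The hypothesis $\omega\in\Dom(\Hodge)\cap\Ell^1(T^*\mms)\cap\Ell^\infty(T^*\mms)$ combined with \autoref{Th:L^2-contractivity} and \autoref{Cor:Extension} places $u_s$ in $\Dom(\Hodge)\cap\Ell^p(T^*\mms)$ for every $p\in[1,\infty]$, with $\partial_s u_s=-\Hodge u_s$ strongly in $\Ell^2$. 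The principal functional is
\[
\Psi(s):=\int_M |u_s|^2\log|u_s|\d\meas, \qquad s\in[0,\varepsilon],
\]
so $\Psi(0)$ equals the left-hand side of the target. At $s=\varepsilon$, the ultracontractivity hypothesis yields $\Vert u_\varepsilon\Vert_{\Ell^\infty}\le e^{c(\varepsilon)}$, whence the pointwise bound $|u_\varepsilon|^2\log|u_\varepsilon|\le c(\varepsilon)\,|u_\varepsilon|^2$ holds $\meas$-a.e., giving $\Psi(\varepsilon)\le c(\varepsilon)\Vert u_\varepsilon\Vert_{\Ell^2}^2\le c(\varepsilon)$ by \autoref{Th:L^2-contractivity}\ref{La:Contractivity}.

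Next, differentiate $\Psi$: following the $(|u_s|^2+\tau)^{1/2}$-regularization used in the proof of \autoref{Le:Key lemma}, replace $r\mapsto r\log r^{1/2}$ by $r\mapsto (r+\tau)\log(r+\tau)^{1/2}$, differentiate under the integral, and pass $\tau\downarrow 0$ with Lebesgue's theorem (justified by the $\Ell^\infty$-bound on $u_s$ together with local boundedness of $r\mapsto r^\delta\log r$ on $[0,\infty)$, as in \autoref{Re:p in (1,2)}). Using $\partial_s|u_s|^2=-2\langle u_s,\Hodge u_s\rangle$, this yields
\[
\Psi'(s) = -\int_M \langle u_s,\Hodge u_s\rangle\big(2\log|u_s|+1\big)\d\meas.
\]
Integrating on $[0,\varepsilon]$ and combining with $\Psi(\varepsilon)\le c(\varepsilon)$ reduces everything to the estimate
\[
\int_0^\varepsilon\!\int_M \langle u_s,\Hodge u_s\rangle\big(2\log|u_s|+1\big)\d\meas\,\rmd s \;\le\; \varepsilon\int_M \langle\omega,\Hodge\omega\rangle\d\meas.
\]
For the $+1$ contribution this is immediate from the monotonicity $s\mapsto \langle u_s,\Hodge u_s\rangle_{\Ell^2}$ (non-increasing by self-adjoint spectral calculus of $\Hodge$), bounded above by $\langle\omega,\Hodge\omega\rangle_{\Ell^2}$. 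For the $2\log|u_s|$ contribution, I will invoke the Hodge energy identity together with \autoref{Le:Multiplication with Sobolev functions} to rewrite, for truncated $\phi_s=2\log(|u_s|\vee\delta)$,
\[
\int_M \phi_s\langle u_s,\Hodge u_s\rangle\d\meas = \int_M \phi_s\big[|\rmd u_s|^2+|\delta u_s|^2\big]\d\meas + (\text{cross terms with }\rmd\phi_s),
\]
and then use \autoref{Pr:Kato's inequality} to absorb the cross terms into the quadratic quantity, before sending $\delta\downarrow 0$.

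The main obstacle is precisely this last step: $\log|u_s|$ is singular on $|u_s|^{-1}(\{0\})$ and changes sign where $|u_s|=1$, so the truncation $\phi_s=2\log(|u_s|\vee\delta)$ and the passage $\delta\downarrow 0$ must be carried out carefully, tracking the signs so that the contributions from the $2\log|u_s|$ and the $+1$ terms combine to preserve the spectral-calculus monotonicity used above. The $\Ell^\infty$-bound on $u_s$ (uniformly in $s\in[0,\varepsilon]$, by \autoref{Cor:Extension}) provides the integrable majorants needed for Lebesgue's theorem throughout this regularization, in complete analogy with the scheme already worked out in \autoref{Sec:Kato-Simon}.
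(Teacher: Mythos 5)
Your plan diverges from the paper's argument, and the divergence hides a genuine gap. The paper proves \autoref{Th:LSI 2 from hyper} by Stein complex interpolation: from the endpoint bounds $\Vert\sfS_{\nli\vartheta}\eta\Vert_{\Ell^2}\leq\Vert\eta\Vert_{\Ell^2}$ and $\Vert\sfS_{1+\nli\vartheta}\eta\Vert_{\Ell^\infty}\leq\rme^{c(\varepsilon)}\Vert\eta\Vert_{\Ell^2}$ for the analytic family $\sfS_\xi=\rme^{-\varepsilon\xi\Hodge}$, it interpolates to $\Vert\HHeat_{\varepsilon\tau}\omega\Vert_{\Ell^{2/(1-\tau)}}\leq\rme^{c(\varepsilon)\tau}\Vert\omega\Vert_{\Ell^2}$ for all $\tau\in(0,1)$, then differentiates this inequality in $t=\varepsilon\tau$ at $t=0^+$. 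The exponent $p(t)=2\varepsilon/(\varepsilon-t)$ moves with $t$; the inequality holds for the whole curve; and the LSI is extracted as a derivative at the common point $(t,p)=(0,2)$. This is the only step that actually makes the hypercontractivity hypothesis bite all the way down to $t=0$.

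Your proposal instead keeps the exponent fixed at $p=2$ and tries to transport information from $t=\varepsilon$ to $t=0$ by integrating $\Psi'(s)=-\int\langle u_s,\Hodge u_s\rangle\big(2\log|u_s|+1\big)\d\meas$. The needed bound
\[
\int_0^\varepsilon\!\int_M \langle u_s,\Hodge u_s\rangle\big(2\log|u_s|+1\big)\d\meas\,\rmd s \;\leq\; \varepsilon\int_M \langle\omega,\Hodge\omega\rangle\d\meas
\]
is not just delicate — it is not a consequence of the tools you list, and in fact it should be expected to fail. The integrand $\langle u_s,\Hodge u_s\rangle$ has no pointwise sign, $\log|u_s|$ changes sign, and there is no monotonicity mechanism (Kato's inequality, \autoref{Le:Multiplication with Sobolev functions}, truncation $\phi_s=2\log(|u_s|\vee\delta)$, etc.) which realigns the cross terms with the decreasing quantity $s\mapsto\calE(\HHeat_s\omega)$. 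Already in the scalar model case this route stalls: after integrating by parts one finds $\Psi'(s)=2\int\log u_s\,|\nabla u_s|^2\d\meas+3\int|\nabla u_s|^2\d\meas$, and neither the coefficient $3$ nor the sign-indefinite $\log u_s$ term can be brought under $\int|\nabla\omega|^2\d\meas$ with the desired constant. The Davies reference you cite is apposite, but the proof of that theorem in Davies \emph{is} the Stein interpolation argument; the entropy-monotonicity-along-the-semigroup computation you describe is the one used in the \emph{forward} direction (LSI $\Rightarrow$ hyper/ultracontractivity, cf.\ \autoref{Th:pLSI to Ultra}), where the LSI is fed \emph{into} $\Psi'$, not extracted from it.

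To repair the argument, you should replace the fixed-$\Ell^2$ entropy functional by the $\Ell^{p(t)}$ norm along a varying exponent and obtain the $\Ell^2\to\Ell^{p(t)}$ bound by complex interpolation between the $\Ell^2\to\Ell^2$ contractivity of $\sfS_{\nli\vartheta}$ (from \autoref{Th:L^2-contractivity}) and the $\Ell^2\to\Ell^\infty$ ultracontractivity hypothesis applied to $\sfS_{1+\nli\vartheta}$; the LSI then comes out as the right derivative at $t=0$. Your preparatory observations — the reduction to $\Vert\omega\Vert_{\Ell^2}=1$, the uniform $\Ell^\infty$ bound on $u_s$ via \autoref{Cor:Extension}, and the $\tau$-regularization scheme for differentiating under the integral as in \autoref{Re:p in (1,2)} — are all correct and do feed into the final differentiation step of the interpolation argument, so they are not wasted.
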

	
	\begin{proof} Let $\smash{U := \big\lbrace \xi \in \C : \Re\xi \in  [0,1]\big\rbrace}$. Given any $\varepsilon \in (0,T)$ and $\xi\in U$, we consider the operator
		\begin{align*}
		\sfS_\xi := \rme^{-\varepsilon \xi \Hodge}
		\end{align*}
		acting on $\Ell^2(T^*\mms) + \nli\,\Ell^2(T^*\mms)$. For every $\eta,\rho\in\Ell^2(T^*\mms) + \nli\,\Ell^2(T^*\mms)$, the canonical bilinear form in $\Ell^2(T^*\mms)+\nli\,\Ell^2(T^*\mms)$ induced by $\sfS_\xi$ evaluated at $(\eta,\rho)$ is continuous in $U$, and its restriction to the interior of $U$ is holomorphic. For every $\eta\in\Ell^2(T^*\mms) + \nli\,\Ell^2(T^*\mms)$ and every $\vartheta\in\R$, we have
		\begin{align*}
		\Vert \sfS_{\nli \vartheta}\eta\Vert_{\Ell^2} &\leq \Vert \eta\Vert_{\Ell^2},\\
		\Vert\sfS_{1+\nli\vartheta}\eta\Vert_{\Ell^\infty} &\leq \rme^{-c(\varepsilon)}\,\Vert\sfS_{\nli\vartheta}\eta\Vert_{\Ell^2}\leq\rme^{-c(\varepsilon)}\,\Vert\eta\Vert_{\Ell^2}.
		\end{align*}
		
		Given any $\omega\in\Dom(\Hodge)\cap\Ell^1(T^*\mms)\cap\Ell^\infty(T^*\mms)$, for every $\tau \in (0,1)$, via Stein's interpolation theorem we infer
		\begin{align}\label{Eq:Stein}
		\Vert \HHeat_{\varepsilon\tau}\omega\Vert_{\Ell^{2/(1-\tau)}} = \Vert\sfS_\tau\omega\Vert_{\Ell^{2/(1-\tau)}} \leq \rme^{-c(\varepsilon)\tau}\,\Vert\omega\Vert_{\Ell^2}.
		\end{align}
		
		Now we define $p\in \Cont^1([0,\varepsilon))$ by $p(t) := 2\varepsilon/(\varepsilon -t)$. Setting $\tau := t/\varepsilon$ in \eqref{Eq:Stein} translates into
		\begin{align*}
		\Vert\HHeat_t\omega\Vert_{\Ell^{p(t)}}^{p(t)} \leq \rme^{-c(\varepsilon)p(t)t/\varepsilon}\,\Vert\omega\Vert_{\Ell^2},
		\end{align*}
		and the claim follows after differentiating both sides at $0$ via
		\begin{align*}
		\int_\mms \vert\omega\vert^2\,\Big[\frac{2}{\varepsilon}\log\vert\omega\vert - 2\,\vert\omega\vert^{-2}\,\langle\omega,\Hodge\omega\rangle\Big]\d\meas \leq \frac{2c(\varepsilon)}{\varepsilon}\,\Vert\omega\Vert_{\Ell^2}.\tag*{\qedhere}
		\end{align*}
	\end{proof}
	
	\begin{example} If $\ChHeat_t$ is bounded from $\Ell^2(\mms)$ to $\Ell^\infty(\mms)$, then so is $\HHeat_t$ by means of \autoref{Th:Kato-Simon}. Compare this with (the proof of) \autoref{Th:Lp Linfty}.
		
		See \cite[Ch.~4]{charalambous2007} for an application to certain Gaussian upper bounds for the heat kernel on $1$-forms in the non-weighted smooth setting.
	\end{example}
	
	\section{Spectral properties of the Hodge Laplacian}\label{Ch:Spectral props}
	
	Next, we study properties of the \emph{spectrum} $\sigma(\Hodge)$ of $\Hodge$, i.e.~the set of all $\lambda\in\C$ such that the operator $\Hodge - \lambda$ fails to be bijective. We denote the \emph{resolvent set} of $\Hodge$ by $\rho(\Hodge) := \C\setminus\sigma(\Hodge)$. The \emph{point spectrum} of $\Hodge$ is denoted by $\sigma_\pt(\Hodge)$, and the \emph{essential spectrum} of $\Hodge$ will be termed $\sigma_\ess(\Hodge)$. The former is the set of all $\lambda\in \sigma(\Hodge)$ for which $\Hodge-\lambda$ is not injective, and the second is the set of all $\lambda\in\sigma(\Hodge)$ which are not an eigenvalue of $\Hodge$ with finite algebraic multiplicity. Similar notations are employed for the operators $-\Delta$ and $-\Delta+K$.
	
	One immediately sees that since $\Hodge$ is self-adjoint and nonnegative, we have
	\begin{align*}
	\sigma(\Hodge) \subset [0,\infty).
	\end{align*}
	In addition, eigenspaces w.r.t.~different eigenvalues are mutually orthogonal in $\Ell^2(T^*\mms)$.
	
	\subsection{Inclusion of spectra}\label{Sec:Basic sectral properties}
	
	In this section, we show that, except the critical value $0$, the spectrum of the negative functional Laplacian $-\Delta$ is contained in $\sigma(\Hodge)$. Similar inclusions hold between the respective point and essential spectra. See \autoref{Th:Spectrum of fcts and of forms}. Our proof follows the smooth treatise for \cite[Cor.~4.4, Cor.~4.5]{charalambous2019}.
	
	As an important application, in \autoref{Cor:Spectral gap inclusion} we derive explicit relations between the spectral gaps of the Schrödinger operator $-\Delta+K$, $\Hodge$ and $-\Delta$.
		
	We shall need the subsequent characterization of points in the (essential) spectrum of $\Hodge$. See \cite[Prop.~2.5]{charalambous2019} and the references therein for a more general statement.
	
	\begin{lemma}\label{Le:Generalized Weyl} For every $\lambda > 0$, we have $\lambda\in\sigma(\Hodge)$ if and only if there exist $\alpha < 0$ and a sequence $(\omega_n)_{n\in\N}$ in $\Dom(\Hodge)$ such that
		\begin{enumerate}[label=\textnormal{\alph*.}]
			\item\label{La:AA} $\Vert \omega_n\Vert_{\Ell^2} = 1$ for every $n\in\N$, and
			\item\label{La:BB} for every $j\in\{1,2\}$, one has
			\begin{align*}
			\lim_{n\to\infty}\int_\mms \big\langle (\Hodge-\alpha)^{-j}\omega_n, \Hodge\omega_n - \lambda\,\omega_n\big\rangle\d\meas = 0.
			\end{align*}
		\end{enumerate}
		Moreover, a number $\lambda > 0$ belongs to the essential spectrum of $\Hodge$ if and only if some sequence $(\omega_n)_{n\in\N}$ in $\Dom(\Hodge)$ satisfies the previous conditions \ref{La:AA} and \ref{La:BB} as well as
		\begin{enumerate}[label=\textnormal{\alph*.}]\setcounter{enumi}{2}
			\item\label{La:CC} $\omega_n \rightharpoonup 0$ in $\Ell^2(T^*\mms)$ as $n\to\infty$.
		\end{enumerate}
	\end{lemma}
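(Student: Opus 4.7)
The statement is a form of Weyl's criterion for the self-adjoint operator $\Hodge$. The only-if direction is essentially immediate from the classical Weyl criterion: since $\Hodge$ is self-adjoint and $\lambda\in\sigma(\Hodge)$ (resp.~$\sigma_\ess(\Hodge)$), there is a sequence $(\omega_n)_{n\in\N}$ of unit vectors in $\Dom(\Hodge)$ with $(\Hodge-\lambda)\omega_n\to 0$ in $\Ell^2(T^*\mms)$ (resp.~additionally converging weakly to zero). Pick any $\alpha<0$; then $(\Hodge-\alpha)^{-j}$ is bounded for $j=1,2$, and the Cauchy--Schwarz inequality yields
\begin{align*}
\Big\vert\int_\mms \big\langle(\Hodge-\alpha)^{-j}\omega_n,(\Hodge-\lambda)\omega_n\big\rangle\d\meas\Big\vert \leq \Vert(\Hodge-\alpha)^{-j}\Vert_{\Ell^2,\Ell^2}\,\Vert(\Hodge-\lambda)\omega_n\Vert_{\Ell^2}\longrightarrow 0.
\end{align*}

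For the converse, the plan is to \emph{recycle} a given sequence $(\omega_n)_{n\in\N}$ satisfying the integral conditions into one for which the classical Weyl criterion applies. Write $\sfR:=(\Hodge-\alpha)^{-1}$, a bounded, positive, self-adjoint operator which commutes with $\Hodge$ by spectral calculus. The algebraic identity
\begin{align*}
\sfR(\Hodge-\lambda) = \Id+(\alpha-\lambda)\,\sfR
\end{align*}
together with condition \ref{La:BB} at $j=1$ forces $\langle\omega_n,\sfR\omega_n\rangle\to (\lambda-\alpha)^{-1}$. Applying $\sfR$ once more and using \ref{La:BB} at $j=2$ yields $\langle\omega_n,\sfR^2\omega_n\rangle\to(\lambda-\alpha)^{-2}$. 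Denoting by $\mu_n$ the scalar spectral measure of $\Hodge$ associated to $\omega_n$, these say precisely that $\int f\d\mu_n\to f(\lambda)$ and $\int f^2\d\mu_n\to f(\lambda)^2$, where $f(t):=(t-\alpha)^{-1}$; hence $\int(f-f(\lambda))^2\d\mu_n\to 0$, which is a concentration statement of $\mu_n$ near $\lambda$.

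The key step --- and essentially the only substantive calculation --- is now to set $\tilde\omega_n:=\sfR\omega_n/\Vert\sfR\omega_n\Vert_{\Ell^2}$ and verify that $\Vert(\Hodge-\lambda)\tilde\omega_n\Vert_{\Ell^2}\to 0$. Indeed, expanding
\begin{align*}
\Vert(\Hodge-\lambda)\sfR\omega_n\Vert_{\Ell^2}^2 = 1+2(\alpha-\lambda)\,\langle\omega_n,\sfR\omega_n\rangle + (\alpha-\lambda)^2\,\langle\omega_n,\sfR^2\omega_n\rangle,
\end{align*}
the limits established above make the right-hand side tend to $1-2+1=0$; meanwhile $\Vert\sfR\omega_n\Vert_{\Ell^2}^2\to(\lambda-\alpha)^{-2}>0$ keeps the normalization harmless. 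Since $\tilde\omega_n\in\Dom(\Hodge)$ and $\Vert\tilde\omega_n\Vert_{\Ell^2}=1$, the classical Weyl criterion gives $\lambda\in\sigma(\Hodge)$.

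For the essential spectrum half, if in addition $\omega_n\rightharpoonup 0$ in $\Ell^2(T^*\mms)$, then the bounded operator $\sfR$ preserves weak convergence, so $\sfR\omega_n\rightharpoonup 0$, and the uniform positivity of $\Vert\sfR\omega_n\Vert_{\Ell^2}$ yields $\tilde\omega_n\rightharpoonup 0$ as well; the classical characterization of $\sigma_\ess$ via weakly null Weyl sequences then places $\lambda\in\sigma_\ess(\Hodge)$. The only delicate point I anticipate is the bookkeeping with constants to go from the two moment limits to the norm convergence $(\Hodge-\lambda)\tilde\omega_n\to 0$; everything else is a routine application of the spectral theorem for bounded self-adjoint operators.
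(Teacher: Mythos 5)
Your proof is correct. Note that the paper does not actually prove this lemma: immediately before its statement it writes that the result is taken from \cite[Prop.~2.5]{charalambous2019}, so there is no paper proof to compare against. Your argument supplies a complete, self-contained derivation from the classical Weyl criterion, and it is clean.

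To spell out why the ``delicate point'' you flag is in fact fine: writing $\sfR := (\Hodge-\alpha)^{-1}$ and using $\sfR(\Hodge-\lambda)=\Id+(\alpha-\lambda)\sfR$ on $\Dom(\Hodge)$, condition \ref{La:BB} for $j=1$ reads $1+(\alpha-\lambda)(\omega_n,\sfR\omega_n)_{\Ell^2}\to 0$, so $(\omega_n,\sfR\omega_n)_{\Ell^2}\to(\lambda-\alpha)^{-1}$; plugging this into the $j=2$ version gives $(\omega_n,\sfR^2\omega_n)_{\Ell^2}\to(\lambda-\alpha)^{-2}$. Then $(\Hodge-\lambda)\sfR\omega_n=\omega_n+(\alpha-\lambda)\sfR\omega_n$ and
\begin{align*}
\Vert(\Hodge-\lambda)\sfR\omega_n\Vert_{\Ell^2}^2 = 1+2(\alpha-\lambda)(\omega_n,\sfR\omega_n)_{\Ell^2}+(\alpha-\lambda)^2(\omega_n,\sfR^2\omega_n)_{\Ell^2}\longrightarrow 1-2+1=0,
\end{align*}
while $\Vert\sfR\omega_n\Vert_{\Ell^2}^2=(\omega_n,\sfR^2\omega_n)_{\Ell^2}\to(\lambda-\alpha)^{-2}>0$, so the normalization is eventually well-defined and harmless. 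The weakly null case is handled exactly as you say, since bounded operators preserve weak convergence and the denominators are bounded away from zero. So the argument is complete; there is no gap.
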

	
	\begin{lemma}\label{Le:Commutation resolvent differential} Let $\alpha < 0$. Then for every $f\in W^{1,2}(\mms)$, we have
		\begin{align*}
		(\Hodge -\alpha)^{-1}\rmd f = \rmd(-\Delta-\alpha)^{-1}f,
		\end{align*}
		while for every $\omega\in\Dom(\delta)$, we have
		\begin{align*}
		(-\Delta -\alpha)^{-1}\delta\omega = \delta(\Hodge-\alpha)^{-1}\omega.
		\end{align*}
	\end{lemma}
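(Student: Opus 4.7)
Both identities will be proved by the same strategy: represent the resolvent as the Laplace transform of the corresponding heat semigroup and reduce to the intertwinings $\HHeat_t\rmd f=\rmd\ChHeat_t f$ and $\delta\HHeat_t\omega=\ChHeat_t\delta\omega$ already provided by \autoref{Le:Ht vs Pt} and \autoref{Cor:delta H_t P_t delta}. Since $\Hodge$ and $-\Delta$ are nonnegative self-adjoint operators whose semigroups $(\HHeat_t)_{t\geq 0}$ and $(\ChHeat_t)_{t\geq 0}$ are $\Ell^2$-contractions by \autoref{Th:L^2-contractivity}, and since $\alpha<0$, Hille's theorem furnishes
$$
(-\Delta-\alpha)^{-1}f=\int_0^\infty \rme^{\alpha t}\,\ChHeat_t f\d t,\qquad (\Hodge-\alpha)^{-1}\omega=\int_0^\infty \rme^{\alpha t}\,\HHeat_t\omega\d t,
$$
as Bochner integrals in $\Ell^2(\mms)$ and $\Ell^2(T^*\mms)$ respectively.

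For the first identity, fix $f\in W^{1,2}(\mms)$ and set $T_n:=\int_0^n \rme^{\alpha t}\,\ChHeat_t f\d t$. By strong $W^{1,2}$-continuity of $t\mapsto\ChHeat_t f$ on $[0,\infty)$, the integrand is Bochner integrable in $W^{1,2}(\mms)$ over $[0,n]$, so $T_n\in\Sobo^2(\mms)$, and the closed operator $\rmd\colon\Sobo^2(\mms)\to\Ell^2(T^*\mms)$ commutes with the truncated Bochner integral:
$$
\rmd T_n=\int_0^n \rme^{\alpha t}\,\rmd\ChHeat_t f\d t=\int_0^n \rme^{\alpha t}\,\HHeat_t\rmd f\d t,
$$
the second equality being \autoref{Le:Ht vs Pt} pointwise in $t$. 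Letting $n\to\infty$, $T_n\to(-\Delta-\alpha)^{-1}f$ in $\Ell^2(\mms)$, while the $\Ell^2$-contractivity bound $\Vert\HHeat_t\rmd f\Vert_{\Ell^2}\leq\Vert\rmd f\Vert_{\Ell^2}$ combined with the weight $\rme^{\alpha t}$ ensures that $\rmd T_n$ converges in $\Ell^2(T^*\mms)$ to $(\Hodge-\alpha)^{-1}\rmd f$. Closedness of $\rmd$ then gives $\rmd(-\Delta-\alpha)^{-1}f=(\Hodge-\alpha)^{-1}\rmd f$.

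The second identity is obtained by the mirror argument. For $\omega\in\Dom(\delta)$, set $S_n:=\int_0^n \rme^{\alpha t}\,\HHeat_t\omega\d t$. By \autoref{Cor:delta H_t P_t delta}, $\delta\HHeat_t\omega=\ChHeat_t\delta\omega$, so the continuity of $t\mapsto\ChHeat_t\delta\omega$ in $\Ell^2(\mms)$ (together with that of $t\mapsto\HHeat_t\omega$ in $\Ell^2(T^*\mms)$) shows that $t\mapsto\HHeat_t\omega$ is continuous in the graph norm of $\delta$. Hence $S_n\in\Dom(\delta)$ and the closed operator $\delta$ commutes with the integral, yielding $\delta S_n=\int_0^n \rme^{\alpha t}\,\ChHeat_t\delta\omega\d t$. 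Passing to the limit $n\to\infty$ and invoking closedness of $\delta$ once more produces $\delta(\Hodge-\alpha)^{-1}\omega=(-\Delta-\alpha)^{-1}\delta\omega$. The only delicate point throughout is the commutation of a closed operator with a Bochner integral, which is the standard fact that a closed operator commutes with integration of any integrand that is integrable together with its image under the operator; the truncation to $[0,n]$ handles integrability on compact intervals via continuity, while $\Ell^2$-contractivity and the exponential weight $\rme^{\alpha t}$ deliver the passage $n\to\infty$.
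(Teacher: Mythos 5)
Your proof is correct, but it takes a genuinely different route from the paper's. You represent the resolvents as Laplace transforms of the semigroups and then push the closed operators $\rmd$ and $\delta$ through truncated Bochner integrals using the intertwinings from \autoref{Le:Ht vs Pt} and \autoref{Cor:delta H_t P_t delta}, finishing with closedness to pass to the limit $n\to\infty$. The paper instead treats the two identities asymmetrically: it notes that the second follows from the first simply by duality, since $\delta$ is the adjoint of $\rmd$ and the resolvents $(\Hodge-\alpha)^{-1}$, $(-\Delta-\alpha)^{-1}$ are self-adjoint because $\alpha$ is real. For the first identity the paper argues algebraically rather than via the Laplace transform: with $u:=(-\Delta-\alpha)^{-1}f\in\Dom(\Delta)$, it applies $\ChHeat_t$ as a mollifier so that $\rmd\ChHeat_t u\in\Dom(\Hodge)$, computes $(\Hodge-\alpha)\rmd\ChHeat_t u=\rmd\ChHeat_t f$ directly from \eqref{Le:Hodge Laplacian on test forms} and the resolvent equation, rewrites both sides via \autoref{Le:Ht vs Pt} as $\HHeat_t\rmd u=(\Hodge-\alpha)^{-1}\HHeat_t\rmd f$, and lets $t\to 0$ using only strong continuity and boundedness of the resolvent. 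Your route is heavier in that it needs the (correct but not entirely trivial) commutation of closed unbounded operators with improper Bochner integrals, while the paper's route avoids improper integrals altogether and halves the work via adjointness; on the other hand, your approach treats $\rmd$ and $\delta$ symmetrically and would extend more readily to settings where the adjointness reduction is less clean.
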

	
	\begin{proof} Any $\alpha < 0$ belongs to $\rho(\Hodge)$ and $\rho(-\Delta)$, thus $\Hodge-\alpha$ and $-\Delta-\alpha$ are invertible with bounded inverse. Furthermore, the second identity follows from the first by definition of $\delta$ and the self-adjointness of $(\Hodge-\alpha)^{-1}$, since $\alpha$ is real --- we thus concentrate on the proof of the first equality.
		
		Given any $f\in W^{1,2}(\mms)$, let $u\in\Dom(\Delta)$ be the unique solution to the equation $-\Delta u - \alpha\, u = f$ on $\mms$. By \autoref{Le:Ht vs Pt}, for every $t>0$ we have $\rmd \ChHeat_tu \in \Dom(\Hodge)$ and
		\begin{align*}
		\Hodge \rmd\ChHeat_t u - \alpha\d\ChHeat_t u = -\rmd(\Delta\ChHeat_t u + \alpha\, \ChHeat_tu) = \rmd \ChHeat_tf.
		\end{align*}
		Therefore $\HHeat_t\rmd u = (\Hodge-\alpha)^{-1}\HHeat_t\rmd f$ again by \autoref{Le:Ht vs Pt}, and the claim follows by letting $t\to 0$.
	\end{proof}
	
	\begin{theorem}\label{Th:Spectrum of fcts and of forms} Let $(\mms,\met,\meas)$ be an $\RCD(K,\infty)$ space for some $K\in\R$.  Then
		\begin{align*}
		\sigma_\pt(-\Delta)&\subset \sigma_\pt(\Hodge),\\
			\sigma(-\Delta)\setminus\{0\} &\subset \sigma(\Hodge),\\
			\sigma_\ess(-\Delta)\setminus\{0\} &\subset \sigma_\ess(\Hodge).
		\end{align*}
	\end{theorem}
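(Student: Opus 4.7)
The plan is to transfer spectral information from $-\Delta$ to $\Hodge$ through the intertwining identity $\HHeat_t\,\rmd = \rmd\,\ChHeat_t$ of \autoref{Le:Ht vs Pt} together with its resolvent counterpart $(\Hodge - \alpha)^{-1}\,\rmd = \rmd\,(-\Delta - \alpha)^{-1}$ from \autoref{Le:Commutation resolvent differential}, by feeding suitable approximate eigenforms into the generalized Weyl-type criterion of \autoref{Le:Generalized Weyl}. The point spectrum case is essentially immediate: given $\lambda > 0$ and $f \in \Dom(\Delta)\setminus\{0\}$ with $-\Delta f = \lambda\,f$, integration by parts yields $\Vert\rmd f\Vert_{\Ell^2}^2 = \lambda\,\Vert f\Vert_{\Ell^2}^2 > 0$, so $\rmd f\neq 0$, and \autoref{Le:Ht vs Pt} gives $\HHeat_t\,\rmd f = \rmd\,\ChHeat_t f = \rme^{-\lambda t}\,\rmd f$, whence by the functional calculus of $\Hodge$ one obtains $\rmd f\in\Dom(\Hodge)$ with $\Hodge\,\rmd f = \lambda\,\rmd f$.

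For $\lambda > 0\in\sigma(-\Delta)$, the standard (self-adjoint) Weyl criterion provides $(f_n)_{n\in\N}$ in $\Dom(\Delta)$ with $\Vert f_n\Vert_{\Ell^2} = 1$ and $\Vert(-\Delta-\lambda)f_n\Vert_{\Ell^2}\to 0$. Since \autoref{Le:Generalized Weyl} demands that the test sequence lie in $\Dom(\Hodge)$, I would smooth via $\tilde f_n := \ChHeat_{\varepsilon_n} f_n$ with $\varepsilon_n\downarrow 0$ chosen small by diagonalization so that $\Vert\tilde f_n - f_n\Vert_{\Ell^2}\to 0$; commutation of $\ChHeat_{\varepsilon_n}$ with $\Delta$ preserves $\Vert(-\Delta-\lambda)\tilde f_n\Vert_{\Ell^2}\to 0$, while iterated application of \eqref{Eq:A priori estimate Laplacian} places $\tilde f_n$ in the domain of every power of $\Delta$, so \autoref{Le:Ht vs Pt} gives $\rmd\tilde f_n\in\Dom(\Hodge)$ with $\Hodge\,\rmd\tilde f_n = \rmd(-\Delta\tilde f_n)$. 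Setting $\omega_n := \rmd\tilde f_n\,\Vert\rmd\tilde f_n\Vert_{\Ell^2}^{-1}$, the convergence $\Vert\rmd\tilde f_n\Vert_{\Ell^2}^2 = -\int_\mms\tilde f_n\,\Delta\tilde f_n\d\meas \to \lambda$ ensures well-definedness for $n$ large. Fixing any $\alpha < 0$ and iterating \autoref{Le:Commutation resolvent differential} to obtain $(\Hodge-\alpha)^{-j}\rmd\tilde f_n = \rmd(-\Delta-\alpha)^{-j}\tilde f_n$, the integrand in condition (b) of \autoref{Le:Generalized Weyl} becomes
\begin{align*}
\Vert\rmd\tilde f_n\Vert_{\Ell^2}^{-2}\int_\mms\langle\rmd(-\Delta-\alpha)^{-j}\tilde f_n,\,\rmd(-\Delta-\lambda)\tilde f_n\rangle\d\meas,
\end{align*}
which by integration by parts (both factors lying in $\Dom(\Delta)$ by the smoothing) and self-adjointness equals $\Vert\rmd\tilde f_n\Vert_{\Ell^2}^{-2}\int_\mms (-\Delta)(-\Delta-\alpha)^{-j}\tilde f_n\cdot(-\Delta-\lambda)\tilde f_n\d\meas$. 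Since the operator $(-\Delta)(-\Delta-\alpha)^{-j}$ is bounded on $\Ell^2(\mms)$ by the spectral calculus of $-\Delta$, while $(-\Delta-\lambda)\tilde f_n\to 0$ in $\Ell^2(\mms)$, this expression tends to zero, so \autoref{Le:Generalized Weyl} delivers $\lambda\in\sigma(\Hodge)$.

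For the essential spectrum inclusion, I would additionally require $f_n\rightharpoonup 0$ in $\Ell^2(\mms)$, which is possible since $\lambda \in \sigma_\ess(-\Delta)$; then $\tilde f_n\rightharpoonup 0$, and testing $\rmd\tilde f_n$ against $\eta \in \Test(T^*\mms)$ via $\int_\mms\langle\rmd\tilde f_n,\eta\rangle\d\meas = -\int_\mms\tilde f_n\,\delta\eta\d\meas\to 0$, combined with the uniform bound $\sup_n\Vert\rmd\tilde f_n\Vert_{\Ell^2}<\infty$ and density of $\Test(T^*\mms)$ in $\Ell^2(T^*\mms)$, yields $\omega_n\rightharpoonup 0$, verifying the additional condition for $\sigma_\ess(\Hodge)$ in \autoref{Le:Generalized Weyl}. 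The principal technical obstacle is the regularization bookkeeping: one must smooth the raw Weyl sequence for $-\Delta$ just enough via $(\ChHeat_t)_{t\geq 0}$ to bring differentials into $\Dom(\Hodge)$, without destroying either the approximate eigenvalue equation or weak convergence to zero; the diagonal choice of $\varepsilon_n$ together with the contractive action of $(\ChHeat_t)_{t\geq 0}$ on $\Ell^2(\mms)$ and its commutation with $\Delta$ handles both requirements simultaneously.
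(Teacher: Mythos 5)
Your proof follows essentially the same route as the paper: the intertwining relations of Lemmas \ref{Le:Ht vs Pt} and \ref{Le:Commutation resolvent differential} push a (heat-flow--regularized) Weyl sequence for $-\Delta$ through $\rmd$ into a Weyl sequence for $\Hodge$, and the generalized Weyl criterion of Lemma \ref{Le:Generalized Weyl} then delivers the spectral inclusions, with the essential-spectrum case handled by testing against $\Test(T^*\mms)$ via $\delta$ exactly as in the paper. Apart from cosmetic differences --- you invoke boundedness of $(-\Delta)(-\Delta-\alpha)^{-j}$ from the spectral theorem where the paper carries out explicit numerical estimates, and your appeal to higher powers of $\Delta$ via \eqref{Eq:A priori estimate Laplacian} is more than what is actually needed for the integration by parts --- the argument is sound and matches the paper's.
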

	
	\begin{proof} The first inclusion is elementary, since for every $\lambda \in\sigma_\pt(-\Delta)$ and its corresponding eigenfunction $f\in\Dom(\Delta)$, by \autoref{Le:Ht vs Pt},  $\rmd\ChHeat_1f\in\Dom(\Hodge)$ and
		\begin{align*}
		\Hodge\rmd\ChHeat_1f = -\rmd\Delta\ChHeat_1 f = \lambda\d\ChHeat_1f. 
		\end{align*}
		
		To prove the second inclusion, let $\lambda \in \sigma(-\Delta)\setminus\{0\}$. By Weyl's criterion \cite[Thm.~5.10]{hislop1996} applied to $-\Delta$, for every $n\in\N$ there exists $g_n \in\Dom(\Delta)$ with
		\begin{align*}
		\Vert g_n\Vert_{\Ell^2}&= 1,\\
		 \Vert \Delta g_n + \lambda\,g_n\Vert_{\Ell^2}&\leq 2^{-n}.
		\end{align*}
		Moreover, for every $n\in\N$ there exists $t_n > 0$ such that $f_n := \ChHeat_{t_n}g_n\in\Dom(\Delta)$ satisfies the similar estimates
		\begin{align}\label{Eq:Props of fn i}
		\begin{split}
		\sqrt{2}^{-1}&\leq \,\Vert f_n\Vert_{\Ell^2}\leq 1,\\
		\Vert\Delta f_n +\lambda f_n\Vert_{\Ell^2} &\leq 2^{-n}.
		\end{split}
		\end{align}
		
		Provided that $2^{-n} \leq \lambda/4$, from \eqref{Eq:Props of fn i}  we get
		\begin{align}\label{Eq:Props of fn II}
		\begin{split}
		\int_\mms \vert\rmd f_n\vert^2\d\meas &=-\int_\mms f_n\,\Delta f_n\d\meas\\
		&\geq - \Vert \Delta f_n + \lambda\,f_n\Vert_{\Ell^2} + \lambda\,\Vert f_n\Vert_{\Ell^2}^2 \geq \frac{\lambda}{4} >0.
		\end{split}
		\end{align}
		Possibly relabeling $(f_n)_{n\in\N}$, we may and will assume that the sequence $(\Vert\rmd f_n\Vert_{\Ell^2}^2)_{n\in \N}$ is uniformly bounded from below by $\lambda/4$. In particular, for  $j\in\{1,2\}$ it follows from \autoref{Le:Ht vs Pt}, \autoref{Le:Commutation resolvent differential}, contractivity of $(-\Delta+1)^{-j}$ in $\Ell^2(\mms)$, \eqref{Eq:Props of fn i} and  \eqref{Eq:Props of fn II} that
		\begin{align*}
		&\Big\vert\!\int_\mms \big\langle (\Hodge + 1)^{-j}\rmd f_n, (\Hodge-\lambda)\rmd f_n\big\rangle\d\meas\Big\vert\\
		&\qquad\qquad = \Big\vert\!\int_\mms \big[\delta(\Hodge+1)^{-j}\rmd f_n\big]\, (\Delta f_n+\lambda\, f_n) \d\meas\Big\vert \\
		&\qquad\qquad = \Big\vert\!\int_\mms \big[(-\Delta+1)^{-j}\Delta f_n\big]\,(\Delta f_n+\lambda\,f_n)\d\meas\Big\vert\\
		&\qquad\qquad \leq 2^{-n}\,\Vert \Delta f_n\Vert_{\Ell^2} \leq 2^{-n}\,\big(\lambda + 2^{-n}\big) \leq \frac{4(\lambda+1)}{\lambda}\,2^{-n}\,\Vert\rmd f_n\Vert_{\Ell^2}^2.\textcolor{White}{\int_\mms}
		\end{align*}
		In particular, the sequence $(\omega_n)_{n\in\N}$  given by
		\begin{align}\label{Eq:Sequence def}
		\omega_n := \Vert \rmd f_n\Vert_{\Ell^2}^{-1} \d f_n,
		\end{align}
		which takes values in $\Dom(\Hodge)$ by \autoref{Le:Ht vs Pt}, obeys \ref{La:AA} and \ref{La:BB} from \autoref{Le:Generalized Weyl}, whence $\lambda \in \sigma(\Hodge)$.
		
		Turning to the last inclusion, if $\lambda\in\sigma_\ess(-\Delta)$, then the sequence $(f_n)_{n\in\N}$ from the previous step can be constructed to satisfy $f_n \rightharpoonup 0$ in $\Ell^2(\mms)$ as $n\to\infty$ in addition to \eqref{Eq:Props of fn i} above \cite[Thm.~7.2]{hislop1996}. Therefore, for every $\eta\in\Test(T^*\mms)$ we obtain for the sequence $(\omega_n)_{n\in\N}$ defined in \eqref{Eq:Sequence def} that
		\begin{align*}
		\lim_{n\to\infty} \Big\vert\! \int_\mms \langle\omega_n,\eta\rangle\d\meas\Big\vert \leq \lim_{n\to\infty} \frac{2}{\sqrt{\lambda}}\,\Big\vert\!\int_\mms f_n\,\delta\eta\d\meas\Big\vert = 0.
		\end{align*}
		Since $\Vert \omega_n\Vert_{\Ell^2} = 1$ for every $n\in\N$, this provides \ref{La:CC} in \autoref{Le:Generalized Weyl}.
	\end{proof}
	
	\begin{corollary}\label{Cor:Spectral gap inclusion} Under the assumptions of \autoref{Th:Spectrum of fcts and of forms}, we have
		\begin{align*}
		\inf \sigma(-\Delta + K) \leq \inf\sigma(\Hodge)\leq  \inf \sigma(\Hodge)\setminus\{0\}\leq \inf \sigma(-\Delta)\setminus\{0\}.
		\end{align*}
	\end{corollary}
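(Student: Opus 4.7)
The three inequalities split cleanly and can be handled independently.

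The second inequality is immediate because $\sigma(\Hodge)\setminus\{0\}\subset\sigma(\Hodge)$, so taking infima over a larger set only decreases the value. The third inequality follows directly from \autoref{Th:Spectrum of fcts and of forms}: we have $\sigma(-\Delta)\setminus\{0\}\subset\sigma(\Hodge)$, and since every element of $\sigma(-\Delta)\setminus\{0\}$ is strictly positive (as $-\Delta$ is nonnegative), this inclusion refines to $\sigma(-\Delta)\setminus\{0\}\subset\sigma(\Hodge)\setminus\{0\}$, and the infimum inequality follows.

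The first inequality is the only one requiring work, and the plan is to read it off from \autoref{Th:Kato-Simon} at the level of operator norms. By the Hess--Schrader--Uhlenbrock inequality, for every $\omega\in\Ell^2(T^*\mms)$ and every $t\geq 0$,
\begin{align*}
\Vert\HHeat_t\omega\Vert_{\Ell^2}\leq \rme^{-Kt}\,\Vert\ChHeat_t\vert\omega\vert\Vert_{\Ell^2}\leq \rme^{-Kt}\,\Vert\ChHeat_t\Vert_{\Ell^2,\Ell^2}\,\Vert\omega\Vert_{\Ell^2}.
\end{align*}
Taking the supremum over $\omega$ gives $\Vert\HHeat_t\Vert_{\Ell^2,\Ell^2}\leq \rme^{-Kt}\,\Vert\ChHeat_t\Vert_{\Ell^2,\Ell^2}$. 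Since $\Hodge$ and $-\Delta$ are self-adjoint and nonnegative, the spectral theorem identifies the operator norms of their semigroups with their spectral radii, namely
\begin{align*}
\Vert\HHeat_t\Vert_{\Ell^2,\Ell^2}=\rme^{-t\inf\sigma(\Hodge)},\qquad \Vert\ChHeat_t\Vert_{\Ell^2,\Ell^2}=\rme^{-t\inf\sigma(-\Delta)}.
\end{align*}
Combining the two displays yields $\rme^{-t\inf\sigma(\Hodge)}\leq \rme^{-t(K+\inf\sigma(-\Delta))}$ for every $t\geq 0$, which rearranges to $\inf\sigma(\Hodge)\geq K+\inf\sigma(-\Delta)=\inf\sigma(-\Delta+K)$, using that $-\Delta+K$ is a constant shift of $-\Delta$.

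There is no serious obstacle here; the argument essentially amounts to noting that the semigroup comparison of \autoref{Th:Kato-Simon}, when read through the spectral radius formula, translates bound-by-bound into a comparison of infima of spectra. The only mild subtlety is using self-adjointness to convert semigroup norms into spectral data, which is standard. The plan is therefore simply to state the three items in this order and invoke \autoref{Th:Kato-Simon} and \autoref{Th:Spectrum of fcts and of forms} as black boxes.
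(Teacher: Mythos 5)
Your proof is correct, but takes a genuinely different route to the first inequality than the paper. The paper works at the level of quadratic forms via the Rayleigh quotient: by the integrated vector Bochner formula \eqref{Eq:H embedding} (a consequence of \autoref{Th:Ricci tensor}) and Kato's inequality (\autoref{Pr:Kato's inequality}), for every $\omega\in H^{1,2}(T^*\mms)$ with $\Vert\omega\Vert_{\Ell^2}=1$ one has
\begin{align*}
2\,\calE(\omega) \;\geq\; \int_\mms \big[\vert\nabla\omega^\sharp\vert^2 + K\,\vert\omega\vert^2\big]\d\meas \;\geq\; \int_\mms \big[\vert\nabla\vert\omega\vert\vert^2 + K\,\vert\omega\vert^2\big]\d\meas \;\geq\; \inf\sigma(-\Delta+K),
\end{align*}
and taking the infimum over such $\omega$ gives the first inequality directly. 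Your argument instead upgrades the pointwise Hess--Schrader--Uhlenbrock inequality (\autoref{Th:Kato-Simon}) to a comparison of $\Ell^2$-operator norms of the two semigroups and reads off the spectral gap from the spectral-theorem identity $\Vert\rme^{-tA}\Vert = \rme^{-t\inf\sigma(A)}$ for nonnegative self-adjoint $A$. Both are valid and essentially equivalent in content, since form domination and semigroup domination are two sides of the same coin; however, the paper's route is more economical in that it only needs the two elementary ingredients (\eqref{Eq:H embedding} and Kato's inequality) rather than invoking \autoref{Th:Kato-Simon}, which is itself a deeper result derived from those same ingredients via a form-domination argument. Your proof is a perfectly legitimate ``semigroup-first'' alternative. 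The second and third inequalities are handled the same way in both treatments, the third being an immediate consequence of \autoref{Th:Spectrum of fcts and of forms}.
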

	
	\begin{proof} The last inequality follows from \autoref{Th:Spectrum of fcts and of forms}. 
		
		The proof of the first inequality basically reduces to an inequality between quadratic forms. Indeed, from \autoref{Th:Ricci tensor} and \autoref{Pr:Kato's inequality} we obtain, for every $\omega\in H^{1,2}(T^*\mms)$ with $\Vert \omega\Vert_{\Ell^2}=1$,
		\begin{align*}
		\int_\mms \big[\vert \rmd \omega\vert^2 + \vert\delta\omega\vert^2\big]\d\meas &\geq \int_\mms \big[\vert\nabla \omega^\sharp\vert^2 + K\, \vert \omega\vert^2\big]\d\meas\\
		&\geq \int_\mms \big[\vert\nabla \vert \omega\vert\vert^2 + K\, \vert \omega\vert^2\big]\d\meas \geq \inf\sigma(-\Delta+K),
		\end{align*}
		and we conclude by taking the infimum over $\omega$ as above.
	\end{proof}
	
	\begin{remark} In the setting of \autoref{Re:Variable}, without any change of the previous proof, under a variable, uniformly lower bounded lower Ricci bound $\smash{k\in\Ell_\loc^1(\mms)}$ for $(\mms,\met,\meas)$, one verifies that
		\begin{align*}
		\inf\sigma(-\Delta+k)\leq\inf(\Hodge).\tag*{\qedhere}
		\end{align*}
	\end{remark}
	
	\subsection{The spectrum in the compact case}\label{Sec:Spectral properties compact}
	
	Much more about $\sigma(\Hodge)$ can be said if $(\mms,\met,\meas)$ is a compact $\RCD^*(K,N)$ space. In this framework, adopted throughout this section, we prove that $\sigma(\Hodge)$ is discrete and only consists of eigenvalues, see \autoref{Pr:Eigenbasis}.  A closely related result is that the natural inclusion of $H^{1,2}(T^*\mms)$ into $\Ell^2(T^*\mms)$ is compact, see \autoref{Th:Compact embedding}. In turn, by abstract functional analysis, this follows if $\HHeat_t$ is a Hilbert--Schmidt operator on $\Ell^2(T^*\mms)$ for every $t>0$, which is the content of \autoref{Cor:Hilbert-Schmidt}. 
	
	Afterwards, we establish the boundedness of eigenforms for $\Hodge$ with an explicit growth rate for their $\Ell^\infty$-norms for positive eigenvalues, see \autoref{Pr:Harmonic forms bounded} and \autoref{Th:L infty estimates}. The entire discussion in this section heavily relies on the $\Ell^2$-$\Ell^\infty$-regularization property of $(\HHeat_t)_{t\geq 0}$ from \autoref{Th:Lp Linfty}.
	
	The simple proof of the subsequent lemma is taken from \cite[Subsec.~1.8.4]{arendt2006}.
	
	\begin{lemma} Suppose that $\sfS$ is a linear operator which maps $\Ell^2(T^*\mms)$ boundedly into $\Ell^\infty(T^*\mms)$. Then $\sfS$ is a Hilbert--Schmidt operator.
	\end{lemma}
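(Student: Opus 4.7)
The plan is to adapt to the module setting the classical scalar argument for bounded $\Ell^2\to\Ell^\infty$ operators on a finite measure space, which proceeds by pointwise Riesz representation. Let $C:=\Vert\sfS\Vert_{\Ell^2,\Ell^\infty}$ and fix a countable orthonormal basis $(\omega_i)_{i\in\N}$ of $\Ell^2(T^*\mms)$. By Parseval and Tonelli,
\[
\Vert\sfS\Vert_{\HS}^2 \,=\, \sum_{i=1}^\infty \Vert\sfS\omega_i\Vert_{\Ell^2}^2 \,=\, \int_\mms \sum_{i=1}^\infty \vert\sfS\omega_i\vert^2\,\d\meas,
\]
so it suffices to produce a uniform $\meas$-a.e.\ pointwise bound on $\sum_i\vert\sfS\omega_i\vert^2$.

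First I would exploit the operator norm estimate. For every finite $N$ and every $(a_i)_{i\le N}\in\Q^N$ one has
\[
\big\vert \sum\nolimits_{i\le N} a_i\,\sfS\omega_i \big\vert \,\leq\, C\,\Big(\sum\nolimits_i a_i^2\Big)^{1/2} \quad \meas\text{-a.e.}
\]
Unioning the countably many associated null sets yields a single $\meas$-null set $E$ outside of which this inequality holds for every rational finite tuple, and, by density, for every $(a_i)\in\ell^2$. Passing to the $\Ell^0$-cotangent module and using the standard disintegration of a separable Hilbert module into a measurable field of fibers $\{T_x^*\mms\}_{x\in\mms}$, this says that for every $x\notin E$ the linear map
\[
A_x\colon \ell^2 \longrightarrow T_x^*\mms, \qquad (a_i)\longmapsto \sum_i a_i\,(\sfS\omega_i)(x),
\]
is well-defined with $\Vert A_x\Vert_{\op}\leq C$.

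Here the $\RCD^*(K,N)$ hypothesis enters crucially: by the essential dimension theorem \cite{brue2020,mondino2019}, the fiber $T_x^*\mms$ is a Hilbert space of finite dimension $n:=\dim_{\met,\meas}\mms\leq N$ for $\meas$-a.e.\ $x$. Since any bounded linear map into a $d$-dimensional Hilbert space satisfies $\Vert A\Vert_{\HS}^2\leq d\,\Vert A\Vert_{\op}^2$, evaluating the Hilbert-Schmidt norm of $A_x$ on the standard basis of $\ell^2$ gives
\[
\sum_{i=1}^\infty \vert\sfS\omega_i\vert^2(x) \,=\, \Vert A_x\Vert_{\HS}^2 \,\leq\, n\,C^2 \quad\meas\text{-a.e.}
\]
Integrating this uniform bound over the compact space $\mms$ yields $\Vert\sfS\Vert_{\HS}^2\leq n\,C^2\,\meas[\mms]<\infty$.

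The main technical obstacle is the rigorous transfer from the $\Ell^\infty$-module inequality to the pointwise operator-norm bound on $A_x$: one must realize $\Ell^0(T^*\mms)$ as an $\Ell^2$-section space of a measurable field of Hilbert spaces, and invoke the finite essential dimension of the cotangent module on $\RCD^*(K,N)$ spaces, in order to convert the $L^\infty$-module operator norm bound into the Frobenius-type bound which is the vector-valued replacement of the scalar Parseval identity $\sum_i\vert Te_i(x)\vert^2 = \Vert k_x\Vert_{\Ell^2}^2\leq C^2$ underlying the argument of \cite{arendt2006}.
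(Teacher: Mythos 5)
Your argument is correct and in fact supplies a small but real fix to the paper's own proof. Both proofs start the same way: choose a countable dense subset of the finite-dimensional unit ball, unite the associated null sets, and obtain the pointwise operator-norm bound $\sup_{a\in C^d}\vert\sum_{i\leq d} a_i\,\sfS\omega_i\vert(x)\leq\Vert\sfS\Vert_{\Ell^2,\Ell^\infty}$ outside a null set. The paper then asserts the pointwise \emph{equality} $\sum_{i=1}^d\vert\sfS\omega_i\vert^2 = \sup_{a\in C^d}\vert\sum_i a_i\,\sfS\omega_i\vert^2$. This is the Cauchy--Schwarz saturation identity underlying Arendt's scalar $L^2\to L^\infty$ argument, where the fiber is one-dimensional and the operator norm and the Hilbert--Schmidt norm of the rank-one map $a\mapsto\sum a_i\sfS\omega_i(x)$ coincide; for $1$-forms the two sides are, respectively, the trace and the operator norm of the $d\times d$ Gram matrix of $(\sfS\omega_i(x))_{i\leq d}$, which differ in general by up to the fiber dimension --- already when $\sfS\omega_1(x)$ and $\sfS\omega_2(x)$ are orthonormal the ratio is $2$. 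You insert exactly the missing dimension factor: invoking the section's standing $\RCD^*(K,N)$ assumption and the essential dimension $n=\dim_{\met,\meas}\mms\leq\lfloor N\rfloor$ from \cite{brue2020,mondino2019}, together with the elementary bound $\Vert A\Vert_{\HS}^2\leq n\,\Vert A\Vert_{\op}^2$ for a bounded operator into an $n$-dimensional Hilbert space, you get the correct pointwise estimate $\sum_{i}\vert\sfS\omega_i\vert^2\leq n\,\Vert\sfS\Vert_{\Ell^2,\Ell^\infty}^2$ and hence $\Vert\sfS\Vert_{\HS}^2\leq n\,\Vert\sfS\Vert_{\Ell^2,\Ell^\infty}^2\,\meas[\mms]<\infty$. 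The paper's conclusion is the same since the missing constant is finite, but your intermediate step is the one that is actually true. As a side remark, the appeal to an abstract disintegration into a measurable field of fibers can be avoided: with $G(x)_{ij}:=\langle\sfS\omega_i,\sfS\omega_j\rangle(x)$ one has $\sum_i\vert\sfS\omega_i\vert^2(x)=\tr G(x)\leq\mathrm{rank}\,G(x)\,\Vert G(x)\Vert_{\op}\leq n\sup_{a\in C^d}\vert\sum_i a_i\sfS\omega_i\vert^2(x)$ directly in the $\Ell^\infty$-module language, where $\mathrm{rank}\,G(x)\leq n$ $\meas$-a.e.\ follows from the dimensional decomposition \cite[Sec.~1.4]{gigli2018} and \cite{giglipasqualetto2016,han2018}.
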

	
	\begin{proof} Let $(\omega_i)_{i\in\N}$ be any orthonormal basis of the separable Hilbert space $\Ell^2(T^*\mms)$. Given $d\in\N$, we denote by $\smash{B^d}$ the closed unit ball of $\R^d$. Let $\smash{C^d}$ be fixed countable and dense subset of $\smash{B^d}$.
		
		It easily follows from the boundedness of $\sfS$ that, for every $\smash{a \in C^d}$,
		\begin{align}\label{Eq:S bound}
		\Big\vert\sfS \Big[\sum_{i=1}^d a_i\,\omega_i\Big]\Big\vert \leq \Vert \sfS\Vert_{\Ell^2,\Ell^\infty}\quad\meas\text{-a.e.}
		\end{align}
		Hence, there exists an $\meas$-null set $P\subset\mms$ such that for every $x\in \mms\setminus P$, the inequality from \eqref{Eq:S bound} holds true for every $\smash{a\in C^d}$. It follows that
		\begin{align*}
		\sum_{i=1}^d \vert \sfS\omega_i\vert^2 = \sup\!\Big\lbrace\Big[\! \sum_{i=1}^d a_i\,\sfS\omega_i\Big]^2 : a\in C^d\Big\rbrace\leq \Vert\sfS\Vert_{\Ell^2,\Ell^\infty}^2\quad\text{on }\mms\setminus P.
		\end{align*}
		
		Integrating this inequality and using the arbitrariness of $d$ concludes the proof.
	\end{proof}
	
	\begin{corollary}\label{Cor:Hilbert-Schmidt} For every $t>0$, $\HHeat_t$ is a Hilbert--Schmidt operator on $\Ell^2(T^*\mms)$.
	\end{corollary}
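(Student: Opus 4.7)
The plan is to chain together the two results immediately preceding the corollary. By \autoref{Th:Lp Linfty}, since $(\mms,\met,\meas)$ is a compact $\RCD^*(K,N)$ space, the heat operator $\HHeat_t$ maps $\Ell^p(T^*\mms)$ boundedly into $\Ell^\infty(T^*\mms)$ for every $t>0$ and every $p\in[1,\infty]$; in particular, specializing to $p=2$, we obtain that $\HHeat_t\colon \Ell^2(T^*\mms)\to\Ell^\infty(T^*\mms)$ is bounded. The previous lemma then applies with $\sfS := \HHeat_t$ and yields directly that $\HHeat_t$ is a Hilbert--Schmidt operator on $\Ell^2(T^*\mms)$.

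So the proof is essentially a two-line invocation; there is no genuine obstacle. The only conceptual point to stress, if one wants to make the argument fully transparent, is that the boundedness from $\Ell^2$ into $\Ell^\infty$ asserted in \autoref{Th:Lp Linfty} is understood in the Banach sense (i.e.\ as an operator between the corresponding $\Ell^p$-normed $\Ell^\infty$-modules with their natural norms $\Vert\cdot\Vert_{\Ell^2}$ and $\Vert\cdot\Vert_{\Ell^\infty}$), which is exactly the hypothesis required by the previous lemma.

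If one prefers, one could also observe the semigroup refinement: writing $\HHeat_t = \HHeat_{t/2}\circ \HHeat_{t/2}$, the second factor maps $\Ell^2(T^*\mms)$ to $\Ell^\infty(T^*\mms)\subset\Ell^2(T^*\mms)$ (using compactness of $\mms$ for the last inclusion), while the first factor is bounded on $\Ell^2(T^*\mms)$; this gives an alternative way to recognize the Hilbert--Schmidt property, but it is not needed here since the direct route via the lemma is cleaner.
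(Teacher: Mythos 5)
Your main argument coincides with the paper's: \autoref{Th:Lp Linfty} (specialized to $p=2$) combined with the unlabeled lemma directly preceding \autoref{Cor:Hilbert-Schmidt} gives the result immediately, with the finiteness of $\meas[\mms]$ (coming from compactness) being what makes the final integration step in that lemma's proof produce a finite Hilbert--Schmidt norm. The ``semigroup refinement'' you mention in passing is, however, not a genuine alternative: factoring $\HHeat_t = \HHeat_{t/2}\circ\HHeat_{t/2}$ with one factor bounded from $\Ell^2(T^*\mms)$ into $\Ell^\infty(T^*\mms)\subset\Ell^2(T^*\mms)$ and the other bounded on $\Ell^2(T^*\mms)$ does not by itself yield the Hilbert--Schmidt property --- the inclusion $\Ell^\infty\hookrightarrow\Ell^2$ on a finite measure space is bounded but not even compact --- so one would still have to invoke the lemma on $\HHeat_{t/2}$, at which point the decomposition is superfluous; fortunately you explicitly discard this aside.
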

	
	In particular, $\HHeat_t$ is compact on $\Ell^2(T^*\mms)$ for every $t>0$. Employing standard functional analytic results, see e.g.~\cite[Cor.~1.5]{lenz2010}, this entails the following crucial Rellich-type theorem. (The reader is also invited to consult \cite[Thm.~6.1]{honda2020}, where the same result has independently been proven by different, more geometric means via $\delta$-splitting maps.) It implies in particular that $\Harm(T^*\mms)$ is a closed subspace of $\Ell^2(T^*\mms)$, hence the $\Ell^2$-orthogonal projection $\sfT$ onto $\Harm(T^*\mms)$ is well-defined. \autoref{Th:PC compact} is then easily argued by contradiction.
	
	\begin{theorem}[Compactness of $\Hodge^{-1}$]\label{Th:Compact embedding} Let $(\mms,\met,\meas)$ be a compact $\RCD^*(K,N)$ space with $K\in\R$ and $N\in (1,\infty)$. Then the natural inclusion of $H^{1,2}(T^*\mms)$ into $\Ell^2(T^*\mms)$ is compact.
	\end{theorem}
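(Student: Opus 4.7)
The plan is to deduce the Rellich-type theorem from the Hilbert--Schmidtness of the heat operator established in \autoref{Cor:Hilbert-Schmidt} via essentially abstract spectral-theoretic arguments. The core observation is that $H^{1,2}(T^*\mms)$ is, up to equivalence of norms, precisely the form domain of the nonnegative self-adjoint operator $\Hodge$ (with graph norm $(\Vert\cdot\Vert_{\Ell^2}^2+2\calE(\cdot))^{1/2}$ corresponding to $\Vert\cdot\Vert_{H^{1,2}}$, by \eqref{Eq:Hodge energy functional}), and for such an operator the compactness of the form-domain embedding is equivalent to the compactness of $\smash{\HHeat_t=\rme^{-t\Hodge}}$ for any (equivalently, every) $t>0$. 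The latter is immediate from \autoref{Cor:Hilbert-Schmidt}, since Hilbert--Schmidt operators on a separable Hilbert space are compact.

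If I wanted to make the argument self-contained rather than quoting the abstract equivalence, I would proceed as follows. Given a bounded sequence $(\omega_n)_{n\in\N}$ in $H^{1,2}(T^*\mms)$, I would split
\begin{align*}
\omega_n = (\omega_n-\HHeat_t\omega_n) + \HHeat_t\omega_n
\end{align*}
for any $t>0$. For the second summand, compactness of $\HHeat_t$ on $\Ell^2(T^*\mms)$ yields, after passing to a subsequence, a strongly $\Ell^2$-convergent sequence. For the first summand, spectral calculus applied to $\Hodge$ gives the bound
\begin{align*}
\Vert \omega-\HHeat_t\omega\Vert_{\Ell^2}^2 = \int_0^\infty (1-\rme^{-t\lambda})^2\,\rmd\mu_\omega(\lambda) \leq 2t\int_0^\infty\lambda\,\rmd\mu_\omega(\lambda) = 4t\,\calE(\omega),
\end{align*}
where I used $(1-\rme^{-t\lambda})^2\leq 1-\rme^{-2t\lambda}\leq 2t\lambda$ and $\mu_\omega$ is the scalar spectral measure of $\Hodge$ at $\omega$. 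Hence $\Vert\omega_n-\HHeat_t\omega_n\Vert_{\Ell^2}\to 0$ as $t\downarrow 0$, uniformly in $n$. A standard diagonal extraction along a sequence $t_j\downarrow 0$ then produces a subsequence of $(\omega_n)_{n\in\N}$ that is Cauchy in $\Ell^2(T^*\mms)$.

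I do not expect any serious obstacle here; the only substantive input is the Hilbert--Schmidt property of $\HHeat_t$, which is precisely what \autoref{Cor:Hilbert-Schmidt} supplies (and which in turn rests on the $\Ell^2$-$\Ell^\infty$-regularization \autoref{Th:Lp Linfty}, i.e., on the Hess--Schrader--Uhlenbrock inequality \autoref{Th:Kato-Simon} combined with the Gaussian heat kernel bound \eqref{Eq:Heat kernel bound}). Given how short the argument is after the preparatory work, I would simply invoke the abstract equivalence (as the paper does by citing Lenz's corollary) rather than reproducing the decomposition argument in full.
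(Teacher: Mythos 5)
Your proposal is correct and matches the paper's own argument: the paper likewise deduces compactness of the form-domain embedding from the Hilbert--Schmidt (hence compact) property of $\HHeat_t$ on $\Ell^2(T^*\mms)$ (\autoref{Cor:Hilbert-Schmidt}), citing the abstract equivalence from Lenz--Stollmann--Wingert. Your self-contained decomposition argument, with the spectral bound $\Vert\omega-\HHeat_t\omega\Vert_{\Ell^2}^2\leq 4t\,\calE(\omega)$ and the diagonal extraction, is an accurate unpacking of that black box and introduces no gaps.
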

	
	\begin{lemma}\label{Th:PC compact} There exists a constant $C <\infty$ such that for every $\omega\in\Ell^2(T^*\mms)$, 
		\begin{align}\label{Eq:Inequality Poincare}
		\Vert \omega- \sfT\omega\Vert_{\Ell^2}^2 \leq 2C\,\calE(\omega).
		\end{align}
	\end{lemma}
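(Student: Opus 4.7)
The plan is to argue by contradiction using the Rellich-type compactness theorem \autoref{Th:Compact embedding}, in the standard Rellich--Poincaré style.

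First, observe that the statement is invariant under the substitution $\omega \mapsto \omega - \sfT\omega$: since $\sfT$ is an orthogonal projection, the left-hand side is unchanged; since $\sfT\omega \in \Harm(T^*\mms)$ satisfies $\rmd\sfT\omega = 0$ and $\delta\sfT\omega = 0$, the Hodge energy is likewise unchanged. Moreover, if $\omega \notin H^{1,2}(T^*\mms)$, then $\calE(\omega) = \infty$ and \eqref{Eq:Inequality Poincare} is trivial. Hence it suffices to exhibit a constant $C < \infty$ such that $\Vert\omega\Vert_{\Ell^2}^2 \leq 2C\,\calE(\omega)$ for every $\omega \in H^{1,2}(T^*\mms) \cap \Harm(T^*\mms)^\perp$.

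Assume no such $C$ exists. Then we can select a sequence $(\omega_n)_{n\in\N}$ in $H^{1,2}(T^*\mms) \cap \Harm(T^*\mms)^\perp$ with $\Vert\omega_n\Vert_{\Ell^2} = 1$ and $\calE(\omega_n) < 1/(2n)$. In particular $(\Vert\omega_n\Vert_{H^{1,2}})_{n\in\N}$ is uniformly bounded. By \autoref{Th:Compact embedding}, after passing to a subsequence, $\omega_n$ converges strongly in $\Ell^2(T^*\mms)$ to some $\omega$ with $\Vert\omega\Vert_{\Ell^2} = 1$. Since $\Harm(T^*\mms)$, and therefore its $\Ell^2$-orthogonal complement, is closed in $\Ell^2(T^*\mms)$ (which follows from the closedness of $\rmd$ and $\delta$), we have $\omega \in \Harm(T^*\mms)^\perp$.

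It remains to show that $\omega \in \Harm(T^*\mms)$, which together with the above forces $\omega = 0$ and yields the sought contradiction with $\Vert\omega\Vert_{\Ell^2} = 1$. Since $\rmd\omega_n \to 0$ and $\delta\omega_n \to 0$ in $\Ell^2$, the closedness of the operators $\rmd$ and $\delta$ on $\Ell^2(T^*\mms)$ guarantees $\omega \in \Dom(\rmd) \cap \Dom(\delta)$ with $\rmd\omega = 0$ and $\delta\omega = 0$. Extracting a further subsequence, the $H^{1,2}$-boundedness of $(\omega_n)_{n\in\N}$ and the Hilbert-space structure of $H^{1,2}(T^*\mms)$ produce a weak limit $\tilde\omega \in H^{1,2}(T^*\mms)$; this weak limit necessarily coincides with the strong $\Ell^2$-limit $\omega$, placing $\omega$ itself in $H^{1,2}(T^*\mms)$ and hence in $\Harm(T^*\mms)$. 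The only subtle point in the argument is certifying that the $\Ell^2$-limit belongs to $H^{1,2}(T^*\mms)$ and not merely to $\Dom(\rmd) \cap \Dom(\delta)$, which is secured by the preceding weak compactness argument in the Hilbert space $H^{1,2}(T^*\mms)$.
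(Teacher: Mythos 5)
Your proof is correct and follows the same contradiction-via-compact-embedding strategy that the paper indicates ("\autoref{Th:PC compact} is then easily argued by contradiction" immediately after \autoref{Th:Compact embedding}). The reduction to $\omega\perp\Harm(T^*\mms)$, the normalized sequence with vanishing energy, strong $\Ell^2$-extraction via compactness, and the careful weak-$H^{1,2}$ subsequence step to certify the limit lies in $H^{1,2}(T^*\mms)$ (so that it is genuinely in $\Harm(T^*\mms)$, not merely in $\ker\rmd\cap\ker\delta$) are all sound; one small imprecision is the parenthetical claim that closedness of $\Harm(T^*\mms)^\perp$ "follows from the closedness of $\rmd$ and $\delta$" — in fact orthogonal complements in a Hilbert space are automatically closed, and closedness of $\Harm(T^*\mms)$ itself is supplied by \autoref{Th:Compact embedding} as the paper notes, but this does not affect the argument.
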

	
		
	
	\begin{remark}\label{Re:Poincaré discussion} \autoref{Th:PC compact} can be seen as a qualitative global Poincaré inequality for $\Hodge$. 		In contrast to logarithmic Sobolev inequalities, we did not derive local or global Poincaré inequalities for $1$-forms from the corresponding functional estimates. Combining \autoref{Pr:Kato's inequality} with \cite[Thm.~1.1, Thm.~1.2]{rajala2012} or \cite[Thm.~30.24]{villani2009}, this would be possible to some extent.
		
		Paying the price of a less explicit constant, the point is however that the terms $\smash{\int_{B_r(x)} f\d\meas}$ or $\smash{\int_\mms f\d\meas}$ if $\meas[\mms] < \infty$, respectively, appearing in the functional versions are the $\Ell^2$-orthogonal projections of $f$ onto the space of harmonic functions on the respective $\Ell^2$-spaces, while their $1$-form counterparts $\smash{\int_{B_r(x)}\vert\omega\vert\d\meas}$ or $\smash{\int_\mms\vert\omega\vert\d\meas}$ appearing in the derived estimates arguing as for \autoref{Th:Log Sobolev for RCD} would clearly lack this interpretation.
	\end{remark}
	
	\begin{corollary}\label{Cor:Eigenvalue Cor} Let $C > 0$ be any constant for which \eqref{Eq:Inequality Poincare} holds, and let $\lambda\in \sigma(\Hodge)\setminus\{0\}$. Then $\lambda$ is an eigenvalue of $\Hodge$ and satisfies the inequality
		\begin{align*}
		\lambda \geq 1/C.
		\end{align*}
	\end{corollary}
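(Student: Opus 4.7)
My plan has two independent parts: first showing that every nonzero element of $\sigma(\Hodge)$ is actually an eigenvalue (using the Rellich-type theorem from \autoref{Th:Compact embedding}), and then extracting the quantitative bound $\lambda \ge 1/C$ from the Poincar\'e inequality \eqref{Eq:Inequality Poincare} via a direct test against an eigenform.

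For the first part, I would argue that $\Hodge$ has compact resolvent on $\Ell^2(T^*\mms)$. Indeed, $\Hodge$ is a nonnegative self-adjoint operator whose associated quadratic form is $2\calE$, with form domain $H^{1,2}(T^*\mms)$ (by \eqref{Eq:Hodge energy functional}). Since $H^{1,2}(T^*\mms)$ embeds compactly into $\Ell^2(T^*\mms)$ by \autoref{Th:Compact embedding}, the standard functional analytic criterion (see e.g.\ \cite[Thm.\ XIII.64]{reed1975}) yields that $(\Hodge+1)^{-1}\colon\Ell^2(T^*\mms)\to\Ell^2(T^*\mms)$ is compact. Hence $\sigma(\Hodge)$ is discrete and consists of eigenvalues of finite multiplicity accumulating only at infinity; in particular, every $\lambda\in\sigma(\Hodge)\setminus\{0\}$ is an eigenvalue of $\Hodge$.

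For the second part, let $\lambda\in\sigma(\Hodge)\setminus\{0\}$ and pick a nonzero eigenform $\omega\in\Dom(\Hodge)\subset H^{1,2}(T^*\mms)$ with $\Hodge\omega=\lambda\omega$. Since $\Harm(T^*\mms)$ is the eigenspace of $\Hodge$ corresponding to the eigenvalue $0$ and eigenspaces of $\Hodge$ for distinct eigenvalues are $\Ell^2$-orthogonal, we have $\sfT\omega=0$. Substituting into \eqref{Eq:Inequality Poincare} gives
\begin{align*}
\Vert\omega\Vert_{\Ell^2}^2 \;=\; \Vert\omega-\sfT\omega\Vert_{\Ell^2}^2 \;\leq\; 2C\,\calE(\omega).
\end{align*}
On the other hand, the defining identity of $\Hodge$ applied with $\eta:=\omega$ (or equivalently integration by parts for $\rmd$ and $\delta$) yields
\begin{align*}
2\,\calE(\omega) \;=\; \int_\mms \big[\vert\rmd\omega\vert^2 + \vert\delta\omega\vert^2\big]\d\meas \;=\; \int_\mms \langle\omega,\Hodge\omega\rangle\d\meas \;=\; \lambda\,\Vert\omega\Vert_{\Ell^2}^2.
\end{align*}
Combining the two displays gives $\Vert\omega\Vert_{\Ell^2}^2\leq C\lambda\Vert\omega\Vert_{\Ell^2}^2$, and dividing by $\Vert\omega\Vert_{\Ell^2}^2>0$ yields $\lambda\geq 1/C$.

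No step in this plan looks delicate: the spectral reduction is a textbook consequence of \autoref{Th:Compact embedding}, and the eigenvalue bound is just the Rayleigh quotient inequality on the orthogonal complement of $\Harm(T^*\mms)$ dictated by \autoref{Th:PC compact}. The mildly nontrivial point to verify carefully is that the form domain of $\Hodge$ coincides with $H^{1,2}(T^*\mms)$, which is built into the definition of $\calE$ in \eqref{Eq:Hodge energy functional}.
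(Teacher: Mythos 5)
Your proof is correct, and the second part (deriving $\lambda \ge 1/C$ by testing the eigenform $\omega$ against the Poincar\'e inequality using $\sfT\omega=0$ and $2\calE(\omega)=\lambda\Vert\omega\Vert_{\Ell^2}^2$) is exactly what the paper does. The first part, however, takes a genuinely different and more streamlined route: you argue directly that $\Hodge$ has compact resolvent since its form domain $H^{1,2}(T^*\mms)$ embeds compactly into $\Ell^2(T^*\mms)$ by \autoref{Th:Compact embedding}, and then invoke the classical fact that a self-adjoint operator with compact resolvent has purely discrete spectrum consisting of eigenvalues. The paper instead produces, via Weyl's criterion, a normalized almost-eigensequence $(\omega_n)$ in $\Dom(\Hodge)$, shows it is bounded in $H^{1,2}(T^*\mms)$, extracts a subsequence converging weakly in $H^{1,2}(T^*\mms)$ and strongly in $\Ell^2(T^*\mms)$ via the compact embedding, and then verifies by testing against $\rho\in\Test(T^*\mms)$ that the limit is an eigenform. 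Your abstract argument is shorter and arguably cleaner; the paper's is more elementary and keeps the mechanism visible (which is perhaps why the paper later derives the full discreteness of $\sigma(\Hodge)$ in \autoref{Pr:Eigenbasis} only \emph{after} this corollary, citing it as an ingredient — your route essentially proves that discreteness statement first and deduces the corollary from it, which is perfectly fine and not circular). One small imprecision: the criterion you invoke (compact form-domain embedding implies compact resolvent) is in Reed--Simon Vol.~IV, Thm.~XIII.64, whereas the paper's \cite{reed1975} refers to Vol.~II; the result is standard, so this is merely a matter of citing the correct volume.
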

	
	\begin{proof} Let $\lambda \in\sigma(\Hodge)\setminus\{0\}$. As in the proof of \autoref{Th:Spectrum of fcts and of forms}, by Weyl's criterion there exists a sequence $(\omega_n)_{n\in\N}$ in $\Dom(\Hodge)$ such that, for every $n\in\N$,
		\begin{align}\label{Eq:Weyl sequence cpt x}
		\begin{split}
		\Vert\omega_n\Vert_{\Ell^2} &= 1,\\
		\Vert \Hodge \omega_n - \lambda\,\omega_n\Vert_{\Ell^2} &\leq 2^{-n}.
		\end{split}
		\end{align}
		
		To prove that $\lambda$ is an eigenvalue, observe that $(\Vert\omega_n\Vert_{H^{1,2}})_{n\in\N}$ is uniformly bounded by \eqref{Eq:Weyl sequence cpt x}. According to \autoref{Th:Compact embedding}, a non-relabeled subsequence of $(\omega_n)_{n\in\N}$ converges weakly in $H^{1,2}(T^*\mms)$ and strongly in $\Ell^2(T^*\mms)$ to some $\omega\in H^{1,2}(T^*\mms)$ with $\Vert\omega\Vert_{\Ell^2}=1$. Given any $\rho\in\Test(T^*\mms)$, since
		\begin{align*}
		\lambda\int_\mms \langle\rho,\omega\rangle\d\meas &= \lim_{n\to\infty} \int_\mms \langle\rho,\Hodge\omega_n\rangle\d\meas = \lim_{n\to\infty} \int_\mms \big[\langle\rmd\rho,\rmd \omega_n\rangle + \delta\rho\,\delta\omega_n\big]\d\meas\\
		&= \int_\mms \big[\langle\rmd\rho,\rmd\omega\rangle + \delta\rho\,\delta\omega\big]\d\meas,
		\end{align*}
		we also obtain that $\omega\in\Dom(\Hodge)$ with $\Hodge\omega = \lambda\,\omega$, which is the claim.
		
		The bound $\lambda\geq 1/C$ then follows by inserting $\omega$ into \eqref{Eq:Inequality Poincare}, recalling that eigenspaces w.r.t.~different eigenvalues are orthogonal in $\Ell^2(T^*\mms)$.
	\end{proof}
	
	\begin{remark} The nonsmooth analogue of Hodge's theorem \cite[Thm.~3.5.15]{gigli2018} gives a one-to-one correspondence between the multiplicity of harmonic $1$-forms in terms of the dimension of the first de Rham cohomology group $H_{\mathrm{dR}}^1(\mms)$ as defined in \cite[Def.~3.5.8]{gigli2018}.
	\end{remark}
	
	In view of \autoref{Cor:Eigenvalue Cor}, given $\lambda \geq 0$ we denote the \emph{eigenspace of $\Hodge$ w.r.t.~$\lambda$} by
	\begin{align*}
	\nlE_\lambda(\Hodge) := \big\lbrace \omega\in\Dom(\Hodge) : \Hodge\omega = \lambda\,\omega\big\rbrace.
	\end{align*}
	The proofs of the following basic results are standard once having \autoref{Th:Compact embedding} and \autoref{Cor:Eigenvalue Cor} at our disposal. We refer to \cite[Thm.~4.3]{honda2017} as well as  \cite[Lem.~VI.3.6, Prop.~VI.3.8]{sakai1996} for similar statements in the smooth setting and for comprehensive proofs. 
 	
	\begin{theorem}\label{Pr:Eigenbasis} The spectrum $\sigma(\Hodge)$ has the following properties.
		\begin{enumerate}[label=\textnormal{(\roman*)}]
			\item \textnormal{\textsc{Finite dimensionality.}} For every $\lambda \geq 0$, the vector space dimension of $\nlE_\lambda(\Hodge)$ is finite.
			\item \textnormal{\textsc{Discreteness and unboundedness.}} The spectrum $\sigma(\Hodge)$ is discrete \textnormal{(}i.e.~for every $\lambda\in \sigma(\Hodge)$ there exists some $r>0$ such that $(\lambda-r,\lambda+r)\cap \sigma(\Hodge) = \{\lambda\}$\textnormal{)} and unbounded.
			\item \textnormal{\textsc{Variational principle.}} Let $(\lambda_i)_{i\in\N}$ be an increasing enumeration of the eigenvalues of $\Hodge$ counted with multiplicities. Let $\S$ denote the unit sphere in $\Ell^2(T^*\mms)$. Then for every $i\in\N$,
			\begin{align*}
			\lambda_i = \inf\!\Big\lbrace\sup_{\omega\in E\cap\S}2\,\calE(\omega) : E\subset H^{1,2}(T^*\mms)\text{ subspace with }\dim E = i\Big\rbrace.
			\end{align*}
			\item \textnormal{\textsc{Orthonormal eigenbasis.}} The direct sum
			\begin{align*}
			\nlE(\Hodge) := \bigoplus_{\lambda \in \sigma(\Hodge)} \nlE_\lambda(\Hodge)
			\end{align*}
			is dense both in $H^{1,2}(T^*\mms)$ and $\Ell^2(T^*\mms)$, endowed with their respective norms. In particular, there exists a countable orthonormal basis $(\omega_i)_{i\in\N}$ of $\Ell^2(T^*\mms)$ such that, for every $i\in\N$, we have $\omega_i \in \nlE_{\lambda_i}(\Hodge)$ for some  $\lambda_i \in \sigma(\Hodge)$.
		\end{enumerate}
	\end{theorem}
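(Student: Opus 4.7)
}
The whole argument runs on the standard package of spectral theory for a nonnegative self-adjoint operator with compact resolvent, applied to $\Hodge$. The essential input has already been collected just above the statement: by \autoref{Th:Compact embedding} the form domain $H^{1,2}(T^*\mms)$ embeds compactly into $\Ell^2(T^*\mms)$, and equivalently by \autoref{Cor:Hilbert-Schmidt} the heat operator $\HHeat_t$ is compact on $\Ell^2(T^*\mms)$ for every $t>0$. Since $\Hodge$ is the nonnegative self-adjoint operator associated with the closed symmetric form $2\calE$ on $H^{1,2}(T^*\mms)$, the resolvent $(\Hodge+1)^{-1}$ factors as $\Ell^2(T^*\mms)\to H^{1,2}(T^*\mms)\to \Ell^2(T^*\mms)$, the first arrow being bounded and the second compact; hence $(\Hodge+1)^{-1}$ is a compact self-adjoint operator on $\Ell^2(T^*\mms)$.

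For (i) and the discreteness part of (ii), I apply the Hilbert--Schmidt/spectral theorem to $(\Hodge+1)^{-1}$: its spectrum is a countable sequence of eigenvalues in $(0,1]$ accumulating only at $0$, each of finite multiplicity, and there is an orthonormal basis of $\Ell^2(T^*\mms)$ of associated eigenvectors. Pulling back through the spectral calculus (an eigenvalue $\mu\in(0,1]$ of $(\Hodge+1)^{-1}$ corresponds to the eigenvalue $\mu^{-1}-1$ of $\Hodge$ on the same eigenspace), one obtains that $\sigma(\Hodge)$ is a discrete subset of $[0,\infty)$ consisting only of eigenvalues of finite multiplicity, with no finite accumulation point. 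Unboundedness of $\sigma(\Hodge)$ then follows from \autoref{Th:Spectrum of fcts and of forms}: $\sigma(-\Delta)\setminus\{0\}\subset\sigma(\Hodge)$, and on a compact $\RCD^*(K,N)$ space $\sigma(-\Delta)$ is well-known to be unbounded (for instance by the classical compact embedding $W^{1,2}(\mms)\hookrightarrow\Ell^2(\mms)$ together with the same spectral theorem applied to $(-\Delta+1)^{-1}$).

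For (iii), this is Courant--Fischer in its abstract form: once one knows that $\Hodge$ is self-adjoint, nonnegative, with compact resolvent and form $2\calE$ on form domain $H^{1,2}(T^*\mms)$, the min-max formula
\[
\lambda_i = \inf\!\big\lbrace\sup\{2\,\calE(\omega) : \omega\in E\cap\S\} : E\subset H^{1,2}(T^*\mms) \text{ subspace},\ \dim E = i\big\rbrace
\]
is standard (the usual proof, by a dimension count against the span of the first $i-1$ eigenfunctions and a density-of-$H^{1,2}$ argument, goes through verbatim). For (iv), the orthonormal eigenbasis is the one delivered by the spectral theorem above; since each $\omega_i$ is an eigenform with eigenvalue $\lambda_i$ and different eigenspaces are $\calE$-orthogonal, the span $\nlE(\Hodge)$ is $\Ell^2$-dense by construction. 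Density in $H^{1,2}(T^*\mms)$ amounts to checking that any $\eta\in H^{1,2}(T^*\mms)$ orthogonal (with respect to $\langle\cdot,\cdot\rangle_{\Ell^2}+2\calE(\cdot,\cdot)$) to every $\omega_i$ vanishes, and this reduces via the eigenvalue relation to $\Ell^2$-orthogonality to every $\omega_i$, hence to $\eta=0$.

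The only mildly delicate step, and the one I would write out carefully, is the passage from "eigenvector of $(\Hodge+1)^{-1}$" to "eigenvector of $\Hodge$" with the right finite multiplicity, and the verification that the decomposition inherited from $(\Hodge+1)^{-1}$ refines into $\calE$-orthogonal pieces realizing the min-max formula; both follow from the commutation of spectral projections with $\Hodge$ and the finite-dimensionality of each $\nlE_\lambda(\Hodge)$. Everything else is routine bookkeeping with the spectral theorem and causes no genuine obstacle.
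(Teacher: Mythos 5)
Your proposal is correct and follows precisely the route the paper intends: the paper states only that the results "are standard once having \autoref{Th:Compact embedding} and \autoref{Cor:Eigenvalue Cor} at our disposal" and cites external references, and your plan — compactness of $(\Hodge+1)^{-1}$ from \autoref{Th:Compact embedding}, then the spectral theorem for compact self-adjoint operators plus Courant--Fischer — is exactly that standard package made explicit. Your use of \autoref{Th:Spectrum of fcts and of forms} to get unboundedness is a fine variant (the paper implicitly gets it from the same compactness since $\Ell^2(T^*\mms)$ is infinite dimensional), and the reduction of $H^{1,2}$-density to $\Ell^2$-density via the eigenvalue relation is handled correctly.
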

	
	Since $\omega = \HHeat_1\omega$ for every $\omega\in\Harm(T^*\mms)$, \autoref{Th:Lp Linfty}
	immediately provides \autoref{Pr:Harmonic forms bounded} below. An argument as in the proof of \autoref{Th:Lp Linfty} with a finer estimation then yields \autoref{Th:L infty estimates}. For similar statements, see \cite[Prop.~7.1]{ambrosio2018} for functions --- whose proof is adopted in our approach --- and \cite[Prop.~4.14]{honda2017} for arbitrary tensor fields in the Ricci limit framework.
	
	\begin{corollary}\label{Pr:Harmonic forms bounded} We have $\Harm(T^*\mms)\subset \Ell^\infty(T^*\mms)$.
	\end{corollary}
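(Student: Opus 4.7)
The plan is essentially to follow the hint given just before the statement: combine the eigenform interpretation of a harmonic form with the $\Ell^p$–$\Ell^\infty$ regularization already proven in \autoref{Th:Lp Linfty}. I work inside the standing hypothesis of this subsection, namely that $(\mms,\met,\meas)$ is a compact $\RCD^*(K,N)$ space.

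First, I would fix $\omega\in\Harm(T^*\mms)$. By definition this means $\omega\in\Dom(\Hodge)$ with $\Hodge\omega=0$, and in particular $\omega\in H^{1,2}(T^*\mms)\subset \Ell^2(T^*\mms)$. The Kolmogorov forward equation (property \ref{La:D} of \autoref{Def:Heat flow}) then gives $\frac{\rmd}{\rmd t}\HHeat_t\omega = -\Hodge\HHeat_t\omega$, and since $\Hodge$ commutes with $\HHeat_t$ on its domain by item (ii) of \autoref{Th:L^2-contractivity}, one sees $\Hodge\HHeat_t\omega = \HHeat_t\Hodge\omega = 0$ for every $t\geq 0$. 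Hence the map $t\mapsto \HHeat_t\omega$ is constant in $\Ell^2(T^*\mms)$, and the initial condition \ref{La:A} yields $\HHeat_t\omega=\omega$ for all $t\geq 0$; specializing to $t=1$ gives $\omega=\HHeat_1\omega$.

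Next, I invoke \autoref{Th:Lp Linfty} with $p=2$ and $t=1$: the operator $\HHeat_1$ is bounded from $\Ell^2(T^*\mms)$ into $\Ell^\infty(T^*\mms)$. Applying this to $\omega$ yields $\omega=\HHeat_1\omega\in\Ell^\infty(T^*\mms)$, with the quantitative bound
\[
\Vert\omega\Vert_{\Ell^\infty} = \Vert\HHeat_1\omega\Vert_{\Ell^\infty}\leq \Vert\HHeat_1\Vert_{\Ell^2,\Ell^\infty}\,\Vert\omega\Vert_{\Ell^2}.
\]
This is the assertion. Since the whole argument rides on the already proven \autoref{Th:Lp Linfty}, there is no real obstacle; the only small point to check is that harmonic forms are genuinely fixed points of $(\HHeat_t)_{t\geq 0}$, which is the first paragraph above.
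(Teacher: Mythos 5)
Your proof is correct and is exactly the argument the paper has in mind: the paper states, in the sentence immediately preceding the corollary, that ``Since $\omega = \HHeat_1\omega$ for every $\omega\in\Harm(T^*\mms)$, \autoref{Th:Lp Linfty} immediately provides \autoref{Pr:Harmonic forms bounded}.'' You have simply spelled out the (routine but worth verifying) fact that harmonic forms are fixed points of $(\HHeat_t)_{t\geq 0}$, via the Kolmogorov forward equation and commutation of $\Hodge$ with $\HHeat_t$, which the paper leaves implicit.
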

	
	\begin{theorem}\label{Th:L infty estimates} Let $(\mms,\met,\meas)$ be a compact $\RCD^*(K,N)$ space for some $K\in\R$ and $N\in (1,\infty)$. Let $D>0$ obey $\diam\mms \leq D$. Assume that $\omega\in \Dom(\Hodge)$ is an eigenform with eigenvalue $\lambda \in [D^{-2},\infty)$ and $\Vert \omega\Vert_{\Ell^2}=1$. Then there exists a constant $C<\infty$ depending only on $K$, $N$ and $D$ such that
		\begin{align*}
		\Vert\omega\Vert_{\Ell^\infty}\leq C\,\lambda^{N/4}.
		\end{align*}
	\end{theorem}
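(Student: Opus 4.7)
The plan is to imitate the functional argument used in the proof of \autoref{Th:Lp Linfty}, but with a sharper tracking of all constants so as to retain the $\lambda^{N/4}$-growth. The starting point is that since $\omega$ is an eigenform of $\Hodge$ with eigenvalue $\lambda$, one has $\HHeat_t\omega = \rme^{-\lambda t}\,\omega$ for every $t\geq 0$, and therefore by \autoref{Th:Kato-Simon} (Hess--Schrader--Uhlenbrock),
\begin{align*}
\vert\omega\vert = \rme^{\lambda t}\,\vert\HHeat_t\omega\vert \leq \rme^{(\lambda-K)t}\,\ChHeat_t\vert\omega\vert\quad\meas\text{-a.e.}
\end{align*}
for every $t>0$.

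Next I would use the functional heat kernel $\sfp$ together with the $\Ell^2$-normalization $\Vert\omega\Vert_{\Ell^2}=1$ and Cauchy--Schwarz plus the semigroup identity $\int_\mms \sfp_t(x,y)^2\d\meas(y) = \sfp_{2t}(x,x)$ to obtain, $\meas$-a.e.~on $\mms$,
\begin{align*}
\ChHeat_t\vert\omega\vert(x) = \int_\mms \sfp_t(x,y)\,\vert\omega\vert(y)\d\meas(y) \leq \sqrt{\sfp_{2t}(x,x)}.
\end{align*}
Combining this with the previous bound, squaring, and appealing to the dimensional heat kernel estimate \eqref{Eq:Heat kernel bound} yields, with constants $C_3,C_4>1$ depending only on $K$ and $N$,
\begin{align*}
\vert\omega\vert^2(x) \leq \rme^{2(\lambda-K)t}\,\sfp_{2t}(x,x) \leq C_3\,\rme^{2(\lambda - K + C_4)t}\,\meas\big[B_{\sqrt{2t}}(x)\big]^{-1}.
\end{align*}

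The final step is to choose $t := 1/\lambda$. Since $\lambda \geq D^{-2}$, one has $\sqrt{2t} \leq \sqrt{2}\,D$, so the Bishop--Gromov comparison \eqref{Eq:Bishop-Gromov} applied with $R = D$ and $r = \sqrt{2t}$ (using that $B_D(x) = \mms$ by the diameter bound) gives a constant $C'$ depending only on $K$, $N$, $D$ with
\begin{align*}
\meas\big[B_{\sqrt{2t}}(x)\big]^{-1} \leq C'\,(2t)^{-N/2}\,D^N\,\meas[\mms]^{-1} = C'\,D^N\,\meas[\mms]^{-1}\,2^{-N/2}\,\lambda^{N/2}.
\end{align*}
The exponential prefactor $\rme^{2(\lambda - K + C_4)/\lambda}\leq \rme^{2}\,\rme^{2(C_4 - K)D^2}$ is bounded purely in terms of $K$, $N$, $D$, so taking square roots yields $\Vert\omega\Vert_{\Ell^\infty}\leq C\,\lambda^{N/4}$ with $C$ depending only on $K$, $N$, $D$ (up to the harmless $\meas[\mms]^{-1/2}$-factor, which is consistent with the $\Ell^2$-normalization).

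The only nonroutine point is the sharp $\lambda$-dependence: one has to pick $t\sim 1/\lambda$ precisely to match the off-diagonal Gaussian decay with the on-diagonal volume factor produced by Bishop--Gromov, and to verify that the $\lambda \geq D^{-2}$ hypothesis is exactly what is needed to stay inside the range of validity of the local doubling estimate \eqref{Eq:Bishop-Gromov}. Beyond this, the argument is a bookkeeping exercise combining \autoref{Th:Kato-Simon}, the Gaussian kernel bound \eqref{Eq:Heat kernel bound}, and \eqref{Eq:Bishop-Gromov}.
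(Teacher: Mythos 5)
Your proof is correct and follows essentially the same route as the paper's: exploit the eigenform identity $\HHeat_t\omega=\rme^{-\lambda t}\omega$, apply Hess--Schrader--Uhlenbrock, estimate $\ChHeat_t\vert\omega\vert$ by Cauchy--Schwarz against the functional heat kernel using $\Vert\omega\Vert_{\Ell^2}=1$, invoke the Gaussian heat kernel bound and Bishop--Gromov, and conclude with the choice $t=1/\lambda$ together with $\lambda\geq D^{-2}$ to tame the exponential prefactor. The only cosmetic deviation is that you first pass through the on-diagonal value $\sfp_{2t}(\cdot,\cdot)$ via Chapman--Kolmogorov before applying the Gaussian bound, while the paper keeps $\big[\int_\mms\sfp_t^2(\cdot,y)\d\meas(y)\big]^{1/2}$ and invokes \cite[Prop.~7.1]{ambrosio2018} for the resulting volume--Gaussian integral (note also that since $\sqrt{2t}$ can slightly exceed $D$ at the endpoint $\lambda=D^{-2}$, one should observe that $B_{\sqrt{2t}}(x)=\mms$ in that regime before citing Bishop--Gromov, but this is immediate).
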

	
	\begin{proof} Since $\omega \in \nlE_\lambda(\Hodge)$, it follows that $\HHeat_t\omega = \rme^{-\lambda t}\omega$ for every $t\geq 0$. Thus, for $t\in (0, D^2]$ to be determined later, \autoref{Th:Kato-Simon} and then \eqref{Eq:Heat kernel bound} for $\varepsilon := 1$ yield the existence of constants $C_1,C_2 < \infty$ depending only on $K$ and $N$ such that
		\begin{align}\label{Eq:Huu}
		\vert\omega\vert &= \rme^{\lambda t}\,\vert\HHeat_t\omega\vert \leq \rme^{(\lambda + K^-)t}\int_\mms\sfp_t(\cdot,y)\,\vert\omega\vert(y)\d\meas(y)\nonumber\\
		&\leq \rme^{(\lambda + K^-)t}\,\Big[\!\int_\mms\sfp_t^2(\cdot,y)\d\meas(y)\Big]^{1/2}\nonumber\\
		&\leq C_1\,\rme^{(\lambda + K^-+C_2)t}\,\meas\big[B_{\!\sqrt{t}}(\cdot)\big]^{-1}\,\Big[\!\int_\mms \rme^{-2\met^2(\cdot,y)/5t}\d\meas(y) \Big]^{1/2}\quad\meas\text{-a.e.}
		\end{align}
		
		Arguing exactly as in the proof of \cite[Prop.~7.1]{ambrosio2018}, using \eqref{Eq:Bishop-Gromov}, under the given assumptions we find a constant $C<\infty$ depending only on $K$, $N$ and $D$ such that
		\begin{align*}
		\meas\big[B_{\!\sqrt{t}}(\cdot)\big]^{-1}\,\Big[\!\int_\mms \rme^{-2\met^2(\cdot,y)/5t}\d\meas(y) \Big]^{1/2} \leq C\,\Big(\frac{D}{\sqrt{t}}\Big)^{N/2}\quad\meas\text{-a.e.}
		\end{align*}
		The choice of $t:= 1/\lambda$ gives the desired estimate.
	\end{proof}
	
	\subsection[Independence of the $\Ell^p$-spectrum on $p$]{Independence of the $\Ell^p$-spectrum on $p$}\label{Sec:Independence spectrum}
	
	In this section, fix an $\RCD^*(K,N)$ space $(\mms,\met,\meas)$, where $K\in\R$ and $N\in (1,\infty)$. Under a volume growth assumption stated in \autoref{Def:Unif sub int}, following \cite{charalambous2005, hempel1986, sturm1993} we show that the $\Ell^p$-spectrum of $\Hodge$ is independent of $p\in[1,\infty]$, see \autoref{Th:Lp independence theorem} below. 
	
	To keep the presentation clear, in this section we denote by $\Hodge_{2} := \Hodge$ the Hodge Laplacian acting on $\Ell^2(T^*\mms)$ and by $(\HHeat_{2,t})_{t\geq 0}$ the associated semigroup $\HHeat_{2,t} := \HHeat_t$. Recalling \autoref{Cor:Extension}, by $(\HHeat_{p,t})_{t\geq 0}$ we denote the strongly continuous extension of $(\HHeat_{2,t})_{t\geq 0}$ to $\Ell^p(T^*\mms)$ for every $p\in [1,\infty)$. Let $\Hodge_{p}$ be the infinitesimal generator of $(\HHeat_{p,t})_{t\geq 0}$. We also define $\HHeat_{\infty,t}$ and $\Hodge_{\infty}$ on $\Ell^\infty(T^*\mms)$ as the adjoints of $\HHeat_{1,t}$ and $\Hodge_{1}$, respectively.
	
	Given any $p\in [1,\infty]$ and $n\in\N$, by \cite[Thm.~IX.4.1, Cor.~IX.4.1]{yosida1980} we know that, for every $\xi\in \rho(\Hodge_p)$ with $\Re\xi < K^-$, we have
	\begin{align}\label{Eq:Resolvent formula with heat flow}
	(\Hodge_{p}-\xi)^{-n} &= \frac{1}{(n-1)!}\int_0^\infty \rme^{\xi t}\,t^{n-1}\,\HHeat_{p,t}\d t.
	\end{align}
	
	\begin{definition}\label{Def:Unif sub int} We say that the reference measure $\meas$ on $(\mms,\met)$ is \emph{uniformly subexponentially integrable} if for every $\varepsilon>0$, we have
		\begin{align*}
		\sup_{x\in\mms} \int_\mms\rme^{-\varepsilon\met(x,y)}\,\meas[B_1(x)]^{-1/2}\,\meas[B_1(y)]^{-1/2}\d\meas(y) < \infty.
		\end{align*}
	\end{definition}
	
	\begin{remark}\label{Re:Suff cond Exp gr} By the same argument as for \cite[Prop.~1]{sturm1993}, $\meas$ is uniformly subexponentially integrable if for every $\varepsilon >0$, there exists  $C <\infty$ such that  for every $x\in\mms$ and every $r>0$,
		\begin{align*}
		\meas[B_r(x)] \leq C\,\rme^{\varepsilon r}\,\meas[B_1(x)].\tag*{\qedhere}
		\end{align*}
	\end{remark}
	
	\begin{example} If $(\mms,\met,\meas)$ is globally doubling \eqref{Eq:Doubling condition}, then $\meas$ is uniformly subexponentially integrable. Indeed, by a well-known iteration argument starting from \eqref{Eq:Doubling condition}, the sufficient condition from the previous \autoref{Re:Suff cond Exp gr} follows from the existence of finite constants $\alpha,\beta > 0$ such that
		\begin{align*}
		\meas[B_r(x)] \leq \beta\,r^\alpha\,\meas[B_1(x)]
		\end{align*}
		holds for every $x\in\mms$ and every $r\geq 1$.
		
		Since $\RCD^*(K,N)$ spaces with a nonnegative lower Ricci bound are globally doubling \cite[Cor.~2.4]{sturm2006b}, uniform subexponential integrability of $\meas$ is granted as soon as $K\geq 0$.
	\end{example}
	
	\begin{theorem}\label{Th:Lp independence theorem} Let $(\mms,\met,\meas)$ be an $\RCD^*(K,N)$ space for some $K\in\R$ and $N\in (1,\infty)$. Assume that $\meas$ is uniformly subexponentially integrable. Then the spectrum $\sigma(\Hodge_{p})$ of the operator $\Hodge_{p}$ acting on $\Ell^p(T^*\mms)$ is equal to $\sigma(\Hodge_{2})$ for every $p\in [1,\infty]$. 		Furthermore, for every $p,q\in [1,\infty]$, every isolated eigenvalue of $\Hodge_{p}$ with finite algebraic multiplicity is also an isolated eigenvalue of $\Hodge_{q}$ with the same algebraic multiplicity.
	\end{theorem}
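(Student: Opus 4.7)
The plan is to adapt to the cotangent module the perturbation scheme of Hempel--Voigt \cite{hempel1986} and Sturm \cite{sturm1993} for the functional Laplacian, using \autoref{Th:Kato-Simon} to dominate the vector-valued semigroup by the scalar one and then invoking the Gaussian bound \autoref{Th:Heat kernel bound thm} together with the uniform subexponential integrability of $\meas$. By the duality $\HHeat_{\infty,t} = \HHeat_{1,t}^*$ and Riesz--Thorin interpolation, it is enough to prove $\sigma(\Hodge_p)=\sigma(\Hodge_2)$ for $p\in[1,2]$. The consistency of the extended semigroups from \autoref{Cor:Extension}, together with \eqref{Eq:Resolvent formula with heat flow}, implies that whenever $\Re\xi<K^-$ the resolvents $\sfR_\xi^{(p)}:=(\Hodge_p-\xi)^{-1}$ and $\sfR_\xi^{(2)}$ coincide on $\Ell^p(T^*\mms)\cap\Ell^2(T^*\mms)$, with operator norms controlled by $(K^--\Re\xi)^{-1}$ uniformly in $p$.

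The key analytical step is the following kernel bound. Fix $\alpha$ larger than $K^-$ plus the constant $C_1C_2$ appearing in \autoref{Th:Heat kernel bound thm}. Combining \eqref{Eq:Resolvent formula with heat flow} for $n=1$ with \autoref{Th:Kato-Simon} and \autoref{Th:Heat kernel bound thm} gives
\begin{equation*}
|\sfR_{-\alpha}^{(2)}\omega|(x)\leq\int_0^\infty e^{-(\alpha+K)t}\,\ChHeat_t|\omega|(x)\d t\leq \int_\mms\sfG_\alpha(x,y)\,|\omega|(y)\d\meas(y),
\end{equation*}
where $\sfG_\alpha(x,y):=\int_0^\infty e^{-(\alpha+K)t}\sfp_t(x,y)\d t$. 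Splitting the time integral at $t=\met(x,y)$, applying Young's inequality $\met^2(x,y)/[(4+\varepsilon)t]+\alpha t\geq 2\sqrt{\alpha/(4+\varepsilon)}\,\met(x,y)$, and iterating the doubling inequality \eqref{Eq:Doubling condition} to absorb $\meas[B_{\sqrt t}(\cdot)]^{-1/2}$ into $\meas[B_1(\cdot)]^{-1/2}$ on the short-time part, one obtains, for $\alpha$ sufficiently large,
\begin{equation*}
\sfG_\alpha(x,y)\leq C_\alpha\,\meas[B_1(x)]^{-1/2}\,\meas[B_1(y)]^{-1/2}\,e^{-\delta_\alpha\met(x,y)}
\end{equation*}
for some $\delta_\alpha>0$. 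The uniform subexponential integrability of $\meas$ from \autoref{Def:Unif sub int} then yields uniform bounds on $\sup_x\Vert\sfG_\alpha(x,\cdot)\Vert_{\Ell^1(\meas)}$ and $\sup_y\Vert\sfG_\alpha(\cdot,y)\Vert_{\Ell^1(\meas)}$, so Schur's test provides boundedness of $\sfR_{-\alpha}$ on every $\Ell^p(T^*\mms)$, while H\"older additionally yields the improved mapping property $\Ell^p(T^*\mms)\to\Ell^\infty(T^*\mms)$.

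The identity of spectra now follows by standard arguments. For $\xi\in\rho(\Hodge_2)$ connect $-\alpha$ to $\xi$ by a polygonal path $\gamma\subset\rho(\Hodge_2)$. The resolvent identity
\begin{equation*}
\sfR_\mu^{(2)}=\sfR_{-\alpha}^{(2)}+(\mu+\alpha)\,\sfR_\mu^{(2)}\,\sfR_{-\alpha}^{(2)},\quad\mu\in\rho(\Hodge_2),
\end{equation*}
together with a Neumann series expansion on small discs along $\gamma$ produces, using the $\Ell^p$-boundedness of $\sfR_{-\alpha}^{(2)}$ established above, an $\Ell^p$-bounded analytic continuation of $\mu\mapsto\sfR_\mu^{(2)}$ which agrees with $\sfR_\mu^{(p)}$ on the dense subspace $\Ell^p(T^*\mms)\cap\Ell^2(T^*\mms)$; iterating along $\gamma$ places $\xi$ in $\rho(\Hodge_p)$. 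The reverse inclusion follows by interpolating $\sfR_\xi^{(p)}$ with its dual $\sfR_{\bar\xi}^{(p')}=(\sfR_\xi^{(p)})^*$. For an isolated eigenvalue $\lambda\in\sigma(\Hodge_p)$ of finite algebraic multiplicity, the Riesz projection $\sfE_\lambda^{(p)}:=(2\pi\nli)^{-1}\oint_\Gamma\sfR_\xi^{(p)}\d\xi$ along a small contour $\Gamma\subset\rho(\Hodge_p)$ around $\lambda$ is a bounded projector onto the algebraic eigenspace, and the consistency of the resolvents propagates to $\sfE_\lambda^{(p)}|_{\Ell^p\cap\Ell^q}=\sfE_\lambda^{(q)}|_{\Ell^p\cap\Ell^q}$, forcing equality of ranges and of algebraic multiplicities.

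The main obstacle I anticipate is the kernel bound for $\sfG_\alpha$: extracting a genuine linear exponential decay $e^{-\delta_\alpha\met(x,y)}$ from the Laplace transform of the time-dependent Gaussian $\sfp_t$ requires a careful time-splitting at $t=\met(x,y)$, the two regimes $\met(x,y)\leq 1$ and $\met(x,y)>1$ must be handled separately, and on the short-time regime one must use the doubling inequality \eqref{Eq:Doubling condition} a number of times proportional to $\log(1/\sqrt{t})$ to compare $\meas[B_{\sqrt t}(x)]$ with $\meas[B_1(x)]$ while keeping the blow-up of the resulting polynomial factor $t^{-N/2}$ integrable against the Gaussian factor.
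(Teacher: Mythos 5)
Your kernel estimate for the resolvent at a single real point $\xi = -\alpha$ and the ensuing Schur-test argument are sound (modulo the technical point that for large $N$ the $L^1\to L^\infty$ mapping property needs the $n$-th power $(\Hodge_2+\alpha)^{-n/2}$ with $n \geq \lfloor N+4\rfloor$, not $n=1$, to tame the on-diagonal singularity of the Green-type kernel; the paper takes such an $n$ precisely for this reason). The genuine gap lies in the "analytic continuation along a polygonal path via Neumann series" step. The Neumann series centred at $\mu$ gives $L^p$-boundedness of $\sfR^{(p)}$ only in a disc of radius $\Vert\sfR^{(p)}_\mu\Vert_{\Ell^p,\Ell^p}^{-1}$, and to re-centre along $\gamma$ one needs a lower bound on that quantity at each new centre --- but $\Vert\sfR^{(p)}_\mu\Vert_{\Ell^p,\Ell^p}^{-1}$ is comparable to the distance from $\mu$ to $\sigma(\Hodge_p)$, which is exactly the unknown object. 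If $\sigma(\Hodge_p)$ did protrude into $\rho(\Hodge_2)$ somewhere near $\gamma$, your discs would shrink to zero and the continuation would stall without your argument being able to detect it; nothing in the proposal rules this out.

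This is precisely why the paper does not attempt to continue the single-point bound, but instead introduces the Combes--Thomas / weight-twisting mechanism that your proposal omits: one conjugates $\Hodge_2$ by $\rme^\psi$ with $\psi\in\Gamma_\varepsilon$, shows (\autoref{Le:L2 boundedness}) via a quadratic-form perturbation argument that $\rme^\psi\Hodge_2\rme^{-\psi}-\Hodge_2$ is form-small relative to $\Hodge_2$ uniformly in $\psi$, and hence that the twisted resolvent is $L^2\to L^2$ bounded \emph{uniformly over $\xi$ in any prescribed compact $V\subset\rho(\Hodge_2)$ and over $\psi\in\Gamma_\varepsilon$}. Sandwiching this with the kernel bounds of \autoref{Le:Resolvent bounds} and optimising over $\psi$ via \eqref{Eq:Intrinsic distance} produces the $L^\infty$-operator bound of \autoref{Cor:Linfty boundedness of resolvent} for \emph{every} $\xi\in V$, and Riesz--Thorin then gives uniform $L^p$-bounds on $(\Hodge_2-\xi)^{-n}$ on the whole compact $V$. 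Once one has such uniform bounds on a compact, the analytic continuation identity $(\Hodge_2-\xi)^{-n}=(\Hodge_p-\xi)^{-n}$ does propagate from $\xi<0$ to all of $V$, which is the legitimate version of the step you tried to do with a Neumann series. The Riesz-projection argument for the eigenvalue multiplicity claim is fine once this is in place, but it also rests on the uniform resolvent consistency and hence inherits the same gap.
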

	
	The key point of the proof of \autoref{Th:Lp independence theorem} is a perturbation argument whose core we outsource into \autoref{Le:L2 boundedness}, \autoref{Le:Resolvent bounds} and \autoref{Cor:Linfty boundedness of resolvent} below. Before that, we quickly fix some notation. 
	
	We define the measurable function $\phi_1\colon \mms\to \R$ by
	\begin{align*}
	\phi_1(x) := \meas[B_1(x)]^{1/2}.
	\end{align*}
	
	Given any $\varepsilon>0$, we consider the class
	\begin{align*}
	\Gamma_\varepsilon := \big\lbrace \psi \in W^{1,2}(\mms)\cap\Cont_\bounded(\mms) : \vert\rmd\psi\vert \leq \varepsilon\ \meas\text{-a.e.} \big\rbrace
	\end{align*}
	and recall from \cite[Thm.~4.17]{ambrosio2015} that, for every $x,y\in\mms$,
	\begin{align}\label{Eq:Intrinsic distance}
	\varepsilon\,\met(x,y) = \sup\!\big\lbrace \psi(x) - \psi(y) : \psi \in \Gamma_\varepsilon\big\rbrace.
	\end{align}
	
	Lastly, given $\psi \in \Gamma_\varepsilon$, by $\rme^\psi\,\Hodge_{2}\,\rme^{-\psi}$ we intend the linear, densely defined operator on $\Ell^2(T^*\mms)$ given by setting, for arbitrary $\omega,\eta\in\Test(T^*\mms)$,
	\begin{align}\label{Eq:Formula in proof}
	\begin{split}
	&\int_\mms \big\langle \eta,\big(\rme^\psi\,\Hodge_{2}\,\rme^{-\psi}\big)\omega\big\rangle\d\meas\\
	&\qquad\qquad := \int_\mms \langle\eta, \Hodge_{2}\omega\rangle\d\meas -\int_\mms \vert\nabla\psi\vert^2\,\langle\eta,\omega\rangle\d\meas \\
	&\qquad\qquad\qquad\qquad  + \int_\mms \big[\nabla\omega^\sharp\big(\nabla \psi,\eta^\sharp\big) - \nabla\eta^\sharp\big(\nabla \psi,\omega^\sharp\big)\big]\d\meas.
	\end{split}
	\end{align}
	
	\begin{remark} Observe that if $\psi$ is sufficiently regular, say, $\psi \in\Gamma_\varepsilon\cap\Test(\mms)$ with $\Delta\psi\in\Ell^\infty(\mms)$, then $\rme^{\psi}\,\Hodge_{2}\,\rme^{-\psi} \omega$ is pointwise well-defined on any $\omega\in\Test(T^*\mms)$ as composition of the multiplication operators $\rme^\psi$ and $\rme^{-\psi}$ as well as the Hodge Laplacian $\Hodge_{2}$ in the indicated order by \eqref{Le:Hodge Laplacian on test forms}. In this case, \eqref{Eq:Formula in proof} follows by a straightforward computation using \autoref{Le:Laplacian functions chain rule} and \eqref{Eq:Compatibility with the metric}. 
		
		The class of functions in $\Gamma_\varepsilon\cap \Test(\mms)$ with bounded Laplacian is dense in $\Gamma_\varepsilon$ w.r.t.~strong convergence in $W^{1,2}(\mms)$, see \autoref{Le:Mollified heat flow}.
	\end{remark}
	
	\begin{lemma}\label{Le:L2 boundedness} For every compact $V\subset \rho(\Hodge_{2})$, there exist $\varepsilon \in (0,1)$ and a constant $C< \infty$ such that for every $\xi \in V$ and every $\psi \in \Gamma_\varepsilon$, $\xi \in \rho(\rme^{\psi}\,\Hodge_{2}\,\rme^{-\psi})$ with
		\begin{align}\label{Eq:L2L2 bound}
		\Vert (\rme^\psi\,\Hodge_{2}\,\rme^{-\psi} - \xi)^{-1}\Vert_{\Ell^2,\Ell^2}\leq C.
		\end{align}
	\end{lemma}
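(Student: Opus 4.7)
The plan is to realize $\rme^{\psi}\Hodge_{2}\rme^{-\psi}$ as a form-small perturbation of $\Hodge_{2}$ on $H^{1,2}(T^*\mms)$ and to invert the resolvent by a Neumann series. First, observe that the right-hand side of \eqref{Eq:Formula in proof}, initially prescribed only on $\Test(T^*\mms)\times\Test(T^*\mms)$, extends by density and continuity to a sesquilinear form
\[
\mathfrak{a}_\psi(\eta,\omega) \;=\; 2\,\calE(\eta,\omega) + \mathfrak{b}_\psi(\eta,\omega)
\]
on $H^{1,2}(T^*\mms)\times H^{1,2}(T^*\mms)$, where $\mathfrak{b}_\psi(\eta,\omega)$ collects the two $\psi$-dependent terms in \eqref{Eq:Formula in proof}. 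Using $\vert\nabla\psi\vert\leq\varepsilon$ $\meas$-a.e. together with Cauchy--Schwarz,
\[
\vert\mathfrak{b}_\psi(\eta,\omega)\vert \;\leq\; \varepsilon^2\,\Vert\eta\Vert_{\Ell^2}\Vert\omega\Vert_{\Ell^2} + \varepsilon\,\bigl(\Vert\nabla\omega^\sharp\Vert_{\Ell^2}\Vert\eta\Vert_{\Ell^2}+\Vert\nabla\eta^\sharp\Vert_{\Ell^2}\Vert\omega\Vert_{\Ell^2}\bigr),
\]
and \eqref{Eq:H embedding} yields $\Vert\nabla\rho^\sharp\Vert_{\Ell^2}\leq C_K\,\Vert\rho\Vert_{H^{1,2}}$ for every $\rho\in H^{1,2}(T^*\mms)$, with $C_K:=(1+K^-)^{1/2}$. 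Combining the two bounds gives the crucial estimate $\vert\mathfrak{b}_\psi(\eta,\omega)\vert\leq C\varepsilon\,\Vert\eta\Vert_{H^{1,2}}\Vert\omega\Vert_{H^{1,2}}$ for a constant $C=C(K)$.

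Next I would interpret $\Hodge_{2}-\xi$ as a topological isomorphism $R_\xi^{-1}\colon H^{1,2}(T^*\mms)\to H^{1,2}(T^*\mms)^*$ for every $\xi\in\rho(\Hodge_{2})$. For $\Re\xi<-K^-$ the shifted form $2\calE-\xi\langle\cdot,\cdot\rangle$ is coercive on $H^{1,2}(T^*\mms)$, so Lax--Milgram produces $R_\xi$; self-adjointness of $\Hodge_{2}$ together with $\xi\in\rho(\Hodge_{2})$ then propagate the isomorphism to every $\xi\in\rho(\Hodge_{2})$ via a standard graph-norm argument, and compactness of $V\subset\rho(\Hodge_{2})$ yields
\[
M \;:=\; \sup_{\xi\in V}\,\Vert R_\xi\Vert_{H^{1,2}(T^*\mms)^*,\,H^{1,2}(T^*\mms)} \;<\;\infty .
\]
Since the inclusion $\Ell^2(T^*\mms)\hookrightarrow H^{1,2}(T^*\mms)^*$ is norm-contractive, the form bound upgrades to $\Vert R_\xi\mathfrak{b}_\psi\Vert_{H^{1,2}\to H^{1,2}}\leq CM\varepsilon$. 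The resolvent equation $(\rme^{\psi}\Hodge_{2}\rme^{-\psi}-\xi)\omega=f$ rewrites as $\omega+R_\xi\mathfrak{b}_\psi\omega=R_\xi f$ in $H^{1,2}(T^*\mms)$, and choosing $\varepsilon:=\min\{(2CM)^{-1},1/2\}$ --- which depends only on $K$ and $V$ --- makes $I+R_\xi\mathfrak{b}_\psi$ invertible by Neumann series uniformly in $\xi\in V$ and $\psi\in\Gamma_\varepsilon$. The solution $\omega=(I+R_\xi\mathfrak{b}_\psi)^{-1}R_\xi f$ obeys $\Vert\omega\Vert_{\Ell^2}\leq\Vert\omega\Vert_{H^{1,2}}\leq 2M\,\Vert f\Vert_{\Ell^2}$, which is precisely \eqref{Eq:L2L2 bound}.

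The principal technical hurdle will be identifying the operator $\rme^{\psi}\Hodge_{2}\rme^{-\psi}$ as defined in the excerpt through \eqref{Eq:Formula in proof} on test forms with the m-sectorial operator associated to the extended form $\mathfrak{a}_\psi$ by Kato's first representation theorem; only after this identification can the Neumann-series formal inverse be interpreted as $(\rme^{\psi}\Hodge_{2}\rme^{-\psi}-\xi)^{-1}$ on $\Ell^2(T^*\mms)$. This should reduce to density of $\Test(T^*\mms)$ in $H^{1,2}(T^*\mms)$ --- built into the definition of the latter --- together with the closedness and sectoriality of $\mathfrak{a}_\psi$ (for $\varepsilon$ small), both of which follow from the form estimate above and the corresponding properties of the unperturbed Hodge form $2\calE$.
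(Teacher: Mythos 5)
Your proposal is correct, and it takes a genuinely different route from the paper's --- though both hinge on the same estimate for the $\psi$-dependent terms in \eqref{Eq:Formula in proof}. You view those terms as a small bounded sesquilinear perturbation $\mathfrak{b}_\psi$ on $H^{1,2}(T^*\mms)$, realize $\Hodge_{2}-\xi$ as an isomorphism between the form domain and its dual, and invert $I+R_\xi\mathfrak{b}_\psi$ by a Neumann series. The paper instead records the same estimate as a relative form bound of $\sfT_\psi := \rme^\psi\Hodge_{2}\rme^{-\psi}-\Hodge_{2}$ with respect to $\Hodge_{2}$, and then invokes Kato's Theorem VI.3.9 directly; that single citation packages both the Neumann-series inversion and the identification of the perturbed m-sectorial operator, delivering $\xi\in\rho(\rme^\psi\Hodge_{2}\rme^{-\psi})$ and the uniform bound \eqref{Eq:L2L2 bound} in one stroke. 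Your route unpacks the mechanism, at the cost of two items you must supply by hand: the uniform bound $M=\sup_{\xi\in V}\Vert R_\xi\Vert<\infty$ on the form-domain scale, for which the cleanest justification (given self-adjointness of $\Hodge_{2}\geq 0$) is the spectral-calculus identity $\Vert(\Hodge_{2}+1)^{1/2}(\Hodge_{2}-\xi)^{-1}(\Hodge_{2}+1)^{1/2}\Vert_{\Ell^2,\Ell^2}=\sup_{t\geq 0}\vert(t+1)/(t-\xi)\vert$, a quantity finite and continuous on $\rho(\Hodge_{2})$ and hence bounded on compact $V$ --- your appeal to a ``standard graph-norm argument'' is somewhat loose here; and the identification of the Neumann-series inverse with $(\rme^\psi\Hodge_{2}\rme^{-\psi}-\xi)^{-1}$ via the first representation theorem, which you correctly flag as the remaining step. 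One small remark: coercivity of $2\calE-\xi\langle\cdot,\cdot\rangle$ on $H^{1,2}(T^*\mms)$ already holds for every $\Re\xi<0$, since $\calE\geq 0$; the constant $K$ enters only through \eqref{Eq:H embedding} in the bound on $\Vert\nabla\omega^\sharp\Vert_{\Ell^2}$, not in the coercivity threshold.
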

	
	\begin{proof} Given any $\varepsilon\in (0,1)$ to be determined later and $\psi\in\Gamma_\varepsilon$, the operator
		\begin{align*}
		\sfT_\psi := \rme^\psi\,\Hodge_{2}\,\rme^{-\psi} - \Hodge_{2}
		\end{align*}
		is well-defined on $\Test(T^*\mms) + \rmi\,\Test(T^*\mms)$ and therefore a densely defined linear operator on the complexified vector space $\Ell^2(T^*\mms) + \rmi\,\Ell^2(T^*\mms)$. \eqref{Eq:Formula in proof} yields
		\begin{align*}
		\int_\mms \langle\sfT_\psi\omega,\overline{\omega}\rangle\d\meas = -\int_\mms \vert\nabla\psi\vert^2\,\vert\omega\vert^2\d\meas + 2\rmi\int_\mms\Im\big(\nabla\omega^\sharp\big(\nabla\psi,\overline{\omega}^\sharp\big)\big)\d\meas
		\end{align*}
		for every $\omega\in \Test(T^*\mms) + \rmi\,\Test(T^*\mms)$. Young's inequality and \eqref{Eq:H embedding} thus give
		\begin{align*}
		\Big\vert\!\int_\mms \langle\sfT_\psi\omega,\overline{\omega}\rangle\d\meas\Big\vert &\leq \varepsilon^2\,\Vert\omega\Vert_{\Ell^2}^2 + 2\varepsilon\int_\mms \vert\nabla\omega^\sharp\vert\,\vert\omega\vert\d\meas\\
		&\leq (\varepsilon+1\big)\,\varepsilon\,\Vert\omega\Vert_{\Ell^2}^2 + \varepsilon\int_\mms\vert\nabla\omega^\sharp\vert^2\d\meas\\
		&\leq (\varepsilon+1-K)\,\varepsilon\,\Vert \omega\Vert_{\Ell^2}^2 + \varepsilon\int_\mms \langle\Hodge_{2}\omega,\omega\rangle\d\meas.
		\end{align*}
		
		Given any compact $V\subset \rho(\Hodge_{2})$, there exists $\varepsilon \in (0,1)$ such that for every $\xi\in V$, 
		\begin{align*}
		&2\,\Vert \big((\varepsilon+1-K)\,\varepsilon + \varepsilon\,\Hodge_{2}\big)(\Hodge_{2}-\xi)^{-1}\Vert_{\Ell^2,\Ell^2}\\
		&\qquad\qquad \leq 2\varepsilon\,\big(3 + \vert K\vert +\vert\xi\vert\big)\,\Vert (\Hodge_{2}-\xi)^{-1}\Vert_{\Ell^2,\Ell^2} +2\varepsilon < 1.
		\end{align*}
		From the above form boundedness of $\sfT_\psi$ by $\Hodge_{2}$, which is uniform in $\psi\in\Gamma_\varepsilon$, and under the previous choice of $\varepsilon$, \cite[Thm.~VI.3.9]{kato1995} both gives $\xi\in\rho(\rme^{\psi}\,\Hodge_{2}\,\rme^{-\psi})$ for every $\psi\in\Gamma_\varepsilon$ and provides us with the existence of a finite constant $C$ such that \eqref{Eq:L2L2 bound} holds uniformly in $\xi\in V$ and $\psi\in\Gamma_\varepsilon$.
	\end{proof}
	
Recall that every real $\alpha < 0$ belongs to $\rho(\Hodge_{2})$. Therefore, using \eqref{Eq:AHPT} in the first case, the operators $\smash{	(\Hodge_{2}-\alpha)^{-1/2}\,\rme^{-\psi}\,\phi_1}$ 	and $\smash{(\Hodge_{2}-\alpha)^{-1/2}\,\rme^{-\psi}}$ 	are well-defined on $\smash{\Ell^\infty_\bs(T^*\mms)}$ (which is the space of all $\omega\in\Ell^\infty(T^*\mms)$ such that $\vert\omega\vert$ has bounded support), hence densely defined on $\Ell^p(T^*\mms)$ for every $p\in [1,\infty)$.
	
	\begin{lemma}\label{Le:Resolvent bounds} There exists $\alpha < 0$ such that for every $\varepsilon \in (0,1)$, there exist an even $n\in\N$ and a constant $C<\infty$ such that for every $\psi\in\Gamma_\varepsilon$, we have
		\begin{align*}
		\Vert \rme^\psi\,(\Hodge_{2}-\alpha)^{-n/2}\,\rme^{-\psi}\,\phi_1\Vert_{\Ell^1,\Ell^2} &\leq C,\\
		\Vert \phi_1\,\rme^\psi\,(\Hodge_{2}-\alpha)^{-n/2}\,\rme^{-\psi}\Vert_{\Ell^2,\Ell^\infty} &\leq C.
		\end{align*} 
	\end{lemma}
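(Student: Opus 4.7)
The strategy is to use the resolvent formula \eqref{Eq:Resolvent formula with heat flow} with $m:=n/2$ in place of $n$ (legitimate since $n$ is even) to write
\[
(\Hodge_{2}-\alpha)^{-m}=\tfrac{1}{(m-1)!}\int_0^\infty \rme^{\alpha t}\,t^{m-1}\,\HHeat_{2,t}\d t,
\]
and reduce both asserted bounds to $t$-dependent estimates on $\rme^{\psi}\HHeat_{2,t}\rme^{-\psi}\phi_1$. The second ($\Ell^2$--$\Ell^\infty$) inequality will follow from the first ($\Ell^1$--$\Ell^2$) by duality, since
\[
\big(\phi_1\rme^{\psi}(\Hodge_{2}-\alpha)^{-m}\rme^{-\psi}\big)^*=\rme^{-\psi}(\Hodge_{2}-\alpha)^{-m}\rme^{\psi}\phi_1,
\]
and $-\psi\in\Gamma_\varepsilon$ whenever $\psi\in\Gamma_\varepsilon$; so it suffices to focus on the first.

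For the pointwise-in-$t$ bound, combine \autoref{Th:Kato-Simon} with the Lipschitz estimate $|\psi(x)-\psi(y)|\leq\varepsilon\met(x,y)$ (coming from $\psi\in\Gamma_\varepsilon$ via \eqref{Eq:Intrinsic distance}) to obtain, for $\omega\in\Ell^1(T^*\mms)\cap\Ell^2(T^*\mms)$,
\[
\big|\rme^{\psi}\HHeat_{2,t}\rme^{-\psi}\phi_1\omega\big|(x)\leq\rme^{-Kt}\!\int_\mms\rme^{\varepsilon\met(x,y)}\sfp_t(x,y)\,\phi_1(y)\,|\omega|(y)\d\meas(y)\quad\meas\text{-a.e.}
\]
Applying Cauchy--Schwarz with weight $|\omega|(y)\d\meas(y)$ and then Fubini gives
\[
\|\rme^{\psi}\HHeat_{2,t}\rme^{-\psi}\phi_1\omega\|_{\Ell^2}^2\leq\rme^{-2Kt}\,\|\omega\|_{\Ell^1}\!\int_\mms\!|\omega|(y)\,\meas[B_1(y)]\,\sfI(t,y)\d\meas(y),
\]
where $\sfI(t,y):=\int_\mms\sfp_t(x,y)^2\rme^{2\varepsilon\met(x,y)}\d\meas(x)$. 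The goal is to show $\sfI(t,y)\leq C\,Q(t)^2\,\rme^{2\lambda t}\meas[B_1(y)]^{-1}$ for a fixed power function $Q$ and some $\lambda$ bounded uniformly in $\varepsilon\in(0,1)$, so that the $\meas[B_1(y)]$ factors cancel.

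To this end, insert \autoref{Th:Heat kernel bound thm} with a fixed auxiliary $\varepsilon_0>0$, together with Bishop--Gromov \eqref{Eq:Bishop-Gromov} (in the form $\meas[B_{\!\sqrt t}(z)]^{-1}\leq C_{\mathrm{BG}}\,(1\vee t^{-N/2})\meas[B_1(z)]^{-1}$ for $z\in\{x,y\}$), to dominate $\sfp_t(x,y)^2$ by a product of $Q(t)^2$, $\meas[B_1(x)]^{-1}\meas[B_1(y)]^{-1}$, a factor $\rme^{C_5(1+t)}$, and the Gaussian $\rme^{-2\met^2(x,y)/((4+\varepsilon_0)t)}$, where $Q(t):=C_{\mathrm{BG}}(1\vee t^{-N/2})$. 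For the fixed choice $\mu:=N/2+1$, Young's inequality yields
\[
-\tfrac{2r^2}{(4+\varepsilon_0)t}+2\varepsilon r\leq -\mu r+\big(\varepsilon^2+\mu^2/4\big)(4+\varepsilon_0)t,
\]
whose coefficient is uniformly bounded on $\varepsilon\in(0,1)$. Finally, \eqref{Eq:AHPT} supplies $\meas[B_1(x)]^{-1/2}\leq C_{\mathrm{AHPT}}^{1/2}\rme^{N\met(x,y)/2}\meas[B_1(y)]^{-1/2}$, whence the residual $x$-integral reduces to $\int_\mms\rme^{-(\mu-N/2)\met(x,y)}\meas[B_1(x)]^{-1/2}\meas[B_1(y)]^{-1/2}\d\meas(x)$, which is bounded uniformly in $y$ by \autoref{Def:Unif sub int} (applied with exponent $\mu-N/2=1$, \emph{independent of} $\varepsilon$). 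Combining these estimates yields $\|\rme^{\psi}\HHeat_{2,t}\rme^{-\psi}\phi_1\|_{\Ell^1,\Ell^2}\leq C_8\,(1\vee t^{-N/2})\rme^{\lambda t}$ with $\lambda$ uniform over $\varepsilon\in(0,1)$. Plugging this into the resolvent formula, integrability at $t=\infty$ is granted by any $\alpha<-\lambda$ (fixed independently of $\varepsilon$) and at $t=0$ as soon as $n>N$. The main technical obstacle is precisely the uniformity of $\lambda$ over $\varepsilon\in(0,1)$: it is engineered by fixing $\mu=N/2+1$ \emph{before} choosing $\varepsilon$, decoupling the sub-exponential decay rate in $\met(x,y)$ consumed by \autoref{Def:Unif sub int} from the parameter $\varepsilon$.
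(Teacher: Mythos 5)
Your overall strategy is sound: using the resolvent formula to reduce to a $t$-dependent bound on $\rme^\psi\HHeat_{2,t}\rme^{-\psi}\phi_1$, proving only the $\Ell^1$-$\Ell^2$ inequality, and deducing the $\Ell^2$-$\Ell^\infty$ inequality from it by duality together with the observation that $-\psi\in\Gamma_\varepsilon$ whenever $\psi\in\Gamma_\varepsilon$ --- the paper does the same duality reduction, just in the opposite direction. Your weighted Cauchy--Schwarz step, the Young-type absorption of $\rme^{2\varepsilon\met(x,y)}$ into the Gaussian with an $\varepsilon$-uniform penalty, and the book-keeping for the $t$-integral near $0$ and $\infty$ are all correct.

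The genuine gap is in how you close the $x$-integral: you invoke \autoref{Def:Unif sub int}. That hypothesis is \emph{not} part of \autoref{Le:Resolvent bounds}. In the paper's architecture, \autoref{Le:Resolvent bounds} and \autoref{Cor:Linfty boundedness of resolvent} are proved under the $\RCD^*(K,N)$ assumption alone; \autoref{Def:Unif sub int} is deferred to \autoref{Th:Lp independence theorem}, where it is precisely what makes the right-hand side of \autoref{Cor:Linfty boundedness of resolvent} finite. By consuming \autoref{Def:Unif sub int} already inside \autoref{Le:Resolvent bounds}, you prove a strictly weaker lemma and collapse the separation that the paper keeps deliberate.

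The root cause is your choice of the symmetrized heat-kernel bound \autoref{Th:Heat kernel bound thm}, which leaves a factor $\meas[B_1(x)]^{-1}$ inside the $x$-integral; splitting it and applying \eqref{Eq:AHPT} costs $\rme^{N\met(x,y)/2}$, and with your fixed $\mu=N/2+1$ the residual $\rme^{-\met(x,y)}$ decay together with the cross term $\meas[B_1(x)]^{-1/2}\meas[B_1(y)]^{-1/2}$ cannot be controlled by Bishop--Gromov alone --- this is exactly the quantity \autoref{Def:Unif sub int} concerns. The paper's proof sidesteps this by using the asymmetric bound \eqref{Eq:Heat kernel bound}, which produces a single volume factor $\meas[B_{\sqrt{t}}(x)]^{-1}$ that, after Bishop--Gromov and the $t$-integration, becomes $\phi_1^{-2}(x)$ sitting \emph{outside} the $y$-integral; the kernel $\phi_1^{-2}(x)\,\rme^{-\beta\met(x,y)}$ is then estimated after inserting the $\psi$-weights by the annular sum $\sum_{j\geq 1}\rme^{-2(\beta-\varepsilon)(j-1)}\,\meas[B_j(x)]/\meas[B_1(x)]$, which is uniformly bounded by Bishop--Gromov once $\beta$ is large enough, with no further hypothesis. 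Your own route could also be salvaged without \autoref{Def:Unif sub int}: apply \eqref{Eq:AHPT} to the full factor $\meas[B_1(x)]^{-1}$ (costing $\rme^{N\met(x,y)}$) and take $\mu$ strictly larger than $N$ plus the exponential Bishop--Gromov growth rate; this $\mu$ depends on $K,N$ but not on $\varepsilon$, so $\alpha$ stays independent of $\varepsilon$ and the lemma's quantifier structure is respected.
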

	
	\begin{proof} Fix any real $\alpha < 0$ to be determined later, an arbitrary $\beta\in\R$ as well as an even $n\in\N$ with $n\geq \lfloor N+ 4\rfloor$. Employing the formula \eqref{Eq:Resolvent formula with heat flow} and then \autoref{Th:Kato-Simon}, \eqref{Eq:Heat kernel bound} as well as \eqref{Eq:Bishop-Gromov}, there exist constants $c,C_1,C_2,C_3 < \infty$ with
		\begin{align*}
		&\vert(\Hodge_{2}-\alpha)^{-n/2}\eta\vert \leq c\int_0^\infty \rme^{\alpha t}\, t^{n/2-1}\,\vert\HHeat_{2,t}\eta\vert\d t\\
		&\qquad\qquad\leq c \int_0^\infty \rme^{(\alpha-K)t}\,t^{n/2-1}\int_\mms \sfp_t(\cdot,y)\,\vert \eta\vert(y)\d\meas(y)\d t\\
		&\qquad\qquad\leq c\,C_1\int_0^\infty\Big[\rme^{(\alpha-K+C_2)t}\,t^{n/2-1}\,\meas\big[B_{\sqrt{t}}(\cdot)\big]^{-1}\\
		&\qquad\qquad\qquad\qquad\qquad\qquad \times \int_\mms \rme^{-\met^2(\cdot,y)/5t}\,\vert\eta\vert(y)\d\meas(y)\Big]\d t\\
		&\qquad\qquad\leq c\,C_1\,C_3\int_0^\infty\Big[\rme^{(\alpha-K+C_2)t}\,t^{n/2-1}\max\!\big\lbrace t^{-N/2},1\big\rbrace\,\phi_1^{-2}\\
		&\qquad\qquad\qquad\qquad\qquad\qquad\int_\mms \rme^{-\met^2(x,y)/5t}\,\vert\eta\vert(y)\d\meas(y)\Big]\d t\\
		&\qquad\qquad\leq c\,C_1\,C_3\,\Big[\!\int_0^\infty\rme^{(\alpha-K+C_2+5\beta^2/4)t}\,t^{n/2-1}\,\max\!\big\lbrace t^{-N/2},1\big\rbrace\d t\Big]\,\phi_1^{-2}\\
		&\qquad\qquad\qquad\qquad \times \int_\mms \rme^{-\beta\met(\cdot,y)}\,\vert\eta\vert(y)\d\meas(y)\quad\meas\text{-a.e.}
		\end{align*}
		for every $\eta\in\Ell^2(T^*\mms)$. In the last inequality, we also used that
		\begin{align*}
		\frac{5t}{4}\,\beta^2 - \beta\,\met(x,y) + \frac{\met^2(x,y)}{5t}  \geq 0.
		\end{align*}
		Setting $\smash{\alpha := \min\!\big\lbrace \!-\!1-K+C_2+5\beta^2/4, -1\big\rbrace}$ gives the existence of a constant $C_4 <\infty$ together with
		\begin{align}\label{Eq:Key estimate}
		\vert(\Hodge_{2}-\alpha)^{-n/2}\eta\vert \leq C_4\,\phi_1^{-2}\int_\mms \rme^{-\beta\met(\cdot,y)}\,\vert\eta\vert(y)\d\meas(y)\quad\meas\text{-a.e.}
		\end{align}
		
		The next step is to use \eqref{Eq:Key estimate} subject to a particular choice of $\beta\in\R$ to be determined later. Let $\varepsilon\in (0,1)$ and $\psi\in\Gamma_\varepsilon$ be arbitrary. Since $\rme^\psi\,(\Hodge_{2}-\alpha)^{-n/2}\,\rme^{-\psi}\,\phi_1$ is the formal adjoint of $\phi_1\,\rme^{\psi}\,(\Hodge_{2}-\alpha)^{-n/2}\,\rme^{-\psi}$, the first estimate  will actually follow from the second inequality. To prove the latter, for every $\smash{\omega\in\Ell_\bs^\infty(T^*\mms)}$, inserting $\eta := \rme^{-\psi}\,\omega$ into \eqref{Eq:Key estimate} for arbitrary $\beta > \varepsilon$ and using \eqref{Eq:Intrinsic distance} yields
		\begin{align}\label{Eq:Inequality with sum}
		&\vert \phi_1\,\rme^{\psi}\,(\Hodge_{2}-\alpha)^{-n/2}\,\rme^{-\psi}\,\omega\vert
		\leq C_4\,\phi_1^{-1}\,\rme^{\psi}\int_\mms\rme^{-\beta\met(\cdot,y)}\,\rme^{-\psi(y)}\,\vert\omega\vert(y)\d\meas(y)\nonumber\\
		&\qquad\qquad\leq C_4\,\Big[\phi_1^{-2}\int_\mms \rme^{-2(\beta-\varepsilon)\met(\cdot,y)}\d\meas(y)\Big]^{1/2}\,\Vert\omega\Vert_{\Ell^2}\nonumber\\
		&\qquad\qquad\leq C_4\,\Big[\!\sum_{j=1}^\infty \rme^{-2(\beta-\varepsilon)(j-1)}\,\meas[B_j(\cdot)]\,\meas[B_1(\cdot)]^{-1}\Big]^{1/2}\,\Vert\omega\Vert_{\Ell^2}\quad\meas\text{-a.e.}
		\end{align}
		By \eqref{Eq:Bishop-Gromov}, the last sum is uniformly bounded uniformly on $\mms$ and in $\varepsilon\in (0,1)$ as soon as $\beta > 0$ is chosen large enough.
		
		The inequality \eqref{Eq:Inequality with sum} for arbitrary $\omega\in\Ell^2(T^*\mms)$ follows by density of $\smash{\Ell^\infty_\bs(T^*\mms)}$, after passing to pointwise $\meas$-a.e.~convergent subsequences.
	\end{proof}
	
	\begin{corollary}\label{Cor:Linfty boundedness of resolvent} For every compact $V\subset \rho(\Hodge_{2})$, there exist $\varepsilon\in (0,1)$, an even $n\in\N$ and a constant $C<\infty$ such that for every $\xi\in V$, one has
		\begin{align*}
		\Vert (\Hodge_{2}-\xi)^{-n}\Vert_{\Ell^\infty,\Ell^\infty} \leq C\,\sup_{x\in\mms}\int_\mms \rme^{-\varepsilon\met(x,y)}\,\phi_1^{-1}(x)\,\phi_1^{-1}(y)\d\meas(y).
		\end{align*}
	\end{corollary}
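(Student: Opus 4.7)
The plan is to first show that the ``twisted'' operator $\phi_1\,\rme^\psi\,(\Hodge_{2}-\xi)^{-n}\,\rme^{-\psi}\,\phi_1$ is bounded from $\Ell^1(T^*\mms)$ to $\Ell^\infty(T^*\mms)$ uniformly in $\xi\in V$ and $\psi\in\Gamma_\varepsilon$, and then to convert this into a pointwise kernel bound for $(\Hodge_{2}-\xi)^{-n}$ from which the claimed $\Ell^\infty$-operator norm estimate follows by Schur's test. Let $\varepsilon_0\in(0,1)$ and $C_V<\infty$ be the constants furnished by \autoref{Le:L2 boundedness} applied to $V$, and let $\alpha<0$ be the constant from \autoref{Le:Resolvent bounds}. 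Setting $\varepsilon:=\varepsilon_0$, let $n\in\N$ (even) and $C_\varepsilon<\infty$ be the associated constants from \autoref{Le:Resolvent bounds}. The starting point is the self-adjoint functional-calculus identity
\[
(\Hodge_{2}-\xi)^{-n} = (\Hodge_{2}-\alpha)^{-n/2}\cdot(\Hodge_{2}-\alpha)^{n}(\Hodge_{2}-\xi)^{-n}\cdot(\Hodge_{2}-\alpha)^{-n/2},
\]
into which I would insert $\rme^{-\psi}\rme^\psi=\Id$ at the two inner joints and then multiply by $\phi_1$ on both sides to obtain the factorization $\phi_1\,\rme^\psi(\Hodge_{2}-\xi)^{-n}\rme^{-\psi}\phi_1 = A_1\,A_2\,A_3$, where $A_1:=\phi_1\,\rme^\psi(\Hodge_{2}-\alpha)^{-n/2}\rme^{-\psi}$, $A_3:=\rme^\psi(\Hodge_{2}-\alpha)^{-n/2}\rme^{-\psi}\,\phi_1$, and $A_2:=\rme^\psi(\Hodge_{2}-\alpha)^{n}(\Hodge_{2}-\xi)^{-n}\rme^{-\psi}$.

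Next, \autoref{Le:Resolvent bounds} directly delivers $\Vert A_1\Vert_{\Ell^2,\Ell^\infty}\leq C_\varepsilon$ and $\Vert A_3\Vert_{\Ell^1,\Ell^2}\leq C_\varepsilon$ uniformly in $\psi\in\Gamma_\varepsilon$. For the middle factor $A_2$, I would use the elementary identities $\rme^\psi(\Hodge_{2}-\alpha)^{n}\rme^{-\psi} = (\Hodge_\psi-\alpha)^{n}$ and $\rme^\psi(\Hodge_{2}-\xi)^{-n}\rme^{-\psi} = (\Hodge_\psi-\xi)^{-n}$, with $\Hodge_\psi := \rme^\psi\,\Hodge_{2}\,\rme^{-\psi}$, to rewrite $A_2 = (\Hodge_\psi-\alpha)^{n}(\Hodge_\psi-\xi)^{-n}$. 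Binomial expansion then gives
\[
(\Hodge_\psi-\alpha)^{n}(\Hodge_\psi-\xi)^{-n} = \sum_{k=0}^{n}\binom{n}{k}\,(\xi-\alpha)^{n-k}\,(\Hodge_\psi-\xi)^{k-n},
\]
so that the bound $\Vert(\Hodge_\psi-\xi)^{-1}\Vert_{\Ell^2,\Ell^2}\leq C_V$ from \autoref{Le:L2 boundedness} yields $\Vert A_2\Vert_{\Ell^2,\Ell^2}\leq (1+|\xi-\alpha|\,C_V)^{n}$ uniformly in $\xi\in V$ and $\psi\in\Gamma_\varepsilon$. Composing the three estimates, I would obtain some finite $C'$ (depending on $V$ but not on $\psi$ or $\xi$) with $\Vert\phi_1\,\rme^\psi(\Hodge_{2}-\xi)^{-n}\rme^{-\psi}\phi_1\Vert_{\Ell^1,\Ell^\infty}\leq C'$.

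Finally, I would invoke a Dunford--Pettis-type kernel extraction theorem for $\Ell^\infty$-modules (the one developed in \autoref{Sec:Dunford-Pettis}, cf.~\autoref{Th:Dunford Pettis forms}): the previous bound implies that the twisted operator has a pointwise kernel bounded $\meas^{\otimes 2}$-a.e.~by $C'$. Unwinding the multiplications by $\phi_1$ and $\rme^{\pm\psi}$ exhibits a pointwise kernel $K_\xi$ for $(\Hodge_{2}-\xi)^{-n}$ satisfying
\[
|K_\xi(x,y)| \leq C'\,\rme^{\psi(y)-\psi(x)}\,\phi_1^{-1}(x)\,\phi_1^{-1}(y)\quad\text{$\meas^{\otimes 2}$-a.e.,}
\]
with exceptional null set a priori depending on $\psi\in\Gamma_\varepsilon$. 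Restricting $\psi$ to a countable subfamily of $\Gamma_\varepsilon$ --- e.g.~suitably truncated distance functions from points of a countable dense subset of $\mms$ --- merges the exceptional sets, and \eqref{Eq:Intrinsic distance} then permits passing to the pointwise infimum, replacing $\rme^{\psi(y)-\psi(x)}$ by $\rme^{-\varepsilon\met(x,y)}$. Schur's estimate $\Vert(\Hodge_{2}-\xi)^{-n}\Vert_{\Ell^\infty,\Ell^\infty}\leq\esssup_{x}\int_\mms|K_\xi(x,y)|\d\meas(y)$ then closes the argument. The most delicate step will be justifying the kernel extraction in the $\Ell^\infty$-module framework and the accompanying countable-family reduction making the pointwise infimum over $\Gamma_\varepsilon$ meaningful; the operator algebra and norm bounds are then routine consequences of \autoref{Le:L2 boundedness} and \autoref{Le:Resolvent bounds}.
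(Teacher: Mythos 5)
Your proof is correct and follows essentially the same route as the paper's: the three-factor decomposition $A_1 A_2 A_3$ together with the binomial expansion of the middle factor recovers, after reindexing $j=n-k$, precisely the resolvent-identity expansion the paper writes out directly, and the subsequent use of \autoref{Th:Dunford Pettis forms}, the untwisting by $\phi_1$ and $\rme^{\pm\psi}$, and the intrinsic-distance formula \eqref{Eq:Intrinsic distance} all match. Your explicit reduction to a countable subfamily of $\Gamma_\varepsilon$ to merge the $\psi$-dependent exceptional null sets before passing to the infimum is a detail the paper leaves implicit (it simply invokes ``arbitrariness of $\psi$''), and it is a point worth stating; otherwise the two arguments coincide.
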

	
	\begin{proof} Let $V\subset\rho(\Hodge_{2})$ be compact, and let $\varepsilon \in (0,1)$ be as provided by \autoref{Le:L2 boundedness} and $n\in\N$ be as in \autoref{Le:Resolvent bounds}. For every $\xi\in V$ and every $\psi\in\Gamma_\varepsilon$, the first resolvent identity gives
		\begin{align*}
		&\phi_1\,\rme^\psi\,(\Hodge_{2}-\xi)^{-n}\,\rme^{-\psi}\,\phi_1\\
		&\qquad\qquad = \sum_{j=0}^n \Big[\binom{n}{j}\,(\xi-\alpha)^j\,\big(\phi_1\,\rme^\psi\,(\Hodge_{2}-\alpha)^{-n/2}\,\rme^{-\psi}\big)\,\big(\rme^\psi\,(\Hodge_{2}-\xi)^{-1}\,\rme^{-\psi}\big)^j\\
		&\qquad\qquad\qquad\qquad \big(\rme^\psi\,(\Hodge_{2}-\alpha)^{-n/2}\,\rme^{-\psi}\,\phi_1\big)\Big].
		\end{align*}
		By \autoref{Le:L2 boundedness} and \autoref{Le:Resolvent bounds} we find a constant $C<\infty$ such that for every $\xi\in V$ and $\psi\in \Gamma_\varepsilon$, 
		\begin{align*}
		\Vert \phi_1\,\rme^\psi\,(\Hodge_{2}-\xi)^{-n}\,\rme^{-\psi}\,\phi_1\Vert_{\Ell^1,\Ell^\infty}\leq C.
		\end{align*}
		
		Hence, $\smash{\phi_1\,\rme^\psi\,(\Hodge_{2}-\xi)^{-n}\,\rme^{-\psi}\,\phi_1}$ is representable as an integral operator in the sense of \autoref{Th:Dunford Pettis forms} --- in particular, for every $\smash{\eta\in\Ell^\infty_\bs(T^*\mms)}$, we obtain
		\begin{align*}
		\vert  \phi_1\,\rme^\psi\,(\Hodge_{2}-\xi)^{-n}\,\rme^{-\psi}\,\phi_1\eta\vert\leq C\int_\mms\vert\eta\vert(y)\d\meas(y)\quad\meas\text{-a.e}.
		\end{align*}
		For clarity, we point out that here we used the mentioned \autoref{Th:Dunford Pettis forms} (which is proven only later), but no circular reasoning occurs since \autoref{Cor:Linfty boundedness of resolvent} will not be used in its proof. Setting $\eta := \phi_1^{-1}\,\rme^\psi\,\omega$, where $\smash{\omega\in\Ell^\infty_\bs(T^*\mms)}$ is arbitrary,
		\begin{align*}
		\vert (\Hodge_2-\xi)^{-n}\omega\vert \leq C\int_\mms \rme^{-\psi}\,\rme^{\psi(y)}\,\phi_1^{-1}\,\phi_1^{-1}(y)\,\vert\omega\vert(y)\d\meas(y)\quad\meas\text{-a.e.}
		\end{align*}
		By the arbitrariness of $\psi\in\Gamma_\varepsilon$ and \eqref{Eq:Intrinsic distance}, we obtain
		\begin{align*}
		\vert (\Hodge_2-\xi)^{-n}\omega\vert \leq C\int_\mms \rme^{-\varepsilon\met(\cdot,y)}\,\phi_1^{-1}\,\phi_1^{-1}(y)\,\vert\omega\vert(y)\d\meas(y)\quad\meas\text{-a.e.}
		\end{align*}
		
		The latter estimate is indeed true for every $\omega\in\Ell^\infty(T^*\mms)$ by an elementary cutoff argument, which establishes the desired assertion.
	\end{proof}
	
	\begin{proof}[Proof of \autoref{Th:Lp independence theorem}] Fix an arbitrary $p\in [1,2)\cup(2,\infty]$. We concentrate on the inclusion $\sigma(\Hodge_{p})\subset \sigma(\Hodge_{2})$. The inclusion $\sigma(\Hodge_{p})\supset \sigma(\Hodge_{2})$ follows as for \cite[Prop.~9]{charalambous2005}, and the argument for the isolated eigenvalues is the same as in (the references given in) the proof of \cite[Prop.~2.2]{hempel1986}.

		Let $V\subset\rho(\Hodge_{2})$ be compact with $V\cap (-\infty,0) \neq \emptyset$. Let $n\in\N$ be as in \autoref{Cor:Linfty boundedness of resolvent}. Since $\meas$ is uniformly subexponentially integrable, by \autoref{Cor:Linfty boundedness of resolvent} and taking adjoints, we see that $(\Hodge_{2}-\xi)^{-n}$ is bounded from $\Ell^p(T^*\mms)$ into $\Ell^p(T^*\mms)$ for $p\in\{1,\infty\}$. By Riesz--Thorin's interpolation theorem, $(\Hodge_{2}-\xi)^{-n}$ is actually bounded from $\Ell^p(T^*\mms)$ into $\Ell^p(T^*\mms)$ for every $p\in[1,\infty]$.
		
		By \eqref{Eq:Resolvent formula with heat flow} and since $\HHeat_{2,t} = \HHeat_{p,t}$ on $\Ell^2(T^*\mms)\cap\Ell^p(T^*\mms)$ for every $t\geq 0$, we get
		\begin{align}\label{Eq:p-q resolvent identity}
		(\Hodge_{2} - \xi)^{-n} = (\Hodge_{p} - \xi)^{-n}\quad\text{on }\Ell^2(T^*\mms)\cap\Ell^p(T^*\mms)
		\end{align}
		for every $\xi \in \rho(\Hodge_{2})\cap (-\infty,0)$. Since $V\cap (-\infty,0)\neq \emptyset$ and the map $\xi\mapsto (\Hodge_{2}-\xi)^{-n}$ is analytic on $\rho(\Hodge_{2})$, the identity \eqref{Eq:p-q resolvent identity} holds for every $\xi\in V$. In particular, $(\Hodge_{p}-\xi)^{-n}$ extends to a bounded linear operator from $\Ell^p(T^*\mms)$ into $\Ell^p(T^*\mms)$ for every $\xi\in V$, with
		\begin{align*}
		(\Hodge_{p}-\xi)^{-n} = (\Hodge_{2}-\xi)^{-n}\quad\text{on }\Ell^p(T^*\mms).
		\end{align*}
		It follows that $V\subset \rho(\Hodge_{p})$. Taking complements, we deduce the claimed inclusion.
	\end{proof}
	
	\begin{example} Let $N\in\N$ and $K<0$. The $N$-dimensional hyperbolic space $\smash{\mathbf{H}_K^N}$ with constant sectional curvature $K$ is an $\RCD^*(K(N-1),N)$ space when endowed with its Riemannian distance and Riemannian volume measure. In this situation, it is due to \cite[Thm.~14]{charalambous2005} that the set $\sigma(\Hodge_{p})$ \emph{does} depend on $p\in [1,\infty]$.
	\end{example}
	
	\section{Heat kernel}\label{Ch:Heat kernel}
	
	\subsection{Dunford--Pettis' theorem}\label{Sec:Dunford-Pettis}
	
	A crucial step in proving the existence of a heat kernel is a Dunford--Pettis-type theorem for co- or contravariant objects, see \autoref{Th:Dunford Pettis forms} below. See \cite[Lem.~11]{charalambous2005} for a smooth analogue obtained via computations in local coordinates. Our \autoref{Th:Dunford Pettis forms} significantly enlarges the scope of the latter to the language of $\Ell^1$-normed $\Ell^\infty$-modules from \autoref{Sec:A glimpse}.
	
	The following definition provides the setting for our understanding of integral operators over an $\Ell^p$-normed $\Ell^\infty$-module $\scrM$, $p\in [1,\infty]$. 
	
	\begin{definition}\label{Def:Kernel space} Given any $p,r\in [1,\infty]$, let $\scrM$ be a separable $\Ell^p$-normed $\Ell^\infty$-module, and $\scrN$ be a separable $\Ell^r$-normed $\Ell^\infty$-module. Let $\scrM^0$ and $\scrN^0$ be their corresponding $\Ell^0$-modules as introduced in \autoref{Sec:A glimpse}. We denote by $\scrN^0\boxtimes \scrM^0$ the space of all $\Ell^0$-bilinear maps $\sfa\colon \scrN^0\times \scrM^0\to \Ell^0(\mms^2)$.
		
		In the case $p=r$ and $\scrM = \scrN$, we briefly write $\smash{(\scrM^0)^{\boxtimes 2} := \scrM^0\boxtimes \scrM^0}$.
		
		For $\sfa\in \scrN^0\boxtimes\scrM^0$, we define the $\meas^{\otimes 2}$-measurable function $\vert\sfa\vert_\otimes\colon \mms^2\to [0,\infty]$ by
		\begin{align*}
		\vert \sfa\vert_\otimes(x,y) &:= \esssup\!\big\lbrace \vert\sfa[s,v]\vert(x,y) :\\
		&\qquad\qquad  s\in \scrN^0,\ v\in \scrM^0\text{ with } \vert s\vert,\vert v\vert\leq 1\,\meas\text{-a.e.}\big\rbrace.
		\end{align*}
	\end{definition}
	
	A key ingredient for \autoref{Th:Dunford Pettis forms} is the subsequent result from \cite[Thm.~2.2.5]{dunford1940}. Its advantage compared to the more general result \cite[Thm.~VI.8.6]{dunford1958} --- providing a similar statement with the Banach dual of \emph{any} separable Banach space as target domain --- is described in \autoref{Re:Later DP}. 
	
	\begin{proposition}\label{Th:DP40} Assume that $\sfB\colon \Ell^1(\mms)\to\Ell^\infty(\mms)$ is a linear and bounded map. Then there exists an $\meas^{\otimes 2}$-measurable kernel $\sfb\colon \mms^2\to\R$ such that
		\begin{align*}
		\Vert\sfb\Vert_{\Ell^\infty} = \Vert \sfB\Vert_{\Ell^1,\Ell^\infty} < \infty
		\end{align*}
		and, for every $g\in\Ell^1(\mms)$, 
		\begin{align*}
		\sfB g = \int_\mms \sfb(\cdot,y)\,g(y)\d\meas(y)\quad\meas\text{-a.e.}
		\end{align*}
				Such a kernel is unique in the sense that if $\tilde{\sfb}\colon \mms^2\to\R$ is another $\meas^{\otimes 2}$-measurable kernel fulfilling the foregoing obstructions, then $\smash{\tilde{\sfb}}$ does $\meas^{\otimes 2}$-a.e.~coincide with $\sfb$.
	\end{proposition}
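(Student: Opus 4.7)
The plan is to dualize the operator $\sfB$ into a bounded bilinear form and then apply a representation theorem via the projective tensor product. Concretely, I would define $\beta\colon \Ell^1(\mms)\times\Ell^1(\mms)\to\R$ by
\begin{align*}
\beta(f,g):=\int_\mms g\,\sfB f\d\meas,
\end{align*}
and note that $|\beta(f,g)|\leq \Vert \sfB\Vert_{\Ell^1,\Ell^\infty}\,\Vert f\Vert_{\Ell^1}\,\Vert g\Vert_{\Ell^1}$ by Hölder, so that $\beta$ is a continuous bilinear form of norm at most $\Vert \sfB\Vert_{\Ell^1,\Ell^\infty}$. Conversely, choosing $g$ of norm one concentrated where $|\sfB f|$ is close to its essential supremum shows that the bilinear form norm actually coincides with $\Vert \sfB\Vert_{\Ell^1,\Ell^\infty}$.

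The next step is to identify continuous bilinear forms on $\Ell^1(\mms)\times\Ell^1(\mms)$ with continuous linear functionals on $\Ell^1(\mms^2)$. This is the classical identification $\Ell^1(\mms)\,\widehat{\otimes}_\pi\,\Ell^1(\mms)\cong \Ell^1(\mms^2)$ of the projective tensor product (which holds for any pair of $\sigma$-finite measure spaces, a condition granted here since $\meas$ is a locally finite Borel measure on a separable metric space). Under this identification, the universal property extends $\beta$ to a unique continuous functional on $\Ell^1(\mms^2)$ of the same norm. By the duality $\Ell^1(\mms^2)^*\cong\Ell^\infty(\mms^2)$, there exists a unique $\sfb\in\Ell^\infty(\mms^2)$ with $\Vert \sfb\Vert_{\Ell^\infty}=\Vert \beta\Vert$ and
\begin{align*}
\beta(f,g)=\int_{\mms^2}\sfb(x,y)\,g(x)\,f(y)\d\meas^{\otimes 2}(x,y).
\end{align*}

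To pass from this to the pointwise representation, I would appeal to Fubini's theorem: for each $f\in\Ell^1(\mms)$, the function $x\mapsto \int_\mms \sfb(x,y)\,f(y)\d\meas(y)$ is well-defined for $\meas$-a.e.~$x\in\mms$, lies in $\Ell^\infty(\mms)$, and testing against arbitrary $g\in\Ell^1(\mms)$ yields the same value as $\int_\mms g\,\sfB f\d\meas$; since $\Ell^1(\mms)$ separates $\Ell^\infty(\mms)$, this forces the desired $\meas$-a.e.~identity. Uniqueness of $\sfb$ is immediate: if $\tilde{\sfb}$ is another admissible kernel, then $\sfb-\tilde{\sfb}$ annihilates every simple tensor $g\otimes f$, hence the whole of $\Ell^1(\mms^2)$ by density, and so vanishes $\meas^{\otimes 2}$-a.e.

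The only genuinely nontrivial ingredient is the projective tensor product identification $\Ell^1(\mms)\,\widehat{\otimes}_\pi\,\Ell^1(\mms)\cong \Ell^1(\mms^2)$, which is where the separability and $\sigma$-finiteness of $(\mms,\meas)$ enter in an essential way; everything else is routine duality and Fubini. Since this is precisely Dunford's classical theorem \cite{dunford1940} in its original form, I would simply cite it rather than reproving it in detail, reserving the verification effort for the $\Ell^\infty$-module generalization in \autoref{Th:Dunford Pettis forms}, where the analogous scheme must be implemented in the absence of a scalar duality.
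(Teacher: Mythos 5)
The paper states this proposition without proof, citing \cite[Thm.~2.2.5]{dunford1940}, and you correctly recognize this and propose the same citation. Your accompanying sketch via the isometric identification $\Ell^1(\mms)\,\widehat{\otimes}_\pi\,\Ell^1(\mms)\cong\Ell^1(\mms^2)$ followed by $\Ell^1(\mms^2)^*\cong\Ell^\infty(\mms^2)$ is a correct and clean modern route to the result, with the $\sigma$-finiteness of $\meas$ (guaranteed here since $\meas$ is locally finite on a separable metric space) the essential hypothesis, and the norm equality, Fubini argument, and uniqueness-by-density step all in order.
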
 
	
	\begin{theorem}[Dunford--Pettis theorem for $\Ell^\infty$-modules]\label{Th:Dunford Pettis forms} Let $\scrM$ and $\scrN$ be separable $\Ell^1$-normed $\Ell^\infty$-modules defined over $(\mms,\met,\meas)$. Suppose that $\sfA\colon \scrM\to\scrN^*$ is a linear map with $\Vert \sfA\Vert_{\scrM,\scrN^*}<\infty$. Then there exists $\sfa\in\scrN^0\boxtimes\scrM^0$ such that
		\begin{align*}
		\Vert\vert\sfa\vert_\otimes\Vert_{\Ell^\infty} = \Vert\sfA\Vert_{\scrM,\scrN^*}
		\end{align*}
		and, for every $v\in\scrM$ and every $s\in\scrN$, we have $\sfa[s,v]\in\Ell^1(\mms^2)$ with
		\begin{align*}
		\langle s\mid \sfA v\rangle = \int_\mms \sfa[s,v](\cdot,y)\d\meas(y)\quad\meas\text{-a.e.}
		\end{align*}
		The element $\sfa$ is unique in the sense that for any other $\smash{\tilde{\sfa}\in\scrN^0\boxtimes\scrM^0}$ satisfying the foregoing obstructions, $\smash{\sfa[s,v] = \tilde{\sfa}[s,v]}$ holds $\meas^{\otimes 2}$-a.e.~for every $v\in\scrM^0$ and every $s\in\scrN^0$.
	\end{theorem}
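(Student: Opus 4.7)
The plan is to reduce the statement to the scalar Dunford--Pettis theorem (\autoref{Th:DP40}) by first constructing scalar kernels for ``normalized'' pairs $(s,v) \in \scrN \times \scrM$ (meaning $|s|_\scrN, |v|_\scrM \leq 1$ $\meas$-a.e.) and then assembling these into the desired $\Ell^0$-bilinear object on $\scrN^0 \times \scrM^0$ via the $\Ell^0$-module machinery from \autoref{Sec:A glimpse}.

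Fix a normalized pair $(s,v)$. Because the $\Ell^\infty$-multiplication extends to $\Ell^0(\mms) \times \scrM^0 \to \scrM^0$, every $g \in \Ell^1(\mms)$ yields $gv \in \scrM^0$ with pointwise norm $|gv| = |g|\,|v| \leq |g|$ $\meas$-a.e., whence $gv \in \scrM$ and $\|gv\|_\scrM \leq \|g\|_{\Ell^1}$. Since $\sfA(gv) \in \scrN^*$ satisfies $\||\sfA(gv)|\|_{\Ell^\infty} \leq \|\sfA\|\,\|g\|_{\Ell^1}$, the pointwise Cauchy--Schwarz estimate combined with $|s| \leq 1$ gives
\begin{align*}
|\langle s \mid \sfA(gv)\rangle| \leq |s|\cdot|\sfA(gv)|_{\scrN^*} \leq \|\sfA\|\,\|g\|_{\Ell^1}\quad \meas\text{-a.e.}
\end{align*}
Hence $\sfT_{s,v}\colon \Ell^1(\mms) \to \Ell^\infty(\mms)$, $\sfT_{s,v}(g) := \langle s \mid \sfA(gv)\rangle$, is linear and bounded with $\|\sfT_{s,v}\|_{\Ell^1, \Ell^\infty} \leq \|\sfA\|$, and \autoref{Th:DP40} furnishes a unique kernel $\sft_{s,v} \in \Ell^\infty(\mms^2)$ with $\|\sft_{s,v}\|_{\Ell^\infty} \leq \|\sfA\|$ that represents $\sfT_{s,v}$ as an integral operator.

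Uniqueness of this scalar kernel then transports the module structure to the assignment $(s,v) \mapsto \sft_{s,v}$: for $f \in \Ell^\infty(\mms)$ with $\|f\|_{\Ell^\infty} \leq 1$, the identities $\sfA((fg)v) = \sfA(g\cdot fv)$ and $\langle fs \mid \cdot\rangle = f\,\langle s \mid \cdot\rangle$ give $\sft_{s, fv}(x,y) = f(y)\,\sft_{s,v}(x,y)$ and $\sft_{fs,v}(x,y) = f(x)\,\sft_{s,v}(x,y)$ $\meas^{\otimes 2}$-a.e., while additivity in each entry (for sums that remain normalized) is similar. To extend from normalized pairs to arbitrary $(s,v) \in \scrN^0 \times \scrM^0$, fix Borel partitions $(F_n)_{n \in \N}$ of $\{|v| > 0\}$ with $F_n \subset \{n-1 < |v| \leq n\}$ and $(G_m)_{m \in \N}$ of $\{|s| > 0\}$ with $G_m \subset \{m-1 < |s| \leq m\}$, set $v_n := \One_{F_n}\, v/n$ and $s_m := \One_{G_m}\, s/m$ (normalized by construction), and define
\begin{align*}
\sfa[s, v] := \sum_{n,m \in \N} nm\,\sft_{s_m, v_n} \in \Ell^0(\mms^2).
\end{align*}
A localization check shows $\sft_{s_m, v_n}$ is supported in $G_m \times F_n$, so the sum is well-defined pointwise; the $\Ell^\infty$-bilinearity on normalized pairs ensures both partition-independence and $\Ell^0$-bilinearity on $\scrN^0 \times \scrM^0$, the integral representation transfers blockwise, and $\sfa[s,v] \in \Ell^1(\mms^2)$ follows from the a.e.~bound $|\sfa[s,v]|(x,y) \leq \|\sfA\|\,|s|(x)\,|v|(y)$.

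The norm identity $\||\sfa|_\otimes\|_{\Ell^\infty} = \|\sfA\|$ then splits into two: the ``$\leq$'' direction is inherited from $\|\sft_{s,v}\|_{\Ell^\infty} \leq \|\sfA\|$ on normalized pairs together with the partition formula, and the reverse bound comes from the duality $\|\sfA v\|_{\scrN^*} = \esssup\{|\langle s \mid \sfA v\rangle| : s \in \scrN,\ |s| \leq 1\ \meas\text{-a.e.}\}$ combined with the integral representation. Uniqueness of $\sfa$ reduces on normalized blocks to the uniqueness clause of \autoref{Th:DP40} and then propagates by $\Ell^0$-bilinearity. The principal obstacle will be the third step: rigorously verifying that the partition sum is independent of the chosen $(F_n), (G_m)$, that the $\Ell^\infty$-bilinearity on normalized pairs lifts to genuine $\Ell^0$-bilinearity on the completions, and that the pointwise operator norm $|\sfa|_\otimes$ from \autoref{Def:Kernel space} coincides with the essential supremum of $\sft_{s,v}$ over normalized pairs. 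Separability of $\scrM$ and $\scrN$ enters essentially here to confine the essential supremum to a countable family and avoid measurability pathologies.
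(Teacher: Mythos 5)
Your scalarization via $\sfT_{s,v}(g) := \langle s \mid \sfA(gv)\rangle$ and the appeal to \autoref{Th:DP40} is identical in spirit to the paper's Step~1, but the globalization is genuinely different. The paper fixes countable dense families $\scrM_\diamond$, $\scrN_\diamond$ of bounded elements, applies \autoref{Th:DP40} only to those countably many pairs, establishes the bilinearity \eqref{Eq:Linearity b[v,s]} and the pointwise bound \eqref{Eq:Pointwise bound} outside a single $\meas^{\otimes 2}$-null set, and then extends to $\scrM^0\times\scrN^0$ by $\met_{\scrM^0}$-density together with an explicit Cauchy estimate in $\Ell^0(\mms^2)$ driven by that pointwise bound. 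You instead apply \autoref{Th:DP40} to the continuum of normalized pairs and extend by a layered sum of disjointly supported block kernels. What the paper's route buys is a one-time bookkeeping of exceptional null sets via the countable algebra $\scrD_\diamond$; your route is conceptually more direct but defers that bookkeeping to every subsequent consistency check.

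That deferral is exactly where your acknowledged obstacle lies, and it is a genuine piece of work rather than a formality. For a single pair $(s,v)$ with a fixed choice of layers the block sum is well-defined (its summands are $\meas^{\otimes 2}$-a.e.~supported on the pairwise disjoint rectangles $G_m\times F_n$, so the sum is a.e.~a sum of at most one term) and the pointwise bound $|\sfa[s,v]|\le\Vert\sfA\Vert_{\scrM,\scrN^*}\,|s|(\pr_1)\,|v|(\pr_2)$ follows blockwise, giving $\Ell^1$-integrability. But partition independence, additivity, and $\Ell^0$-homogeneity in each slot all require that countably many relations of the form $\sft_{s,\One_B v}=\One_B(\pr_2)\,\sft_{s,v}$ hold simultaneously outside a single null set, where $B$ ranges over a common refinement of two partitions --- and each such relation is delivered by \autoref{Th:DP40} only $\meas^{\otimes 2}$-a.e.~with a pair-dependent exceptional set that must be consolidated. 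This is precisely the coherence the paper's Step~2 furnishes by restricting from the outset to the countable family. If you make that consolidation explicit, your partition argument closes; the norm identity, with $\ge$ coming from testing the integral representation against the $\esssup$ defining $|\cdot|$ on $\scrN^*$, and the uniqueness statement then follow by the same duality computation the paper carries out in its Step~4.
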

	
	\begin{proof} \textsc{Step 1.} \textit{Integral kernel for the pointwise pairing with $\sfA$.} Let $\scrD_\infty$, $\scrM_\infty$ and $\scrN_\infty$ be countable dense subsets in $\Ell^0(\mms)$, $\scrM$ and $\scrN$ made of $\meas$-essentially bounded elements. We may and will assume that $\One_\mms,0\in\scrD_\infty$. Let $\scrD_\diamond$ be the smallest algebra of functions in $\Ell^0(\mms)$ --- w.r.t.~pointwise multiplication --- which contains $\scrD_\infty$. Furthermore, let $\scrM_\diamond$ and $\scrN_\diamond$ be the sets of all finite linear combinations of elements of the form $f\,v$ and $f\,s$, respectively, where $f\in\scrD_\diamond$, $v\in\scrM_\infty$ and $s\in\scrN_\infty$. The classes $\scrD_\diamond$, $\scrM_\diamond$ and $\scrN_\diamond$ are all countable, consist of $\meas$-essentially bounded elements, and are dense in $\Ell^0(\mms)$, $\scrM^0$ and $\scrN^0$, respectively.
		
		Given any $v\in\scrM_\diamond$ and any $s\in\scrN_\diamond$, define $\sfB[s,v]\colon\Ell^1(\mms)\to\Ell^\infty(\mms)$ by
		\begin{align*}
		\sfB[s,v]g := \langle s\mid\sfA(g\,v)\rangle.
		\end{align*}
		The map $\sfB[s,v]$ is clearly linear, and it is well-defined and bounded since
		\begin{align*}
		\Vert \sfB[s,v]g\Vert_{\Ell^\infty} &\leq \Vert \sfA(g\,v)\Vert_{\scrN^*}\,\Vert \vert s\vert\Vert_{\Ell^\infty}\leq \Vert\sfA\Vert_{\scrM,\scrN^*}\,\Vert g\,v\Vert_\scrM\,\Vert \vert s\vert\Vert_{\Ell^\infty}\\
		&\leq \Vert\sfA\Vert_{\scrM,\scrN^*}\,\Vert g\Vert_{\Ell^1}\,\Vert\vert v\vert\Vert_{\Ell^\infty}\,\Vert\vert s\vert\Vert_{\Ell^\infty}.
		\end{align*}
		By \autoref{Th:DP40}, there exists a kernel $\sfb[s,v]\in\Ell^\infty(\mms^2)$, $\meas^{\otimes 2}$-a.e.~uniquely determined in a proper way, such that for every $g\in\Ell^1(\mms)$,
		\begin{align}\label{Eq:B v s identity}
		\sfB[s,v]g = \int_\mms \sfb[s,v](\cdot,y)\,g(y)\d\meas(y)\quad\meas\text{-a.e.}
		\end{align}
		
		\textsc{Step 2.} \textit{Properties of the obtained integral kernel.} An immediate property coming from the fact that $\sfB[s,c\,v]g = \sfB[s,v](c\,g)$ and $\sfB[d\,s, v]g = d\,\sfB[s,v]g$ for every $g\in\Ell^1(\mms)$ and every $c,d\in\Ell^\infty(\mms)$, and the $\meas^{\otimes 2}$-a.e.~uniqueness of the induced integral kernel is the following bilinearity. For every $c,d\in\scrD_\diamond$, every $v,v'\in\scrM_\diamond$ and every $s,s'\in\scrN_\diamond$, we have
		\begin{align}\label{Eq:Linearity b[v,s]}
		\begin{split}
		\sfb[d\,s + s',c\,v+v'] &= d(\pr_1)\,c(\pr_2)\,\sfb[s,v] + d(\pr_1)\,\sfb[s,v']\\
		&\qquad\qquad + c(\pr_2)\,\sfb[s',v] +\sfb[s',v'] \quad\meas^{\otimes 2}\text{-a.e.}
		\end{split}
		\end{align}
		
		Moreover, for every $v\in\scrN_\diamond$ and every $s\in\scrN_\diamond$, we claim that
		\begin{align}\label{Eq:Pointwise bound}
		\vert\sfb[s,v]\vert\leq \Vert\sfA\Vert_{\scrM,\scrN^*}\,\vert s\vert(\pr_1)\,\vert v\vert(\pr_2)\quad\meas^{\otimes 2}\text{-a.e.}
		\end{align}
		Indeed, let $g,h\in\Ell^1(\mms)$ be nonnegative. Multiplying both sides of the identity \eqref{Eq:B v s identity} with $h$ and integrating w.r.t.~$\meas$ yields
		\begin{align}\label{Eq:Estim}
		&\int_{\mms^2} \sfb[s,v](x,y)\,g(y)\,h(x)\d\meas^{\otimes 2}(x,y) = \int_\mms \sfB[s,v]g(x)\,h(x)\d\meas(x)\\
		&\qquad\qquad \leq \int_\mms \vert\sfA(g\,v)\vert(x)\,\vert s\vert(x)\,h(x)\d\meas(x)\nonumber\\
		&\qquad\qquad \leq \int_{\mms^2} \Vert \sfA\Vert_{\scrM,\scrN^*}\,\vert v\vert(y)\,\vert s\vert(x)\,g(y)\,h(x)\d\meas^{\otimes 2}(x,y).\nonumber
		\end{align}
		Changing the sign in both sides of \eqref{Eq:Estim}, the claim follows by the arbitrariness of $g$ and $h$.
		
		\textsc{Step 3.} \textit{Definition of $\sfa$.} Since the topology of $\scrM^0$ and $\scrN^0$ is intrinsic, in the sense indicated in \autoref{Sec:A glimpse}, we consider the distances $\met_{\scrM^0}$ and $\met_{\scrN^0}$ as defined w.r.t.~a fixed partition $(E_j)_{j\in\N}$ of $\mms$ into Borel subsets of finite and positive $\meas$-measure. Then $\smash{(E_j\times E_{j'})_{j,j'\in\N}}$ is a partition of $\mms^2$ into Borel sets of finite and positive $\meas^{\otimes 2}$-measure.  Let $(F_k)_{k\in\N_{\geq 2}}$ be an enumeration of the latter sequence with the property that if $F_k = E_j \times E_{j'}$ for some $k\in\N_{\geq 2}$ and some $j,j'\in\N$, then $j+j'\leq k$ (e.g.~using Cantor's diagonal procedure). We define the distance $\met_{\Ell^0}$ on $\Ell^0(\mms^2)$ w.r.t.~this sequence $(F_k)_{k\in\N_{\geq 2}}$.
		
		If $v\in\scrM_\diamond$ and $s\in\scrN_\diamond$, we set 
		\begin{align*}
		\sfa[s,v] := \sfb[s,v].
		\end{align*}
		
		Next, let any $v\in\scrM^0$ and $s\in\scrN^0$ satisfy $\vert v\vert,\vert s\vert\in\Ell^\infty(\mms)$. By density of $\scrM_\diamond$ in $\scrM$ and by the definition of $\scrM^0$, there exists a sequence $(v_n)_{n\in\N}$ in $\scrM_\diamond$ converging to $v$ w.r.t.~$\met_{\scrM^0}$. Analogously, we extract a sequence $(s_i)_{i\in\N}$ in $\scrN_\diamond$ converging to $s$ w.r.t.~$\met_{\scrN^0}$. Define
		\begin{align*}
		C := \max\!\big\lbrace \Vert \sfA\Vert_{\scrM,\scrN^*}+1, \Vert\vert v\vert\Vert_{\Ell^\infty}+1, \Vert\vert s\vert\Vert_{\Ell^\infty}+1\big\rbrace.
		\end{align*} 
		Given any $\varepsilon > 0$, select $L\in\N$ such that, for every $n,n',i,i' \geq L$,
		\begin{align*}
		\max\!\big\lbrace \met_{\scrM^0}(v_n,v_{n'}),\ \met_{\scrM^0}(v,v_{n'}),\ \met_{\scrN^0}(s_i,s_{i'}),\ \met_{\scrN^0}(s_i,s) \big\rbrace \leq \frac{\varepsilon}{6C^2}.
		\end{align*}
		Using the elementary fact that
		\begin{align*}
		\min\{a+b,1\}&\leq \min\{a,1\} +\min\{b,1\},\\
		\min\{ab,1\}&\leq \min\{a,1\} + \min\{b,1\}
		\end{align*}
		for every $a,b\in [0,\infty)$ as well as \eqref{Eq:Linearity b[v,s]} and \eqref{Eq:Pointwise bound} thus yields
		\begin{align*}
		&\sum_{k=2}^\infty \frac{2^{-k}}{\meas^{\otimes 2}[F_k]}\int_{F_k}\min\!\big\lbrace \vert \sfa[s_i,v_n] - \sfa[s_{i'}, v_{n'}]\vert,1\big\rbrace\d\meas^{\otimes 2}\\
		&\qquad\qquad\leq \sum_{j,j'=1}^\infty \frac{2^{-j-j'}}{\meas[E_j]\,\meas[E_{j'}]} \int_{E_j\times E_{j'}}\min\!\big\lbrace \vert \sfa[s_i,v_n] - \sfa[s_{i'}, v_{n'}]\vert,1\big\rbrace\d\meas^{\otimes 2}\\
		&\qquad\qquad\leq \sum_{j,j'=1}^\infty \frac{2^{-j-j'}}{\meas[E_j]\,\meas[E_{j'}]}\int_{E_j\times E_{j'}} \min\!\big\lbrace \vert\sfa[s_i,v_n - v_{n'}]\vert,1\big\rbrace\d\meas^{\otimes 2}\\
		&\qquad\qquad\qquad\qquad + \sum_{j,j'=1}^\infty \frac{2^{-j-j'}}{\meas[E_j]\,\meas[E_{j'}]}\int_{E_j\times E_{j'}} \min\!\big\lbrace \vert\sfa[s_i-s_{i'},  v_{n'}]\vert,1\big\rbrace\d\meas^{\otimes 2}\\
		&\qquad\qquad\leq C\sum_{j,j'=1}^\infty \frac{2^{-j-j'}}{\meas[E_j]\,\meas[E_{j'}]}\,\int_{E_j\times E_{j'}} \min\!\big\lbrace\vert s_i\vert(\pr_1)\,\vert v_n - v_n'\vert(\pr_2),1\big\rbrace\d\meas^{\otimes 2}\\
		&\qquad\qquad\qquad\qquad + C\sum_{j,j'=1}^\infty \frac{2^{-j-j'}}{\meas[E_j]\,\meas[E_{j'}]}\,\int_{E_j\times E_{j'}} \min\!\big\lbrace\vert s_i-s_{i'}\vert(\pr_1)\,\vert v_{n'}\vert(\pr_2),1\big\rbrace\d\meas^{\otimes 2}\\
		&\qquad\qquad\leq C\sum_{j,j'=1}^\infty \frac{2^{-j-j'}}{\meas[E_j]\,\meas[E_{j'}]}\,\int_{E_j\times E_{j'}} \min\!\big\lbrace\vert s_i-s\vert(\pr_1)\,\vert v_n - v_n'\vert(\pr_2),1\big\rbrace\d\meas^{\otimes 2}\\
		&\qquad\qquad\qquad\qquad + C^2\sum_{j,j'=1}^\infty \frac{2^{-j-j'}}{\meas[E_{j'}]}\,\int_{E_{j'}} \min\!\big\lbrace \vert v_n - v_n'\vert,1\big\rbrace\d\meas\\
		&\qquad\qquad\qquad\qquad + C\sum_{j,j'=1}^\infty \frac{2^{-j-j'}}{\meas[E_j]\,\meas[E_{j'}]}\,\int_{E_j\times E_{j'}} \min\!\big\lbrace\vert s_i-s_{i'}\vert(\pr_1)\,\vert v_{n'}-v\vert(\pr_2),1\big\rbrace\d\meas^{\otimes 2}\\
		&\qquad\qquad\qquad\qquad + C^2\sum_{j,j'=1}^\infty \frac{2^{-j-j'}}{\meas[E_j]}\,\int_{E_j} \min\!\big\lbrace\vert s_i-s_{i'}\vert,1\big\rbrace\d\meas\leq \varepsilon.
		\end{align*}
		Thus $(\sfb[s_i,v_n])_{n,i\in\N}$ is a Cauchy sequence in $\Ell^0(\mms^2)$ --- we define $\sfa[s,v]$ as its $\meas^{\otimes 2}$-a.e.~unique limit in $\Ell^0(\mms^2)$. A similar argument shows that this definition of $\sfa[s,v]$ is independent of the particularly chosen approximating sequences in $\scrM_\diamond$ and $\scrN_\diamond$, respectively. Moreover, the identities \eqref{Eq:Linearity b[v,s]}, for arbitrary $c,d\in\Ell^\infty(\mms)$, and \eqref{Eq:Pointwise bound} remain true for $\sfb$ replaced by $\sfa$.
		
		Lastly, for arbitrary $v\in\scrM^0$ and $s\in\scrN^0$, the sequences $(v_n)_{n\in\N}$ and $(s_i)_{i\in\N}$ given by $v_n := \One_{[0,n]}(\vert v\vert)\, v$ and $s_i := \One_{[0,i]}(\vert s\vert)\, s$ converge to $v$ and $s$ in $\scrM^0$ and $\scrN^0$, respectively. Indeed, observe that $\vert v - v_n\vert\to 0$ pointwise $\meas$-a.e.~as $n\to\infty$ and $\vert s-s_i\vert \to 0$ pointwise $\meas$-a.e.~as $i\to\infty$, and the claim follows since pointwise $\meas$-a.e.~convergent sequences converge in measure on finite measure spaces. By \eqref{Eq:Linearity b[v,s]} and \eqref{Eq:Pointwise bound},  $(\sfb[s_i,v_n])_{n,i\in\N}$ is a Cauchy sequence in $\Ell^0(\mms^2)$ --- we again define $\sfa[s,v]$ as its $\meas^{\otimes 2}$-a.e.~unique limit. Once again, it is easily seen that this definition does not depend on the chosen approximating sequences for $v$ and $s$, respectively, and that \eqref{Eq:Linearity b[v,s]}, for arbitrary $c,d\in\Ell^0(\mms)$, and \eqref{Eq:Pointwise bound} hold for $\sfa$ instead of $\sfb$.
		
		\textsc{Step 4.} \textit{Properties of $\sfa$.} From the previous step, we already know that $\sfa\in\scrN^0\boxtimes\scrM^0$.
		
		Moreover, from \eqref{Eq:Pointwise bound} for $\sfa$, it already follows that $\sfa[s,v]\in\Ell^1(\mms^2)$ for every $v\in\scrM$ and every $s\in\scrN$, and that
		\begin{align*}
		\Vert\vert\sfa\vert_\otimes\Vert_{\Ell^\infty}\leq \Vert\sfA\Vert_{\scrM,\scrN^*}.
		\end{align*}
		To show the claimed integral identity, let $z\in\mms$. For any sequences $(v_n)_{n\in\N}$ in $\scrM_\diamond$ and $(s_i)_{i\in\N}$ in $\scrN_\diamond$ converging to $v$ and $s$ in $\scrM$ and $\scrN$, respectively, from \eqref{Eq:B v s identity} we get, for every $n,i,k\in\N$,
		\begin{align*}
		\langle s\mid A(\One_{B_k(z)}\,v)\rangle = \int_{B_k(z)} \sfa[s_i,v_n](\cdot,y)\d\meas(y)\quad\meas\text{-a.e.}
		\end{align*}
		Letting $k\to\infty$ together with the continuity of $\sfA$ and then $n\to\infty$ and $i\to\infty$, employing that $\sfa[s_i,v_n]\to \sfa[s,v]$ in $\Ell^1(\mms^2)$ by virtue of \eqref{Eq:Pointwise bound}, the desired claim is deduced. 
		
		From this, the inequality
		\begin{align*}
		\Vert\vert\sfa\vert_\otimes\Vert_{\Ell^\infty}\geq \Vert \sfA\Vert_{\scrM,\scrN^*}
		\end{align*}
		simply follows by observing that, by definition of the pointwise norm in $\scrN^*$,
		\begin{align*}
		\vert\sfA v\vert &\leq \int_\mms \esssup\!\big\lbrace \sfa[s,v] : s\in\scrN^0,\ \vert s\vert\leq 1\ \meas\text{-a.e.}\big\rbrace\d\meas\\
		&\leq \Vert\vert\sfa\vert_\otimes\Vert_{\Ell^\infty}\,\Vert v\Vert_\scrM\quad\meas\text{-a.e.}
		\end{align*}
		
		The uniqueness statement is clear by $\Ell^0$-bilinearity of all considered mappings.
	\end{proof}
	
	\begin{remark}\label{Re:Later DP} In the setting of \autoref{Th:Dunford Pettis forms}, the general result \cite[Thm.~VI.8.6]{dunford1958} would provide a map $a$ on $\mms$, $\meas$-essentially uniquely determined in a proper way, such that $a(y)\colon\scrM\to \scrN^*$ is linear for $\meas$-a.e.~$y\in\mms$ and, for every $v\in\scrM$ and every $s\in\scrN$, we have
		\begin{align*}
		\int_\mms \langle s\mid \sfA v\rangle\d\meas = \int_\mms\int_\mms \langle s\mid a(y) v\rangle(x)\d\meas(x)\d\meas(y).
		\end{align*}
		
		However, it is not clear that the map $(x,y)\mapsto \langle s\mid a(y)v\rangle(x)$ is $\meas^{\otimes 2}$-measurable --- a property which is implicitly used at many places in the proof of \autoref{Th:Dunford Pettis forms}. Even in functional treatises, this is considered as a delicate detail \cite[Ch.~3]{grigoryan2014} and explains why we chose the formulation of \autoref{Def:Kernel space} with target space $\Ell^0(\mms^2)$.
	\end{remark}

	\subsection{Explicit construction as integral kernel}\label{Sub:Expl constr}
	
	We are now in a position to introduce our main result. On weighted Riemannian manifolds with not necessarily uniform lower Ricci bounds, a version of it has been proven in \cite[Thm.~XI.1]{gueneysu2017} using Lebesgue's differentiation theorem and thus local compactness of the underlying space, an assumption we do not make. See also \cite{grigoryan2009} for a thorough functional treatment.
	
	\begin{theorem}[Heat kernel existence]\label{Th:Heat kernel existence} Let $(\mms,\met,\meas)$ be an $\RCD(K,\infty)$ space, $K\in\R$. Then there exists a mapping $\smash{\Hheat\colon (0,\infty)\to \Ell^0(T^*\mms)^{\boxtimes 2}}$ such that for all $p,q\in [1,\infty]$ with $1/p+1/q=1$, if $\omega\in\Ell^p(T^*\mms)$ and $\eta\in\Ell^q(T^*\mms)$, for every $t>0$ we have $\Hheat_t[\eta,\omega]\in\Ell^1(\mms^2)$, and
		\begin{align*}
		\langle\eta,\HHeat_t\omega\rangle = \int_\mms \Hheat_t[\eta,\omega](\cdot,y)\d\meas(y)\quad\meas\text{-a.e.}
		\end{align*}
		The previously mentioned mapping $\sfh$ is uniquely determined in the sense that for every mapping $\smash{\tilde{\Hheat}\colon (0,\infty)\to\Ell^0(T^*\mms)^{\boxtimes 2}}$ satisfying the foregoing obstructions, for every $\omega,\eta\in\Ell^0(T^*\mms)$ and every $t>0$, the identity $\Hheat_t[\eta,\omega] = \smash{\tilde{\Hheat}_t[\eta,\omega]}$ holds $\meas^{\otimes 2}$-a.e.
	\end{theorem}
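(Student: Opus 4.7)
The plan is to construct $\Hheat_t$ for each $t>0$ using the Dunford--Pettis-type theorem for $\Ell^\infty$-modules (\autoref{Th:Dunford Pettis forms}), applied to a perturbation of $\HHeat_t$ that becomes bounded from $\Ell^1(T^*\mms)$ into $\Ell^\infty(T^*\mms)$. Fix $t>0$ and set $\phi_t(x):=\meas[B_{\sqrt{t}}(x)]^{1/2}$, a positive $\meas$-measurable function, and consider the operator $\sfA_t:=\phi_t\,\HHeat_t\,\phi_t$, initially defined on $\omega\in\Ell^1(T^*\mms)\cap\Ell^\infty(T^*\mms)$ of bounded support. Combining Hess--Schrader--Uhlenbrock's inequality (\autoref{Th:Kato-Simon}) with the Gaussian upper bound of \autoref{Th:Heat kernel bound thm} yields, for any fixed $\varepsilon>0$,
\begin{align*}
\vert\sfA_t\omega\vert(x) &\leq \rme^{-Kt}\,\phi_t(x)\int_\mms \sfp_t(x,y)\,\phi_t(y)\,\vert\omega\vert(y)\d\meas(y)\\
&\leq \rme^{-Kt+C_1(1+C_2 t)}\int_\mms \rme^{-\met^2(x,y)/((4+\varepsilon)t)}\,\vert\omega\vert(y)\d\meas(y),
\end{align*}
so $\sfA_t$ extends to a bounded linear operator $\Ell^1(T^*\mms)\to\Ell^\infty(T^*\mms)$ with explicit norm bound depending only on $K$, $\varepsilon$ and $t$.

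Applying \autoref{Th:Dunford Pettis forms} with $\scrM=\scrN=\Ell^1(T^*\mms)$ (so $\scrN^*=\Ell^\infty(T^*\mms)$) then produces an $\Ell^0$-bilinear map $\sfa_t\in\Ell^0(T^*\mms)^{\boxtimes 2}$ satisfying $\sfa_t[\eta,\omega]\in\Ell^1(\mms^2)$ and $\langle\eta,\sfA_t\omega\rangle=\int_\mms\sfa_t[\eta,\omega](\cdot,y)\d\meas(y)$ $\meas$-a.e.\ for every $\eta,\omega\in\Ell^1(T^*\mms)$. Revisiting Step~2 in the proof of \autoref{Th:Dunford Pettis forms}, where any positive kernel dominating the operator is inherited by the DP kernel, sharpens the crude operator-norm estimate to the pointwise bound
\begin{align*}
\vert\sfa_t\vert_\otimes(x,y)\leq \rme^{-Kt}\,\phi_t(x)\,\phi_t(y)\,\sfp_t(x,y)\qquad \meas^{\otimes 2}\text{-a.e.}
\end{align*}
The candidate heat kernel is then defined, invoking the $\Ell^0$-bilinearity of $\sfa_t$ and the fact that $\phi_t^{-1}\in\Ell^0(\mms)$, by
\begin{align*}
\Hheat_t[\eta,\omega]:=\sfa_t\big[\phi_t^{-1}\eta,\,\phi_t^{-1}\omega\big]\qquad \text{for }\eta,\omega\in\Ell^0(T^*\mms),
\end{align*}
which yields the pointwise bound $\vert\Hheat_t[\eta,\omega]\vert(x,y)\leq \rme^{-Kt}\,\sfp_t(x,y)\,\vert\eta\vert(x)\,\vert\omega\vert(y)$.

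For the assertions in the theorem, fix Hölder conjugate $p,q\in[1,\infty]$ and $\omega\in\Ell^p(T^*\mms)$, $\eta\in\Ell^q(T^*\mms)$. Fubini, the above pointwise bound, Hölder's inequality and the $\Ell^p$-contractivity of $(\ChHeat_t)_{t\geq 0}$ then give $\Vert \Hheat_t[\eta,\omega]\Vert_{\Ell^1(\mms^2)}\leq \rme^{-Kt}\,\Vert\eta\Vert_{\Ell^q}\,\Vert\omega\Vert_{\Ell^p}$. The integral identity will be verified first on the dense class of $\omega,\eta\in\Ell^1(T^*\mms)\cap\Ell^\infty(T^*\mms)$ of bounded support that additionally satisfy $\phi_t^{-1}\omega,\phi_t^{-1}\eta\in\Ell^1(T^*\mms)$ (extracted by truncating on the region where $\phi_t$ is below a threshold), via the pointwise identity $\langle\eta,\HHeat_t\omega\rangle=\langle\phi_t^{-1}\eta,\,\sfA_t(\phi_t^{-1}\omega)\rangle$ together with the Dunford--Pettis integral formula for $\sfa_t$; the $\Ell^1(\mms^2)$-bound above then propagates the identity to all conjugate pairs by density and continuity in both arguments. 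Uniqueness of $\Hheat_t$ follows because the integral identity, combined with $\Ell^0$-bilinearity and the $\meas^{\otimes 2}$-a.e.~uniqueness clause of \autoref{Th:Dunford Pettis forms}, pins down $\Hheat_t[\eta,\omega]$ on a dense family and hence on all of $\Ell^0(T^*\mms)^2$. I expect the main technical obstacle to be keeping the calculus of $\phi_t$-multiplications self-consistent within Gigli's $\Ell^0$-module framework --- since $\phi_t$ is neither bounded nor bounded away from zero globally --- and producing a truncation scheme that is simultaneously dense in each $\Ell^p(T^*\mms)\times\Ell^q(T^*\mms)$; once this is set up, the transfer of the sharp Gaussian bound from $\sfA_t$ to $\sfa_t$ and back to $\Hheat_t$ is largely a bookkeeping exercise.
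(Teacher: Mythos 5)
Your proof takes essentially the same route as the paper: the same weighted perturbation $\sfA_t = \phi_t\,\HHeat_t\,\phi_t$, the same combination of Hess--Schrader--Uhlenbrock with the Gaussian upper bound from \autoref{Th:Heat kernel bound thm} to get $\Ell^1$-to-$\Ell^\infty$ boundedness, the same appeal to \autoref{Th:Dunford Pettis forms}, and the same transfer of the pointwise Gaussian bound from the operator to the kernel $\sfa_t$. The only cosmetic difference is the weight removal: you divide by $\phi_t$ directly, invoking $\phi_t^{-1}\in\Ell^0(\mms)$ and $\Ell^0$-bilinearity of $\sfa_t$, while the paper realizes the same element of $\Ell^0(T^*\mms)^{\boxtimes 2}$ as the $\Ell^0(\mms^2)$-limit of $\sfa_t\bigl[(\phi_t+\varepsilon)^{-1}\eta,\,(\phi_t+\iota)^{-1}\omega\bigr]$ as $\varepsilon,\iota\downarrow 0$; these clearly coincide. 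One point to tighten: the ``propagation by density and continuity in both arguments'' does not literally cover $p\in\{1,\infty\}$, since boundedly supported forms are not $\Ell^\infty$-norm-dense in $\Ell^\infty(T^*\mms)$. The paper instead integrates the pre-limit identity against $\meas$ and passes to the limit via Lebesgue's dominated convergence theorem; your pointwise domination $\vert\Hheat_t[\eta,\omega]\vert\leq\rme^{-Kt}\,\sfp_t\,\vert\eta\vert(\pr_1)\,\vert\omega\vert(\pr_2)$ provides exactly the majorant needed to run the same argument along truncations $\One_{E_n}\eta$, $\One_{E_n}\omega$ with bounded $E_n\uparrow\mms$, so the fix is immediate.
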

	
	\begin{proof} \textsc{Step 1.} \emph{Kernel for a perturbation of $\HHeat_t$.} Let $t>0$. We define the weight $\phi_t\colon\mms\to\R$, locally bounded by the volume growth property \eqref{Eq:Volume growth}, and the operator $\smash{\sfA_t\colon\Ell_\bs^1(T^*\mms)\to\Ell^0(T^*\mms)}$ as
		\begin{align*}
		\phi_t(x) &:= \meas\big[B_{\sqrt{t}}(x)\big]^{1/2},\\
		\sfA_t &:= \phi_t\,\HHeat_t\,\phi_t.
		\end{align*}
		
		By \autoref{Th:Kato-Simon} and the functional heat kernel bound from \autoref{Th:Heat kernel bound thm}, there exist constants $C_1,C_2<\infty$ such that for every $\smash{\omega\in\Ell_\bs^1(T^*\mms)}$, 
		\begin{align*}
		\vert \sfA_t\omega\vert &\leq  \rme^{-Kt}\int_\mms \phi_t\,\sfp_t(\cdot,y)\,\phi_t(y)\,\vert\omega\vert(y)\d\meas(y)\\
		&\leq \rme^{-Kt}\,\rme^{C_1(1+C_2t)}\,\Vert\omega\Vert_{\Ell^1}\quad\meas\text{-a.e.}\textcolor{white}{\int_M}
		\end{align*}
		Therefore, $\sfA_t$ uniquely extends to a bounded and linear operator from $\Ell^1(T^*\mms)$ into $\Ell^\infty(T^*\mms)$, whose extension we still denote by $\sfA_t$.
		
		\autoref{Th:Dunford Pettis forms} thus provides us with some element $\smash{\sfa_t\in\Ell^0(T^*\mms)^{\boxtimes 2}}$, uniquely determined in a proper way, such that for every $\omega,\eta\in\Ell^1(T^*\mms)$, 
		\begin{align*}
		\langle\eta,\sfA_t\omega\rangle = \int_\mms \sfa_t[\eta,\omega](\cdot,y)\d\meas(y)\quad\meas\text{-a.e.}
		\end{align*}
		In fact, arguing as in the proof of \autoref{Th:Dunford Pettis forms}, we obtain
		\begin{align}\label{Eq:Heat kernel est}
		\vert\sfa_t\vert_\otimes \leq \rme^{-Kt}\,\phi_t(\pr_1)\,\phi_t(\pr_2)\,\sfp_t\quad\meas^{\otimes 2}\text{-a.e.}
		\end{align}
		
		\textsc{Step 2.} \emph{Removing the weights.} Given the element $\sfa_t$ extracted in the previous step and any $\varepsilon,\iota > 0$, we define $\smash{\Hheat_t^{\varepsilon,\iota}\in \Ell^0(T^*\mms)^{\boxtimes 2}}$ through
		\begin{align*}
		\Hheat_t^{\varepsilon,\iota}[\eta,\omega] := \sfa_t\Big[\frac{1}{\phi_t+\varepsilon}\,\eta,\frac{1}{\phi_t+\iota}\,\omega\Big].
		\end{align*}
		It is clear from \eqref{Eq:Heat kernel est} and \autoref{Th:Heat kernel bound thm} that $\smash{\Hheat_t^{\varepsilon,\iota}[\eta,\omega]}\in\Ell^1(\mms^2)$ for every $\omega,\eta\in\Ell^1(T^*\mms)$. Moreover, if in addition $\smash{\omega\in\Ell_\bs^1(T^*\mms)}$, then 
		\begin{align}\label{Eq:Epsilon equality}
		\Big\langle \eta, \frac{\phi_t}{\phi_t+\varepsilon}\,\HHeat_t\Big[\frac{\phi_t}{\phi_t+\iota}\,\omega\Big]\Big\rangle = \int_\mms \Hheat_t^{\varepsilon,\iota}[\eta,\omega](\cdot,y)\d\meas(y)\quad\meas\text{-a.e.}
		\end{align}
		
		Next, observe that for every $\omega,\eta\in\Ell^0(T^*\mms)$ and every $\varepsilon',\iota'>0$, by \eqref{Eq:Heat kernel est},
		\begin{align*}
		&\vert\Hheat_t^{\varepsilon,\iota}[\eta,\omega] - \Hheat_t^{\varepsilon',\iota'}[\eta,\omega]\vert\\
		&\qquad\qquad \leq \Big\vert\sfa_t\Big[\frac{1}{\phi_t+\varepsilon}\,\eta, \frac{1}{\phi_t+\iota}\,\omega - \frac{1}{\phi_t+\iota'}\,\omega\Big]\Big\vert\\
		&\qquad\qquad\qquad\qquad + \Big\vert \sfa_t\Big[\frac{1}{\phi_t+\varepsilon}\,\eta - \frac{1}{\phi_t+\varepsilon'}\,\eta, \frac{1}{\phi_t+\iota'
		}\,\omega\Big]\Big\vert\\
		&\qquad\qquad\leq \rme^{-Kt}\,\vert\eta\vert(\pr_1)\,\vert\omega\vert(\pr_2)\,\sfp_t\times\,\Big\vert \frac{\phi_t(\pr_2)}{\phi_t(\pr_2)+\iota} - \frac{\phi_t(\pr_2)}{\phi_t(\pr_2)+\iota'}\Big\vert\\
		&\qquad\qquad\qquad\qquad + \bigg[ \rme^{-Kt}\,\vert\eta\vert(\pr_1)\,\vert\omega\vert(\pr_2)\,\sfp_t\\
		&\qquad\qquad\qquad\qquad\qquad\qquad \times\,\Big\vert \frac{\phi_t(\pr_1)}{\phi_t(\pr_1)+\varepsilon} - \frac{\phi_t(\pr_1)}{\phi_t(\pr_1)+\varepsilon'}\Big\vert \bigg]\quad\meas^{\otimes 2}\text{-a.e}.
		\end{align*}
		Thus, independently of the choice of sequences $(\varepsilon_n)_{n\in\N}$ and $(\iota_n)_{n\in\N}$ in $(0,\infty)$ converging to $0$ in place of $\varepsilon$ and $\iota$, the two-parameter family $\smash{(\sfh_t^{\varepsilon,\iota}[\eta,\omega])_{\varepsilon,\iota>0}}$ has a unique limit in $\Ell^0(\mms^2)$ --- we define $\Hheat_t[\eta,\omega]$ to be this limit and denote by $\smash{\Hheat_t \in \Ell^0(T^*\mms)^{\boxtimes 2}}$ the induced element.
		
		\textsc{Step 3.} \textit{Properties of $\Hheat$.} Turning to the claimed integral representation of $\HHeat_t$, given any $\omega\in\Ell^p(T^*\mms)$ and $\eta\in\Ell^q(T^*\mms)$ where $p,q\in [1,\infty]$ are dual to each other, we integrate \eqref{Eq:Epsilon equality} and let $\varepsilon,\iota\to 0$. On the one hand, by Hölder's inequality, \autoref{Cor:Extension} and Lebesgue's theorem,
		\begin{align*}
		\lim_{\varepsilon,\iota\to 0} \int_\mms \Big\langle\eta, \frac{\phi_t}{\phi_t+\varepsilon}\,\HHeat_t\Big[\frac{\phi_t}{\phi_t+\iota}\,\omega\Big]\Big\rangle\d\meas = \int_\mms \langle\eta,\HHeat_t\omega\rangle\d\meas.
		\end{align*}
		On the other hand, from the construction in the previous step,
		\begin{align*}
		\vert \sfh_t\vert_\otimes \leq \rme^{-Kt}\,\sfp_t\quad\meas^{\otimes 2}\text{-a.e.}
		\end{align*}
		so that a further application of Lebesgue's theorem to \eqref{Eq:Epsilon equality} entails
		\begin{align*}
		\int_\mms \langle\eta,\HHeat_t\omega\rangle\d\meas = \int_{\mms^2}\Hheat_t[\eta,\omega]\d\meas^{\otimes 2}.
		\end{align*}
		
		Replacing $\eta$ by $f\,\eta$ for arbitrary $f\in\Ell^\infty(\mms)$ finally gives the claimed pointwise $\meas$-a.e.~equality.
		
		The uniqueness statement is as clear as in the proof of \autoref{Th:Dunford Pettis forms}.
	\end{proof}
	
	\begin{definition}\label{Def:Heat kernel forms} We call the mapping $\sfh$ from \autoref{Th:Heat kernel existence} the \emph{$1$-form heat kernel} of $(\mms,\met,\meas)$.
	\end{definition}
	
	It is straightforward to check the following result using the symmetry and the semigroup property of $(\HHeat_t)_{t\geq 0}$, the Chapman--Kolmogorov formula for the functional heat kernel \cite[Thm.~6.1]{ambrosio2014b} as well as \autoref{Th:Kato-Simon}. \autoref{Cor:Heat kernel bds} then follows from \autoref{Th:Heat kernel bound thm} and \eqref{Eq:Heat kernel bound}.
	
	\begin{theorem}\label{Th:Properties heat kernel} For every $\omega,\eta\in\Ell^0(T^*\mms)$ and every $s,t>0$, the $1$-form heat kernel $\sfh$ from \autoref{Def:Heat kernel forms} obeys the following relations at $\meas^{\otimes 2}$-a.e.~$(x,y)\in\mms^2$.
		\begin{enumerate}[label=\textnormal{(\roman*)}]
			\item\label{La:Symm} \textnormal{\textsc{Symmetry.}} We have
			\begin{align*}
\Hheat_t[\eta,\omega](x,y) = \Hheat_t[\omega,\eta](y,x)
			\end{align*}
			\item\label{La:KSI} \textnormal{\textsc{Pointwise Hess--Schrader--Uhlenbrock inequality.}} We have 
			\begin{align*}
				\vert\Hheat_t\vert_\otimes(x,y) \leq \rme^{-Kt}\,\sfp_t(x,y)
			\end{align*}
			\item\label{La:CKE} \textnormal{\textsc{Chapman--Kolmogorov equation.}} We have
			\begin{align*}
			\int_\mms \Hheat_{t+s}[\eta,\omega](\cdot,y)\d\meas(y) = \int_\mms \Hheat_s[\eta,\HHeat_t\omega](\cdot,y)\d\meas(y)\quad\meas\text{-a.e.}
			\end{align*}
		\end{enumerate}
	\end{theorem}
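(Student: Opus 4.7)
All three statements should follow by exploiting the construction of $\Hheat_t$ in the proof of \autoref{Th:Heat kernel existence} as a limit of rescalings of the kernel $\sfa_t$ of the auxiliary operator $\sfA_t = \phi_t\,\HHeat_t\,\phi_t \colon \Ell^1(T^*\mms) \to \Ell^\infty(T^*\mms)$. The principle is: transfer the desired property first to the level of $\sfa_t$ by invoking the uniqueness clause of \autoref{Th:Dunford Pettis forms}, then pass it through the weight-removal procedure of Step~2 in the proof of \autoref{Th:Heat kernel existence}.

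For \ref{La:Symm}, the plan is to first establish symmetry of $\sfa_t$, namely $\sfa_t[\eta,\omega](x,y) = \sfa_t[\omega,\eta](y,x)$ for $\meas^{\otimes 2}$-a.e.~$(x,y)\in\mms^2$. Since $\phi_t$ is a real-valued multiplication operator and $\HHeat_t$ is self-adjoint on $\Ell^2(T^*\mms)$ by \autoref{Th:L^2-contractivity}, the operator $\sfA_t$ inherits self-adjointness on $\Ell^1(T^*\mms)\cap\Ell^2(T^*\mms)$. Testing the defining integral identity of $\sfa_t$ against product pairs $f\,\eta$, $g\,\omega$ with $f,g\in\Ell^\infty_\bs(\mms)$ and $\omega,\eta\in\Ell^1(T^*\mms)\cap\Ell^\infty(T^*\mms)$, I will use the $\Ell^0$-bilinearity together with \eqref{Eq:Linearity b[v,s]} to obtain
\begin{align*}
\int_{\mms^2} f(x)\,g(y)\,\sfa_t[\eta,\omega](x,y)\d\meas^{\otimes 2}(x,y) = \langle f\,\eta, \sfA_t(g\,\omega)\rangle_{\Ell^2}.
\end{align*}
Using self-adjointness and Fubini to swap the roles of $x$ and $y$, the right-hand side equals $\int_{\mms^2} f(x)\,g(y)\,\sfa_t[\omega,\eta](y,x)\d\meas^{\otimes 2}(x,y)$. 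The density of product functions in $\Ell^1(\mms^2)$ forces the pointwise $\meas^{\otimes 2}$-a.e.~equality of the two kernels. Symmetry of $\Hheat_t$ then follows directly from the definition $\Hheat_t^{\varepsilon,\iota}[\eta,\omega] = \sfa_t[\eta/(\phi_t+\varepsilon),\omega/(\phi_t+\iota)]$, bilinearity, and passage to the limit as $\varepsilon,\iota\to 0$. This is the only step requiring genuine care, and the main obstacle is to make the Fubini/testing argument compatible with the $\Ell^0$-bilinearity axiom so that no $\meas^{\otimes 2}$-measurability issues appear.

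For \ref{La:KSI}, the pointwise bound is essentially already in the construction: as noted in the proof of \autoref{Th:Heat kernel existence}, \eqref{Eq:Heat kernel est} yields $\vert\sfa_t\vert_\otimes \leq \rme^{-Kt}\,\phi_t(\pr_1)\,\phi_t(\pr_2)\,\sfp_t$ $\meas^{\otimes 2}$-a.e. I will apply this to the rescaled kernels $\Hheat_t^{\varepsilon,\iota}[\eta,\omega]$: by bilinearity,
\begin{align*}
\vert\Hheat_t^{\varepsilon,\iota}[\eta,\omega]\vert(x,y) \leq \rme^{-Kt}\,\sfp_t(x,y)\,\frac{\phi_t(x)}{\phi_t(x)+\varepsilon}\,\frac{\phi_t(y)}{\phi_t(y)+\iota}\,\vert\eta\vert(x)\,\vert\omega\vert(y)
\end{align*}
for $\meas^{\otimes 2}$-a.e.~$(x,y)\in\mms^2$. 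Letting $\varepsilon,\iota\to 0$ gives $\vert\Hheat_t[\eta,\omega]\vert \leq \rme^{-Kt}\,\sfp_t\,\vert\eta\vert(\pr_1)\,\vert\omega\vert(\pr_2)$, and taking the essential supremum over $\eta,\omega$ with $\vert\eta\vert,\vert\omega\vert\leq 1$ $\meas$-a.e.~yields the claim.

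For \ref{La:CKE}, nothing beyond the semigroup property is needed. The defining integral identity of $\Hheat$ applied at time $t+s$ with the pair $(\eta,\omega)$ gives $\int_\mms \Hheat_{t+s}[\eta,\omega](\cdot,y)\d\meas(y) = \langle\eta,\HHeat_{t+s}\omega\rangle$, while the same identity at time $s$ with the pair $(\eta,\HHeat_t\omega)$ gives $\int_\mms \Hheat_s[\eta,\HHeat_t\omega](\cdot,y)\d\meas(y) = \langle\eta,\HHeat_s\HHeat_t\omega\rangle$. The semigroup identity $\HHeat_{t+s} = \HHeat_s\,\HHeat_t$ recalled in \autoref{Sub:Def bas props} closes the argument. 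The only points to verify are that $\HHeat_t\omega\in\Ell^0(T^*\mms)$ pairs admissibly with $\eta$ in the defining identity; since $\Ell^p$-boundedness of $\HHeat_t$ for all $p\in[1,\infty]$ is granted by \autoref{Cor:Extension}, this is routine.
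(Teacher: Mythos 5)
Your proof is correct and fills in the details of the paper's one‑line hint in a way that matches the intended approach: self‑adjointness of $\HHeat_t$ (transferred to $\sfA_t$) yields symmetry of $\sfa_t$ and then of $\Hheat_t$ by the weight‑removal limit; the bound in (ii) is in fact already recorded at the end of Step~3 of the proof of \autoref{Th:Heat kernel existence}, so there your derivation merely re‑states what the construction gives; and (iii) follows from the defining integral identity applied twice together with the semigroup law $\HHeat_{t+s}=\HHeat_s\HHeat_t$, which is cleaner than the paper's hint suggests since the \emph{functional} Chapman--Kolmogorov formula is not actually needed for the integrated form of (iii). One small caveat worth flagging explicitly (a vagueness you inherit from the theorem statement itself): for (iii) the two integrals only make sense, and $\HHeat_t\omega$ is only defined, once one restricts to $\omega\in\Ell^p(T^*\mms)$ and $\eta\in\Ell^q(T^*\mms)$ with $1/p+1/q=1$ (or the bounded‑support classes used in \autoref{Re:Local basis}); \autoref{Cor:Extension} then does the rest, as you note.
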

	
	\begin{remark}\label{Re:Local basis} Since $(\HHeat_t)_{t\geq 0}$ does not localize, we cannot state Chapman--Kolmogorov's formula from \ref{La:CKE} in \autoref{Th:Properties heat kernel} as a pointwise $\meas^{\otimes 2}$-a.e.~equality in the previous sense.
		
		However, if $(\mms,\met,\meas)$ is an $\RCD^*(K,N)$ space, $N\in (1,\infty)$, this can be circumvented by using the \emph{dimensional decomposition} of $\Ell^2(T^*\mms)$ \cite[Prop.~1.4.5]{gigli2018}. In this case (see \autoref{Sub:Fund sol} for details), there exists a unique $n\in\N$ as well as a Borel set  $E_n\subset\mms$ with $\meas[\mms\setminus E_n] = 0$ such that for every Borel set $B\subseteq E_n$ with $\meas[B] < \infty$, there exist local basis vectors $\rho_1,\dots,\rho_n\in\Ell^2(T^*\mms)$ \cite[Sec.~1.4]{gigli2018} such that for every $i,j\in\{1,\dots,n\}$, we have
		\begin{align*}
		\One_{B^\rmc}\,\rho_i &= 0,\\
		\langle\rho_i,\rho_j\rangle &= \delta_{ij}\quad\meas\text{-a.e.}\quad\text{on }B.
		\end{align*}
		
		In this framework, given any $\omega,\eta\in\Ell^0(T^*\mms)$, we claim that for every $s,t>0$,
		\begin{align*}
		\Hheat_{t+s}[\eta,\omega] = \sum_{i=1}^n\int_B \Hheat_s[\eta,\rho_i](\pr_1,z)\,\Hheat_t[\rho_i,\omega](z,\pr_2)\d\meas(z)\quad\meas^{\otimes 2}\text{-a.e.}\quad\text{on }B^2.
		\end{align*}
		Indeed, let $z\in B$ and $R>0$ be arbitrary, and set 
		\begin{align*}
		\omega_R &:= \One_{B_R(z)}\,\One_{[0,R]}(\vert\omega\vert)\,\omega,\\
		\eta_R &:= \One_{B_R(z)}\,\One_{[0,R]}(\vert\eta\vert)\,\eta.
		\end{align*}
		By \autoref{Th:Heat kernel existence} and \autoref{Th:Properties heat kernel} applied to $\One_B\,\omega_R$ and $\One_B\,\eta_R$ in place of $\omega$ and $\eta$, we obtain
		\begin{align*}
		\int_{B^2}\Hheat_{t+s}[\eta_R,\omega_R]\d\meas^{\otimes 2} &= \int_{B^2}\Hheat_s[\eta_R, \One_B\,\HHeat_t(\One_B\,\omega_R)]\d\meas^{\otimes 2}\\
		&= \sum_{i=1}^n\int_{B^2} \Hheat_s[\eta_R,\rho_i]\,\langle \rho_i, \HHeat_t(\One_B\,\omega_R)\rangle(\pr_2)\,\d\meas^{\otimes 2}\\
		&= \sum_{i=1}^n \int_{B^2} \int_B \Big[\Hheat_s[\eta_R,\rho_i](\pr_1,z)\\
		&\qquad\qquad \times \Hheat_t[\rho_i,\omega_R](z,\pr_2)\Big]\d\meas(z)\d\meas^{\otimes 2}.\textcolor{white}{\sum_{i}^n}
		\end{align*}
		
		The integrands on both sides of this chain of equalities are local in their respective components, and the claim follows by letting $R\to\infty$.
	\end{remark}
	
	\begin{theorem}\label{Cor:Heat kernel bds} Let $(\mms,\met,\meas)$ be an $\RCD(K,\infty)$ space, and let $\varepsilon > 0$. Then there exist finite constants $C_1 > 0$, depending only on $\varepsilon$, and $C_2 \geq 0$, depending only on $K$, with $C_2 := 0$ if $K\geq 0$, such that for $\meas^{\otimes 2}$-a.e.~$(x,y)\in\mms^2$, we have
		\begin{align*}
		\vert\Hheat_t\vert_\otimes(x,y) &\leq \meas\big[B_{\sqrt{t}}(x)\big]^{-1/2}\,\meas\big[B_{\sqrt{t}}(y)\big]^{-1/2}\\
		&\qquad\qquad \exp\!\Big(C_1\big(1+ (C_2-K)t\big) - \frac{\met^2(x,y)}{(4+\varepsilon)t}\Big).
		\end{align*}
		In particular, if $(\mms,\met,\meas)$ obeys the $\RCD^*(K,N)$ condition with $N\in (1,\infty)$, there exist  finite constants $C_3,C_4>1$ depending only on $\varepsilon$, $K$ and $N$ such that at $\meas^{\otimes 2}$-a.e.~$(x,y)\in\mms^2$, we have
		\begin{align*}
		\vert\Hheat_t\vert_\otimes(x,y) \leq C_3\,\meas\big[B_{\sqrt{t}}(y)\big]^{-1}\,\exp\!\Big((C_4-K)t - \frac{\met^2(x,y)}{(4+\varepsilon)t}\Big).
		\end{align*}
	\end{theorem}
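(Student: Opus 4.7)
The plan is that this corollary follows routinely by combining the pointwise Hess--Schrader--Uhlenbrock inequality for $\sfh$ established in \autoref{Th:Properties heat kernel}\ref{La:KSI}, namely
\begin{align*}
\vert \Hheat_t\vert_\otimes(x,y) \leq \rme^{-Kt}\,\sfp_t(x,y)\quad\text{for }\meas^{\otimes 2}\text{-a.e.~}(x,y)\in\mms^2,
\end{align*}
with the already-proven Gaussian upper bounds on the functional heat kernel $\sfp_t$. No further analysis on $1$-forms is required; the whole argument is a bookkeeping exercise on exponents.

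For the first inequality, I would fix $\varepsilon > 0$ and apply \autoref{Th:Heat kernel bound thm} with the \emph{same} $\varepsilon$ to obtain constants $C_1' > 0$ (depending only on $\varepsilon$) and $C_2' \geq 0$ (depending only on $K$, and equal to $0$ when $K \geq 0$) such that
\begin{align*}
\rme^{-Kt}\,\sfp_t(x,y) \leq \meas\big[B_{\sqrt{t}}(x)\big]^{-1/2}\,\meas\big[B_{\sqrt{t}}(y)\big]^{-1/2}\,\exp\!\big(C_1' + (C_1'C_2' - K)t - \met^2(x,y)/((4+\varepsilon)t)\big).
\end{align*}
Writing $C_1' + (C_1'C_2' - K)t = C_1(1+(C_2-K)t)$ for suitable $C_1, C_2$ (e.g.~by taking $C_1 := \max\{C_1',1\}$ and $C_2 := C_1'C_2'/C_1 + (1-1/C_1)\,K^+$, with the convention $C_2 := 0$ when $K \geq 0$ and $C_2' = 0$), we arrive at the first bound. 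This is the only mildly fiddly step, and it only involves elementary manipulations.

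For the second (dimensional) inequality, I would instead appeal to the estimate \eqref{Eq:Heat kernel bound}: for $\meas^{\otimes 2}$-a.e.~$(x,y)\in\mms^2$,
\begin{align*}
\sfp_t(x,y) \leq C_3'\,\meas\big[B_{\sqrt{t}}(x)\big]^{-1}\,\exp\!\big(C_4' t -\met^2(x,y)/((4+\varepsilon)t)\big)
\end{align*}
with constants $C_3',C_4' > 1$ depending only on $\varepsilon$, $K$ and $N$. Exploiting the $\meas^{\otimes 2}$-a.e.~symmetry of $\sfp_t$ in $(x,y)$, the same estimate holds with $\meas[B_{\sqrt{t}}(y)]^{-1}$ on the right-hand side. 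Multiplying by $\rme^{-Kt}$ and setting $C_3 := C_3'$, $C_4 := C_4'$ then yields the claim.

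The only conceptual content is the pointwise Hess--Schrader--Uhlenbrock inequality for $\vert\Hheat_t\vert_\otimes$, which itself reduces via \autoref{Th:Dunford Pettis forms} (used inside \autoref{Th:Heat kernel existence}) to the functional inequality \autoref{Th:Kato-Simon}; this has already been established. Consequently, I do not anticipate any genuine obstacle here, and the main thing to watch is simply that the constants produced by the two steps can be recombined in the exact form stated.
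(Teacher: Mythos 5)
Your proposal is correct and follows exactly the (implicit) route in the paper: the paper simply remarks that \autoref{Cor:Heat kernel bds} ``follows from \autoref{Th:Heat kernel bound thm} and \eqref{Eq:Heat kernel bound}'' once \autoref{Th:Properties heat kernel}\ref{La:KSI} is in hand, which is precisely the reduction you describe. One small caveat on your first step, which you rightly flag as ``mildly fiddly'': your proposed recombination $C_2 := C_1'C_2'/C_1 + (1-1/C_1)K^+$ together with the convention $C_2 := 0$ for $K\geq 0$ only satisfies the required inequality $C_1 C_2 \geq C_1'C_2' + (C_1-1)K$ when $K\leq 0$ or $C_1' \leq 1$; if $K>0$ and the constant $C_1'$ from \autoref{Th:Heat kernel bound thm} exceeds $1$, then forcing $C_2 = 0$ makes the exponent $C_1(1-Kt)$ strictly smaller than the available bound $C_1'-Kt$ for large $t$, and the inequality reverses. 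This is a cosmetic tension already present in the statement's precise parenthesization $C_1(1+(C_2-K)t)$ (as opposed to the cleaner $C_1(1+C_2 t)-Kt$ one gets by multiplying the functional bound by $\rme^{-Kt}$), so it is not a conceptual gap in your argument; the rest of your reasoning, including the use of the symmetry of $\sfp_t$ to swap $\meas[B_{\sqrt{t}}(x)]^{-1}$ for $\meas[B_{\sqrt{t}}(y)]^{-1}$ in the second inequality, is exactly right.
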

	
	\subsection{Trace inequality and spectral resolution}\label{Sub:Fund sol}
	
	Let $(\mms,\met,\meas)$ be an $\RCD^*(K,N)$ space for some $N\in (1,\infty)$. First, relying on \autoref{Th:Properties heat kernel} and \autoref{Re:Local basis}, we prove a trace inequality between $\HHeat_t$ and $\ChHeat_t$ in \autoref{Pr:Trace}. In the smooth case, the corresponding bound is classical, see e.g.~\cite[Cor.~XI.8]{gueneysu2017}, (1.1) in \cite{hess1980} or \cite[Thm.~3.5]{rosenberg1988}. A key feature of the $\RCD^*(K,N)$ framework is that, by \cite{giglipasqualetto2016,han2018}, there exists precisely one $n\in\N$ such that $\meas[E_n] > 0$ within the dimensional decomposition $(E_n)_{n\in\N}$ of $\Ell^2(T^*\mms)$ --- actually, $n$ is equal to the \emph{essential dimension} $\dim_{\met,\meas}\mms \in \{1,\dots, \lfloor N\rfloor\}$ of $(\mms,\met,\meas)$. See \cite{brue2020, giglipasqualetto2016, mondino2019} for comprehensive accounts on the latter from the structure theoretic point of view.
	
	Via \autoref{Pr:Eigenbasis}, if $\mms$ is additionally compact, we also prove a spectral resolution identity for $\Hheat_t$, $t>0$, in \autoref{Cor:Spectral resol}. More precisely, we show that $\Hheat_t$ can be viewed as an element in the two-fold Hilbert space tensor product $\Ell^2(T^*\mms)^{\otimes 2}$ of $\Ell^2(T^*\mms)$ indicated in \autoref{Sec:A glimpse}. 
	
	\begin{theorem}\label{Pr:Trace} Let $(\mms,\met,\meas)$ be an $\RCD^*(K,N)$ space with $K\in\R$ and $N\in (1,\infty)$. Intending the traces in the usual Hilbert space sense, for every $t>0$,
		\begin{align*}
		\tr \HHeat_t \leq (\dim_{\met,\meas}\mms)\,\rme^{-Kt}\,\tr\ChHeat_t.
		\end{align*}
	\end{theorem}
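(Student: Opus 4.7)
The plan is to exploit the semigroup property and self-adjointness to rewrite the trace as a Hilbert--Schmidt norm of $\HHeat_{t/2}$, then match it coordinate-by-coordinate against the analogous expression for $\ChHeat_{t/2}$ using the pointwise Hess--Schrader--Uhlenbrock inequality from \autoref{Th:Properties heat kernel}\,\ref{La:KSI}. The factor $\dim_{\met,\meas}\mms$ will enter exactly once, coming from the passage between operator and Hilbert--Schmidt norms on the $n$-dimensional fibers provided by the dimensional decomposition of $\Ell^2(T^*\mms)$.

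First, by self-adjointness and the semigroup property, $\HHeat_t = \HHeat_{t/2}^*\,\HHeat_{t/2}$, so $\tr\HHeat_t = \Vert\HHeat_{t/2}\Vert_{\HS}^2$ (as an element of $[0,\infty]$), and similarly $\tr\ChHeat_t = \Vert\ChHeat_{t/2}\Vert_\HS^2 = \int_{\mms^2}\sfp_{t/2}^2\d\meas^{\otimes 2}$. Next, let $n := \dim_{\met,\meas}\mms$ and choose a countable Borel partition $(B_k)_{k\in\N}$ of $E_n$ into sets of finite $\meas$-measure carrying local orthonormal frames $\rho_1^{(k)},\dots,\rho_n^{(k)}\in\Ell^2(T^*\mms)$ as in \autoref{Re:Local basis}. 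Choosing an $\Ell^2(B_k,\meas)$-orthonormal basis $(f_\alpha^{(k)})_{\alpha\in\N}$ for each $k$, the family $\smash{\big\{f_\alpha^{(k)}\,\rho_i^{(k)}\big\}_{k,\alpha,i}}$ is an orthonormal basis of $\Ell^2(T^*\mms)$. Using the heat kernel representation from \autoref{Th:Heat kernel existence} and the $\Ell^0$-bilinearity of $\Hheat_{t/2}$,
\begin{align*}
\big\langle\HHeat_{t/2}\big(f_\beta^{(l)}\,\rho_j^{(l)}\big),f_\alpha^{(k)}\,\rho_i^{(k)}\big\rangle_{\Ell^2} = \int_{B_k\times B_l}\!\! f_\alpha^{(k)}(x)\,f_\beta^{(l)}(y)\,\Hheat_{t/2}\big[\rho_i^{(k)},\rho_j^{(l)}\big](x,y)\d\meas^{\otimes 2}(x,y).
\end{align*}
Summing the squares of these coefficients over $\alpha$ and $\beta$ and applying Parseval on $\Ell^2(B_k\times B_l,\meas^{\otimes 2})$, we obtain
\begin{align*}
\tr\HHeat_t = \sum_{k,l\in\N}\sum_{i,j=1}^n\int_{B_k\times B_l}\Hheat_{t/2}\big[\rho_i^{(k)},\rho_j^{(l)}\big]^2\d\meas^{\otimes 2}.
\end{align*}

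The central step is then the pointwise estimate: for $\meas^{\otimes 2}$-a.e.~$(x,y)\in B_k\times B_l$, since $(\rho_i^{(k)}(x))_{i=1}^n$ and $(\rho_j^{(l)}(y))_{j=1}^n$ form orthonormal systems on the respective fibers (in the sense that $\langle\rho_i^{(k)},\rho_{i'}^{(k)}\rangle = \delta_{ii'}$ on $B_k$, etc.), the $n\times n$ array $\smash{\big[\Hheat_{t/2}[\rho_i^{(k)},\rho_j^{(l)}](x,y)\big]_{i,j}}$ has Hilbert--Schmidt norm bounded by $\sqrt{n}$ times its operator norm. The latter is by definition $\leq\vert\Hheat_{t/2}\vert_\otimes(x,y)$, and \autoref{Th:Properties heat kernel}\,\ref{La:KSI} bounds this by $\rme^{-Kt/2}\,\sfp_{t/2}(x,y)$. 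Hence
\begin{align*}
\sum_{i,j=1}^n\Hheat_{t/2}\big[\rho_i^{(k)},\rho_j^{(l)}\big](x,y)^2 \leq n\,\rme^{-Kt}\,\sfp_{t/2}(x,y)^2\quad\meas^{\otimes 2}\text{-a.e.}
\end{align*}
Summing over $k,l$ and using $\meas[\mms\setminus E_n] = 0$ together with the Chapman--Kolmogorov identity $\int_{\mms^2}\sfp_{t/2}^2\d\meas^{\otimes 2} = \tr\ChHeat_t$ yields the claim.

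The main technical point is the pointwise fiber-wise HS-vs-operator norm comparison: one needs to verify that the pointwise operator norm $\vert\Hheat_{t/2}\vert_\otimes(x,y)$ really controls, at $\meas^{\otimes 2}$-a.e.~$(x,y)$, the $n\times n$ matrix built from a \emph{fixed} countable collection of local frames, which reduces to the essential supremum in the definition of $\vert\cdot\vert_\otimes$ by enumerating all rational coefficient combinations of the $\rho_i^{(k)},\rho_j^{(l)}$ and taking the union of countably many $\meas^{\otimes 2}$-null sets. Everything else is a bookkeeping of Parseval plus the already-established pointwise Hess--Schrader--Uhlenbrock bound.
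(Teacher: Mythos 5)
Your proposal is correct, and it takes a noticeably cleaner route than the paper's own proof, even though both ultimately rest on the same two inputs (the dimensional decomposition of $\Ell^2(T^*\mms)$ into constant-rank pieces, and the pointwise Hess--Schrader--Uhlenbrock kernel bound $\vert\Hheat_{t/2}\vert_\otimes \leq \rme^{-Kt/2}\,\sfp_{t/2}$). The paper fixes a \emph{single} bounded Borel set $B \subset E_d$ of finite measure, rescales its frame $\rho_1,\dots,\rho_d$ to an orthonormal family $\omega_k := \meas[B]^{-1/2}\rho_k$, completes to an orthonormal basis, and then bounds the partial traces $\sum_{k\leq\alpha}\int_\mms\langle\HHeat_t\omega_k,\omega_k\rangle\d\meas$ by a chain of Cauchy--Schwarz and kernel-domination estimates: writing $\Hheat_t[\rho_i,\rho_i] = \Hheat_{t/2}[\rho_i,\HHeat_{t/2}\rho_i]$, controlling $\vert\HHeat_{t/2}\rho_i\vert$ again by the kernel, and then applying Cauchy--Schwarz in the intermediate integration variable to arrive at $\int_{B^2}\vert\Hheat_{t/2}\vert_\otimes^2$. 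The factor $d$ emerges there simply from summing $i = 1,\dots,d$ before the last Cauchy--Schwarz step. Your route instead partitions $E_n$ into countably many finite-measure pieces, builds an orthonormal basis of genuine product form $f_\alpha^{(k)}\rho_i^{(k)}$, and converts $\tr\HHeat_t = \Vert\HHeat_{t/2}\Vert_\HS^2$ directly via Parseval into $\sum_{k,l,i,j}\int\Hheat_{t/2}[\rho_i^{(k)},\rho_j^{(l)}]^2\,\d\meas^{\otimes 2}$; the factor $n$ then enters as the transparent Hilbert--Schmidt-versus-operator-norm constant for a fixed $n\times n$ matrix. Two small things you should make explicit: the Parseval equality requires $\Hheat_{t/2}[\rho_i^{(k)},\rho_j^{(l)}]\in\Ell^2(B_k\times B_l)$, which follows from the HSU bound once $\tr\ChHeat_t<\infty$ (and in the case $\tr\ChHeat_t = \infty$ the statement is vacuous, so one may equally well stop at Bessel's inequality, which suffices and sidesteps the issue); and $\vert\Hheat_{t/2}\vert_\otimes$ is symmetric (from \autoref{Th:Properties heat kernel}\,(i)), which you use implicitly when combining the two halves. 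Your handling of the a.e.-issue in comparing $\Vert A(x,y)\Vert_\op$ to $\vert\Hheat_{t/2}\vert_\otimes(x,y)$ by enumerating rational coefficient combinations is exactly right and is a point the paper does not have to confront explicitly since it avoids the matrix formulation.
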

	
	\begin{proof} Abbreviate $d:= \dim_{\met,\meas}\mms$ and let $B\subset E_d$ be any bounded Borel set with $\meas[B]\in (0, \infty)$. Let $\rho_1,\dots,\rho_d \in \Ell^2(T^*\mms)$ be local basis vectors of $\Ell^2(T^*\mms)$ on $B$ as in \autoref{Re:Local basis}. We set $\omega_k := \meas[B]^{-1/2}\,\rho_k$ with $k\in\{1,\dots,n\}$ and complete this set of $1$-forms to a countable orthonormal basis $(\omega_k)_{k\in\N}$ of $\Ell^2(T^*\mms)$ --- we may and will assume that $\One_B\,\langle\omega_k,\rho_i\rangle = 0$ for all $k>d$ and $i\in\{1,\dots,d\}$. 
		¸
		Given any $\alpha\in\N$ with $\alpha> d$, by \autoref{Th:Properties heat kernel} and Cauchy--Schwarz's inequality, 
		\begin{align}\label{Eq:d est for trace}
		&\sum_{k=1}^\alpha \int_\mms \langle\HHeat_t\omega_k,\omega_k\rangle\d\meas\nonumber\\
		&\qquad\qquad = \sum_{k=1}^\alpha \sum_{i,i'=1}^d \int_{B^2} \langle\omega_k,\rho_i\rangle(\pr_1)\,\langle\omega_k,\rho_{i'}\rangle(\pr_2)\,\Hheat_t[\rho_i,\rho_{i'}]\d\meas^{\otimes 2}\nonumber\\
		&\qquad\qquad= \meas[B]^{-1}\sum_{i=1}^d \int_{B^2} \Hheat_t[\rho_i,\rho_i]\d\meas^{\otimes 2}\nonumber\\
		&\qquad\qquad = \meas[B]^{-1}\sum_{i=1}^d \int_{B^2} \Hheat_{t/2}[\rho_i,\HHeat_{t/2}\rho_i] \d\meas^{\otimes 2}\nonumber\\
		&\qquad\qquad \leq \meas[B]^{-1}\sum_{i=1}^d\int_{B^2} \vert\Hheat_{t/2}\vert_\otimes\,\vert\HHeat_{t/2}\rho_i\vert(\pr_2)\d\meas^{\otimes 2}\nonumber\\
		&\qquad\qquad\leq \meas[B]^{-1}\, d\int_{B^3}\vert\Hheat_{t/2}\vert_\otimes(\pr_1,\pr_3)\,\vert\Hheat_{t/2}\vert_\otimes(\pr_3,\pr_2)\d\meas^{\otimes 3}\textcolor{white}{\sum_{j=1}^d}\\
		&\qquad\qquad \leq d\int_{B^2} \vert\Hheat_{t/2}\vert_\otimes^2\d\meas^{\otimes 2} \leq d\,\rme^{-Kt}\int_{\mms^2}\sfp_{t/2}^2\d\meas^{\otimes 2} = d\,\rme^{-Kt}\,\tr\ChHeat_t.\textcolor{white}{\sum_{j=1}^d}\nonumber
		\end{align}
		In \eqref{Eq:d est for trace}, we used \autoref{Th:Heat kernel existence} together with duality for the pointwise norm, see \autoref{Sec:A glimpse}. The last identity follows from the self-adjointness of the functional heat flow in $\Ell^2(\mms)$. 
		
		The asserted inequality then follows by letting $\alpha\to\infty$.
	\end{proof}
	
	Now, let $\mms$ be compact. Let $(\omega_i)_{i\in \N}$ be an orthonormal basis of $\Ell^2(T^*\mms)$ consisting of eigenforms for $\Hodge$ provided by \autoref{Pr:Eigenbasis}, i.e.~$\omega_i\in \rmE_{\lambda_i}(\Hodge)$ for some $\lambda_i\in\sigma(\Hodge)$ and every $i\in\N$.
	
	\begin{theorem}\label{Cor:Spectral resol} Let $(\mms,\met,\meas)$ be a compact $\RCD^*(K,N)$ space, with $K\in\R$ and $N\in (1,\infty)$. For every $t>0$, there exists a unique element $\sfg_t\in\Ell^2(T^*\mms)^{\otimes 2}$ such that, for every $\omega,\eta\in\Ell^2(T^*\mms)$,
		\begin{align*}
		\langle\sfg_t, \eta\otimes\omega\rangle = \sfh_t[\eta,\omega]\quad\meas^{\otimes 2}\text{-a.e}.
		\end{align*}
		Moreover, w.r.t.~strong convergence in $\Ell^2(T^*\mms)^{\otimes 2}$, $\sfg_t$ admits the  representation
		\begin{align*}
		\sfg_t = \sum_{i=1}^\infty \rme^{-\lambda_i t}\,\omega_i\otimes\omega_i.
		\end{align*}
	\end{theorem}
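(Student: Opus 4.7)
The plan is to construct $\sfg_t$ as the strong limit of spectral partial sums in $\Ell^2(T^*\mms)^{\otimes 2}$ and then to verify the pointwise pairing identity by passing to the limit under the pointwise scalar product of the Hilbert module structure on $\Ell^2(T^*\mms)^{\otimes 2}$ provided by \autoref{Pr:HS tensor product}. First, by \autoref{Cor:Hilbert-Schmidt}, $\HHeat_t$ is a Hilbert--Schmidt operator on $\Ell^2(T^*\mms)$ for every $t>0$. Computing its Hilbert--Schmidt norm in the orthonormal eigenbasis $(\omega_i)_{i\in\N}$ from \autoref{Pr:Eigenbasis} yields
$$\Vert\HHeat_t\Vert_\HS^2 = \sum_{i,i'=1}^\infty\Big[\!\int_\mms \langle\HHeat_t\omega_i,\omega_{i'}\rangle\d\meas\Big]^2 = \sum_{i=1}^\infty \rme^{-2\lambda_i t} < \infty.$$
Since $(\omega_i\otimes\omega_{i'})_{i,i'\in\N}$ is a countable orthonormal basis of $\Ell^2(T^*\mms)^{\otimes 2}$ by \autoref{Th:Univ property}, the partial sums $\sfg_t^n := \sum_{i=1}^n \rme^{-\lambda_i t}\,\omega_i\otimes\omega_i$ form a Cauchy sequence in $\Ell^2(T^*\mms)^{\otimes 2}$ whose strong limit I take as the definition of $\sfg_t$. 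This already produces the claimed spectral representation.

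Next, I fix $\omega,\eta\in\Ell^2(T^*\mms)$ and verify the pairing identity. By \autoref{Pr:HS tensor product} and bilinearity of the pointwise scalar product on the Hilbert module,
$$\langle\sfg_t^n,\eta\otimes\omega\rangle = \sum_{i=1}^n \rme^{-\lambda_i t}\,\langle\omega_i,\eta\rangle(\pr_1)\,\langle\omega_i,\omega\rangle(\pr_2)\quad\meas^{\otimes 2}\text{-a.e.}$$
Pointwise $\meas^{\otimes 2}$-a.e.~Cauchy--Schwarz, combined with global Cauchy--Schwarz, gives $\langle\sfg_t^n,\eta\otimes\omega\rangle\to\langle\sfg_t,\eta\otimes\omega\rangle$ in $\Ell^1(\mms^2)$. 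Integrating the partial sum in the second variable and using the spectral representation of $\HHeat_t$ yields
$$\int_\mms \langle\sfg_t^n,\eta\otimes\omega\rangle(\cdot,y)\d\meas(y) = \Big\langle\eta,\sum_{i=1}^n \rme^{-\lambda_i t}\,\omega_i\int_\mms\langle\omega_i,\omega\rangle\d\meas\Big\rangle,$$
which converges to $\langle\eta,\HHeat_t\omega\rangle$ in $\Ell^1(\mms)$. Passing to the limit on both sides thus shows that the mapping $(\eta,\omega)\mapsto\langle\sfg_t,\eta\otimes\omega\rangle$ satisfies the integral identity characterizing the $1$-form heat kernel in \autoref{Th:Heat kernel existence}. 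The uniqueness statement therein then forces $\langle\sfg_t,\eta\otimes\omega\rangle = \sfh_t[\eta,\omega]$ $\meas^{\otimes 2}$-a.e., while uniqueness of $\sfg_t$ itself in $\Ell^2(T^*\mms)^{\otimes 2}$ is immediate from the density of simple tensors and continuity of $\langle\cdot,\eta\otimes\omega\rangle$.

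The main obstacle I anticipate is the invocation of uniqueness in \autoref{Th:Heat kernel existence}: that statement is formulated for $\Ell^0$-bilinear maps on $\Ell^0(T^*\mms)^2$, whereas I have so far only defined $(\eta,\omega)\mapsto\langle\sfg_t,\eta\otimes\omega\rangle$ on $\Ell^2(T^*\mms)^2$. The missing extension to $\Ell^0(T^*\mms)^2$ will rely on the pointwise norm bound $\vert\langle\sfg_t,\eta\otimes\omega\rangle\vert\leq\vert\sfg_t\vert\,\vert\eta\vert(\pr_1)\,\vert\omega\vert(\pr_2)$ inherent to the Hilbert module $\Ell^2(T^*\mms)^{\otimes 2}$, together with the canonical $\Ell^0$-module extension of the pointwise scalar product recalled in \autoref{Sec:A glimpse}; this is routine but should be written out carefully to justify the application of uniqueness.
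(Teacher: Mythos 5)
Your construction is correct in substance, but it inverts the paper's order of construction and verification, and so constitutes a genuinely different route. The paper first defines $\sfg_t$ abstractly: it shows that the bilinear map $\sfG_t(\eta,\omega) := \int_{\mms^2}\Hheat_t[\eta,\omega]\,\d\meas^{\otimes 2}$ is weakly Hilbert--Schmidt (combining \autoref{Cor:Hilbert-Schmidt} with $\vert\Hheat_t\vert_\otimes\in\Ell^\infty(\mms^2)$ from \autoref{Cor:Heat kernel bds}), factors it through $\Ell^2(T^*\mms)^{\otimes 2}$ via the universal property \autoref{Th:Univ property}, invokes Riesz' theorem for Hilbert modules to produce $\sfg_t$, upgrades the integrated identity to the pointwise $\meas^{\otimes 2}$-a.e.\ identity by replacing $\omega$ and $\eta$ with $f\,\omega$ and $g\,\eta$ for $f,g\in\Ell^\infty(\mms)$, and only at the very end computes the coefficients $c_{ij}(t)=\int_{\mms^2}\langle\sfg_t,\omega_i\otimes\omega_j\rangle\d\meas^{\otimes 2}=\rme^{-\lambda_i t}\,\delta_{ij}$ to obtain the series. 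You reverse this: you \emph{define} $\sfg_t$ as the strong $\Ell^2(T^*\mms)^{\otimes 2}$-limit of the spectral partial sums, using $\Vert\HHeat_t\Vert_\HS^2 = \sum_i\rme^{-2\lambda_i t}<\infty$ for the Cauchy property, which hands you the series for free, and then verify the pairing identity by passing to the limit under the pointwise scalar product. Your route is more explicit; the paper's requires no reference to the eigenbasis until the final coefficient computation.

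The obstacle you flag at the end is in fact a bit more serious than ``routine.'' Invoking the uniqueness part of \autoref{Th:Heat kernel existence} requires more than extending $(\eta,\omega)\mapsto\langle\sfg_t,\eta\otimes\omega\rangle$ to $\Ell^0(T^*\mms)^{\boxtimes 2}$: the uniqueness there is stated relative to \emph{all} the foregoing obstructions, i.e.\ one must verify $\langle\sfg_t,\eta\otimes\omega\rangle\in\Ell^1(\mms^2)$ and the integral identity for \emph{every} dual pair $\omega\in\Ell^p(T^*\mms)$, $\eta\in\Ell^q(T^*\mms)$, not just $p=q=2$. For $p=1$, $q=\infty$ the membership in $\Ell^1(\mms^2)$ amounts to $\vert\sfg_t\vert\in\Ell^\infty(\mms^2)$, which you only learn a posteriori once the identification with $\sfh_t$ is made and \autoref{Cor:Heat kernel bds} applies --- so the appeal to uniqueness as written is slightly circular. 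The cleaner fix, which is exactly what the paper does to close its own proof, is to bypass the uniqueness of \autoref{Th:Heat kernel existence} entirely: having established, for $\omega,\eta\in\Ell^2(T^*\mms)$, that $\int_\mms\langle\sfg_t,\eta\otimes\omega\rangle(\cdot,y)\,\d\meas(y)=\langle\eta,\HHeat_t\omega\rangle=\int_\mms\sfh_t[\eta,\omega](\cdot,y)\,\d\meas(y)$ $\meas$-a.e., replace $\omega$ by $g\,\omega$ for arbitrary $g\in\Ell^\infty(\mms)$ and use $\Ell^\infty$-bilinearity of both kernels; since both $\langle\sfg_t,\eta\otimes\omega\rangle$ and $\sfh_t[\eta,\omega]$ lie in $\Ell^1(\mms^2)$, the arbitrariness of $g$ (and of $f$ in the first slot) forces the $\meas^{\otimes 2}$-a.e.\ identity without ever leaving the $\Ell^2$ class. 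With that substitution your argument is complete.
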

	
	\begin{proof} By virtue of \eqref{Eq:AHPT} and \autoref{Cor:Heat kernel bds}, we have $\vert\Hheat_t\vert_\otimes\in\Ell^\infty(\mms^2)$. Hence, the bilinear map $\sfG_t\colon\Ell^2(T^*\mms)^2\to\R$ is well-defined, where\begin{align*}
		\sfG_t(\eta,\omega) := \int_{\mms^2} \Hheat_t[\eta,\omega]\d\meas^{\otimes 2}.
		\end{align*}
		Moreover, $\sfG_t$ is weakly Hilbert--Schmidt by \autoref{Th:Heat kernel existence} and \autoref{Cor:Hilbert-Schmidt}. Therefore, by \autoref{Th:Univ property} there exists a unique bounded $\sfT_t\colon\Ell^2(T^*\mms)^{\otimes 2}\to \R$ such that $\sfG_t(\eta,\omega) = \sfT_t(\eta\otimes\omega)$ for every $\omega,\eta\in\Ell^2(T^*\mms)$. Recalling \autoref{Pr:HS tensor product}, by Riesz' theorem for Hilbert modules \cite[Thm.~1.2.24]{gigli2018} there exists a unique $\sfg_t\in\Ell^2(T^*\mms)^{\otimes 2}$ such that for every $\omega,\eta\in\Ell^2(T^*\mms)$,
		\begin{align*}
		\int_{\mms^2} \langle\sfg_t,\eta\otimes\omega\rangle\d\meas^{\otimes 2} = \sfT_t(\eta\otimes\omega) = \sfG_t(\eta,\omega)= \int_{\mms^2} \Hheat_t[\eta,\omega]\d\meas^{\otimes 2}.
		\end{align*}
		Replacing $\omega$ and $\eta$ by $f\,\omega$ and $g\,\eta$ for arbitrary $f,g\in\Ell^\infty(\mms)$, respectively, provides the claimed $\meas^{\otimes 2}$-a.e.~valid identity $\langle\sfg_t,\eta\otimes\omega\rangle = \Hheat_t[\eta,\omega]$.
		
		It remains to prove the series representation of $\sfg_t$. Since $(\omega_i\otimes\omega_j)_{i,j\in\N}$ is an orthonormal basis of $\Ell^2(T^*\mms)^{\otimes 2}$,  this simply follows by writing
		\begin{align*}
		\sfg_t = \sum_{i,j=1}^\infty c_{ij}(t)\,\omega_i\otimes\omega_{j}
		\end{align*}
		w.r.t.~strong convergence in $\Ell^2(T^*\mms)^{\otimes 2}$, 
		where the coefficients are given by
		\begin{align*}
		c_{ij}(t) = \int_{\mms^2}\langle\sfg_t,\omega_i\otimes\omega_j\rangle\d\meas^{\otimes 2} = \int_\mms\langle\omega_i,\HHeat_t\omega_j\rangle\d\meas =  \rme^{-\lambda_i t}\,\delta_{ij}.\tag*{\qedhere}
		\end{align*}
	\end{proof}
	
	\begin{remark}\label{Re:Compat} By \autoref{Cor:Spectral resol} and \cite[Prop.~3.1]{rosenberg1997}, our notion of the $1$-form heat kernel from \autoref{Def:Heat kernel forms} is fully compatible with the so-called \emph{parametrix approach} to it on compact, non-weighted Riemannian manifolds  \cite[Ch.~4]{patodi1971}. More precisely, denoting the smooth heat kernel by $\Hheat\colon (0,\infty)\times\mms^2\to (T^*\mms)^*\boxtimes T^*\mms$ as in \autoref{Ch:Introduction} by a slight abuse of notation, for every smooth $1$-forms $\omega$ and $\eta$, we have $\sfg_t(\eta\otimes\omega)(x,y) = \langle\eta(x), \Hheat_t(x,y)\,\omega(y)\rangle$ for $\meas^{\otimes 2}$-a.e.~$(x,y)\in\mms^2$.
	\end{remark}

\end{document}